\documentclass[11pt,a4paper]{article}
\RequirePackage[hmargin=1.25in,vmargin=1in]{geometry} %页面样式
\RequirePackage{graphicx}		%插图
\RequirePackage{xcolor}			%颜色
\RequirePackage{listings}		%关键字高亮的代码环境lstlisting
\RequirePackage{amsmath}		%AMS数学宏集
\RequirePackage{amsfonts}		%AMS字体
\RequirePackage{amssymb}		%数学公式
\RequirePackage{amsthm}			%定理环境
\RequirePackage{bm}				%公式加粗
\RequirePackage{diagbox}		%表格
\RequirePackage{multirow}		%表格
\RequirePackage{booktabs}		%三线表
\usepackage{float}
\RequirePackage{hyperref} 
\newtheorem{thm}{Theorem}[section]
\newtheorem{lem}{Lemma}[section]
\newtheorem{dfn}{Definition}[section]
\newtheorem{prp}{Proposition}[section]

\newtheorem{Ass}{Assumption}[section]
\newtheorem{remark}{Remark}[section]
\title{General Leader-Follower Linear-Quadratic Stochastic Graphon Games with Indefinite Control Weights\thanks{This work is supported by National Key R\&D Program of China (2022YFA1006104), National Natural Science Foundations of China (12471419, 12271304), and Shandong Provincial Natural Science Foundation (ZR2024ZD35).}}
\author{\normalsize
	Weijia Chen\thanks{\textit{School of Mathematics, Shandong University, Jinan 250100, P.R. China, E-mail: 202520301@mail.sdu.edu.cn}},
	\ Jingtao Shi\thanks{\textit{Corresponding author, School of Mathematics, Shandong University, Jinan 250100, P.R. China, E-mail: shijingtao@sdu.edu.cn}}\\ [8pt]
{\it \small Dedicated to Professor Jiongmin Yong on the Occasion of His 70th Birthday}}
\date{}

\begin{document}

\maketitle

\begin{abstract}
	This paper studies general leader-follower linear-quadratic stochastic graphon games with indefinite control weights. The model consists of one leader and a continuum of followers, where the followers interact through graphon aggregate terms and are coupled with the leader through their dynamics and cost functionals. Compared with existing related models, the state equations considered here allow more general diffusion terms, including diffusion coefficients depending on the states, controls, leader variables, and graphon aggregate terms. Using the stochastic maximum principle, we characterize the followers' Nash response and the leader's optimal control through coupled forward-backward stochastic systems with graphon aggregate terms. Under suitable Riccati solvability and convexity conditions, we construct a Stackelberg-Nash equilibrium for the limiting graphon game. We further design decentralized strategies for the associated finite-player network game and prove that they form an approximate Stackelberg-Nash equilibrium, which yields a propagation of chaos result. The framework covers, as special cases, indefinite linear-quadratic stochastic graphon games, Stackelberg mean field games, and stochastic graphon games with random effects.
\end{abstract} 

\noindent{\bf Keywords:}\quad Leader-follower differential game, stochastic graphon game, Stackelberg-Nash equilibrium, graphon-aggregated forward-backward stochastic differential equation

	\vspace{2mm}
	
\noindent{\bf Mathematics Subject Classification:}\quad 91A23, 91A13, 91A43, 93E20

	\tableofcontents
	
\section{Introduction}

{\it Mean-field games} (MFGs) were first proposed by Lasry and Lions \cite{Lasry-Lions-2007} and Huang et al. \cite{Huang-Mallahme-2006}, providing an effective limiting analysis framework for large-scale noncooperative stochastic games. The core idea is to approximate the complex interactions among finitely many players by a mean field. When the players satisfy homogeneity and anonymity conditions, the limiting equilibrium can typically be used in reverse to construct an approximate Nash equilibrium for the finite-player game, and the convergence of the finite system to the mean-field limit is characterized by a propagation of chaos result. For the theory of MFGs, see \cite{Bensoussan-Frehse-Yam-2013,Carmona-Delarue-2018}.

However, in many networked systems, the influence among players is not a globally uniform average, but is determined by the underlying connection structure. When the interactions are described by a graph or a random graph, the homogeneity and anonymity assumptions in classical MFGs are no longer natural. Early works studied MFGs on finite graphs and Erd\H{o}s-R\'enyi random graphs \cite{Gueant-2015,Delarue-2017}. A graphon can be regarded as the limit of a sequence of dense graphs \cite{Lovasz-2012}, and provides a natural tool for describing weighted adjacency structures in large-scale heterogeneous networks. Gao and Caines introduced graphon structures into large-scale network control \cite{Gao-Caines-2017,Gao-Caines-2018,Gao-Caines-2019a,Gao-Caines-2019b,Gao-Caines-2020}; Parise and Ozdaglar proposed static graphon games \cite{Parise-Ozdaglar-2019,Parise-Ozdaglar-2023}; Carmona et al. further established a rigorous mathematical framework for static stochastic graphon games \cite{Carmona-Coopey-Graves-Lauriere-2022}.

In the dynamic game setting, Caines and Huang proposed the theory of {\it graphon mean field games} (GMFGs), also referred to as stochastic graphon games, for the analysis and control of large-scale noncooperative dynamic game systems on networks \cite{Caines-Huang-2018,Caines-Huang-2019,Caines-Huang-2021}. Aurell et al. studied {\it linear-quadratic} (LQ) stochastic graphon games, used the rich Fubini extension to handle measurability issues arising from a continuum of players and essentially pairwise independent noises, established Nash equilibria for LQ stochastic graphon games, and proved the corresponding propagation of chaos result \cite{Aurell-Carmona-Lauriere-2022}. Bayraktar et al. studied graphon mean field systems as well as their stability and laws of large numbers \cite{Bayraktar-Chakraborty-Wu-2023}, and further analyzed propagation of chaos for {\it forward-backward stochastic differential equations} (FBSDEs) with graphon interactions \cite{Bayraktar-Wu-Zhang-2023}. Coppini et al. extended the related convergence theory to nonlinear graphon mean field systems \cite{Coppni-DeCrescenzo-Pham-2025}. In recent years, GMFGs have also continued to develop in directions such as common noise, risk sensitivity, infinite horizon problems, jump processes, reinforcement learning, and applied modeling \cite{Xu-Gou-Huang-Gao-2025,Chen-Huang-2026,FoguenTchuendom-Gao-Caines-Huang-2024,Amini-Cao-Sulem-2026,Plank-Zhang-2025,Tangpi-Zhou-2024,Zhang-Tan-Wnag-Yang-2024,Aurell-Carmona-Dayanikli-Lauriere-2022,Liu-Dayanikli-2025}.

On the other hand, Stackelberg games, also known as leader-follower games, were first proposed by von Stackelberg \cite{Stackelberg-1934} to describe decision-making problems with a hierarchical order: the leader acts first, the followers make optimal responses after observing the leader's strategy, and the leader optimizes its own objective in anticipation of the followers' responses. Yong \cite{Yong-2002} studied general stochastic LQ Stackelberg games, and various Stackelberg MFG models were subsequently developed
\cite{Moon-Basar-2018,Wang-2024,Bensoussan-Chau-Yam-2015,Bensoussan-Chau-Lai-Yam-2017,Yang-Huang-2021,Wang-Zhang-2020,Si-Wu-2021,Si-Shi-2025}. These works reveal the two-level equilibrium mechanism generated by combining a leader-follower hierarchical structure with mean-field interactions: for a given leader control, the follower population forms a Nash equilibrium; the leader then substitutes this Nash response mapping back into its own problem, forming a Stackelberg optimal control. However, existing Stackelberg mean field models are still usually based on global mean interactions, and it is difficult for them to characterize the network heterogeneity induced by graphon weighted adjacency structures.

This paper studies a general leader-follower LQ stochastic graphon game, extending our earlier work \cite{Chen-Shi2026}, which first introduced leader-follower stochastic graphon games. The model consists of one leader and a continuum of followers labeled by $I=[0,1]$, where the followers interact through graphon aggregation and are coupled with the leader through both state equations and cost functionals. Compared with \cite{Chen-Shi2026}, in which the leader's influence on the followers was mainly restricted to the cost functionals, the control weight matrices were positive definite, and no propagation of chaos result was obtained, the present paper considers a more general setting with direct leader-follower state coupling, indefinite control weight matrices, and finite-player approximation. In particular, the diffusion terms are allowed to depend on the states, controls, graphon aggregate terms, and the weighted average of the follower states in a general form.

The contributions of this paper include the following aspects.

(1) By introducing the {\it conditional exact law of large numbers} (CELLN) under a rich Fubini extension, this paper provides a rigorous mathematical formulation for the leader-follower stochastic graphon game with direct state-level coupling. It proves the well-posedness of the controlled system equations under admissible controls and constructs a Stackelberg-Nash equilibrium for the limiting problem.

(2) This paper establishes the connection between the limiting graphon game and finite-player network games. Specifically, when the number of followers is sufficiently large, finite-player strategies can be induced by the limiting Stackelberg-Nash equilibrium, and these strategies are proved to satisfy the corresponding approximate optimality in the finite network game, thereby yielding a propagation of chaos result in the sense of this paper.

(3) The LQ structure considered in this paper is fairly general. In particular, the diffusion coefficients allow a general dependence on the relevant state, control, graphon aggregate, and weighted-average terms, while the cost functionals of both the leader and the followers allow indefinite control weight matrices.

(4) The results of this paper cover multiple existing or naturally degenerate models. When the leader-follower structure degenerates, the model contains indefinite LQ stochastic graphon games; when the graphon is constant and the weights degenerate into a uniform average, the model degenerates into a Stackelberg mean field game; when the leader control enters the follower system as an external random influence, the model can also characterize LQ stochastic graphon games with random effects.

(5) The consistency condition of the follower problem can be reduced to a class of FBSDEs with graphon aggregate terms. By using the eigenvalue decomposition of the graphon operator and the method of continuation, this paper gives sufficient conditions for the existence and uniqueness of a solution to this consistency system. Compared with related results on LQ-GMFGs by Gao et al. \cite{Gao-Tchuendom-Caines-2021} and Xu et al. \cite{Xu-Gou-Huang-Gao-2025}, this paper does not rely on the finite-rank graphon operator assumption and allows a more general structure in the diffusion terms.

The remainder of this paper is organized as follows. Section \ref{Sec:02} introduces preliminaries including graphons, the rich Fubini extension, and the CELLN. Section \ref{Sec:03} gives the mathematical formulation of the problem and proves the existence, uniqueness, and estimates of solutions to the system equations. Section \ref{Sec:04} solves the followers' problem and the leader's problem, respectively, and constructs a Stackelberg-Nash equilibrium for the limiting system. Section \ref{Sec:05} discusses the finite-player game and proves the corresponding propagation of chaos result. Section \ref{Sec:06} gives several special cases. Section \ref{Sec:07} provides a numerical simulation example for {\it unmanned aerial vehicle} (UAV) formation navigation. Section \ref{Sec:08} concludes the paper. 

\section{Preliminaries}\label{Sec:02}

\subsection{Graphon theory}

For the theory of graphons, we refer to the monograph \cite{Lovasz-2012}. In what follows, let $I=[0,1]$, and let $(I,\mathcal{I},\lambda)$ be a probability space. A graphon is a real-valued bounded symmetric measurable function on $I\times I$. The graphons considered below take values in $[0,1]$, and the collection of all such graphons is denoted by $\mathcal{W}_0$. Let $L_{\mathcal{I}}^1(\mathbb{R}^n)$ denote the set of all $\mathbb{R}^n$-valued measurable functions $X$ on $I$ satisfying $\int_I |X^u|\lambda(\mathrm{d}u)<\infty$, let $L_{\mathcal{I}}^2(\mathbb{R}^n)$ denote the set of all $\mathbb{R}^n$-valued measurable functions $X$ on $I$ satisfying $\int_I |X^u|^2\lambda(\mathrm{d}u)<\infty$, and let $L_\mathcal{I}^\infty(\mathbb{R}^n)$ denote the set of all bounded $\mathbb{R}^n$-valued functions on $I$.

\begin{dfn}
	For any $G\in\mathcal{W}_0$, define
	
	(1) the $L^\infty$ norm and the degree of the graphon at point $u$:
	\begin{equation*}
		\|G\|_\infty=\sup_{u,v\in I}G(u,v),\quad \|G(u,\cdot)\|_1=\int_I G(u,v)\lambda(\mathrm{d}v),
	\end{equation*}
	
	(2) the bounded linear operator generated by $G$ from $L^1_{\mathcal{I}}(\mathbb{R}^n)$ to $L_\mathcal{I}^\infty(\mathbb{R}^n)$:
	\begin{equation*}
		GX^u=\int_I G(u,v)X^v\lambda(\mathrm{d}v),\quad u\in I,\quad X\in L^1_{\mathcal{I}}(\mathbb{R}^n).
	\end{equation*}
\end{dfn}

\begin{remark}
	A graphon can be understood as a weighted operator that measures the mutual influence among continuum players.
\end{remark}

The bounded linear operator generated by a graphon, referred to as the graphon operator, has the following properties.

\begin{prp}\label{Prp:2.1}
	For any $G\in\mathcal{W}_0$, the graphon operator $G$ defined above has the following properties:
	
	(1) for any $X\in L_{\mathcal{I}}^1(\mathbb{R}^n)$, we have $GX\in L_{\mathcal{I}}^\infty(\mathbb{R}^n)$, and
	\begin{equation*}
		\sup_{u\in I}|GX^u|\le \|G\|_\infty\int_I |X^u|\lambda(\mathrm{d}u)\le \int_I |X^u|\lambda(\mathrm{d}u),
	\end{equation*}
	
	(2) for any $X\in L_{\mathcal{I}}^\infty(\mathbb{R}^n)$ and any $u\in I$, we have
	\begin{equation*}
		|GX^u|\le \|G(u,\cdot)\|_1\cdot\sup_{u\in I}|X^u|\le \sup_{u\in I}|X^u|,
	\end{equation*}
	
	(3) if the domain of the graphon operator $G$ is restricted to $L_{\mathcal{I}}^2(\mathbb{R}^n)$, then $G$ is a compact self-adjoint operator on $L_{\mathcal{I}}^2(\mathbb{R}^n)$. There exists a sequence of eigenvectors $\{\phi_i\}_{i=1}^{\infty}$ of $G$ that forms an orthonormal basis of $\overline{\mathrm{ran}\;G}$. Let $\{\lambda_i\}_{i=1}^\infty$ be the corresponding eigenvalues. Then the eigenvalue decomposition is given by
	\begin{equation*}
		GX=\sum_{k=1}^\infty \lambda_k\langle X,\phi_k \rangle_I\phi_k,\quad \forall X\in L^2_{\mathcal{I}}(\mathbb{R}^n),
	\end{equation*}
	where
	\begin{equation*}
		\langle X,\phi_k \rangle_I=\int_I X^{u\top}\phi_k^u\lambda(\mathrm{d}u).
	\end{equation*}
\end{prp}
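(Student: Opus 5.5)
Parts (1) and (2) follow from pointwise estimates on the integral defining $GX^u$, and part (3) follows by applying the spectral theorem for compact self-adjoint operators on a Hilbert space.

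For (1), fix $X\in L^1_{\mathcal I}(\mathbb R^n)$ and $u\in I$. By the triangle inequality for Bochner/Lebesgue integrals of vector-valued functions and $0\le G\le \|G\|_\infty\le 1$,
\[
|GX^u|\le \int_I G(u,v)|X^v|\lambda(\mathrm dv)\le \|G\|_\infty\int_I|X^v|\lambda(\mathrm dv)\le \int_I|X^v|\lambda(\mathrm dv).
\]
The right-hand side is independent of $u$, so taking the supremum gives the bound. Measurability of $u\mapsto GX^u$ follows from Fubini--Tonelli, since $(u,v)\mapsto G(u,v)X^v$ is jointly measurable and integrable on $I\times I$. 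Hence $GX\in L^\infty_{\mathcal I}(\mathbb R^n)$.

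For (2), take $X$ bounded. Then
\[
|GX^u|\le \int_I G(u,v)|X^v|\lambda(\mathrm dv)\le \Big(\sup_{v\in I}|X^v|\Big)\int_I G(u,v)\lambda(\mathrm dv)=\|G(u,\cdot)\|_1\sup_{v\in I}|X^v|,
\]
and $\|G(u,\cdot)\|_1\le 1$ because $G\le 1$ and $\lambda(I)=1$.

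For (3), I would first note that $G$ is a Hilbert--Schmidt integral operator on $L^2_{\mathcal I}(\mathbb R^n)\cong L^2(I)^n$, acting componentwise with the scalar kernel $G(u,v)$. Its kernel satisfies $\int\!\!\int_{I\times I} G(u,v)^2\,\mathrm du\,\mathrm dv\le 1<\infty$, so $G$ is bounded with operator norm at most $1$ by Cauchy--Schwarz. Compactness follows because $G$ is Hilbert--Schmidt. One argument: $G(u,v)$ expands in the product basis $e_i(u)e_j(v)$ with square-summable coefficients, so the operators obtained by truncating this expansion have finite rank and converge to $G$ in operator norm. Self-adjointness follows from the symmetry $G(u,v)=G(v,u)$ and Fubini:
\[
\langle GX,Y\rangle_I=\int\!\!\int G(u,v)X^{v\top}Y^u\,\mathrm dv\,\mathrm du=\langle X,GY\rangle_I.
\]

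The spectral theorem for compact self-adjoint operators then provides an orthonormal system of eigenvectors $\{\phi_k\}$ with nonzero real eigenvalues $\lambda_k\to0$. This system is an orthonormal basis of $\overline{\mathrm{ran}\,G}=(\ker G)^\perp$, and
\[
GX=\sum_k\lambda_k\langle X,\phi_k\rangle_I\phi_k .
\]
To prove the expansion, write $X=X_0+\sum_k\langle X,\phi_k\rangle_I\phi_k$ with $X_0\in\ker G$, and apply the bounded operator $G$ term by term; the series converges in $L^2$ because $\sum_k|\langle X,\phi_k\rangle_I|^2<\infty$ and the $\lambda_k$ are bounded.

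The only step needing care is the $\mathbb R^n$-valued setting. There are two ways to handle it. One is to treat $L^2_{\mathcal I}(\mathbb R^n)$ directly as a Hilbert space with the stated inner product and apply the abstract theorem, in which case the $\phi_k$ are $\mathbb R^n$-valued. The other is to decompose the scalar operator and tensor with a basis of $\mathbb R^n$. Either way, the eigenvalues and eigenvectors are those of the scalar kernel, each repeated $n$ times. If the range is finite-dimensional, the sequence is simply finite, which the statement implicitly allows.
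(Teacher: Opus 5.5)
Your proposal is correct and follows the same route as the paper. The paper gets (1)--(2) directly from the definition of the graphon operator and (3) by citing the spectral theorem for compact self-adjoint operators; you supply the details it leaves to references (Hilbert--Schmidt compactness, and self-adjointness from the symmetry of $G$), and these hold even though $L^2_{\mathcal{I}}(\mathbb{R}^n)$ is non-separable for the index space of a rich Fubini extension, because the kernel has only countably many nonzero coefficients in the product basis and the spectral theorem for compact self-adjoint operators does not need separability.
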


\begin{proof}
	The estimates follow from the definition of the graphon operator. The spectral decomposition theorem for graphon operators can be found in \cite{Lovasz-2012,Aurell-Carmona-Dayanikli-Lauriere-2022}, and the spectral decomposition properties of compact self-adjoint operators can be found in \cite{Conway-1990}.
\end{proof}

\subsection{Rich Fubini extension and CELLN}

For studies on the rich Fubini extension, the {\it exact law of large numbers} (ELLN), and the CELLN, see Sun \cite{Sun-2006} and Qiao et al. \cite{Qiao-Sun-Zhang-2016}. These works provide effective mathematical tools for studying games with a continuum of players. We list below the mathematical foundations needed in this paper. The proofs of the propositions are omitted, and the reader may refer to \cite{Aurell-Carmona-Dayanikli-Lauriere-2022,Sun-Zhang-2009,Podczeck-2010,Carmona-Delarue-2018}.

\begin{dfn}
	Let $(\varOmega,\mathcal{F},P)$ be a probability space, and let $\mathcal{C}$ be a countably generated sub-$\sigma$-algebra of $\mathcal{F}$. The $\mathbb{R}^n$-valued random variables $X,Y$ are said to be conditionally independent given $\mathcal{C}$ if, for any Borel subsets $B_1,B_2$ of $\mathbb{R}^n$, the conditional probabilities satisfy
	\begin{equation*}
		P\big( X^{-1}(B_1)\cap Y^{-1}(B_2)\big|\mathcal{C} \big)=P\big(X^{-1}(B_1)\big|\mathcal{C}\big)P\big( Y^{-1}(B_2) \big|\mathcal{C} \big).
	\end{equation*}
\end{dfn}

\begin{dfn}
	Let $(\varOmega,\mathcal{F},P),(I,\mathcal{I},\lambda)$ be two probability spaces, and let $\mathcal{C}$ be a countably generated sub-$\sigma$-algebra of $\mathcal{F}$. A mapping $X:\varOmega\times I\to\mathbb{R}^n$ is said to be essentially pairwise independent if, for $\lambda\text{-}a.e.\;u\in I$ and $\lambda\text{-}a.e.\;v\in I$, the random variables $X^u,X^v$ are independent. A mapping $X:\varOmega\times I\to\mathbb{R}^n$ is said to be essentially pairwise conditionally independent given $\mathcal{C}$ if, for $\lambda\text{-}a.e.\;u\in I$ and $\lambda\text{-}a.e.\;v\in I$, the random variables $X^u,X^v$ are conditionally independent given $\mathcal{C}$.
\end{dfn}

\begin{dfn}
	Let $(\varOmega,\mathcal{F},P),(I,\mathcal{I},\lambda)$ be two probability spaces, and let $(\varOmega\times I,\mathcal{F}\otimes\mathcal{I},P\otimes\lambda)$ be the usual product space. Suppose that the probability space $(\varOmega\times I,\mathcal{W},Q)$ is an extension of $(\varOmega\times I,\mathcal{F}\otimes\mathcal{I},P\otimes\lambda)$. If the classical Fubini theorem holds for any real-valued integrable function $f(\omega,u)$ on $(\varOmega\times I,\mathcal{W},Q)$, then $(\varOmega\times I,\mathcal{W},Q)$ is called a Fubini extension of $(\varOmega\times I,\mathcal{F}\otimes\mathcal{I},P\otimes\lambda)$.
\end{dfn}

\begin{prp}\label{prp03}
	Let $I=[0,1]$, let $\mathcal{B}_I$ be the Borel $\sigma$-algebra on $[0,1]$, let $\lambda_I$ be the Lebesgue measure on $[0,1]$, and let $E$ be a Polish space. Then there exist an extension $(I,\mathcal{I},\lambda)$ of $(I,\mathcal{B}_I,\lambda_I)$, a probability space $(\varOmega,\mathcal{F},P)$, and a Fubini extension $(\varOmega\times I,\mathcal{F}\boxtimes\mathcal{I},P\boxtimes\lambda)$ of $(\varOmega\times I,\mathcal{F}\otimes\mathcal{I},P\otimes\lambda)$ such that, for any measurable mapping $\varphi:(I,\mathcal{I},\lambda)\to\mathcal{P}(E)$, there exists an $\mathcal{F}\boxtimes\mathcal{I}$-measurable process $f:\varOmega\times I\to E$ such that the family of random variables $\{f^u\}_{u\in I}$ is essentially pairwise independent, and for any $u\in I$, $\mathcal{L}(f^u)=\varphi(u)$.
\end{prp}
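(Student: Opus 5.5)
The plan follows the Sun--Podczeck construction of rich Fubini extensions, and only sketches it. The statement has two parts: build a Fubini extension that carries many independent processes, and realize any measurable law assignment $u\mapsto\varphi(u)$ on it. I would do the first part for a single universal distribution and then transfer to arbitrary $\varphi$ by measurable push-forward.

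\textbf{Step 1 (universal independent uniforms).} I would construct an extension $(I,\mathcal{I},\lambda)$ of Lebesgue measure, a probability space $(\varOmega,\mathcal{F},P)$, and a Fubini extension $(\varOmega\times I,\mathcal{F}\boxtimes\mathcal{I},P\boxtimes\lambda)$. The goal is an $\mathcal{F}\boxtimes\mathcal{I}$-measurable process $g:\varOmega\times I\to[0,1]^2$ that is essentially pairwise independent, with each $g^u$ uniform on $[0,1]^2$.

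I would follow Podczeck's route. Take $\varOmega=[0,1]^{I}$ with the product of uniform measures, and let $g^u(\omega)=\omega(u)$. This is exactly pairwise independent, but it is not jointly measurable for $\mathcal{F}\otimes\mathcal{B}_I$. Using Maharam-type arguments, one then finds $\lambda$ extending $\lambda_I$ with a large Maharam type, together with a measure on $\mathcal{F}\boxtimes\mathcal{I}$. The Fubini property is obtained as in Sun's ``rich product'' theorem. This is the principal obstacle: the existence of a rich Fubini extension requires nonseparable measure theory (saturated or Loeb-type spaces). I would cite Sun \cite{Sun-2006} and Podczeck \cite{Podczeck-2010} rather than reprove it.

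\textbf{Step 2 (transfer to a general Polish $E$).} Since $E$ is Polish, there is a Borel isomorphism between $E$ and a Borel subset of $[0,1]$. I would first build a jointly measurable randomizing kernel $T:I\times[0,1]\to E$ with $T(u,\cdot)_\#\mathrm{Leb}=\varphi(u)$, taking for instance the quantile map of the image of $\varphi(u)$ in $[0,1]$ composed with the inverse isomorphism. Measurability of $u\mapsto\varphi(u)$ (weak topology on $\mathcal{P}(E)$) makes the distribution function $(u,x)\mapsto\varphi(u)((-\infty,x])$ jointly measurable, and hence makes the quantile map jointly measurable in $(u,s)$.

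\textbf{Step 3 (conclusion).} Set $f(\omega,u)=T(u,g^u_1(\omega))$. The map $(\omega,u)\mapsto(u,g^u_1(\omega))$ is $\mathcal{F}\boxtimes\mathcal{I}$-measurable, because $\mathcal{F}\boxtimes\mathcal{I}\supseteq\mathcal{F}\otimes\mathcal{I}$ and $g$ is jointly measurable. Composing with the $\mathcal{I}\otimes\mathcal{B}$-measurable $T$ then gives measurability of $f$. Each $f^u$ is a deterministic Borel function of $g^u_1$, so essential pairwise independence is inherited from $g$. Finally, $\mathcal{L}(f^u)=T(u,\cdot)_\#\mathrm{Leb}=\varphi(u)$ holds for every $u$ by construction of $T$.
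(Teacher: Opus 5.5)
Your proposal is correct and follows essentially the paper's route. The paper gives no argument and simply refers to Sun, Sun--Zhang, Podczeck and Aurell et al. Your Steps 2--3 (a jointly measurable quantile kernel $T$ composed with one essentially pairwise independent uniform process, using $\mathcal{F}\otimes\mathcal{I}\subseteq\mathcal{F}\boxtimes\mathcal{I}$) are the standard reduction from a single uniform process to an arbitrary measurable $\varphi$.

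You should discard the heuristic in Step 1, because the coordinate process on the product space $[0,1]^I$ cannot be made measurable in a Fubini extension over any extension of $\lambda_I$. A full-measure set of $\omega$'s depends on only countably many coordinates, so the Fubini property would force $\mathcal{I}$ to contain every subset of $[0,1]$, which is impossible under CH. The existence claim in Step 1 must therefore rest on the cited theorems alone.
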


The following proposition gives the existence of a family of essentially pairwise independent Brownian motions.

\begin{prp}
	Let $(\varOmega\times I,\mathcal{F}\boxtimes\mathcal{I},P\boxtimes\lambda)$ be the rich Fubini extension in Proposition \ref{prp03}. Then there exists $W=(W^u)_{u\in I}$ such that $W$ is essentially pairwise independent, and for any $u\in I$, $W^u$ is a $1$-dimensional standard Brownian motion on $(\varOmega,\mathcal{F},P)$.
\end{prp}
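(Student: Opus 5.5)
We apply Proposition \ref{prp03} with the Polish space $E=C([0,T];\mathbb{R})$, equipped with the supremum norm, and with the constant mapping $\varphi(u)\equiv\mu_W$. Here $\mu_W\in\mathcal{P}(E)$ denotes the Wiener measure, that is, the law of a one-dimensional standard Brownian motion on $[0,T]$. One could also take $C([0,\infty);\mathbb{R})$ with the topology of locally uniform convergence, which is still Polish.

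A constant map is trivially measurable from $(I,\mathcal{I},\lambda)$ to $\mathcal{P}(E)$. Proposition \ref{prp03} therefore yields an $\mathcal{F}\boxtimes\mathcal{I}$-measurable process $f:\varOmega\times I\to E$ with two properties: the family $\{f^u\}_{u\in I}$ is essentially pairwise independent, and $\mathcal{L}(f^u)=\mu_W$ for every $u\in I$. We then define $W^u_t(\omega)=f^u(\omega)(t)$.

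It remains to check three points.

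(i) For each fixed $u$, the process $W^u$ is a standard Brownian motion on $(\varOmega,\mathcal{F},P)$. The canonical process on $(E,\mathcal{B}(E),\mu_W)$ is a Brownian motion. The law of $W^u$ as a path-valued random variable is $\mu_W$, so all finite-dimensional distributions coincide with those of Brownian motion, and every path of $W^u$ is continuous. Hence $W^u$ is a standard Brownian motion, for instance with respect to its natural filtration.

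(ii) Essential pairwise independence of $W=(W^u)_{u\in I}$ is inherited directly from $f$. For $\lambda$-a.e.\ $u$ and $\lambda$-a.e.\ $v$, the $E$-valued random variables $f^u$ and $f^v$ are independent. The coordinate maps $\pi_t:E\to\mathbb{R}$ are continuous, so the $\sigma$-algebras generated by the whole processes $W^u$ and $W^v$ are exactly $\sigma(f^u)$ and $\sigma(f^v)$, because $\mathcal{B}(E)$ is generated by the cylinder sets. These $\sigma$-algebras are therefore independent.

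(iii) For each $t$, the map $(\omega,u)\mapsto W^u_t(\omega)=\pi_t\circ f$ is $\mathcal{F}\boxtimes\mathcal{I}$-measurable, since it is a composition with the continuous map $\pi_t$. This joint measurability is what makes the family usable in the Fubini extension.

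The only point needing care is the identification of Brownian motion with its path law, together with the fact that $\mathcal{B}(E)$ equals the cylinder $\sigma$-algebra. This is standard for $C([0,T];\mathbb{R})$, so no step is a real obstacle.
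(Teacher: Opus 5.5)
Your proposal is correct and matches the intended argument. The paper omits this proof and defers to the cited literature on rich Fubini extensions, where the family is obtained exactly as you do: apply the richness property of Proposition~\ref{prp03} with $E=C([0,T];\mathbb{R})$ and the constant map $\varphi\equiv\mu_W$ (Wiener measure), read off the Brownian property of each $W^u$ from its path law, and deduce essential pairwise independence from $\sigma(W^u)=\sigma(f^u)$.
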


The following result is called the CELLN; see \cite{Qiao-Sun-Zhang-2016}.

\begin{thm}\label{Thm:2.1}
	Let $X$ be a real-valued integrable function on $(\varOmega\times I,\mathcal{F}\boxtimes\mathcal{I},P\boxtimes\lambda)$, and let $\mathcal{C}$ be a countably generated sub-$\sigma$-algebra of $\mathcal{F}$. If $X$ is essentially pairwise conditionally independent given $\mathcal{C}$, then
	\begin{equation*}
		\int_I X^u\lambda(\mathrm{d}u)=\int_I \mathbb{E}[X^u|\mathcal{C}]\lambda(\mathrm{d}u),\quad P\text{-}a.s..
	\end{equation*}
\end{thm}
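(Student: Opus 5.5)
The plan is to reduce the CELLN to the exact law of large numbers for essentially pairwise uncorrelated processes on a Fubini extension, applied conditionally on each atom of a countable generating partition. To handle a general countably generated $\mathcal{C}$ we then use a martingale or approximation argument.

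\textbf{Step 1 (unconditional ELLN).} Assume first that $X$ is square integrable and essentially pairwise uncorrelated. Set $\bar X^u=X^u-\mathbb{E}[X^u]$ and consider $S(\omega)=\int_I \bar X^u(\omega)\lambda(\mathrm{d}u)$. By the Fubini property of $\mathcal{F}\boxtimes\mathcal{I}$ we can compute
\begin{equation*}
\mathbb{E}[S^2]=\int_I\int_I \mathbb{E}[\bar X^u\bar X^v]\lambda(\mathrm{d}u)\lambda(\mathrm{d}v).
\end{equation*}
The integrand vanishes for $\lambda\otimes\lambda$-a.e. $(u,v)$, so $S=0$ a.s. Justifying this double integral requires the Fubini property on the doubled extension $(\varOmega\times I\times I)$. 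This is the standard argument of Sun \cite{Sun-2006}. To pass to merely integrable $X$, we truncate $X$ at level $N$: truncation preserves pairwise independence, and dominated convergence in $L^1(P\boxtimes\lambda)$ handles the limit.

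\textbf{Step 2 (conditioning).} First suppose $\mathcal{C}$ is generated by a countable partition $\{C_k\}$ with $P(C_k)>0$. For each $k$, the measure $P_k=P(\cdot\,|C_k)$ still gives a Fubini extension $P_k\boxtimes\lambda$, since its density relative to $P$ depends only on $\omega$. Conditional independence given $\mathcal{C}$ is exactly independence under each $P_k$. Applying Step 1 under $P_k$ gives $\int_I X^u\,\mathrm{d}\lambda=\int_I\mathbb{E}_{P_k}[X^u]\,\mathrm{d}\lambda$ $P_k$-a.s. On $C_k$ we have $\mathbb{E}_{P_k}[X^u]=\mathbb{E}[X^u|\mathcal{C}]$, so gluing over $k$ yields the claim in this case.

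\textbf{Step 3 (general countably generated $\mathcal{C}$).} Let $\mathcal{C}_n$ be the finite $\sigma$-algebras generated by the first $n$ generators, so that $\mathcal{C}_n\uparrow\mathcal{C}$. The difficulty is that conditional independence given $\mathcal{C}$ does not imply it given $\mathcal{C}_n$, so Step 2 cannot be applied to $\mathcal{C}_n$ directly. This is the main obstacle. My approach is to work directly with regular conditional probabilities $P^\omega$ given $\mathcal{C}$, which exist because $\mathcal{C}$ is countably generated and we may take the underlying spaces to be suitably standard or Polish-valued. We then repeat the variance computation of Step 1 under $P^\omega$. The key point to verify is that the Fubini extension property persists under $P^\omega\boxtimes\lambda$ for $P$-a.e. $\omega$. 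As a fallback, we can avoid disintegration entirely: compute $\mathbb{E}\big[(\int_I (X^u-\mathbb{E}[X^u|\mathcal{C}])\,\mathrm{d}\lambda)^2\big]$ for bounded $X$ by Fubini. Since the integrand $\mathbb{E}[(X^u-\mathbb{E}[X^u|\mathcal{C}])(X^v-\mathbb{E}[X^v|\mathcal{C}])]=\mathbb{E}\big[\mathrm{Cov}(X^u,X^v|\mathcal{C})\big]$ vanishes for a.e. $(u,v)$ by conditional independence, the whole expectation is $0$. This route needs only measurability of $(\omega,u)\mapsto\mathbb{E}[X^u|\mathcal{C}](\omega)$ in $\mathcal{F}\boxtimes\mathcal{I}$, which we obtain as the conditional expectation of $X$ with respect to $\mathcal{C}\boxtimes\mathcal{I}$. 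Finally, truncation extends the result from bounded to integrable $X$, as in Step 1.
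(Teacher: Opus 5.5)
Your fallback argument at the end of Step~3 is a correct proof. It is genuinely different from the paper, which gives no argument of its own and only cites Corollary~2 and Lemma~3 of Qiao, Sun and Zhang. Your route is a direct second-moment computation:
\begin{itemize}
\item fix a jointly measurable version $Y$ of $u\mapsto\mathbb{E}[X^u|\mathcal{C}]$;
\item set $D=X-Y$ and $S=\int_I D^u\lambda(\mathrm{d}u)$;
\item show $\mathbb{E}[S^2]=0$ for bounded $X$, then truncate.
\end{itemize}
What this buys is transparency about the hypotheses. It uses only the Fubini property of $P\boxtimes\lambda$, not richness. It uses countable generation of $\mathcal{C}$ solely to produce $Y$. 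For square-integrable $X$ it needs only the product rule $\mathbb{E}[X^uX^v|\mathcal{C}]=\mathbb{E}[X^u|\mathcal{C}]\,\mathbb{E}[X^v|\mathcal{C}]$; independence is used only to make truncation legitimate. It also makes explicit that the right-hand side of the theorem is meaningful only once such a $Y$ is fixed. The citation keeps the paper short but leaves these dependencies implicit.

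Once the fallback is in place, Steps~1 and~2 are special cases ($\mathcal{C}$ trivial or atomic) and can be dropped. The regular-conditional-probability route should be dropped as well. The sample space of a rich Fubini extension is never standard: an $\mathcal{F}\boxtimes\mathcal{I}$-measurable, essentially pairwise independent, nondegenerate process forces $L^2(\varOmega,\mathcal{F},P)$ to be nonseparable. So neither the existence of $P^\omega$ on $\mathcal{F}$ nor the Fubini property of $P^\omega\boxtimes\lambda$ comes for free.

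Two points in the fallback need to be written out.

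\textbf{The measurable version $Y$.} Your $\mathcal{C}\boxtimes\mathcal{I}$ is not defined, but $Y=\mathbb{E}^\boxtimes[X\,|\,\mathcal{C}\otimes\mathcal{I}]$ works. For $A\in\mathcal{C}$ and $B\in\mathcal{I}$, the Fubini property gives $\int_B\mathbb{E}[\mathbf{1}_AY^u]\lambda(\mathrm{d}u)=\int_B\mathbb{E}[\mathbf{1}_AX^u]\lambda(\mathrm{d}u)$. Hence $\mathbb{E}[\mathbf{1}_AY^u]=\mathbb{E}[\mathbf{1}_AX^u]$ outside a $\lambda$-null set that depends on $A$. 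Because $\mathcal{C}$ has a countable generating algebra, you get a single null set for all its members. A monotone class argument then gives $Y^u=\mathbb{E}[X^u|\mathcal{C}]$ $P$-a.s.\ for $\lambda$-a.e.\ $u$. This is exactly where countable generation enters; as written, your fallback never uses that hypothesis.

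\textbf{The double integral.} No Fubini property on $\varOmega\times I\times I$ is needed, in Step~1 or here. Write $\mathbb{E}[S^2]=\int_I\mathbb{E}[SD^v]\lambda(\mathrm{d}v)$ and $\mathbb{E}[SD^v]=\int_I\mathbb{E}[D^uD^v]\lambda(\mathrm{d}u)$. Each equality is a single application of the Fubini property to an $\mathcal{F}\boxtimes\mathcal{I}$-measurable function, namely $D$ times the $\mathcal{F}$-measurable $S$ or $D^v$. The resulting iterated integral vanishes under the ``for a.e.\ $v$, for a.e.\ $u$'' form of the hypothesis, since $\mathbb{E}[D^uD^v|\mathcal{C}]=0$ for such pairs. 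The passage to integrable $X$ then uses that $\mathbb{E}^\boxtimes[\,\cdot\,|\,\mathcal{C}\otimes\mathcal{I}]$ is a contraction on $L^1(P\boxtimes\lambda)$.
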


\begin{proof}
	The proof of the theorem can be found in Corollary 2 and Lemma 3 of \cite{Qiao-Sun-Zhang-2016}.
\end{proof}

\section{Problem formulation}\label{Sec:03}

\subsection{Notation}

We use $A^\top$ to denote the transpose of a matrix $A$, and use $\mathcal{S}^n$ to denote the set of all $n$-dimensional real symmetric matrices. For an $\mathcal{S}^n$-valued matrix $Q$ and a constant $K\ge 0$, if $x^\top Qx\ge K|x|^2$ for any $x\in\mathbb{R}^n$, we write $Q\ge K$. For an $\mathbb{S}^n$-valued process $Q=(Q_t)_{t\in[0,T]}$ and a constant $K\ge 0$, if $x^\top Q_tx\ge K|x|^2$ for any $t\in[0,T]$ and $x\in\mathbb{R}^n$, we write $Q\gg K$. For $Q\in\mathcal{S}^n$ and an $n$-dimensional vector $x$, define $\|x\|_{Q}=x^\top Qx$.

Let $(\varOmega\times I,\mathcal{F}\boxtimes\mathcal{I},P\boxtimes\lambda)$ be the rich Fubini extension constructed in Proposition \ref{prp03}, let $(\varOmega,\mathcal{F},P)$ be the sample space, and let $(I,\mathcal{I},\lambda)$ be the follower index space. We use $\mathbb{E},\mathbb{E}^\boxtimes$ to denote the expectations on $(\varOmega,\mathcal{F},P)$ and $(\varOmega\times I,\mathcal{F}\boxtimes\mathcal{I},P\boxtimes\lambda)$, respectively. Let $W^f=(W^{f,u})_{u\in I}$ be essentially pairwise independent Brownian motions, and let $W^l$ be a Brownian motion independent of $W^{f}$.

Define the filtrations $\mathcal{F}^{f,u},\;u\in I$, and $\mathcal{F}^l$ as the completed natural filtrations generated by the Brownian motions $W^{f,u},W^l$, namely,
\begin{equation*}
	\mathcal{F}_t^{f,u}=\sigma(W_s^{f,u},0\le s\le t)\lor\mathcal{N},\quad u\in I,\quad \mathcal{F}_t^l=\sigma(W_s^l,0\le s\le t)\lor\mathcal{N},
\end{equation*}
where $\mathcal{N}$ is the collection of all $P$-null sets.

Define $L_{\boxtimes^l}^2(0,T;\mathbb{R}^n)$ as the set of all $\mathbb{R}^n$-valued $\mathcal{F}\boxtimes\mathcal{I}$-measurable processes $X$ on $[0,T]$ such that, for any $u\in I$, $X^u$ is an $\mathcal{F}^{f,u}\lor\mathcal{F}^l$-adapted process and $\mathbb{E}^\boxtimes\big[ \int_0^T|X_t|^2\mathrm{d}t \big]<\infty$; $L_{\boxtimes^l}^{\infty,2}(0,T;\mathbb{R}^n)$ as the set of all processes $X$ in $L_{\boxtimes^l}^2(0,T;\mathbb{R}^n)$ satisfying $\sup\limits_{u\in I}\mathbb{E}\big[\int_0^T|X_t^u|^2\mathrm{d}t\big]$ $<\infty$; $L_{\boxtimes^l}^2(\varOmega\times I;C(0,T;\mathbb{R}^n))$ as the set of all processes $X$ in $L_{\boxtimes^l}^2(0,T;\mathbb{R}^n)$ satisfying $\mathbb{E}^\boxtimes\big[\sup\limits_{0\le t\le T}|X_t|^2 \big]<\infty$; $L_{\boxtimes^l}^{\infty,2}(\varOmega\times I;C(0,T;\mathbb{R}^n))$ as the set of all processes $X$ in $L_{\boxtimes^l}^{\infty,2}(0,T;\mathbb{R}^n)$ satisfying $\sup\limits_{u\in I}\mathbb{E}\big[\sup\limits_{0\le t\le T}|X_t^u|^2 \big]<\infty$; $\bar{L}_{\boxtimes^l}^{2}(0,T;\mathbb{R}^n)$ as the set of all processes $X$ in $L_{\boxtimes^l}^2(0,T;\mathbb{R}^n)$ such that, for any $u\in I$, $X^u$ is an $\mathcal{F}^l$-adapted process; $\bar{L}_{\boxtimes^l}^2(\varOmega\times I;C(0,T;\mathbb{R}^n))$ as the set of all processes $X$ in $L_{\boxtimes^l}^2(\varOmega\times I;C(0,T;\mathbb{R}^n))$ such that, for any $u\in I$, $X^u$ is an $\mathcal{F}^l$-adapted process; $L_{\mathcal{F}^l}^2(0,T;\mathbb{R}^n)$ as the set of all $\mathbb{R}^n$-valued $\mathcal{F}^l$-measurable processes $X$ on $[0,T]$ satisfying $\mathbb{E}\big[\int_0^T |X_t|^2\mathrm{d}t \big]<\infty$; $L_{\mathcal{F}^l}^2(\varOmega;C(0,T;\mathbb{R}^n))$ as the set of all processes $X$ in $L_{\mathcal{F}^l}^2(0,T;\mathbb{R}^n)$ satisfying $\mathbb{E}\big[\sup\limits_{0\le t\le T}|X_t|^2\big]<\infty$; $L_{\mathcal{G}}^2(\mathbb{R}^n)$ as the set of all $\mathcal{G}$-measurable $\mathbb{R}^n$-valued random variables; $L_\boxtimes^2(\mathbb{R}^n)$ as the set of all $\mathcal{F}\boxtimes\mathcal{I}$-measurable $\mathbb{R}^n$-valued square-integrable random variables; $L_{\boxtimes^l_T}^2(\mathbb{R}^n)$ as the set of all processes $X$ in $L_\boxtimes^2(\mathbb{R}^n)$ such that, for any $u\in I$, $X^u\in L_{\mathcal{F}_T\lor\mathcal{F}_T^l}^2(\mathbb{R}^n)$; and $L_{\mathcal{F}\boxtimes\mathcal{I}}^{\infty,2}(\mathbb{R}^n)$ as the set of all random variables on $\mathcal{F}\boxtimes\mathcal{I}$ satisfying $\sup\limits_{u\in I}\mathbb{E}[|X^u|^2]<\infty$.

In the estimates for the equations, for notational brevity, write $\mathbb{H}_{\boxtimes^l}^2(\mathbb{R}^n)=L_{\boxtimes^l}^2(0,T;\mathbb{R}^n)$, $\mathbb{H}_{\boxtimes^l}^{\infty,2}(\mathbb{R}^n)=L_{\boxtimes^l}^{\infty,2}(0,T;\mathbb{R}^n)$, $\mathbb{S}_{\boxtimes^l}^2(\mathbb{R}^n)=L_{\boxtimes^l}^{2}(\varOmega\times I;C(0,T;\mathbb{R}^n))$, $\mathbb{S}_{\boxtimes^l}^{\infty,2}(\mathbb{R}^n)=L_{\boxtimes^l}^{\infty,2}(\varOmega\times I;C(0,T;\mathbb{R}^n))$, $\mathbb{H}_{\mathcal{F}^l}^2(\mathbb{R}^n)=L_{\mathcal{F}^l}^2(0,T;\mathbb{R}^n)$, $\mathbb{S}_{\mathcal{F}^l}^2(\mathbb{R}^n)=L_{\mathcal{F}^l}^2(\varOmega;C(0,T;\mathbb{R}^n))$.

\subsection{Problem formulation}

In this paper, we use the superscript $f$ to denote the states and controls of the followers, and use the superscript $l$ to denote the state and control of the leader. Assume that the state of each follower is $n_1$-dimensional, the state of the leader is $n_2$-dimensional, the control of each follower is $m_1$-dimensional, the control of the leader is $m_2$-dimensional, and the Brownian motions are $1$-dimensional. The system equations of the followers and the leader are

\begin{equation}\label{state}
	\begin{cases}
		\mathrm{d}X_t^{f,u}=b_t^{f,u}(X_t^{f,u},\alpha_t^{f,u},GX_t^{f,u},X_t^l,\alpha_t^l)\mathrm{d}t\\
		\qquad\qquad+\sigma_t^{f,u}(X_t^{f,u},\alpha_t^{f,u},GX_t^{f,u},X_t^l,\alpha_t^l)\mathrm{d}W_t^{f,u},\\
		\mathrm{d}X_t^l=b_t^l(X_t^l,\alpha_t^l,M_t^f)\mathrm{d}t+\sigma_t^l(X_t^l,\alpha_t^l,M_t^f)\mathrm{d}W_t^l,\\
		X_0^{f,u}=x_0^{f,u},\quad u\in I,\quad X_0^l=x_0^l,
	\end{cases}
\end{equation}
where
\begin{equation*}
	GX_t^{f,u}=\int_I G(u,v)X_t^{f,v}\lambda(\mathrm{d}v),\quad M_t^f=\int_I w^u X_t^{f,u}\lambda(\mathrm{d}u),
\end{equation*}
$w\in L_\mathcal{I}^\infty(\mathbb{R})$, for any $u\in I$, $w^u\ge 0$, and $\int_I w^u\lambda(\mathrm{d}u)=1$.

The admissible control sets of the followers and the leader are
\begin{equation*}
	\mathcal{U}^{f}=L^2_{\boxtimes^l}(0,T;\mathbb{R}^{m_1}),\quad \mathcal{U}^l=L_{\mathcal{F}^l}^2(0,T;\mathbb{R}^{m_2}),
\end{equation*}
and, for the followers' problem, we also consider a stronger class of admissible controls
\begin{equation*}
	\mathcal{U}_s^f=L_{\boxtimes^l}^{\infty,2}(0,T;\mathbb{R}^{m_1}).
\end{equation*}
Under admissible controls $\alpha^f\in\mathcal{U}^f$, $\alpha^l\in\mathcal{U}^l$, the cost functional of follower $u$ is
\begin{multline}\label{cost followers}
	J^{f,u}(\alpha^{f,u};\alpha^{f,-u},\alpha^l)\\
	=\mathbb{E}\biggl[ \int_0^T h_t^{f,u}(X_t^{f,u},\alpha_t^{f,u},GX_t^{f,u},X_t^l,\alpha_t^l)\mathrm{d}t+g^{f,u}(X_T^{f,u},GX_T^{f,u},X_T^l) \biggr],
\end{multline}
and the cost functional of the leader is
\begin{equation}\label{cost leader}
	J^l(\alpha^l;\alpha^f)=\mathbb{E}\biggl[ \int_0^T h_t^l(X_t^l,\alpha_t^l,M_t^f)\mathrm{d}t+g^l(X_T^l,M_T^f) \biggr].
\end{equation}

Each follower and the leader seek to optimize their own cost functionals. We consider a hierarchical decision structure: the leader first gives the admissible control set $\mathcal{U}^l$; for each control $\alpha^l\in\mathcal{U}^l$ of the leader, the followers compete and reach a Nash equilibrium $\hat{\alpha}^f(\alpha^l)\in\mathcal{U}^f$; the leader can anticipate the Nash equilibrium that the followers will reach, chooses $\hat{\alpha}^l\in\mathcal{U}^l$, and optimizes the cost functional $J^l(\alpha^l,\hat{\alpha}^f(\alpha^l))$.

\begin{Ass}\label{Ass:05}
	The coefficients
	\begin{gather*}
		b^f(x^f,\alpha^f,Gx^f,x^l,\alpha^l):[0,T]\times I\times\mathbb{R}^{n_1}\times\mathbb{R}^{m_1}\times\mathbb{R}^{n_1}\times\mathbb{R}^{n_2}\times\mathbb{R}^{m_2}\to\mathbb{R}^{n_1},\\
		\sigma^f(x^f,\alpha^f,Gx^f,x^l,\alpha^l):[0,T]\times I\times\mathbb{R}^{n_1}\times\mathbb{R}^{m_1}\times\mathbb{R}^{n_1}\times\mathbb{R}^{n_2}\times\mathbb{R}^{m_2}\to\mathbb{R}^{n_1},\\
		b^l(x^l,\alpha^l,m^f):[0,T]\times\mathbb{R}^{n_2}\times\mathbb{R}^{m_2}\times\mathbb{R}^{n_1}\to\mathbb{R}^{n_2},\\
		\sigma^l(x^l,\alpha^l,m^f):[0,T]\times\mathbb{R}^{n_2}\times\mathbb{R}^{m_2}\times\mathbb{R}^{n_1}\to\mathbb{R}^{n_2},\\
		h^f(x^f,\alpha^f,Gx^f,x^l,\alpha^l):[0,T]\times I\times\mathbb{R}^{n_1}\times\mathbb{R}^{m_1}\times\mathbb{R}^{n_1}\times\mathbb{R}^{n_2}\times\mathbb{R}^{m_2}\to\mathbb{R},\\
		g^f(x^f,Gx^f,x^l):I\times\mathbb{R}^{n_1}\times\mathbb{R}^{n_1}\times\mathbb{R}^{n_2}\to\mathbb{R},\\
		h^l(x^l,\alpha^l,m^f):[0,T]\times\mathbb{R}^{n_2}\times\mathbb{R}^{m_2}\times\mathbb{R}^{n_1}\to\mathbb{R},\\
		g^l(x^l,m^f):\mathbb{R}^{n_2}\times\mathbb{R}^{n_1}\to\mathbb{R},
	\end{gather*}
	are jointly measurable.
\end{Ass}

\begin{Ass}[The LQ case]\label{Ass:LQ}
	Assume that there exist deterministic coefficient matrices satisfying
	\begin{gather*}
		b_t^{f,u}(x^f,\alpha^f,Gx^f,x^l,\alpha^l)=b_t^{f,u,0}+b_t^{f,u,1}x^f+b_t^{f,u,2}\alpha^f+b_t^{f,u,3}Gx^f+b_t^{f,u,4}x^l+b_t^{f,u,5}\alpha^l,\\
		\sigma_t^{f,u}(x^f,\alpha^f,Gx^f,x^l,\alpha^l)=\sigma_t^{f,u,0}+\sigma_t^{f,u,1}x^f+\sigma_t^{f,u,2}\alpha^f+\sigma_t^{f,u,3}Gx^f+\sigma_t^{f,u,4}x^l+\sigma_t^{f,u,5}\alpha^l,\\
		b_t^l(x^l,\alpha^l,m^f)=b_t^{l,0}+b_t^{l,1}x^l+b_t^{l,2}\alpha^l+b_t^{l,3}m^f,\\
		\sigma_t^l(x^l,\alpha^l,m^f)=\sigma_t^{l,0}+\sigma_t^{l,1}x^l+\sigma_t^{l,2}\alpha^l+\sigma_t^{l,3}m^f,\\
		h_t^{f,u}(x^f,\alpha^f,Gx^f,x^l,\alpha^l)=1/2\|x^f-h_t^{f,u,1}Gx^f-h_t^{f,u,2}x^l\|_{Q^{f,u}_t}+1/2\|\alpha^f-h_t^{f,u,3}\alpha^l\|_{R^{f,u}_t},\\
		g^{f,u}(x^f,Gx^f,x^l)=1/2\|x^f-g^{f,u,1}Gx^f-g^{f,u,2}x^l\|_{G^{f,u}},\\
		h_t^l(x^l,\alpha^l,m^f)=1/2\|x^l-h_t^{l,1}m^f\|_{Q_t^l}+1/2\|\alpha^l\|_{R_t^{l}},\\
		g^l(x^l,m^f)=1/2\|x^l-g^{l,1}m^f\|_{G^l},
	\end{gather*}
	where all coefficients are $\mathcal{I}$-measurable with respect to $u$ and uniformly bounded with respect to $(t,u)$.
\end{Ass}

We will solve the Stackelberg-Nash equilibrium of the leader and the followers in Section \ref{Sec:04}. The remainder of this section explains the well-posedness of the problem formulation.

\subsection{Existence and uniqueness of system solutions}

This subsection proves that, for any admissible controls, the system equations of the followers and the leader admit a unique solution, and that the graphon aggregate and weighted mean-field term of the follower states depend only on the Brownian motion driving the leader's equation.

The following result extends the domain of the graphon operator to $L_{\boxtimes}^2(\mathbb{R}^n)$.

\begin{prp}\label{Prp:3.1}
	Let $G\in\mathcal{W}_0$, and define the operator generated by the graphon as
	\begin{equation*}
		GX^u=\int_I G(u,v)X^v\lambda(\mathrm{d}v),\quad X\in L_{\boxtimes}^2(\mathbb{R}^n),
	\end{equation*}
	then $G$ is a bounded linear operator from $L_\boxtimes^2(\mathbb{R}^n)$ to $L_\boxtimes^{\infty,2}(\mathbb{R}^n)$, and the estimate
	\begin{equation}\label{eq03}
		\sup_{u\in I}\mathbb{E}\biggl[\int_0^T |GX_t^u|^2\mathrm{d}t\biggr]\le \|G\|_\infty^2\cdot\mathbb{E}^\boxtimes\biggl[ \int_0^T |X_t|^2\mathrm{d}t \biggr]<\infty
	\end{equation}
	holds.
\end{prp}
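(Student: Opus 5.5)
The plan is to treat $X$ as a process in $\mathbb{H}_{\boxtimes^l}^2(\mathbb{R}^n)$, which is what the estimate \eqref{eq03} refers to. The goal is to show that for each fixed $u\in I$ the map $(\omega,t)\mapsto GX_t^u(\omega)$ is well defined and measurable, and that its $L^2(\varOmega\times[0,T])$ norm is bounded uniformly in $u$ by $\|G\|_\infty\,\|X\|_{\mathbb{H}^2_{\boxtimes^l}}$. Linearity of $X\mapsto GX$ is immediate from linearity of the integral, so boundedness is the only real content.

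\textbf{Step 1 (well-definedness and measurability).} Since $X$ is square integrable on $(\varOmega\times I\times[0,T],\mathcal{F}\boxtimes\mathcal{I}\otimes\mathcal{B},P\boxtimes\lambda\otimes\mathrm{d}t)$, the Fubini property of the rich Fubini extension applies. It shows that for $P\otimes\mathrm{d}t$-a.e.\ $(\omega,t)$ the section $v\mapsto X_t^v(\omega)$ is $\mathcal{I}$-measurable and lies in $L^1_{\mathcal{I}}$, since it is even in $L^2_{\mathcal{I}}$ because $\lambda$ is a probability measure. For fixed $u$, the function $v\mapsto G(u,v)$ is bounded and $\mathcal{I}$-measurable, so $(\omega,v,t)\mapsto G(u,v)X_t^v(\omega)$ is again integrable on the Fubini extension. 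Fubini then gives that $(\omega,t)\mapsto\int_I G(u,v)X_t^v(\omega)\lambda(\mathrm{d}v)$ is $\mathcal{F}\otimes\mathcal{B}$-measurable, after modification on a null set.

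For joint measurability in $(\omega,u)$, I would argue by approximation. Approximate $G$ in $L^1(\lambda\otimes\lambda)$, or pointwise boundedly, by finite sums $\sum_k a_k 1_{A_k}(u)1_{B_k}(v)$. For each such sum, $GX^u=\sum_k a_k1_{A_k}(u)\int_{B_k}X^v\lambda(\mathrm{d}v)$ is clearly $\mathcal{F}\boxtimes\mathcal{I}$-measurable. Passing to the limit by dominated convergence, using $|G|\le 1$, gives measurability of the limit. This step is the technical heart, and the main obstacle, because the Fubini extension is not a product $\sigma$-algebra. I would lean on the stated Fubini property rather than product-measurability of $G(u,v)X^v$.

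\textbf{Step 2 (the estimate).} For each fixed $(u,\omega,t)$, Cauchy--Schwarz (or Jensen) with respect to the probability measure $\lambda(\mathrm{d}v)$ gives
\begin{equation*}
|GX_t^u|^2\le\Bigl(\int_I G(u,v)|X_t^v|\lambda(\mathrm{d}v)\Bigr)^2\le\|G\|_\infty^2\int_I|X_t^v|^2\lambda(\mathrm{d}v).
\end{equation*}
Integrate in $t$ and take $\mathbb{E}$. Then exchange $\mathbb{E}$ and $\int_I$ using the Fubini property of $P\boxtimes\lambda$, which allows us to write $\mathbb{E}\int_I\cdot\,\lambda(\mathrm{d}v)=\mathbb{E}^\boxtimes[\cdot]$. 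This yields
\begin{equation*}
\mathbb{E}\Bigl[\int_0^T|GX_t^u|^2\mathrm{d}t\Bigr]\le\|G\|_\infty^2\,\mathbb{E}^\boxtimes\Bigl[\int_0^T|X_t|^2\mathrm{d}t\Bigr].
\end{equation*}
The right-hand side is independent of $u$, so taking $\sup_{u\in I}$ gives \eqref{eq03}. Finiteness holds because $X\in L^2_\boxtimes$.

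\textbf{Step 3 (conclusion).} The uniform bound shows that $GX\in L^{\infty,2}_\boxtimes$ and that the operator norm is at most $\|G\|_\infty\le 1$. If the space is meant to include adaptedness, one also needs $GX^u_t$ to be adapted. I would check this as follows. For each $v$, $X_t^v$ is $\mathcal{F}_t\!\vee\!\mathcal{F}^l_t$-measurable, with $\mathcal{F}_t$ the full filtration. So the integral over $v$ is measurable with respect to the time-$t$ $\sigma$-algebra, by the same Fubini argument restricted to time $t$. The stronger statement that the integral depends only on $W^l$ is not needed here; it is deferred to the CELLN (Theorem \ref{Thm:2.1}) in later results.
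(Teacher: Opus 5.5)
Your proposal is correct and follows the paper's proof. The estimate comes from Cauchy--Schwarz in $v$ against the probability measure $\lambda$, then the exchange $\mathbb{E}\int_I\cdot\,\lambda(\mathrm{d}v)=\mathbb{E}^\boxtimes[\cdot]$ via the Fubini property, then a supremum over $u$ of a bound that does not depend on $u$. The only difference is that you prove the $\mathcal{F}\boxtimes\mathcal{I}$-measurability of $GX$ yourself, by approximating $G$ with rectangle step functions and using dominated convergence, whereas the paper cites Aurell--Carmona--Lauri\`ere for $G$ mapping $L^2_\boxtimes$ into itself; your Step 3 adaptedness remark is not needed for this statement.
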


\begin{proof}
	It has been proved in \cite{Aurell-Carmona-Lauriere-2022} that $G$ is a bounded linear operator from $L_\boxtimes^2(\mathbb{R}^n)$ to $L_\boxtimes^2(\mathbb{R}^n)$. For any $u\in I$, by the Cauchy-Schwarz inequality and by exchanging the order of integration,
	\begin{equation*}
		\int_0^T |GX_t^u|^2\mathrm{d}t\le \int_0^T\|G\|_\infty^2\int_I |X_t^u|^2\lambda(\mathrm{d}u)\mathrm{d}t=\|G\|_\infty^2\int_I\int_0^T |X_t^u|^2\mathrm{d}t\lambda(\mathrm{d}u),
	\end{equation*}
	and taking expectations and then the supremum on both sides yields estimate (\ref{eq03}).
\end{proof}

\begin{Ass}\label{Ass:01}
	$b^f,\sigma^f$ are uniformly Lipschitz continuous with respect to $(x^f,\alpha^f,Gx^f$, $x^l,\alpha^l)$, and $b^l,\sigma^l$ are uniformly Lipschitz continuous with respect to $(x^l,\alpha^l,m^f)$. Moreover, $b_t^{f,u}(0,0,0,0,0),\sigma_t^{f,u}(0,0,0,0,0)$ are uniformly bounded with respect to $(t,u)$, $x_0^f$ is uniformly bounded with respect to $u$, and $b_t^l(0,0,0),\sigma_t^l(0,0,0)$ are uniformly bounded with respect to $t$.
\end{Ass}

For convenience in the subsequent estimates of the equation solutions, we write
\begin{gather*}
	I_0^{f,u}:=\mathbb{E}\biggl[ |x_0^{f,u}|^2+\int_0^T\big\{ |b_t^{f,u}(0,0,0,0,0)|^2+|\sigma_t^{f,u}(0,0,0,0,0)|^2 \big\}\mathrm{d}t \biggr],\quad u\in I,\\
	I_0^{l}:=\mathbb{E}\biggl[ |x_0^l|^2+\int_0^T\big\{ |b_t^l(0,0,0)|^2+|\sigma_t^l(0,0,0)|^2 \big\}\mathrm{d}t \biggr].
\end{gather*}

\begin{prp}\label{prp07}
	For any $(z^1,z^2)\in\mathbb{H}^2_{\boxtimes^l}(\mathbb{R}^{n_1})\times\mathbb{H}^2_{\mathcal{F}^l}(\mathbb{R}^{n_1})$ and $(\alpha^f,\alpha^l)\in\mathcal{U}^f\times\mathcal{U}^l$, the equation
	\begin{equation}\label{eq01}
		\begin{cases}
			\mathrm{d}X_t^{f,u}=b_t^{f,u}(X_t^{f,u},\alpha_t^{f,u},z_t^{1,u},X_t^l,\alpha_t^l)\mathrm{d}t\\
			\qquad\qquad+\sigma_t^{f,u}(X_t^{f,u},\alpha_t^{f,u},z_t^{1,u},X_t^l,\alpha_t^l)\mathrm{d}W_t^{f,u},\\
			\mathrm{d}X_t^l=b_t^l(X_t^l,\alpha_t^l,z_t^2)\mathrm{d}t+\sigma_t^l(X_t^l,\alpha_t^l,z_t^2)\mathrm{d}W_t^l,\\
			X_0^{f,u}=x_0^{f,u},\quad u\in I,\quad X_0^l=x_0^l,
		\end{cases}
	\end{equation}
	admits a unique solution $(X^f,X^l)\in\mathbb{S}^2_{\boxtimes^l}(\mathbb{R}^{n_1})\times\mathbb{S}_{\mathcal{F}^l}^2(\mathbb{R}^{n_2})$, and the estimate
	\begin{multline}\label{eq18}
		\mathbb{E}^\boxtimes\biggl[ \sup_{0\le t\le T}|X_t^f|^2 \biggr]+\mathbb{E}\biggl[ \sup_{0\le t\le T}|X_t^l|^2 \biggr]\le K\int_I I_0^{f,u}\lambda(\mathrm{d}u)+KI_0^l\\
		+K\biggl\{\mathbb{E}^\boxtimes\biggl[ \int_0^T \big\{|z_t^1|^2+|\alpha_t^f|^2\big\}\mathrm{d}t \biggr]+\mathbb{E}\biggl[ \int_0^T\big\{|z_t^2|^2+|\alpha_t^l|^2\big\}\mathrm{d}t \biggr] \biggr\}
	\end{multline}
	holds.
	
	Furthermore, for any $(z^1,z^2)\in\mathbb{H}_{\boxtimes^l}^{\infty,2}(\mathbb{R}^{n_1})\times\mathbb{H}_{\mathcal{F}^l}^2(\mathbb{R}^{n_1})$ and $(\alpha^f,\alpha^l)\in\mathcal{U}_s^f\times\mathcal{U}^l$, equation (\ref{eq01}) admits a unique solution $(X^f,X^l)\in\mathbb{S}_{\boxtimes^l}^{\infty,2}(\mathbb{R}^{n_1})\times\mathbb{S}_{\mathcal{F}^l}^2(\mathbb{R}^{n_2})$, and the estimate
	\begin{multline}\label{eq19}
		\sup_{u\in I}\mathbb{E}\biggl[ \sup_{0\le t\le T}|X_t^{f,u}|^2 \biggr]+\mathbb{E}\biggl[ \sup_{0\le t\le T}|X_t^l|^2 \biggr]\le K\sup_{u\in I}I_0^{f,u}+KI_0^l\\
		+K\biggl\{ \sup_{u\in I}\mathbb{E}\biggl[ \int_0^T\big\{ |z_t^{1,u}|^2+|\alpha_t^{f,u}|^2 \big\}\mathrm{d}t \biggr]+\mathbb{E}\biggl[ \int_0^T\big\{ |z_t^2|^2+|\alpha_t^l|^2 \big\}\mathrm{d}t \biggr] \biggr\}
	\end{multline}
	holds.
\end{prp}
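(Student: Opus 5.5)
The key observation is that once $(z^1,z^2)$ are frozen, system (\ref{eq01}) is only partially coupled. The leader equation involves neither $X^f$ nor $W^f$: it is a standard Lipschitz SDE on $(\varOmega,\mathcal{F},P)$ driven by $W^l$ with the $\mathcal{F}^l$-adapted inputs $\alpha^l$ and $z^2$. So I will solve the leader equation first and then treat the follower equations as a family of SDEs, indexed by $u$, in which $X^l$ and $\alpha^l$ enter as given inputs.

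\textbf{Step 1 (leader).} By Assumption \ref{Ass:01}, $b^l,\sigma^l$ are Lipschitz in $x^l$, and $b_t^l(0,\alpha_t^l,z_t^2)$, $\sigma_t^l(0,\alpha_t^l,z_t^2)$ lie in $\mathbb{H}^2_{\mathcal{F}^l}$. Classical SDE theory then gives a unique solution $X^l\in\mathbb{S}^2_{\mathcal{F}^l}(\mathbb{R}^{n_2})$. The usual argument (Itô/BDG, Lipschitz bounds, Gronwall) yields
\[
\mathbb{E}\Bigl[\sup_{0\le t\le T}|X^l_t|^2\Bigr]\le K\Bigl\{I_0^l+\mathbb{E}\int_0^T(|z^2_t|^2+|\alpha^l_t|^2)\mathrm{d}t\Bigr\}.
\]

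\textbf{Step 2 (followers, fixed $u$).} Fix $u$ and substitute $X^l$. The equation for $X^{f,u}$ is then a Lipschitz SDE driven by $W^{f,u}$ with respect to the filtration $\mathcal{F}^{f,u}\lor\mathcal{F}^l$. Since $W^{f,u}$ and $W^l$ are independent, $W^{f,u}$ is still a Brownian motion in this enlarged filtration. So the SDE has a unique adapted solution whenever $\mathbb{E}\int_0^T(|z^{1,u}_t|^2+|\alpha^{f,u}_t|^2)\mathrm{d}t<\infty$, which holds for $\lambda$-a.e.\ $u$ by Fubini. For each such $u$ the same Gronwall/BDG computation gives
\[
\mathbb{E}\Bigl[\sup_{t}|X^{f,u}_t|^2\Bigr]\le K\Bigl\{I_0^{f,u}+\mathbb{E}\int_0^T(|z^{1,u}_t|^2+|\alpha^{f,u}_t|^2)\mathrm{d}t+\mathbb{E}\Bigl[\sup_t|X^l_t|^2\Bigr]+\mathbb{E}\int_0^T|\alpha^l_t|^2\mathrm{d}t\Bigr\},
\]
where $K$ is independent of $u$ because the Lipschitz constants are uniform in $(t,u)$.

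\textbf{Step 3 (assembling the estimates).} Integrating this bound over $u$ and inserting Step 1 gives (\ref{eq18}). Taking $\sup_u$ instead gives (\ref{eq19}) in the stronger setting $z^1\in\mathbb{H}^{\infty,2}_{\boxtimes^l}$, $\alpha^f\in\mathcal{U}^f_s$. In that setting the right-hand side is finite for every $u$, so the solution exists for all $u$, not merely $\lambda$-a.e. Uniqueness in $\mathbb{S}^2_{\boxtimes^l}$ is immediate: uniqueness holds pathwise for each $u$, so two solutions coincide $P\boxtimes\lambda$-a.e.

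\textbf{Main obstacle: joint measurability.} I expect the hardest step to be showing that the family $(X^{f,u})_{u\in I}$, built one $u$ at a time, is $\mathcal{F}\boxtimes\mathcal{I}$-measurable, so that $X^f\in\mathbb{S}^2_{\boxtimes^l}$ at all. To avoid choosing solutions $u$ by $u$, I would build the solution by Picard iteration carried out directly on the rich Fubini extension, with $X^{f,(0)}\equiv x_0^f$ and
\[
X^{f,(k+1),u}_t=x_0^{f,u}+\int_0^t b^{f,u}_s(X^{f,(k),u}_s,\dots)\mathrm{d}s+\int_0^t\sigma^{f,u}_s(X^{f,(k),u}_s,\dots)\mathrm{d}W^{f,u}_s.
\]
Each iterate is $\mathcal{F}\boxtimes\mathcal{I}$-measurable by induction. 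The Lebesgue integral is measurable by Fubini. For the stochastic integral, I would approximate it by Riemann sums of the $\mathcal{F}\boxtimes\mathcal{I}$-measurable integrand, which converge in $L^2(P\boxtimes\lambda)$; this is the step that uses the joint measurability of $W^f$ and follows the construction in \cite{Aurell-Carmona-Lauriere-2022}. The standard contraction estimate, integrated in $u$, shows the iterates converge in $\mathbb{S}^2_{\boxtimes^l}$. The limit is therefore jointly measurable, and it agrees with the $u$-wise solution for a.e.\ $u$.
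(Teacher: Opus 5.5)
Your proposal is correct and follows the paper's own argument: apply classical SDE theory for each fixed $u$ with constants uniform in $u$, then integrate in $u$ to get (\ref{eq18}) and take $\sup_{u}$ to get (\ref{eq19}). Your variations change nothing essential: you solve the decoupled leader equation once rather than the pair $(X^{f,u},X^l)$ for each $u$, and your Picard iteration on the Fubini extension simply makes explicit the $\lambda$-a.e.\ $u$ caveat and the $\mathcal{F}\boxtimes\mathcal{I}$-measurability, which the paper only asserts. One small repair is needed: since $\alpha^f$ and $z^1$ are merely square integrable, left-point Riemann sums of the stochastic integrand need not converge. Approximate instead by step processes built from time-averages over the preceding subinterval; these converge in $L^2(\mathrm{d}t\otimes P\boxtimes\lambda)$, and It\^o's isometry for each $u$, Fubini and Doob then give the required $L^2(P\boxtimes\lambda)$ convergence uniformly in $t$.
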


\begin{proof}
	By the classical SDE theory, for any $u\in I$, the equation admits a unique solution $(X^{f,u},X^l)\in\mathbb{S}_{\mathcal{F}^{f,u}\lor\mathcal{F}^l}^2(\mathbb{R}^{n_1})\times\mathbb{S}_{\mathcal{F}^l}^2(\mathbb{R}^{n_2})$, $X^f$ is $\mathcal{I}$-measurable with respect to $u$, and the estimate
	\begin{equation*}
		\mathbb{E}\biggl[ \sup_{0\le t\le T}\big\{|X_t^{f,u}|^2+|X_t^l|^2\big\} \biggr]\le K\biggl\{I_0^{f,u}+I_0^l+\mathbb{E}\biggl[ \int_0^T\big\{ |z_t^{1,u}|^2+|\alpha_t^{f,u}|^2+|z_t^2|^2+|\alpha_t^l|^2 \big\}\mathrm{d}t \biggr]\biggr\}
	\end{equation*}
	holds.
	
	If $(z^1,z^2)\in\mathbb{H}^2_{\boxtimes^l}(\mathbb{R}^{n_1})\times\mathbb{H}_{\mathcal{F}^l}^2(\mathbb{R}^{n_1})$ and $(\alpha^f,\alpha^l)\in\mathcal{U}^f\times\mathcal{U}^l$, integrating the above inequality with respect to $u$ gives $(X^f,X^l)\in\mathbb{S}^2_{\boxtimes^l}(\mathbb{R}^{n_1})\times\mathbb{S}_{\mathcal{F}^l}^2(\mathbb{R}^{n_2})$ and estimate (\ref{eq18}). If $(z^1,z^2)\in\mathbb{H}_{\boxtimes^l}^{\infty,2}(\mathbb{R}^{n_1})\times\mathbb{H}_{\mathcal{F}^l}^2(\mathbb{R}^{n_1})$ and $(\alpha^f,\alpha^l)\in\mathcal{U}_s^f\times\mathcal{U}^l$, taking the supremum of the above inequality with respect to $u$ gives $(X^f,X^l)\in\mathbb{H}_{\boxtimes^l}^{\infty,2}(\mathbb{R}^{n_1})\times\mathbb{H}_{\mathcal{F}^l}^2(\mathbb{R}^{n_2})$ and estimate (\ref{eq19}).
\end{proof}

For fixed $(\alpha^f,\alpha^l)\in\mathcal{U}^f\times\mathcal{U}^l$, define the mapping
\begin{equation*}
	\varphi(z^1,z^2)=(GX^f,M^f),
\end{equation*}
where $(X^f,X^l)$ is the solution of equation (\ref{eq01}) under $(z^1,z^2)$, and
\begin{gather*}
	GX^{f,u}_t=\int_I G(u,v)X_t^{f,v}\lambda(\mathrm{d}v),\quad M_t^f=\int_I w^u X_t^{f,u}\lambda(\mathrm{d}u).
\end{gather*}

\begin{prp}\label{Prp:3.3}
	For any $(\alpha^f,\alpha^l)\in\mathcal{U}^f\times\mathcal{U}^l$, $\varphi$ is a mapping from $\mathbb{H}_{\boxtimes^l}^2(\mathbb{R}^{n_1})\times\mathbb{H}_{\mathcal{F}^l}^2(\mathbb{R}^{n_1})$ to itself and admits a unique fixed point. If $(\alpha^f,\alpha^l)\in\mathcal{U}_s^f\times\mathcal{U}^l$, then $\varphi$ is a mapping from $\mathbb{H}_{\boxtimes^l}^{\infty,2}(\mathbb{R}^{n_1})\times\mathbb{H}_{\mathcal{F}^l}^2(\mathbb{R}^{n_2})$ to itself and admits a unique fixed point.
\end{prp}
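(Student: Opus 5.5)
We will prove the statement with a Banach fixed-point argument, using a weighted norm (or, equivalently, a small-time contraction followed by concatenation).

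\textbf{Step 1: $\varphi$ maps the space into itself.} Fix $(\alpha^f,\alpha^l)\in\mathcal{U}^f\times\mathcal{U}^l$. Given $(z^1,z^2)\in\mathbb{H}^2_{\boxtimes^l}(\mathbb{R}^{n_1})\times\mathbb{H}^2_{\mathcal{F}^l}(\mathbb{R}^{n_1})$, Proposition \ref{prp07} gives $(X^f,X^l)\in\mathbb{S}^2_{\boxtimes^l}\times\mathbb{S}^2_{\mathcal{F}^l}$. By Proposition \ref{Prp:3.1} and estimate (\ref{eq03}), $GX^f\in\mathbb{H}^{\infty,2}_{\boxtimes^l}\subset\mathbb{H}^2_{\boxtimes^l}$. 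Similarly, $|M^f_t|^2\le\|w\|_\infty^2\int_I|X^{f,u}_t|^2\lambda(\mathrm{d}u)$, so $\mathbb{E}\int_0^T|M^f_t|^2\mathrm{d}t<\infty$.

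The delicate point is adaptedness. We need $M^f$, and each $GX^{f,u}$, to be $\mathcal{F}^l$-adapted rather than only $\mathcal{F}^{f,u}\lor\mathcal{F}^l$-adapted. Since $z^2$ and $X^l$ are $\mathcal{F}^l$-adapted, conditionally on $\mathcal{C}=\mathcal{F}^l_t$ the family $\{X^{f,u}_t\}_u$ is essentially pairwise conditionally independent. This holds because $X^{f,u}_t$ is a measurable functional of $W^{f,u}$ and of $\mathcal{F}^l$-data, and the $W^{f,u}$ are essentially pairwise independent and independent of $W^l$. Here $\mathcal{C}$ is countably generated up to null sets.

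Applying Theorem \ref{Thm:2.1} componentwise to $G(u,\cdot)X_t^{f,\cdot}$ and to $w^\cdot X^{f,\cdot}_t$ then gives
\[
GX^{f,u}_t=\int_I G(u,v)\mathbb{E}[X^{f,v}_t|\mathcal{F}^l_t]\lambda(\mathrm{d}v),\qquad M^f_t=\int_I w^v\mathbb{E}[X^{f,v}_t|\mathcal{F}^l_t]\lambda(\mathrm{d}v),
\]
both $\mathcal{F}^l_t$-measurable. For the conditional-independence claim we would represent $X^{f,u}$ as $\Phi^u(W^{f,u},W^l)$ via the strong solution (Yamada--Watanabe type) map. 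This requires $z^{1,u}$ and $\alpha^{f,u}$ to be functionals of $(W^{f,u},W^l)$, which is true by their $\mathcal{F}^{f,u}\lor\mathcal{F}^l$-adaptedness.

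Joint $\mathcal{F}\boxtimes\mathcal{I}$-measurability of $GX^f$ follows from the Fubini property. I expect this adaptedness and conditional-independence step to be the main obstacle; the rest is routine.

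\textbf{Step 2: contraction.} Take two inputs $(z^1,z^2)$ and $(\bar z^1,\bar z^2)$ with solutions $X,\bar X$, and write $\Delta X=X-\bar X$. Itô's formula, the Lipschitz condition in Assumption \ref{Ass:01} and Gronwall's inequality give, for each $u$ and each $t\le T$,
\[
\mathbb{E}\Big[\sup_{s\le t}\big(|\Delta X^{f,u}_s|^2+|\Delta X^l_s|^2\big)\Big]\le K\,\mathbb{E}\int_0^t\big(|\Delta z^{1,u}_s|^2+|\Delta z^2_s|^2\big)\mathrm{d}s .
\]
Using the Cauchy--Schwarz estimates of Proposition \ref{Prp:3.1}, this yields
\[
\mathbb{E}^\boxtimes|G\Delta X^f_t|^2+\mathbb{E}|\Delta M^f_t|^2\le K\int_0^t\Big(\mathbb{E}^\boxtimes|\Delta z^1_s|^2+\mathbb{E}|\Delta z^2_s|^2\Big)\mathrm{d}s .
\]
Define the norm
\[
\|(z^1,z^2)\|_\beta^2=\mathbb{E}^\boxtimes\int_0^Te^{-\beta t}|z^1_t|^2\mathrm{d}t+\mathbb{E}\int_0^Te^{-\beta t}|z^2_t|^2\mathrm{d}t,
\]
which is equivalent to the original one. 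Multiplying the previous bound by $e^{-\beta t}$ and integrating gives $\|\varphi(z)-\varphi(\bar z)\|_\beta^2\le (K/\beta)\|z-\bar z\|_\beta^2$. Choosing $\beta>K$ makes $\varphi$ a contraction, and the Banach fixed-point theorem gives a unique fixed point.

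\textbf{Step 3: the strong class.} For $(\alpha^f,\alpha^l)\in\mathcal{U}^f_s\times\mathcal{U}^l$, the second part of Proposition \ref{prp07} together with (\ref{eq03}) shows that $\varphi$ maps $\mathbb{H}^{\infty,2}_{\boxtimes^l}\times\mathbb{H}^2_{\mathcal{F}^l}$ into itself. The argument is the same as in Step 1, with $\sup_u$ in place of $\int_I$.

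We then repeat Step 2 with the norm
\[
\sup_u\mathbb{E}\int_0^Te^{-\beta t}|z^{1,u}_t|^2\mathrm{d}t+\mathbb{E}\int_0^Te^{-\beta t}|z^2_t|^2\mathrm{d}t .
\]
We bound $\sup_u|G\Delta X^{f}|^2$ by $\int_I|\Delta X^{f,v}|^2\lambda(\mathrm{d}v)\le\sup_v|\Delta X^{f,v}|^2$ in expectation. Completeness of this space holds because it is a closed subspace under the sup-type norm. The Banach fixed-point theorem then gives a unique fixed point in this space as well.
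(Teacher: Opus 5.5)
Your proof is correct. Its core is the same estimate the paper uses, $\mathbb{E}^\boxtimes|G\Delta X^f_t|^2+\mathbb{E}|\Delta M^f_t|^2\le K\int_0^t(\mathbb{E}^\boxtimes|\Delta z^1_s|^2+\mathbb{E}|\Delta z^2_s|^2)\,\mathrm{d}s$, but you close the argument differently. The paper iterates this bound to get the factor $K^{k+1}t^k/k!$ and concludes that some power $\varphi^{(k)}$ is a contraction in the original norm. You instead switch to the equivalent $e^{-\beta t}$-weighted norm, under which $\varphi$ itself is a $K/\beta$-contraction. The two devices are interchangeable here; yours avoids the induction.

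The more substantive difference is your Step 1. The paper's proof simply asserts that $\varphi$ maps the space into itself. The adaptedness of $GX^f$ and $M^f$ is only addressed later, in Proposition \ref{Prp:3.4} and Theorem \ref{Thm:3.1}, via the CELLN given $\mathcal{F}^l_T$. There the conditional independence of $X^{f,u},X^{f,v}$ is inferred from that of $W^{f,u},W^{f,v}$ alone. Your version writes $X^{f,u}_t$ as a functional of $(W^{f,u},W^l)$ and uses the mutual independence of $(W^{f,u},W^{f,v},W^l)$, which is the right reason; Doob--Dynkin suffices, and Yamada--Watanabe is not needed. Conditioning on $\mathcal{F}^l_t$ rather than $\mathcal{F}^l_T$ also gives adaptedness directly. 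It avoids the extra remark $\mathbb{E}[X^{f,v}_t|\mathcal{F}^l_T]=\mathbb{E}[X^{f,v}_t|\mathcal{F}^l_t]$, which holds because the increments of $W^l$ after $t$ are independent of $\mathcal{F}^{f,v}_t\lor\mathcal{F}^l_t$.

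Finally, your Step 3 can be bypassed. Since $\mathcal{U}^f_s\subset\mathcal{U}^f$, and by (\ref{eq03}) $\varphi$ maps all of $\mathbb{H}^2_{\boxtimes^l}\times\mathbb{H}^2_{\mathcal{F}^l}$ into $\mathbb{H}^{\infty,2}_{\boxtimes^l}\times\mathbb{H}^2_{\mathcal{F}^l}$, the fixed point from the first part already lies in the smaller space. Any fixed point there coincides with it for every $u$, because fixed points agreeing for $\lambda$-a.e.\ $u$ have the same image under $\varphi$. This spares you the loosely justified completeness claim for the sup-type norm.
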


\begin{proof}
	For any fixed $(\alpha^f,\alpha^l)\in\mathcal{U}^f\times\mathcal{U}^l$ and any $(z^1,z^2),({z^1}',{z^2}')\in\mathbb{H}^2_{\boxtimes^l}(\mathbb{R}^{n_1})\times\mathbb{H}_{\mathcal{F}^l}^2(\mathbb{R}^{n_1})$, let $(X^f,X^l),({X^{f}}',{X^{l}}')$ be the solutions of the corresponding equation (\ref{eq01}), and denote $(\varDelta X^f,\varDelta X^l)=(X^f-{X^f}',X^l-{X^l}')$. Then, by Proposition \ref{prp07}, for any $t\in[0,T]$,
	\begin{equation*}
		\mathbb{E}^\boxtimes\biggl[ \sup_{0\le s\le t}\big\{|\varDelta X_s^f|^2+|\varDelta X_s^l|^2\big\} \biggr]\le K\mathbb{E}^\boxtimes\biggl[\int_0^t \big\{|z_s^1-{z_s^1}'|^2+|z_s^2-{z_s^2}'|^2\big\}\mathrm{d}s \biggr],
	\end{equation*}
	and, by the properties of bounded linear operators, we obtain
	\begin{equation*}
    \begin{aligned}
		&\sup_{0\le s\le t}\mathbb{E}^\boxtimes[|\varphi(z^1,z^2)_s-\varphi({z^1}',{z^2}')_s|^2]
         \le \mathbb{E}^\boxtimes\biggl[ \sup_{0\le s\le t} |\varphi(z^1,z^2)_s-\varphi({z^1}',{z^2}')_s|^2 \biggr]\\
		&=\mathbb{E}^\boxtimes\biggl[ \sup_{0\le s\le t}\big\{ |G\varDelta X_s^f|^2+|\varDelta M^f_s|^2 \big\} \biggr]
         \le K\mathbb{E}^\boxtimes\biggl[\sup_{0\le s\le t}|\varDelta X_s^f|^2\biggr]\\
		&\le K\int_0^t \mathbb{E}^\boxtimes[|z_{s}^1-{z_{s}^1}'|^2+|z_s^2-{z_s^2}'|^2]\mathrm{d}s.
	\end{aligned}
    \end{equation*}
	Let $\varphi^{(k)}$ denote the $k$-th iteration of the mapping $\varphi$. By mathematical induction, for any $t\in [0,T]$ and any positive integer $k$, we have
	\begin{equation*}
		\sup_{0\le s\le t}\mathbb{E}^\boxtimes[|\varphi^{(k)}(z^1,z^2)_s-\varphi^{(k)}({z^1}',{z^2}')_s|^2]\le K^{k+1}\frac{t^k}{k!}\int_0^t \mathbb{E}^\boxtimes[|z_s^1-{z_s^1}'|^2+|z_s^2-{z_s^2}'|^2]\mathrm{d}s.
	\end{equation*}
	There exists a sufficiently large $k$ such that $\varphi^{(k)}$ is a contraction mapping. Hence the mapping $\varphi$ admits a unique fixed point.
	
	Similarly, when $(\alpha^f,\alpha^l)\in\mathcal{U}_s^f\times\mathcal{U}^l$, Proposition \ref{prp07} and induction imply that $\varphi$ admits a unique fixed point.
\end{proof}

We next give several lemmas on essential pairwise conditional independence.

\begin{lem}\label{Lem:3.1}
	Let $(\varOmega,\mathcal{F},P)$ be a probability space, and let $\mathcal{F}^1,\mathcal{F}^2,\mathcal{F}^0$ be sub-$\sigma$-algebras of $\mathcal{F}$. If $\mathcal{F}^1$ and $\mathcal{F}^2$ are independent, and $\mathcal{F}^0$ is independent of $\mathcal{F}^1\lor\mathcal{F}^2$, then $\mathcal{F}^1$ and $\mathcal{F}^2$ are conditionally independent given $\mathcal{F}^0$.
\end{lem}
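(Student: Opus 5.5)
The plan is to reduce conditional independence given $\mathcal{F}^0$ to a product formula for conditional expectations of indicators, and then show that each conditional expectation is in fact deterministic. The key fact is that $\mathcal{F}^1$, $\mathcal{F}^2$ and $\mathcal{F}^0$ are mutually independent. Indeed, for $A_1\in\mathcal{F}^1$, $A_2\in\mathcal{F}^2$, $A_0\in\mathcal{F}^0$, the independence of $\mathcal{F}^0$ from $\mathcal{F}^1\lor\mathcal{F}^2$ together with $A_1\cap A_2\in\mathcal{F}^1\lor\mathcal{F}^2$ gives
\begin{equation*}
	P(A_0\cap A_1\cap A_2)=P(A_0)P(A_1\cap A_2)=P(A_0)P(A_1)P(A_2),
\end{equation*}
where the last equality uses the independence of $\mathcal{F}^1$ and $\mathcal{F}^2$.

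Next I compute the conditional probabilities. Since $A_1\cap A_2$ is independent of $\mathcal{F}^0$, the standard property of conditional expectation under independence gives $P(A_1\cap A_2|\mathcal{F}^0)=P(A_1\cap A_2)$ a.s. The same argument applied to $A_1\in\mathcal{F}^1\subset\mathcal{F}^1\lor\mathcal{F}^2$ and to $A_2$ gives $P(A_i|\mathcal{F}^0)=P(A_i)$ a.s. for $i=1,2$. Combining these,
\begin{equation*}
	P(A_1\cap A_2|\mathcal{F}^0)=P(A_1\cap A_2)=P(A_1)P(A_2)=P(A_1|\mathcal{F}^0)P(A_2|\mathcal{F}^0),\quad P\text{-}a.s.
\end{equation*}
This is exactly conditional independence of $\mathcal{F}^1$ and $\mathcal{F}^2$ given $\mathcal{F}^0$, in the sense of the definition above specialized to events.

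The only point needing care is the justification that $P(A|\mathcal{F}^0)=P(A)$ for $A$ independent of $\mathcal{F}^0$. To check it, note that for every $B\in\mathcal{F}^0$ we have $\mathbb{E}[\mathbf{1}_A\mathbf{1}_B]=P(A)P(B)=\mathbb{E}[P(A)\mathbf{1}_B]$, and the constant $P(A)$ is $\mathcal{F}^0$-measurable, so it is a version of the conditional expectation. I do not expect any genuine obstacle; the argument is elementary.
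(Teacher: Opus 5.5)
Your proof is correct and follows the same route as the paper: for $A\in\mathcal{F}^1$, $B\in\mathcal{F}^2$, independence from $\mathcal{F}^0$ turns each conditional probability into the unconditional one, and independence of $\mathcal{F}^1,\mathcal{F}^2$ then gives the product formula. Your opening paragraph on mutual independence is not used anywhere in the argument, and your check that $P(A|\mathcal{F}^0)=P(A)$ supplies a step the paper only asserts.
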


\begin{proof}
	Since $\mathcal{F}^1$ and $\mathcal{F}^2$ are independent, and $\mathcal{F}^0$ is independent of $\mathcal{F}^1\lor\mathcal{F}^2$, for any $A\in\mathcal{F}^1$ and $B\in\mathcal{F}^2$, we have
	\begin{equation*}
    \begin{aligned}
		&P(A\cap B|\mathcal{F}^0)=\mathbb{E}[\mathbf{1}_{A\cap B}|\mathcal{F}^0]=P(A\cap B)=P(A)P(B)\\
		&=\mathbb{E}[\mathbf{1}_{A}|\mathcal{F}^0]\mathbb{E}[\mathbf{1}_B|\mathcal{F}^0]=P(A|\mathcal{F}^0)P(B|\mathcal{F}^0),\quad a.s.,
	\end{aligned}
    \end{equation*}
	and therefore $\mathcal{F}^1,\mathcal{F}^2$ are conditionally independent given $\mathcal{F}^0$.
\end{proof}

\begin{lem}\label{Lem:3.2}
	Let $(\varOmega,\mathcal{F},P)$ be a probability space, let $W^1,W^2$ be mutually independent Brownian motions, let $\mathcal{F}^1,\mathcal{F}^2$ be the natural filtrations generated by the two Brownian motions, and let $X$ be a progressively measurable process on $\mathcal{F}^1\lor\mathcal{F}^2$ satisfying $\mathbb{E}\big[ \int_0^t |X_s|^2\mathrm{d}s \big]<\infty$. Then, for any $t\in[0,T]$,
	\begin{equation*}
		\mathbb{E}\biggl[ \int_0^t X_s\mathrm{d}W_s^1\bigg|\mathcal{F}_t^2 \biggr]=0.
	\end{equation*}
\end{lem}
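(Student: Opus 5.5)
The plan is to reduce the claim to an identity tested against a generating class of $\mathcal{F}_t^2$, and to prove that identity first for simple integrands and then pass to the limit.

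Denote the stochastic integral by $M_t=\int_0^t X_s\mathrm{d}W_s^1$. Since $M_t$ is integrable and $\mathcal{F}_t^2$-conditional expectation is characterized by testing against bounded $\mathcal{F}_t^2$-measurable variables, it suffices to show $\mathbb{E}[M_t\xi]=0$ for every bounded $\mathcal{F}_t^2$-measurable $\xi$. By the martingale representation theorem applied to $W^2$, any square-integrable $\xi$ of this kind can be written as
\begin{equation*}
\xi=\mathbb{E}[\xi]+\int_0^t \eta_s\mathrm{d}W_s^2,
\end{equation*}
with $\eta$ an $\mathcal{F}^2$-progressively measurable process satisfying $\mathbb{E}\big[\int_0^t|\eta_s|^2\mathrm{d}s\big]<\infty$. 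Since $\mathcal{F}^2_s\subset\mathcal{F}^1_s\lor\mathcal{F}^2_s$, the process $\eta$ is also progressively measurable with respect to the joint filtration. Moreover, $W^1$ and $W^2$ are independent Brownian motions with respect to $\mathcal{F}^1\lor\mathcal{F}^2$, so both are martingales for the joint filtration. In particular, $W^1$ remains a Brownian motion there and $M$ is a square-integrable martingale for it.

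With this representation,
\begin{equation*}
\mathbb{E}[M_t\xi]=\mathbb{E}[\xi]\,\mathbb{E}[M_t]+\mathbb{E}\Big[\int_0^t X_s\mathrm{d}W_s^1\int_0^t\eta_s\mathrm{d}W_s^2\Big].
\end{equation*}
The first term vanishes because $\mathbb{E}[M_t]=0$. For the second term, the Itô isometry in polarized form gives $\mathbb{E}\big[\int_0^t X_s\eta_s\,\mathrm{d}\langle W^1,W^2\rangle_s\big]$. Independence of $W^1$ and $W^2$ makes the quadratic covariation $\langle W^1,W^2\rangle$ identically zero, so this term vanishes as well. We conclude $\mathbb{E}[M_t\xi]=0$ for all bounded $\xi\in\mathcal{F}_t^2$, which gives the claim.

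The main obstacle is justifying the product identity for the joint filtration; I would do this carefully rather than just quoting it. First I take simple integrands $X=\sum_i X_{t_i}\mathbf{1}_{(t_i,t_{i+1}]}$ and $\eta$ on a common partition. Each cross term $\mathbb{E}[X_{t_i}\eta_{t_j}(W^1_{t_{i+1}}-W^1_{t_i})(W^2_{t_{j+1}}-W^2_{t_j})]$ vanishes. When $i\neq j$, say $i<j$, I condition on $\mathcal{F}^1_{t_j}\lor\mathcal{F}^2_{t_j}$ and use that the increment of $W^2$ is independent of it with mean zero. When $i=j$, I use that the pair of increments is independent of $\mathcal{F}^1_{t_i}\lor\mathcal{F}^2_{t_i}$ and $\mathbb{E}[\Delta W^1\Delta W^2]=0$. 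The general case then follows by $L^2$ approximation and continuity of the stochastic integral, using Cauchy–Schwarz to pass to the limit in the product.
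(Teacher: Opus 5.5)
Your argument is correct, but it takes a different route from the paper.

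The paper enlarges the filtration at time zero. For fixed $t$ it sets $\mathcal{G}_s=\mathcal{F}^1_s\lor\mathcal{F}^2_t$ for $s\in[0,t]$. It notes that $W^1$ is still a $\mathcal{G}$-Brownian motion, by independence from $W^2$, and that $X$ is still $\mathcal{G}$-progressively measurable. Hence $N_s=\int_0^s X_r\mathrm{d}W_r^1$ is a $\mathcal{G}$-martingale with $N_0=0$. Since $\mathcal{F}^2_t\subset\mathcal{G}_0$, the tower property gives $\mathbb{E}[N_t|\mathcal{F}^2_t]=\mathbb{E}\big[\mathbb{E}[N_t|\mathcal{G}_0]\big|\mathcal{F}^2_t\big]=0$.

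You instead test against bounded $\mathcal{F}^2_t$-measurable $\xi$, expand $\xi$ by martingale representation for $W^2$, and kill the cross term using $L^2$-orthogonality of stochastic integrals against independent Brownian motions in the joint filtration. You verify that orthogonality directly on simple integrands.

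The paper's route is shorter. It needs neither martingale representation nor the simple-process computation, only two routine facts: adjoining a $\sigma$-algebra independent of $W^1$ at time zero keeps $W^1$ a Brownian motion, and it does not change the It\^o integral. It also uses nothing about $\mathcal{F}^2$ except independence from $W^1$. So it works verbatim with $\mathcal{F}^2_t$ replaced by $\mathcal{F}^2_T$, which is the form actually used in the proof of Proposition~\ref{Prp:3.4}.

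Your route relies on $\mathcal{F}^2$ being a Brownian filtration and carries more machinery than the statement needs. It does also extend to $\mathcal{F}^2_T$: split $\int_0^T\eta_s\mathrm{d}W_s^2$ at $t$ and use that $M$ is a martingale in the joint filtration.
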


\begin{proof}
	For fixed $t\in [0,T]$ and any $s\in[0,t]$, set $\mathcal{G}_s=\mathcal{F}_s^1\lor\mathcal{F}_t^2$. Then, on $[0,t]$, $W^1$ is a $\mathcal{G}$-Brownian motion, and $X$ is a $\mathcal{G}$-progressively measurable process. Let $N_s=\int_0^s X_r\mathrm{d}W_r^1$. Then $N$ is a square-integrable $\mathcal{G}$-martingale, and hence
	\begin{equation*}
		\mathbb{E}[N_t|\mathcal{F}_t^2]=\mathbb{E}\big[\mathbb{E}[N_t|\mathcal{G}_0]\big|\mathcal{F}_t^2\big]=\mathbb{E}[N_0|\mathcal{F}_t^2]=0.\qedhere
	\end{equation*}
\end{proof}

\begin{prp}\label{Prp:3.4}
	Let $X\in L_{\boxtimes^l}^2(0,T;\mathbb{R}^n)$. Then $X$ is essentially pairwise conditionally independent given $\mathcal{F}_T^l$, and for any $t\in[0,T]$,
	\begin{equation*}
		\int_I\int_0^t X_s^u\mathrm{d}W_s^{f,u}\lambda(\mathrm{d}u)=0,\quad a.s..
	\end{equation*}
\end{prp}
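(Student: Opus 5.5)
The plan has two parts: first establish the essential pairwise conditional independence, then combine it with the CELLN (Theorem \ref{Thm:2.1}) and Lemma \ref{Lem:3.2} to kill the stochastic integral.

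\emph{Conditional independence.} Fix $X\in L_{\boxtimes^l}^2(0,T;\mathbb{R}^n)$. For every $u$, $X^u$ is $\mathcal{F}^{f,u}\lor\mathcal{F}^l$-adapted, so $\sigma(X^u)\subset\mathcal{F}_T^{f,u}\lor\mathcal{F}_T^l$. Since $W^f$ is essentially pairwise independent and $W^l$ is independent of $W^f$, for $\lambda$-a.e.\ $u$ and $\lambda$-a.e.\ $v$ the three $\sigma$-algebras $\mathcal{F}_T^{f,u}$, $\mathcal{F}_T^{f,v}$, $\mathcal{F}_T^l$ are mutually independent. Lemma \ref{Lem:3.1} then gives that $\mathcal{F}_T^{f,u}$ and $\mathcal{F}_T^{f,v}$ are conditionally independent given $\mathcal{F}_T^l$.

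I then need to lift this to $\mathcal{F}_T^{f,u}\lor\mathcal{F}_T^l$ and $\mathcal{F}_T^{f,v}\lor\mathcal{F}_T^l$. I would check the product formula on generating sets $A\cap C$ and $B\cap D$, with $A\in\mathcal{F}_T^{f,u}$, $B\in\mathcal{F}_T^{f,v}$ and $C,D\in\mathcal{F}_T^l$. Since $\mathbf{1}_C,\mathbf{1}_D$ are $\mathcal{F}_T^l$-measurable, they factor out of the conditional expectations, and the identity reduces to the conditional independence of $A$ and $B$ already obtained. A $\pi$--$\lambda$ argument extends it to the full $\sigma$-algebras.

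Taking $B_1,B_2$ Borel, this yields the conditional independence of $X^u$ and $X^v$ given $\mathcal{F}_T^l$. Here $X^u$ is viewed as a random variable with values in $L^2(0,T)$, or equivalently I work with its finite-dimensional marginals $X^u_{s}$. Also, $\mathcal{F}_T^l$ is countably generated up to null sets, since it is generated by $W^l$ at rational times, so Theorem \ref{Thm:2.1} is applicable.

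\emph{Vanishing integral.} Set $Y^u=\int_0^t X_s^u\,\mathrm{d}W_s^{f,u}$. The main obstacle is that $Y$ must be $\mathcal{F}\boxtimes\mathcal{I}$-measurable and integrable in order to apply Theorem \ref{Thm:2.1}. For measurability, I would approximate $X$ by simple processes measurable in $(\omega,u)$, for which $Y$ is a finite sum of products of $\mathcal{F}\boxtimes\mathcal{I}$-measurable terms, and pass to the limit in $L^2(P\boxtimes\lambda)$ using the Itô isometry and the Fubini property. This also gives $\mathbb{E}^\boxtimes|Y|^2=\mathbb{E}^\boxtimes\int_0^t|X_s|^2\mathrm{d}s<\infty$.

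Each $Y^u$ is measurable with respect to $\mathcal{F}_T^{f,u}\lor\mathcal{F}_T^l$, so by the first part $Y$ is essentially pairwise conditionally independent given $\mathcal{F}_T^l$. Theorem \ref{Thm:2.1} gives
\begin{equation*}
\int_I Y^u\lambda(\mathrm{d}u)=\int_I\mathbb{E}[Y^u|\mathcal{F}_T^l]\lambda(\mathrm{d}u).
\end{equation*}
Finally, Lemma \ref{Lem:3.2} should give $\mathbb{E}[Y^u|\mathcal{F}_T^l]=0$ for every $u$. Its proof works with $\mathcal{G}_s=\mathcal{F}_s^{f,u}\lor\mathcal{F}_T^l$, and the same argument goes through with $\mathcal{F}_T^l$ in place of $\mathcal{F}_t^l$: $W^{f,u}$ remains a $\mathcal{G}$-Brownian motion because $\mathcal{F}_T^l$ is independent of $W^{f,u}$. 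Hence the integral vanishes almost surely.
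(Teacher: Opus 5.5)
Your proposal is correct and follows the same route as the paper: Lemma \ref{Lem:3.1} gives conditional independence given $\mathcal{F}_T^l$, and then the CELLN (Theorem \ref{Thm:2.1}) combined with Lemma \ref{Lem:3.2} makes the integral vanish. You also fill in several steps the paper's short proof passes over, namely lifting the conditional independence from $\mathcal{F}_T^{f,u},\mathcal{F}_T^{f,v}$ to $\mathcal{F}_T^{f,u}\lor\mathcal{F}_T^l,\ \mathcal{F}_T^{f,v}\lor\mathcal{F}_T^l$ (needed because $X^u$ also depends on $W^l$), checking the countable-generation hypothesis, establishing the $\mathcal{F}\boxtimes\mathcal{I}$-measurability and integrability of $u\mapsto\int_0^tX_s^u\mathrm{d}W_s^{f,u}$, and conditioning on $\mathcal{F}_T^l$ rather than $\mathcal{F}_t^l$ in Lemma \ref{Lem:3.2}.
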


\begin{proof}
	Since $W^f$ is a family of essentially pairwise independent Brownian motions, there exists $I_0\in\mathcal{I}$ with $P(I_0)=1$ such that, for any $u,v\in I_0$, $W^{f,u}$ and $W^{f,v}$ are independent, and $W^l$ is independent of $W^{f,u}\lor W^{f,v}$. By Lemma \ref{Lem:3.1}, $W^{f,u}$ and $W^{f,v}$ are conditionally independent given $\mathcal{F}_T^l$, and hence $X$ is essentially pairwise conditionally independent given $\mathcal{F}_T^l$. By Theorem \ref{Thm:2.1} and Lemma \ref{Lem:3.2}, we obtain
	\begin{equation*}
		\int_I\int_0^t X_s^u\mathrm{d}W_s^{f,u}\lambda(\mathrm{d}u)=\int_I\mathbb{E}\biggl[ \int_0^t X_s^u\mathrm{d}W_s^{f,u}\lambda(\mathrm{d}u) \bigg|\mathcal{F}_T^l \biggr]=0,\quad a.s..\qedhere
	\end{equation*}
\end{proof}

\begin{thm}\label{Thm:3.1}
	For any $(\alpha^f,\alpha^l)\in\mathcal{U}^f\times\mathcal{U}^l$, system equation (\ref{state}) admits a unique solution $(X^f,X^l)\in\mathbb{S}_{\boxtimes^l}^2(\mathbb{R}^{n_1})\times\mathbb{S}^2_{\mathcal{F}^l}(\mathbb{R}^{n_2})$, with $GX^f\in \bar{\mathbb{S}}_{\boxtimes^l}^{\infty,2}(\mathbb{R}^{n_1})$ and $M^f\in \mathbb{S}_{\mathcal{F}^l}^2(\mathbb{R}^{n_1})$. In particular, for any $(\alpha^f,\alpha^l)\in\mathcal{U}_s^f\times\mathcal{U}^l$, the system admits a unique solution $(X^f,X^l)\in\mathbb{S}_{\boxtimes^l}^{\infty,2}(\mathbb{R}^{n_1})\times\mathbb{S}^2_{\mathcal{F}^l}(\mathbb{R}^{n_2})$.
\end{thm}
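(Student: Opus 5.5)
The plan is to reduce the coupled system (\ref{state}) to the fixed-point problem of Proposition \ref{Prp:3.3}. First, existence. Fix $(\alpha^f,\alpha^l)\in\mathcal{U}^f\times\mathcal{U}^l$ and let $(z^1,z^2)\in\mathbb{H}_{\boxtimes^l}^2(\mathbb{R}^{n_1})\times\mathbb{H}_{\mathcal{F}^l}^2(\mathbb{R}^{n_1})$ be the unique fixed point of $\varphi$ given by that proposition. Let $(X^f,X^l)$ be the solution of (\ref{eq01}) driven by $(z^1,z^2)$, from Proposition \ref{prp07}. The fixed-point property gives $z^1=GX^f$ and $z^2=M^f$, so $(X^f,X^l)$ solves (\ref{state}). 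The membership $(X^f,X^l)\in\mathbb{S}_{\boxtimes^l}^2\times\mathbb{S}_{\mathcal{F}^l}^2$ follows from estimate (\ref{eq18}).

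Uniqueness. Suppose $(\tilde X^f,\tilde X^l)$ is another solution in the same space. Set $\tilde z^1=G\tilde X^f$ and $\tilde z^2=\tilde M^f$. By Proposition \ref{Prp:3.1} these belong to the admissible parameter space, once we know $\tilde z^1$ is suitably adapted, which is shown in the next step. Then $(\tilde X^f,\tilde X^l)$ solves (\ref{eq01}) with parameters $(\tilde z^1,\tilde z^2)$. The uniqueness part of Proposition \ref{prp07} gives $\varphi(\tilde z^1,\tilde z^2)=(\tilde z^1,\tilde z^2)$. Uniqueness of the fixed point then forces the two solutions to coincide.

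The main step is showing that $GX^f$ and $M^f$ are $\mathcal{F}^l$-adapted, so that $GX^f\in\bar{\mathbb{S}}_{\boxtimes^l}^{\infty,2}$ and $M^f\in\mathbb{S}^2_{\mathcal{F}^l}$; this is also needed to put $z^2=M^f$ in $\mathbb{H}^2_{\mathcal{F}^l}$ in the first place.
\begin{itemize}
\item Write $X_t^{f,v}=x_0^{f,v}+\int_0^t b_s^{f,v}\,\mathrm{d}s+\int_0^t\sigma_s^{f,v}\,\mathrm{d}W_s^{f,v}$.
\item Multiply by $G(u,v)$ (respectively $w^v$) and integrate in $v$. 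The processes $G(u,\cdot)\sigma^{f,\cdot}$ and $w^{\cdot}\sigma^{f,\cdot}$ lie in $L^2_{\boxtimes^l}$, because $G$ and $w$ are bounded. Proposition \ref{Prp:3.4} then kills the stochastic-integral term, leaving
\begin{equation*}
GX_t^{f,u}=\int_I G(u,v)x_0^{f,v}\lambda(\mathrm{d}v)+\int_I G(u,v)\int_0^t b_s^{f,v}\,\mathrm{d}s\,\lambda(\mathrm{d}v).
\end{equation*}
\item The initial term is deterministic. For the drift term, I apply Theorem \ref{Thm:2.1} with $\mathcal{C}=\mathcal{F}^l_T$, using the essential pairwise conditional independence from Proposition \ref{Prp:3.4}. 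This replaces the integrand by its conditional expectation given $\mathcal{F}_T^l$, which shows $GX_t^{f,u}$ is $\mathcal{F}_T^l$-measurable.
\end{itemize}

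To get adaptedness, rather than just $\mathcal{F}_T^l$-measurability, I would repeat the argument with $\mathcal{C}=\mathcal{F}_t^l$. Here I would use that $X^{f,v}_t$ depends only on $W^{f,v}$ on $[0,t]$ and $W^l$ on $[0,t]$, and that the increments of $W^l$ after $t$ are independent of these. This yields $\mathbb{E}[X_t^{f,v}\mid\mathcal{F}^l_T]=\mathbb{E}[X_t^{f,v}\mid\mathcal{F}^l_t]$. The delicate point is that $\mathcal{F}^l_t$ need not be countably generated once completed; I would apply CELLN with the uncompleted $\sigma(W^l_s,s\le t)$ and then pass to the completion.

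The uniform bound $\sup_u\mathbb{E}[\sup_t|GX^{f,u}_t|^2]<\infty$ comes from Proposition \ref{Prp:2.1}(1) applied pathwise together with (\ref{eq18}). Continuity in $t$ follows from continuity of $X^f$ and dominated convergence.

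Finally, for $(\alpha^f,\alpha^l)\in\mathcal{U}_s^f\times\mathcal{U}^l$, I repeat the existence and uniqueness argument using the second halves of Propositions \ref{prp07} and \ref{Prp:3.3}, with estimate (\ref{eq19}). This gives $X^f\in\mathbb{S}^{\infty,2}_{\boxtimes^l}$. Since $\mathcal{U}_s^f\subset\mathcal{U}^f$, this solution coincides with the one already constructed.
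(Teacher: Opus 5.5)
Your proposal is correct and follows essentially the same route as the paper: existence and uniqueness come from the fixed point of $\varphi$ (Propositions \ref{prp07} and \ref{Prp:3.3}), and the regularity of $GX^f$ and $M^f$ comes from the conditional independence of Proposition \ref{Prp:3.4} together with the CELLN and the bounds of Propositions \ref{Prp:3.1} and \ref{Prp:2.1}. Your version also spells out several points that the paper's short proof leaves implicit: the reduction of uniqueness to the uniqueness of the fixed point, the upgrade from $\mathcal{F}_T^l$-measurability to $\mathcal{F}^l$-adaptedness, and the countable-generation requirement when applying Theorem \ref{Thm:2.1}.
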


\begin{proof}
	The existence and uniqueness of a solution to the system equation follow from Propositions \ref{prp07} and \ref{Prp:3.3}. For $X^f\in\mathbb{S}_{\boxtimes^l}^2(\mathbb{R}^{n_1})$, Proposition \ref{Prp:3.4} implies that $X^f$ is essentially pairwise conditionally independent given $\mathcal{F}_T^l$. By Propositions \ref{Prp:3.1} and \ref{Prp:2.1}, we obtain $GX^f\in \bar{\mathbb{S}}_{\boxtimes^l}^{\infty,2}(\mathbb{R}^{n_1})$ and $M^f\in \mathbb{S}_{\mathcal{F}^l}^2(\mathbb{R}^{n_1})$.
\end{proof}

\section{Solution of the limiting problem}\label{Sec:04}

\subsection{The followers' problem}

We use a fixed-point method to solve the Nash equilibrium of the followers' problem. The steps are as follows:

(1) for fixed $\alpha^l\in\mathcal{U}^l$ and fixed $z\in\bar{\mathbb{S}}_{\boxtimes^l}^{\infty,2}(\mathbb{R}^{n_1})$, solve the optimal control problem of follower $u\in I$;

(2) find $z\in\bar{\mathbb{S}}_{\boxtimes^l}^{\infty,2}(\mathbb{R}^{n_1})$ such that $z$ and the corresponding optimal pair $(\hat{X}^f,\hat{\alpha}^f)$ satisfy $z=G\hat{X}^f$.

\begin{Ass}\label{Ass:02}
	$b^f,\sigma^f,h^f$ are differentiable with respect to $(x^f,\alpha^f)$; $\partial_{x^f} b^f,\partial_{x^f}\sigma^f,\partial_{x^f}h^f$ are continuous with respect to $(x^f,\alpha^f)$; $g^f$ is continuously differentiable with respect to $x^f$; $b_t^{f,u}(0,0,0,0,0),\sigma_t^{f,u}(0,0,0,0,0),h_t^{f,u}(0,0,0,0,0),g_t^{f,u}(0,0,0)$ are uniformly bounded with respect to $(t,u)$; $\partial_{x^f}b^f,\partial_{\alpha^f}b^f,\partial_{x^f}\sigma^f,\partial_{\alpha^f}\sigma^f$ are uniformly bounded; $\partial_{x^f}h^f,\partial_{\alpha^f}h^f$ have at most linear growth with respect to $(x^f,\alpha^f)$; $\partial_{x^f}g^f$ has at most linear growth with respect to $x^f$; $b^f,\sigma^f,\partial_{x^f}h^f,\partial_{\alpha^f}h^f,\partial_{x^f}g^f$ have at most linear growth with respect to $(Gx^f,x^l,\alpha^l)$; and $h^f,g^f$ have at most quadratic growth with respect to $(Gx^f,x^l,\alpha^l)$.
\end{Ass}

\begin{prp}[Maximum principle]
	Assume that Assumptions \ref{Ass:05}, \ref{Ass:01}, and \ref{Ass:02} hold. For fixed $\alpha^l\in\mathcal{U}^l$ and $z\in\bar{\mathbb{S}}_{\boxtimes^l}^{\infty,2}(\mathbb{R}^{n_1})$, suppose that the problem of follower $u$ admits an optimal pair $(\hat{\alpha}^{f,u},\hat{X}^{f,u})$. Then the adjoint equation
	\begin{equation*}
		\begin{cases}
			\mathrm{d}p_t^{f,u}=-\big\{\partial_{x^f} b_t^{f,u}(X_t^{f,u},\alpha_t^{f,u},z_t^u,X_t^l,\alpha_t^l)^\top p_t^{f,u}\\
			\qquad\qquad +\partial_{x^f}\sigma_t^{f,u}(X_t^{f,u},\alpha_t^{f,u},z_t^u,X_t^l,\alpha_t^l)^\top q_t^{f,u}\\
			\qquad\qquad +\partial_{x^f} h_t^{f,u}(X_t^{f,u},\alpha_t^{f,u},z_t^u,X_t^l,\alpha_t^l)\big\}\mathrm{d}t+q_t^{f,u}\mathrm{d}W_t^{f,u}+q_t^{l,u}\mathrm{d}W_t^l,\\
			p_T^{f,u}=\partial_{x^f} g^{f,u}(X_T^{f,u},z_T^u,X_T^l),
		\end{cases}
	\end{equation*}
	admits a unique solution $(p^{f,u},q^{f,u},q^{l,u})\in\mathbb{S}_{\mathcal{F}^{f,u}\lor\mathcal{F}^l}^2(\mathbb{R}^{n_1})\times\mathbb{H}_{\mathcal{F}^{f,u}\lor\mathcal{F}^l}^2(\mathbb{R}^{n_1})\times\mathbb{H}_{\mathcal{F}^{f,u}\lor\mathcal{F}^l}^2(\mathbb{R}^{n_1})$, and the optimal control of follower $u$ satisfies
	\begin{equation*}
		\hat{\alpha}_t^{f,u}=\arg\min_\alpha H_t^{f,u}(\hat{X}_t^{f,u},\alpha,z_t^u,\hat{p}_t^{f,u},\hat{q}_t^{f,u}),
	\end{equation*}
	where the Hamiltonian is
	\begin{equation*}
    \begin{aligned}
		H^{f,u}_t(x,\alpha,z,p,q)&:=p^\top b_t^{f,u}(x,\alpha,z,X_t^l,\alpha_t^l)+q^\top \sigma_t^{f,u}(x,\alpha,z,X_t^l,\alpha_t^l)\\
		&\quad +h_t^{f,u}(x,\alpha,z,X_t^l,\alpha_t^l).
	\end{aligned}
    \end{equation*}
\end{prp}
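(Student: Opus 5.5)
For fixed $\alpha^l\in\mathcal{U}^l$ and $z\in\bar{\mathbb{S}}_{\boxtimes^l}^{\infty,2}(\mathbb{R}^{n_1})$, the problem of follower $u$ is a classical stochastic optimal control problem. It lives on $(\varOmega,\mathcal{F},P)$ with the two-dimensional Brownian motion $(W^{f,u},W^l)$ and filtration $\mathcal{F}^{f,u}\lor\mathcal{F}^l$. The processes $z^u$, $X^l$ and $\alpha^l$ act as exogenous adapted inputs, and $z^u$ and $X^l$ are square integrable. Note that $X^l$ does not depend on $\alpha^{f,u}$, since $M^f$ is frozen through $z$ (or is unaffected by a single follower). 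The plan is therefore to reduce the statement to Peng's general stochastic maximum principle with a control-dependent diffusion and a possibly nonconvex control domain. Here the control domain is all of $\mathbb{R}^{m_1}$, which is convex. So the convex (spike-free) variational approach suffices, and the conclusion comes out as the first-order condition $\partial_\alpha H=0$, written as the stated $\arg\min$ characterization.

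The first step is well-posedness of the adjoint equation. It is a linear BSDE driven by $(W^{f,u},W^l)$. Its coefficients $\partial_{x^f}b^f$ and $\partial_{x^f}\sigma^f$ are bounded by Assumption \ref{Ass:02}. The free term $\partial_{x^f}h^f$ and the terminal value $\partial_{x^f}g^f$ have linear growth in $(X^{f,u},\alpha^{f,u},z^u,X^l,\alpha^l)$. All of these inputs are square integrable by Theorem \ref{Thm:3.1} and the choice of $z$. Standard linear BSDE theory then gives a unique solution $(p^{f,u},q^{f,u},q^{l,u})$ in the stated spaces. The component $q^{l,u}$ is needed because the filtration is generated by both Brownian motions.

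The second step is the variation. Take $\alpha^{f,u}_\varepsilon=\hat\alpha^{f,u}+\varepsilon v$ with $v$ adapted and square integrable. Let $y$ be the solution of the first-order variational equation
\begin{equation*}
\mathrm{d}y_t=(\partial_x b\,y_t+\partial_\alpha b\,v_t)\mathrm{d}t+(\partial_x\sigma\,y_t+\partial_\alpha\sigma\,v_t)\mathrm{d}W^{f,u}_t,\quad y_0=0 .
\end{equation*}
We show $\mathbb{E}\sup_t|(X_\varepsilon-\hat X)/\varepsilon-y|^2\to0$. This uses the bounded derivatives, the continuity of $\partial_x b$ and $\partial_x\sigma$, and dominated convergence. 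Since $X^l$ and $z$ are unchanged, no variation enters through them. The linear-growth bounds on $\partial h$ and $\partial g$ then give
\begin{equation*}
0\le\lim_{\varepsilon\to0}\frac{J^{f,u}(\alpha_\varepsilon)-J^{f,u}(\hat\alpha)}{\varepsilon}=\mathbb{E}\Bigl[\int_0^T(\partial_x h\,y+\partial_\alpha h\,v)\mathrm{d}t+\partial_x g\,y_T\Bigr].
\end{equation*}
Applying Itô's formula to $p^\top y$ turns the right-hand side into $\mathbb{E}\int_0^T\partial_\alpha H_t^\top v_t\,\mathrm{d}t$. The $q^{l,u}\mathrm{d}W^l$ part only produces a martingale term, because $y$ has no $W^l$ diffusion. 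Replacing $v$ by $-v$ gives equality. Choosing $v=\partial_\alpha H$, truncated, yields $\partial_\alpha H_t(\hat X_t,\hat\alpha_t,z_t^u,p_t,q_t)=0$ a.e., a.s.

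The main obstacle is passing from this stationarity condition to the $\arg\min$ statement. Without convexity of $H$ in $\alpha$, the first-order condition alone does not yield the global minimum. For the general (nonconvex, control-in-diffusion) case one would need Peng's second-order adjoint $P$, and the correct minimized object would be the $\mathcal{H}$-function with the correction term $\tfrac12(\sigma-\hat\sigma)^\top P(\sigma-\hat\sigma)$. I would first try to state the proposition under the convexity the paper later imposes in the LQ setting, namely $\partial_{\alpha\alpha}H\ge0$. In that case stationarity is equivalent to the minimization, which completes the argument.
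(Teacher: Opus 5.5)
Up to the last step your route is the paper's. The paper's proof is exactly the reduction you describe, followed by a citation of the stochastic maximum principle (Carmona--Delarue, Vol.~I, Thm.~6.14, or Yong--Zhou, Ch.~3, Thm.~3.2). The reduction: changing $\alpha^{f,u}$ at the single, $\lambda$-null index $u$ leaves $M^f$, hence $X^l$, unchanged, so follower $u$ faces a classical control problem with exogenous inputs $z^u,X^l,\alpha^l$ and convex control set $\mathbb{R}^{m_1}$. Your adjoint well-posedness, convex perturbation and It\^o duality on $p^\top y$ (where $q^{l,u}\,\mathrm{d}W^l_t$ indeed only contributes a martingale) correctly unpack that citation. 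They yield $\partial_\alpha H_t^{f,u}(\hat X_t^{f,u},\hat\alpha_t^{f,u},z_t^u,p_t^{f,u},q_t^{f,u})=0$. The correct reason $X^l$ is unaffected is your parenthetical one; $z$ freezes $GX^f$, not $M^f$.

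The obstacle you flag is genuine, and the paper's proof does not get past it: under Assumptions \ref{Ass:05}, \ref{Ass:01} and \ref{Ass:02} alone, the $\arg\min$ conclusion is false. Take $n_1=m_1=1$, $x_0^{f,u}=0$, $b^f\equiv0$, $\sigma^f=\alpha^f$, $h^f=\frac{r}{2}|\alpha^f|^2$ with $r<0$, and $g^f=\frac{\gamma}{2}|x^f|^2$ with $r+\gamma>0$. By It\^o's isometry, $J^{f,u}=\frac{r+\gamma}{2}\mathbb{E}\int_0^T|\alpha_t^{f,u}|^2\mathrm{d}t$, so $\hat\alpha^{f,u}=0$ is optimal. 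Then $\hat X^{f,u}\equiv0$, the adjoint triple vanishes, and $H_t^{f,u}(\hat X_t^{f,u},\alpha,z_t^u,0,0)=\frac{r}{2}|\alpha|^2$ is unbounded below, with $\hat\alpha^{f,u}=0$ its maximiser. Neither citation covers this. Carmona--Delarue's necessary condition additionally assumes $H$ convex in $\alpha$. Yong--Zhou's Theorem~3.2 is Peng's principle, whose minimised object is the $\mathcal{H}$-function carrying the second-order adjoint correction. Under the stated hypotheses, what actually holds is precisely your stationarity condition.

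Your proposed repair, however, misreads the paper: it never imposes $\partial_{\alpha\alpha}H\ge0$ in the LQ setting. There $\partial_{\alpha\alpha}H=R^f$, which is deliberately allowed to be indefinite. What is imposed is $\hat R^f=R^f+\sigma^{f,2\top}P^f\sigma^{f,2}\gg0$, and the LQ analysis only uses the first-order condition (\ref{eq05}). The example above is of the paper's LQ form with $P^f\equiv\gamma$ and $\hat R^f=r+\gamma>0$, so even that condition does not rescue the $\arg\min$. Adding convexity of $H$ in $\alpha$ makes the proposition true, but it excludes the indefinite-weight regime that is the point of the paper. The repair consistent with how the result is used is to replace the $\arg\min$ by the stationarity condition $\partial_\alpha H^{f,u}=0$ that you proved. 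A stronger correct alternative is minimisation of Peng's $\mathcal{H}$-function, which requires the second-order adjoint.
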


\begin{proof}
	For fixed $\alpha^l\in\mathcal{U}^l$ and $z\in\bar{\mathbb{S}}_{\boxtimes^l}^{\infty,2}(\mathbb{R}^{n_1})$, for each $u\in I$, changing $\alpha^{f,u}$ alone does not affect the value of $X^l$. Therefore, the follower problem reduces to a family of independently optimized optimal control problems. Since the control domain is convex, the conclusion follows from the stochastic maximum principle; see Theorem 6.14 in Volume I of \cite{Carmona-Delarue-2018} or Theorem 3.2 in Chapter 3 of \cite{Yong-Zhou-1999}.
\end{proof}

\begin{prp}[Consistency condition]
	Assume that Assumptions \ref{Ass:05}, \ref{Ass:01}, and \ref{Ass:02} hold. For fixed $\alpha^l\in\mathcal{U}^l$ and $z\in\bar{\mathbb{S}}_{\boxtimes^l}^{\infty,2}(\mathbb{R}^{n_1})$, suppose that the corresponding follower problem admits an optimal pair $(\hat{\alpha}^f,\hat{X}^f)$. Then $z=G\hat{X}^f$ if and only if $z$ satisfies the equation
	\begin{equation*}
		\begin{cases}
			\mathrm{d}z_t^u=\int_I G(u,v)b_t^{f,v}(\hat{X}_t^{f,v},\hat{\alpha}_t^{f,v},z_t^v,X_t^l,\alpha_t^l)\lambda(\mathrm{d}v)\mathrm{d}t,\\
			z_0^u=\int_I G(u,v)x_0^{f,v}\lambda(\mathrm{d}v).
		\end{cases}
	\end{equation*}
\end{prp}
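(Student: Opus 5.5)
The plan is to integrate the optimal state equation of each follower against the kernel $G(u,\cdot)$ and remove the stochastic integral term with Proposition \ref{Prp:3.4}. For fixed $\alpha^l\in\mathcal{U}^l$ and $z\in\bar{\mathbb{S}}_{\boxtimes^l}^{\infty,2}(\mathbb{R}^{n_1})$, write the optimal state of follower $v$ in integral form:
\begin{equation*}
\hat X_t^{f,v}=x_0^{f,v}+\int_0^t b_s^{f,v}(\hat X_s^{f,v},\hat\alpha_s^{f,v},z_s^v,X_s^l,\alpha_s^l)\mathrm{d}s+\int_0^t \sigma_s^{f,v}(\hat X_s^{f,v},\hat\alpha_s^{f,v},z_s^v,X_s^l,\alpha_s^l)\mathrm{d}W_s^{f,v}.
\end{equation*}
Multiply by $G(u,v)$ and integrate in $v$ over $I$. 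By Proposition \ref{prp07}, $\hat X^f\in\mathbb{S}^2_{\boxtimes^l}$. Assumption \ref{Ass:01} then gives $b^f,\sigma^f$ in $\mathbb{H}^2_{\boxtimes^l}$. Since $G$ is bounded by $1$, the Fubini property of the rich Fubini extension lets me swap $\int_I$ with $\int_0^t\cdots\mathrm{d}s$. This yields
\begin{equation*}
G\hat X_t^{f,u}=\int_I G(u,v)x_0^{f,v}\lambda(\mathrm{d}v)+\int_0^t\int_I G(u,v)b_s^{f,v}(\cdots)\lambda(\mathrm{d}v)\mathrm{d}s+\int_I\int_0^t G(u,v)\sigma_s^{f,v}(\cdots)\mathrm{d}W_s^{f,v}\lambda(\mathrm{d}v).
\end{equation*}

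The key step is to show that the last term vanishes a.s. for each fixed $u$. I apply Proposition \ref{Prp:3.4} with the process $Y^v_s:=G(u,v)\sigma_s^{f,v}(\cdots)$. This process lies in $L^2_{\boxtimes^l}(0,T;\mathbb{R}^{n_1})$: it is $\mathcal{F}\boxtimes\mathcal{I}$-measurable, and $Y^v$ is $\mathcal{F}^{f,v}\lor\mathcal{F}^l$-adapted because $z^v$, $X^l$ and $\alpha^l$ are $\mathcal{F}^l$-adapted. Proposition \ref{Prp:3.4} then gives $\int_I\int_0^t Y_s^v\mathrm{d}W_s^{f,v}\lambda(\mathrm{d}v)=0$ a.s. for every $t$. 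Taking a continuous version in $t$ makes the null set uniform over $t\in[0,T]$. Hence $G\hat X^f$ satisfies the stated equation.

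This step is the main obstacle. The measurability of $(\omega,v)\mapsto\int_0^tY^v_s\mathrm{d}W^{f,v}_s$ with respect to $\mathcal{F}\boxtimes\mathcal{I}$ has to be justified. I would do this by approximating $Y$ with simple processes in $L^2_{\boxtimes}$, for which it is clear, and passing to the limit using the Itô isometry integrated in $v$.

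With this identity, both directions are immediate, because the right-hand side of the $z$-equation is a fixed functional of the given optimal pair.
\begin{itemize}
\item If $z=G\hat X^f$, the identity shows that $z$ satisfies the stated equation.
\item Conversely, suppose $z$ satisfies the stated equation. Its right-hand side coincides with that of the identity for $G\hat X^f$, since both involve the same $\hat X^{f,v}$, $\hat\alpha^{f,v}$, $z^v$, $X^l$, $\alpha^l$, and the initial data agree. Integrating from $0$ to $t$ then gives $z_t^u=G\hat X_t^{f,u}$ for all $t$, a.s., for every $u$.
\end{itemize}
No contraction argument is needed here. Only the consistency of the integral representation is used, and uniqueness for the $z$-equation is not required.
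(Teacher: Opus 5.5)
Your proposal is correct and follows essentially the paper's route. You integrate the optimal state equation against $G(u,\cdot)$, interchange the $\lambda(\mathrm{d}v)$ and $\mathrm{d}s$ integrals, remove the stochastic-integral term via the CELLN in the form of Proposition~\ref{Prp:3.4}, and then observe that both directions are immediate because the fixed $z$ appears inside the right-hand side. Your measurability worry can also be handled without any approximation: $\int_0^t Y_s^v\,\mathrm{d}W_s^{f,v}=G(u,v)\bigl(\hat X_t^{f,v}-x_0^{f,v}-\int_0^t b_s^{f,v}(\cdots)\,\mathrm{d}s\bigr)$, which is $\mathcal{F}\boxtimes\mathcal{I}$-measurable directly from the measurability of $\hat X^f$ and of the drift term.
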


\begin{proof}
	Substituting the optimal control into the state equation, integrating the state equation with respect to the index $u$, and using the exact law of large numbers (Theorem \ref{Thm:2.1} and Proposition \ref{Prp:3.4}) yield the result.
\end{proof}

\begin{thm}[Collection of necessary conditions in the general case]
	Assume that Assumptions \ref{Ass:05}, \ref{Ass:01}, and \ref{Ass:02} hold. For fixed $\alpha^l\in\mathcal{U}^l$, suppose that the follower problem admits a Nash equilibrium pair $(\hat{X}^f,\hat{\alpha}^f)$. Then the forward-backward system
	\begin{equation*}
		\begin{cases}
			\mathrm{d}z_t^u=\int_I G(u,v)b_t^{f,v}(\hat{X}_t^{f,v},\hat{\alpha}_t^{f,v},z_t^v,X_t^l,\alpha_t^l)\lambda(\mathrm{d}v)\mathrm{d}t,\\
			\mathrm{d}p_t^{f,u}=-\big\{\partial_{x^f} b_t^{f,u}(X_t^{f,u},\alpha_t^{f,u},z_t^u,X_t^l,\alpha_t^l)^\top p_t^{f,u}\\
			\qquad\qquad +\partial_{x^f}\sigma_t^{f,u}(X_t^{f,u},\alpha_t^{f,u},z_t^u,X_t^l,\alpha_t^l)^\top q_t^{f,u}\\
			\qquad\qquad +\partial_{x^f} h_t^{f,u}(X_t^{f,u},\alpha_t^{f,u},z_t^u,X_t^l,\alpha_t^l)\big\}\mathrm{d}t +q_t^{f,u}\mathrm{d}W_t^{f,u}+q_t^{l,u}\mathrm{d}W_t^l,\\
			z_0^u=\int_I G(u,v)x_0^{f,v}\lambda(\mathrm{d}v),\quad p_T^{f,u}=\partial_{x^f} g^{f,u}(X_T^{f,u},z_T^u,X_T^l),
		\end{cases}
	\end{equation*}
	admits a unique solution $(z,p^f,q^f,q^l)\in\bar{\mathbb{S}}_{\boxtimes^l}^{\infty,2}(\mathbb{R}^{n_1})\times\mathbb{S}_{\boxtimes^l}^2(\mathbb{R}^{n_1})\times\mathbb{H}_{\boxtimes^l}^2(\mathbb{R}^{n_1})\times\mathbb{H}_{\boxtimes^l}^2(\mathbb{R}^{n_1})$, and the Nash equilibrium $\hat{\alpha}$ satisfies, for any $(t,u)\in[0,T]\times I$,
	\begin{equation*}
		\hat{\alpha}_t^{f,u}=\arg\min_\alpha H_t^{f,u}(\hat{X}_t^{f,u},\alpha,z_t^u,\hat{p}_t^{f,u},\hat{q}_t^{f,u}).
	\end{equation*}
\end{thm}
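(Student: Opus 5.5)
The plan is to assemble the theorem from the two preceding propositions. The forward–backward system is essentially uncoupled once the Nash equilibrium pair is given, so existence and uniqueness can be checked equation by equation.

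\emph{Step 1 (the forward component).} Fix $\alpha^l\in\mathcal{U}^l$ and the Nash equilibrium pair $(\hat{X}^f,\hat{\alpha}^f)$, and let $X^l$ be the corresponding leader state. By definition of a Nash equilibrium in the fixed-point scheme, take $z:=G\hat{X}^f$. By Theorem \ref{Thm:3.1}, $z\in\bar{\mathbb{S}}_{\boxtimes^l}^{\infty,2}(\mathbb{R}^{n_1})$. Hence each follower $u$, facing $(z,\alpha^l)$, has $(\hat{\alpha}^{f,u},\hat{X}^{f,u})$ as an optimal pair. The consistency condition proposition then shows that $z$ solves the forward graphon ODE.

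For uniqueness of the forward component, note that this equation is a pathwise linear-in-$z$ integral equation once $(\hat{X}^f,\hat{\alpha}^f,X^l,\alpha^l)$ are frozen. By the Lipschitz continuity of $b^f$ in $Gx^f$ (Assumption \ref{Ass:01}) and Proposition \ref{Prp:2.1}(1)--(2), two solutions $z,z'$ satisfy
\[
\sup_u|z_t^u-z_t'^u|\le K\int_0^t\sup_v|z_s^v-z_s'^v|\,\mathrm{d}s .
\]
Gronwall's inequality then gives $z=z'$. Alternatively, one can reuse the Picard argument of Proposition \ref{Prp:3.3}.

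\emph{Step 2 (the backward component).} With $z$ fixed, for each $u$ the adjoint BSDE is linear in $(p^{f,u},q^{f,u},q^{l,u})$. Its coefficients $\partial_{x^f}b^f$ and $\partial_{x^f}\sigma^f$ are bounded by Assumption \ref{Ass:02}. Its driver term $\partial_{x^f}h^f$ and terminal value $\partial_{x^f}g^f$ are square integrable, by the linear growth conditions and the estimates of Proposition \ref{prp07}. The maximum principle proposition therefore gives a unique solution in the $\mathcal{F}^{f,u}\lor\mathcal{F}^l$ spaces, together with the argmin characterization of $\hat{\alpha}^{f,u}$ for every $u$.

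\emph{Step 3 (lifting to the Fubini extension; the main obstacle).} The remaining step is to show that $(p^f,q^f,q^l)$, viewed as a family indexed by $u$, is $\mathcal{F}\boxtimes\mathcal{I}$-measurable and lies in $\mathbb{S}_{\boxtimes^l}^2\times\mathbb{H}_{\boxtimes^l}^2\times\mathbb{H}_{\boxtimes^l}^2$. The plan has two parts.

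First, obtain measurability in $u$ as for $X^f$ in Proposition \ref{prp07}. Construct the BSDE solution by Picard iteration, so that each iterate is jointly measurable (the data are $\mathcal{I}$-measurable by Assumption \ref{Ass:05}). Measurability then passes to the limit, taking the limit along a subsequence converging $P\boxtimes\lambda$-a.s.

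Second, obtain the integrability bound. The standard a priori BSDE estimate gives
\[
\mathbb{E}\Big[\sup_t|p_t^{f,u}|^2+\int_0^T\big(|q_t^{f,u}|^2+|q_t^{l,u}|^2\big)\,\mathrm{d}t\Big]\le K\,\mathbb{E}\Big[|\partial_{x^f}g^{f,u}|^2+\int_0^T|\partial_{x^f}h_t^{f,u}|^2\,\mathrm{d}t\Big],
\]
with $K$ independent of $u$. By the growth assumptions, the right-hand side is bounded by
\[
K\Big(1+\mathbb{E}\Big[\sup_t|\hat{X}_t^{f,u}|^2+\int_0^T|\hat{\alpha}_t^{f,u}|^2\,\mathrm{d}t\Big]+\sup_v\mathbb{E}\Big[\sup_t|z_t^v|^2\Big]+\mathbb{E}\Big[\sup_t|X_t^l|^2+\int_0^T|\alpha_t^l|^2\,\mathrm{d}t\Big]\Big).
\]
Integrating in $u$ and using \eqref{eq18} gives finiteness in the $\boxtimes$ norms.

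Uniqueness of the full system then follows from the uniqueness of $z$ in Step 1 and the uniqueness of the BSDE solution for each $u$ in Step 2.
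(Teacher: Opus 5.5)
Your proposal is correct and is essentially the paper's argument. The paper gives no separate proof: it treats the theorem as the conjunction of the maximum-principle proposition (applied to each follower with $z=G\hat{X}^f$ frozen) and the consistency-condition proposition, which is exactly your Steps 1--2. Your uniqueness check for $z$ and the lifting in Step 3 are details the paper leaves implicit; the one point there that needs more than measurability of the data is the joint measurability in $u$ of $q^{f,u},q^{l,u}$, because the filtrations $\mathcal{F}^{f,u}\lor\mathcal{F}^l$ vary with $u$ (running the Picard scheme on the canonical path space and then composing with $(W^{f,u},W^l)$ handles this).
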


Substituting the LQ case and applying the sufficient maximum principle \cite[Volume I, Theorem 6.16]{Carmona-Delarue-2018}, we obtain the following result.

\begin{prp}[Necessary and sufficient maximum principle]\label{Prp:4.3}
	Assume that Assumption \ref{Ass:LQ} holds. For fixed $\alpha^l\in\mathcal{U}^l$ and $z\in\bar{\mathbb{S}}_{\boxtimes^l}^{\infty,2}(\mathbb{R}^{n_1})$, the follower problem admits a Nash equilibrium if and only if the Hamiltonian system
	\begin{equation}\label{Eq:follower_hamilton}
		\begin{cases}
			\mathrm{d}\hat{X}_t^{f,u}=\Big\{ b_t^{f,u,0}+b_t^{f,u,1}\hat{X}_t^{f,u}+b_t^{f,u,2}\hat{\alpha}_t^{f,u}
             +b_t^{f,u,3}z_t^u+b_t^{f,u,4}X_t^l+b_t^{f,u,5}\alpha_t^l \Big\}\mathrm{d}t\\
			\qquad\qquad +\Big\{ \sigma_t^{f,u,0}+\sigma_t^{f,u,1}\hat{X}_t^{f,u}+\sigma_t^{f,u,2}\hat{\alpha}_t^{f,u}
             +\sigma_t^{f,u,3}z_t^u+\sigma_t^{f,u,4}X_t^l+\sigma_t^{f,u,5}\alpha_t^l \Big\}\mathrm{d}W_t^{f,u},\\
			\mathrm{d}p_t^{f,u}=-\Big\{ b_t^{f,u,1\top}p_t^{f,u}+\sigma_t^{f,u,1\top}q_t^{f,u}+Q_t^{f,u}(\hat{X}_t^{f,u}-h_t^{f,u,1}z_t^u-h_t^{f,u,2}X_t^l) \Big\}\mathrm{d}t\\
			\qquad\qquad +q_t^{f,u}\mathrm{d}W_t^{f,u}+q_t^{l,u}\mathrm{d}W_t^l,\\
			X_0^{f,u}=x_0^{f,u},\quad p_T^{f,u}=G^{f,u}(\hat{X}_T^{f,u}-g^{f,u,1}z_T^u-g^{f,u,2}X_T^l),
		\end{cases}
	\end{equation}
	admits a unique solution $(\hat{X}^f,p^f,q^f,q^l)\in\mathbb{S}_{\boxtimes^l}^2(\mathbb{R}^{n_1})\times\mathbb{S}_{\boxtimes^l}^2(\mathbb{R}^{n_1})\times\mathbb{H}_{\boxtimes^l}^2(\mathbb{R}^{n_1})\times\mathbb{H}_{\boxtimes^l}^2(\mathbb{R}^{n_1})$, where the equilibrium strategy $\hat{\alpha}^f$ satisfies
	\begin{equation}\label{eq05}
		R_t^{f,u}(\hat{\alpha}_t^{f,u}-h_t^{f,u,3}\alpha_t^l)+b_t^{f,u,2\top}p_t^{f,u}+\sigma_t^{f,u,2\top}q_t^{f,u}=0.
	\end{equation}
\end{prp}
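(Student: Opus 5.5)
The plan is to freeze $\alpha^l\in\mathcal{U}^l$ and $z\in\bar{\mathbb{S}}_{\boxtimes^l}^{\infty,2}(\mathbb{R}^{n_1})$, as in the proof of the maximum principle above. Then $X^l$ and $z$ enter the problem of follower $u$ only as exogenous $\mathcal{F}^l$-adapted square-integrable inputs. As already observed there, varying $\alpha^{f,u}$ alone does not affect $X^l$ (a single index has $\lambda$-measure zero in $M^f$). So for each $u\in I$ the follower problem is a classical stochastic LQ problem with random nonhomogeneous terms on the filtration $\mathcal{F}^{f,u}\lor\mathcal{F}^l$, and the Nash equilibrium is the family of these individual optima. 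Everything then reduces to a convex-variational argument for one $u$, followed by a measurability and integrability check in $u$.

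\textbf{Step 1 (adjoint and directional derivative).} First, well-posedness of the backward component of (\ref{Eq:follower_hamilton}) for a given $\hat{X}^{f,u}$: it is a linear BSDE with bounded coefficients and square-integrable forcing. It has a unique solution by martingale representation with respect to the two independent Brownian motions $(W^{f,u},W^l)$; this is why both $q^{f,u}$ and $q^{l,u}$ appear. Next, fix $\alpha^{f,u}$ and a direction $v\in L^2_{\mathcal{F}^{f,u}\lor\mathcal{F}^l}(0,T;\mathbb{R}^{m_1})$, and let $X^{v}$ solve the homogeneous linear equation $\mathrm{d}X^v=(b^{f,u,1}X^v+b^{f,u,2}v)\mathrm{d}t+(\sigma^{f,u,1}X^v+\sigma^{f,u,2}v)\mathrm{d}W^{f,u}$ with $X^v_0=0$. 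Since the cost is quadratic and the state is affine, the expansion is exact:
\begin{equation*}
J^{f,u}(\alpha^{f,u}+\varepsilon v)-J^{f,u}(\alpha^{f,u})=\varepsilon\,\mathbb{E}\int_0^T\big\langle R^{f,u}_t(\alpha^{f,u}_t-h^{f,u,3}_t\alpha^l_t)+b^{f,u,2\top}_tp^{f,u}_t+\sigma^{f,u,2\top}_tq^{f,u}_t,\,v_t\big\rangle\mathrm{d}t+\tfrac{\varepsilon^2}{2}J^0(v).
\end{equation*}
Here $J^0(v)=\mathbb{E}[\int_0^T(\|X^v_t\|_{Q^{f,u}_t}+\|v_t\|_{R^{f,u}_t})\mathrm{d}t+\|X^v_T\|_{G^{f,u}}]$. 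The linear term comes from applying It\^o's formula to $\langle p^{f,u},X^v\rangle$; the $W^l$-integral has zero cross-variation with $X^v$ and vanishes in expectation.

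\textbf{Step 2 (equivalence).} If $\hat{\alpha}^{f,u}$ is optimal, taking $\varepsilon\to0^\pm$ forces the linear term to vanish for every $v$, which gives (\ref{eq05}). Letting $\varepsilon$ be arbitrary then also gives $J^0(v)\ge0$. Conversely, suppose (\ref{Eq:follower_hamilton}) together with (\ref{eq05}) has a solution and $J^0\ge0$ (the convexity condition implicit in the LQ setting). Then the expansion with $\varepsilon=1$ shows $J^{f,u}(\hat{\alpha}^{f,u}+v)\ge J^{f,u}(\hat{\alpha}^{f,u})$, which is the sufficient maximum principle of \cite[Volume I, Theorem 6.16]{Carmona-Delarue-2018} in this setting. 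Uniqueness of the solution of the Hamiltonian system corresponds to uniqueness of the minimizer, i.e.\ uniform convexity $J^0(v)\ge\delta\,\mathbb{E}\int_0^T|v_t|^2\mathrm{d}t$. I would state explicitly that the ``if and only if'' is read under this convexity condition, since $R^{f,u}$ may be indefinite.

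\textbf{Step 3 (global objects in $u$).} To place $(\hat{X}^f,p^f,q^f,q^l)$ in $\mathbb{S}^2_{\boxtimes^l}\times\mathbb{S}^2_{\boxtimes^l}\times\mathbb{H}^2_{\boxtimes^l}\times\mathbb{H}^2_{\boxtimes^l}$, I would use the standard a priori estimate for the linear FBSDE. Its constants depend only on the uniform bounds of the coefficients (and the uniform convexity constant), so integrating in $u$ as in Proposition \ref{prp07} gives the required integrability. The $\mathcal{F}\boxtimes\mathcal{I}$-measurability follows because the solution is obtained by a Picard or continuation procedure whose iterates are measurable in $u$.

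I expect the main obstacle to be the indefiniteness of $R^{f,u}$. Without $R^{f,u}\gg0$, (\ref{eq05}) cannot be solved pointwise for $\hat{\alpha}^{f,u}$, so the Hamiltonian system is a genuinely coupled FBSDE whose solvability is not given by contraction. Here I would first try the method of continuation, using the uniform convexity of $J^0$ as the monotonicity condition. If that fails, the fallback is to rely on the Riccati solvability assumed later in the paper, via the invertibility of $R^{f,u}+\sigma^{f,u,2\top}P\sigma^{f,u,2}$.
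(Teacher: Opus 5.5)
Your argument is correct, and it takes a genuinely different route from the paper's, one that is also more careful. The paper's justification is a single line: necessity comes from the general maximum principle, and sufficiency is taken from the sufficient maximum principle of Carmona--Delarue (Vol.~I, Theorem~6.16). That theorem requires the Hamiltonian to be convex in $(x,\alpha)$ and the terminal cost to be convex in $x$. For the present LQ data this means $Q^{f,u}\ge 0$, $R^{f,u}\ge 0$, $G^{f,u}\ge 0$, which is exactly what the indefinite setting gives up; the word ``unique'' additionally needs strict convexity in $\alpha$.

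You replace pointwise convexity of the Hamiltonian by convexity of the quadratic functional $J^0$. You do this through the exact expansion $J^{f,u}(\alpha+\varepsilon v)-J^{f,u}(\alpha)=\varepsilon\,\mathbb{E}\int_0^T\langle \Gamma_t(\alpha),v_t\rangle\mathrm{d}t+\frac{\varepsilon^2}{2}J^0(v)$, where $\Gamma(\alpha)$ is the left-hand side of (\ref{eq05}) evaluated at $\alpha$. This is the follower-level analogue of the paper's own proof for the leader, where Assumption~\ref{Ass:4.6} plays the role of your $J^0\ge 0$. It handles genuinely indefinite $R^{f,u}$, for instance when $\sigma^{f,u,2}$ compensates. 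It also shows that $J^0\ge 0$ is necessary for an optimal control to exist.

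Your caveat that the equivalence must be read under a convexity condition is not a weakness: the statement as written fails without it. Take $n_1=m_1=1$, $b^{f,u,2}=1$, all other $b^{f,u,k},\sigma^{f,u,k}$ and $h^{f,u,3}$ zero, and $Q^{f,u}=G^{f,u}=0$, $R^{f,u}=-1$. The backward equation forces $p^{f,u}=q^{f,u}=q^{l,u}=0$, and (\ref{eq05}) then forces $\hat{\alpha}^{f,u}=0$, so the Hamiltonian system is uniquely solvable. Yet $J^{f,u}(\alpha)=-\frac12\mathbb{E}\int_0^T|\alpha_t|^2\mathrm{d}t$ has no minimizer. If instead $R^{f,u}=0$, every control is optimal and uniqueness fails.

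Two refinements to your Step 2 and Step 3:
\begin{itemize}
\item Uniqueness of the minimizer corresponds to $J^0(v)>0$ for $v\neq 0$. Uniform convexity, $J^0(v)\ge\delta\,\mathbb{E}\int_0^T|v_t|^2\mathrm{d}t$ with $\delta$ independent of $u$, is the stronger condition that gives existence and what Step~3 needs.
\item Under uniform convexity you need neither the method of continuation nor the Riccati fallback. $J^{f,u}$ is then a coercive, strictly convex, continuous quadratic functional on a Hilbert space, so it has a unique minimizer, and the Hamiltonian system is solvable by the necessary condition. Comparing $J^{f,u}(\hat{\alpha}^{f,u})\le J^{f,u}(0)$ with the expansion at $0$ gives $\|\hat{\alpha}^{f,u}\|\le\frac{2}{\delta}\|\Gamma(0)\|$ in the $L^2$ norm on $\varOmega\times[0,T]$. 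This bound is uniform in $u$ by the data, and it yields the state and adjoint estimates needed to integrate in $u$.
\end{itemize}
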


As in the treatment of general linear FBSDEs, assume that there exist a deterministic process $P^f$ and a random process $\varphi^f$ such that
\begin{equation}\label{eq04}
	p_t^{f,u}=P_t^{f,u}\hat{X}_t^{f,u}+\varphi_t^{f,u},
\end{equation}
and write $\mathrm{d}\varphi_t^{f,u}=\varPhi_t^{f,u}\mathrm{d}t+\varPsi_t^{f,u}\mathrm{d}W_t^l$. Substituting (\ref{eq04}) into (\ref{eq05}) gives
\begin{equation}\label{eq07}
	R_t^{f,u}(\hat{\alpha}_t^{f,u}-h_t^{f,u,3}\alpha_t^l)+b_t^{f,u,2\top}P_t^{f,u}\hat{X}_t^{f,u}+b_t^{f,u,2\top}\varphi_t^{f,u}+\sigma_t^{f,u,2\top}q_t^{f,u}=0.
\end{equation}

Substituting (\ref{eq04}) into the Hamiltonian system (\ref{Eq:follower_hamilton}) and comparing the coefficients of $\mathrm{d}W_t^l$ and $\mathrm{d}W_t^{f,u}$, we obtain
\begin{equation}\label{eq12}
	q_t^{l,u}=\varPsi_t^{f,u},
\end{equation}
\begin{equation}\label{eq08} q_t^{f,u}=P_t^{f,u}(\sigma_t^{f,u,0}+\sigma_t^{f,u,1}\hat{X}_t^{f,u}+\sigma_t^{f,u,2}\hat{\alpha}_t^{f,u}+\sigma_t^{f,u,3}z_t^u+\sigma_t^{f,u,4}X_t^l+\sigma_t^{f,u,5}\alpha_t^l).
\end{equation}

Set
\begin{equation*}
	\begin{cases}
		\hat{R}_t^{f,u}=R_t^{f,u}+\sigma_t^{f,u,2\top}P_t^{f,u}\sigma_t^{f,u,2},\\
		\hat{S}_t^{f,u}=b_t^{f,u,2\top}P_t^{f,u}+\sigma_t^{f,u,2\top}P_t^{f,u}\sigma_t^{f,u,1}.
	\end{cases}
\end{equation*}
Assume that $\hat{R}_t^{f,u}$ is invertible for any $(t,u)$. By (\ref{eq07}) and (\ref{eq08}), we have
\begin{equation}\label{eq10}
	\begin{aligned}
		\alpha_t^{f,u}=&-(\hat{R}_t^{f,u})^{-1}\big[ \hat{S}_t^{f,u}X_t^{f,u}+\sigma_t^{f,u,2\top}P_t^{f,u}\sigma_t^{f,u,3}z_t^u+b_t^{f,u,2\top}\varphi_t^{f,u}\\
		&+\sigma_t^{f,u,2\top}P_t^{f,u}\sigma_t^{f,u,4}X_t^l+(\sigma_t^{f,u,2\top}P_t^{f,u}\sigma_t^{f,u,5}-R_t^{f,u}h_t^{f,u,3})\alpha_t^l
        +\sigma_t^{f,u,2\top}P_t^{f,u}\sigma_t^{f,u,0} \big],
	\end{aligned}
\end{equation}
\begin{equation}\label{eq09}
	\begin{aligned}
		q_t^{f,u}=&\ P_t^{f,u}\big\{[\sigma_t^{f,u,1}-\sigma_t^{f,u,2}(\hat{R}_t^{f,u})^{-1}\hat{S}_t^{f,u}]X_t^{f,u}\\
		&+[\sigma_t^{f,u,3}-\sigma_t^{f,u,2}(\hat{R}_t^{f,u})^{-1}\sigma_t^{f,u,2\top}P_t^{f,u}\sigma_t^{f,u,3} ]z_t^u
        -\sigma_t^{f,u,2}(\hat{R}_t^{f,u})^{-1}b_t^{f,u,2\top}\varphi_t^{f,u}\\
		&+[\sigma_t^{f,u,4}-\sigma_t^{f,u,2}(\hat{R}_t^{f,u})^{-1}\sigma_t^{f,u,2\top}P_t^{f,u}\sigma_t^{f,u,4} ]X_t^l\\
		&+[\sigma_t^{f,u,5}-\sigma_t^{f,u,2}(\hat{R}_t^{f,u})^{-1}(\sigma_t^{f,u,2\top}P_t^{f,u}\sigma_t^{f,u,5}-R_t^{f,u}h_t^{f,u,3})]\alpha_t^l\\
		&+[\sigma_t^{f,u,0}-\sigma_t^{f,u,2}(\hat{R}_t^{f,u})^{-1}\sigma_t^{f,u,2\top}P_t^{f,u}\sigma_t^{f,u,0}]\big\}.
	\end{aligned}
\end{equation}

Substituting (\ref{eq04}), (\ref{eq12}), (\ref{eq10}), and (\ref{eq09}) back into the Hamiltonian system and comparing the coefficients of the $\mathrm{d}t$ terms, we obtain the Riccati equation and the forward-backward system
\begin{equation}\label{Eq:follower_riccati}
	\begin{cases}
		\mathrm{d}P_t^{f,u}+\big\{ P_t^{f,u}b_t^{f,u,1}+b_t^{f,u,1\top}P_t^{f,u}+\sigma_t^{f,u,1\top}P_t^{f,u}\sigma_t^{f,u,1}\\
		\qquad\qquad -\hat{S}_t^{f,u\top}(\hat{R}_t^{f,u})^{-1}\hat{S}_t^{f,u}+Q_t^{f,u} \big\}\mathrm{d}t=0,\\
		P_T^{f,u}=G^{f,u},
	\end{cases}
\end{equation}
\begin{equation}\label{eq13}
	\begin{cases}
		\mathrm{d}X_t^{f,u}=\hat{b}_t^{f,u}(X_t^{f,u},\varphi_t^{f,u},z_t^u,X_t^l,\alpha_t^l)\mathrm{d}t\\
		\qquad\qquad+\hat{\sigma}_t^{f,u}(X_t^{f,u},\varphi_t^{f,u},z_t^u,X_t^l,\alpha_t^l)\mathrm{d}W_t^{f,u},\\
		\mathrm{d}\varphi_t^{f,u}=\hat{g}_t^{f,u}(\varphi_t^{f,u},z_t^u,X_t^l,\alpha_t^l)\mathrm{d}t+q_t^{l,u}\mathrm{d}W_t^l,\\
		X_0^{f,u}=x_0^{f,u},\quad \varphi_T^{f,u}=-G^{f,u}(g^{f,u,1}z_T^u+g^{f,u,2}X_T^l),
	\end{cases}
\end{equation}
where
\begin{equation*}
	\begin{cases}		
        \hat{b}_t^{f,u}(x^f,\varphi^f,Gx^f,x^l,\alpha^l)=\hat{b}_t^{f,u,0}+\hat{b}_t^{f,u,1}x^f+\hat{b}_t^{f,u,2}\varphi^f+\hat{b}_t^{f,u,3}Gx^f
        +\hat{b}_t^{f,u,4}x^l+\hat{b}_t^{f,u,5}\alpha^l,\\
		\hat{b}_t^{f,u,0}=b_t^{f,u,0}-b_t^{f,u,2}(\hat{R}_t^{f,u})^{-1}\sigma_t^{f,u,2\top}P_t^{f,u}\sigma_t^{f,u,0},\\
		\hat{b}_t^{f,u,1}=b_t^{f,u,1}-b_t^{f,u,2}(\hat{R}_t^{f,u})^{-1}\hat{S}_t^{f,u},\\
		\hat{b}_t^{f,u,2}=-b_t^{f,u,2}(\hat{R}_t^{f,u})^{-1}b_t^{f,u,2\top},\\
		\hat{b}_t^{f,u,3}=b_t^{f,u,3}-b_t^{f,u,2}(\hat{R}_t^{f,u})^{-1}\sigma_t^{f,u,2\top}P_t^{f,u}\sigma_t^{f,u,3},\\
		\hat{b}_t^{f,u,4}=b_t^{f,u,4}-b_t^{f,u,2}(\hat{R}_t^{f,u})^{-1}\sigma_t^{f,u,2\top}P_t^{f,u}\sigma_t^{f,u,4},\\
		\hat{b}_t^{f,u,5}=b_t^{f,u,5}-b_t^{f,u,2}(\hat{R}_t^{f,u})^{-1}(\sigma_t^{f,u,2\top}P_t^{f,u}\sigma_t^{f,u,5}-R_t^{f,u}h_t^{f,u,3}),\\	
        \hat{\sigma}_t^{f,u}(x^f,\varphi^f,Gx^f,x^l,\alpha^l)=\hat{\sigma}_t^{f,u,0}+\hat{\sigma}_t^{f,u,1}x^f+\hat{\sigma}_t^{f,u,2}\varphi^f
        +\hat{\sigma}_t^{f,u,3}Gx^f+\hat{\sigma}_t^{f,u,4}x^l+\hat{\sigma}_t^{f,u,5}\alpha^l,\\
		\hat{\sigma}_t^{f,u,0}=\sigma_t^{f,u,0}-\sigma_t^{f,u,2}(\hat{R}_t^{f,u})^{-1}\sigma_t^{f,u,2\top}P_t^{f,u}\sigma_t^{f,u,0},\\
		\hat{\sigma}_t^{f,u,1}=\sigma_t^{f,u,1}-\sigma_t^{f,u,2}(\hat{R}_t^{f,u})^{-1}\hat{S}_t^{f,u},\\
		\hat{\sigma}_t^{f,u,2}=-\sigma_t^{f,u,2}(\hat{R}_t^{f,u})^{-1}b_t^{f,u,2\top},\\
		\hat{\sigma}_t^{f,u,3}=\sigma_t^{f,u,3}-\sigma_t^{f,u,2}(\hat{R}_t^{f,u})^{-1}\sigma_t^{f,u,2\top}P_t^{f,u}\sigma_t^{f,u,3},\\
		\hat{\sigma}_t^{f,u,4}=\sigma_t^{f,u,4}-\sigma_t^{f,u,2}(\hat{R}_t^{f,u})^{-1}\sigma_t^{f,u,2\top}P_t^{f,u}\sigma_t^{f,u,4},\\
		\hat{\sigma}_t^{f,u,5}=\sigma_t^{f,u,5}-\sigma_t^{f,u,2}(\hat{R}_t^{f,u})^{-1}(\sigma_t^{f,u,2\top}P_t^{f,u}\sigma_t^{f,u,5}-R_t^{f,u}h_t^{f,u,3}),\\
		\hat{g}_t^{f,u}(\varphi^f,Gx^f,x^l,\alpha^l)=\hat{g}_t^{f,u,0}+\hat{g}_t^{f,u,1}\varphi^f+\hat{g}_t^{f,u,2}Gx^f+\hat{g}_t^{f,u,3}x^l+\hat{g}_t^{f,u,4}\alpha^l,\\
		\hat{g}_t^{f,u,0}=-P_t^{f,u}b_t^{f,u,0}-\sigma_t^{f,u,1\top}P_t^{f,u}\sigma_t^{f,u,0}
        +\hat{S}_t^{f,u\top}(\hat{R}_t^{f,u})^{-1}\sigma_t^{f,u,2\top}P_t^{f,u}\sigma_t^{f,u,0},\\
		\hat{g}_t^{f,u,1}=-b_t^{f,u,1\top}+\hat{S}_t^{f,u\top}(\hat{R}_t^{f,u})^{-1}b_t^{f,u,2\top},\\
		\hat{g}_t^{f,u,2}=-P_t^{f,u}b_t^{f,u,3}-\sigma_t^{f,u,1\top}P_t^{f,u}\sigma_t^{f,u,3}+Q_t^{f,u}h_t^{f,u,1}
        +\hat{S}_t^{f,u\top}(\hat{R}_t^{f,u})^{-1}\sigma_t^{f,u,2\top}P_t^{f,u}\sigma_t^{f,u,3},\\
		\hat{g}_t^{f,u,3}=-P_t^{f,u}b_t^{f,u,4}-\sigma_t^{f,u,1\top}P_t^{f,u}\sigma_t^{f,u,4}+Q_t^{f,u}h_t^{f,u,2}
        +\hat{S}_t^{f,u\top}(\hat{R}_t^{f,u})^{-1}\sigma_t^{f,u,2\top}P_t^{f,u}\sigma_t^{f,u,4},\\
		\hat{g}_t^{f,u,4}=-P_t^{f,u}b_t^{f,u,5}-\sigma_t^{f,u,1\top}P_t^{f,u}\sigma_t^{f,u,5}
        +\hat{S}_t^{f,u\top}(\hat{R}_t^{f,u})^{-1}(\sigma_t^{f,u,2\top}P_t^{f,u}\sigma_t^{f,u,5}-R_t^{f,u}h_t^{f,u,3}).
	\end{cases}
\end{equation*}

\begin{Ass}\label{Ass:03}
	The equation coefficients are independent of $u$.
\end{Ass}

When the coefficients are independent of $u$, the follower index $u$ is omitted from the notation.

\begin{prp}[Consistency condition]\label{Prp:4.4}
	Assume that Assumptions \ref{Ass:LQ} and \ref{Ass:03} hold. Then $z=G\hat{X}^f$ if and only if
	\begin{equation*}
		\begin{cases}
			\mathrm{d}z_t^u=(\hat{b}_t^{f,1}z_t^u+\hat{b}_t^{f,2}G\varphi_t^{f,u}+\hat{b}_t^{f,3}Gz_t^u)\mathrm{d}t\\
			\qquad\qquad +\|G(u,\cdot)\|_1(\hat{b}_t^{f,0}+\hat{b}_t^{f,4}X_t^l+\hat{b}_t^{f,5}\alpha_t^l)\mathrm{d}t,\\
			z_0^u=Gx_0^{f,u}.
		\end{cases}
	\end{equation*}
\end{prp}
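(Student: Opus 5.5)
Both directions rest on one identity: for the closed-loop state $\hat X^f$ of (\ref{eq13}) and any $z\in\bar{\mathbb{S}}_{\boxtimes^l}^{\infty,2}(\mathbb{R}^{n_1})$,
\begin{equation*}
\begin{aligned}
\mathrm{d}\,G\hat{X}_t^{f,u}={}&\big(\hat{b}_t^{f,1}G\hat{X}_t^{f,u}+\hat{b}_t^{f,2}G\varphi_t^{f,u}+\hat{b}_t^{f,3}Gz_t^u\big)\mathrm{d}t\\
&+\|G(u,\cdot)\|_1\big(\hat{b}_t^{f,0}+\hat{b}_t^{f,4}X_t^l+\hat{b}_t^{f,5}\alpha_t^l\big)\mathrm{d}t,
\end{aligned}
\end{equation*}
with $G\hat{X}_0^{f,u}=Gx_0^{f,u}$. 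I would establish this identity first and then read off the equivalence.

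\textbf{Deriving the identity.} Fix $\alpha^l$ and $z$. Under Assumption \ref{Ass:03} the coefficients do not depend on $v$. Write (\ref{eq13}) in integral form for follower $v$, multiply by $G(u,v)$, and integrate over $v\in I$. This requires the following steps.
\begin{enumerate}
\item Exchange the $\lambda(\mathrm{d}v)$ integral with the $\mathrm{d}t$ integral. This is justified by Fubini on the rich Fubini extension, since all processes lie in $\mathbb{S}^2_{\boxtimes^l}$.
\item Factor out the deterministic coefficients $\hat b^{f,i}_t$.
\item Recognise the resulting aggregates:
\begin{itemize}
\item $\int_I G(u,v)\hat X^{f,v}_t\,\lambda(\mathrm{d}v)=G\hat X^{f,u}_t$;
\item $\int_I G(u,v)\varphi^{f,v}_t\,\lambda(\mathrm{d}v)=G\varphi^{f,u}_t$;
\item $\int_I G(u,v)z^v_t\,\lambda(\mathrm{d}v)=Gz^u_t$.
\end{itemize}
\item The terms $\hat b^{f,0}$, $\hat b^{f,4}X^l_t$ and $\hat b^{f,5}\alpha^l_t$ do not depend on $v$, so integrating them against $G(u,v)$ produces the factor $\|G(u,\cdot)\|_1$.
\end{enumerate}

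\textbf{Killing the diffusion term.} It remains to show
\begin{equation*}
\int_I G(u,v)\int_0^t \hat\sigma^{f,v}_s\,\mathrm{d}W^{f,v}_s\,\lambda(\mathrm{d}v)=0 .
\end{equation*}
Apply Proposition \ref{Prp:3.4} to the process $X^v_s:=G(u,v)\hat\sigma^{f}_s(\cdots)$, which lies in $L^2_{\boxtimes^l}$ for fixed $u$. This uses Theorem \ref{Thm:2.1} (CELLN) together with Lemma \ref{Lem:3.2}. This is the main technical point. One must check that the integrand, with $G(u,\cdot)$ attached, is still $\mathcal{F}\boxtimes\mathcal{I}$-measurable and adapted to $\mathcal{F}^{f,v}\lor\mathcal{F}^l$; this holds because $G(u,\cdot)$ is a bounded deterministic $\mathcal{I}$-measurable function.

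\textbf{The equivalence.}
\begin{itemize}
\item \emph{Only if.} If $z=G\hat X^f$, substitute $z$ for $G\hat X^f$ in the first term of the identity. This gives exactly the stated equation for $z$.
\item \emph{If.} Suppose $z$ solves the stated equation. Set $\Delta_t:=z_t^u-G\hat X^{f,u}_t$ and subtract the identity from the stated equation. The $G\varphi^f$, $Gz$ and $\|G(u,\cdot)\|_1$ terms cancel, so $\mathrm{d}\Delta_t=\hat b^{f,1}_t\Delta_t\,\mathrm{d}t$ with $\Delta_0=0$. This is a linear ODE pathwise for each $u$ and $\omega$, so $\Delta\equiv0$ and $z=G\hat X^f$.
\end{itemize}
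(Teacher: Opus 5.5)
Your proposal is correct and follows essentially the same route as the paper. Both integrate the closed-loop forward equation (\ref{eq13}) against $G(u,\cdot)$, exchange the integrals, eliminate the stochastic integral via the CELLN (Proposition \ref{Prp:3.4}), and conclude by uniqueness for the resulting linear equation. Your explicit converse argument, that $z-G\hat{X}^f$ solves $\mathrm{d}\Delta_t=\hat{b}_t^{f,1}\Delta_t\,\mathrm{d}t$ with $\Delta_0=0$, simply spells out the paper's appeal to uniqueness of solutions to linear SDEs.
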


\begin{proof}
	Integrating both sides of the forward equation and using the exact law of large numbers gives
	\begin{equation*}
		X_t^{f,u}=\int_0^t \hat{b}_s^{f}(X_s^{f,u},\varphi_s^{f,u},z_s^u,X_s^l,\alpha_s^l)\mathrm{d}s+\int_0^t \hat{\sigma}_s^{f}(X_s^{f,u},\varphi_s^{f,u},z_s^u,X_s^l,\alpha_s^l)\mathrm{d}W_s^{f,u},
	\end{equation*}
	\begin{equation*}
		\begin{aligned}
			GX_t^{f,u}&=\int_I G(u,v) \int_0^t \hat{b}_s^{f}(X_s^{f,v},\varphi_s^{f,v},z_s^v,X_s^l,\alpha_s^l)\mathrm{d}s\lambda(\mathrm{d}v)\\
			&=\int_0^t \int_I G(u,v)\hat{b}_s^{f}(X_s^{f,v},\varphi_s^{f,v},z_s^v,X_s^l,\alpha_s^l)\lambda(\mathrm{d}v)\mathrm{d}s.
		\end{aligned}
	\end{equation*}
	Substituting the coefficients shows that $GX^f$ satisfies the equation
	\begin{equation*}
		\begin{cases}
			\mathrm{d}GX_t^{f,u}=(\hat{b}_t^{f,1}GX_t^{f,u}+\hat{b}_t^{f,2}G\varphi_t^{f,u}+\hat{b}_t^{f,3}Gz_t^u)\mathrm{d}t\\
			\qquad\qquad\quad +\|G(u,\cdot)\|_1(\hat{b}_t^{f,0}+\hat{b}_t^{f,4}X_t^l+\hat{b}_t^{f,5}\alpha_t^l)\mathrm{d}t,\\
			GX_0^{f,u}=Gx_0^{f,u},
		\end{cases}
	\end{equation*}
	and the proposition follows from the existence and uniqueness of solutions to linear SDEs.
\end{proof}

Writing the consistency condition and the inhomogeneous tail terms as a forward-backward system gives
\begin{equation}\label{Eq:follower_consistent}
	\begin{cases}
		\mathrm{d}z_t^u=(\hat{b}_t^{f,1}z_t^u+\hat{b}_t^{f,2}G\varphi_t^{f,u}+\hat{b}_t^{f,3}Gz_t^u)\mathrm{d}t\\
		\qquad\qquad +\|G(u,\cdot)\|_1(\hat{b}_t^{f,0}+\hat{b}_t^{f,4}X_t^l+\hat{b}_t^{f,5}\alpha_t^l)\mathrm{d}t,\\
		\mathrm{d}\varphi_t^{f,u}=(\hat{g}_t^{f,0}+\hat{g}_t^{f,1}\varphi_t^{f,u}+\hat{g}_t^{f,2}z_t^u+\hat{g}_t^{f,3}X_t^l+\hat{g}_t^{f,4}\alpha_t^l)\mathrm{d}t+q_t^{l,u}\mathrm{d}W_t^l,\\
		z_0^u=Gx_0^{f,u},\quad \varphi_T^{f,u}=-G^{f}(g^{f,1}z_T^u+g^{f,2}X_T^l).
	\end{cases}
\end{equation}

\begin{thm}[Combined conclusion]\label{Thm:4.2}
	Assume that Assumptions \ref{Ass:LQ} and \ref{Ass:03} hold, that the Riccati equation (\ref{Eq:follower_riccati}) admits a unique solution $P^f\in L^\infty(0,T;\mathcal{S}^{n_1})$, and that the forward-backward system (\ref{Eq:follower_consistent}) admits a unique solution $(z,\varphi^f,q^l)\in\bar{\mathbb{S}}_{\boxtimes^l}^{\infty,2}(\mathbb{R}^{n_1})\times\mathbb{S}_{\boxtimes^l}^2(\mathbb{R}^{n_1})\times\mathbb{H}_{\boxtimes^l}^2(\mathbb{R}^{n_1})$. Then, for fixed $\alpha^l\in\mathcal{U}^l$, the follower problem admits a unique Nash equilibrium $\hat{\alpha}^f$. Denote the Nash equilibrium pair by $(\hat{X}^f,\hat{\alpha}^f)$. The equilibrium strategy $\hat{\alpha}^f$ satisfies
	\begin{equation*}
		\hat{\alpha}_t^{f,u}=\hat{\alpha}^f_t(\hat{X}_t^{f,u},z_t^u,\varphi_t^{f,u},X_t^l,\alpha_t^l),
	\end{equation*}
	where
	\begin{equation*}
		\begin{cases}
			\hat{\alpha}^f(x^f,z,\varphi^f,x^l,\alpha^l)=\hat{\alpha}_t^{f,0}+\hat{\alpha}_t^{f,1}x^f+\hat{\alpha}_t^{f,2}z+\hat{\alpha}_t^{f,3}\varphi^f+\hat{\alpha}_t^{f,4}x^l+\hat{\alpha}_t^{f,5}\alpha^l,\\
			\hat{\alpha}_t^{f,0}=-(\hat{R}_t^f)^{-1}\sigma_t^{f,2\top}P_t^{f}\sigma_t^{f,0},\quad \hat{\alpha}_t^{f,1}=-(\hat{R}_t^f)^{-1}\hat{S}_t^f,\\
			\hat{\alpha}_t^{f,2}=-(\hat{R}_t^{f})^{-1}\sigma_t^{f,2\top}P_t^f\sigma_t^{f,3},\quad \hat{\alpha}_t^{f,3}=-(\hat{R}_t^f)^{-1}b_t^{f,2\top},\\
			\hat{\alpha}_t^{f,4}=-(\hat{R}_t^f)^{-1}\sigma_t^{f,2\top}P_t^f\sigma_t^{f,4},\quad \hat{\alpha}_t^{f,5}=-(\hat{R}_t^f)^{-1}(\sigma_t^{f,2\top}P_t^f\sigma_t^{f,5}-R_t^fh_t^{f,3}).
		\end{cases}
	\end{equation*}
\end{thm}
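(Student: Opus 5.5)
The argument is a verification: build candidate equilibrium processes from the assumed solutions of (\ref{Eq:follower_riccati}) and (\ref{Eq:follower_consistent}), and check them against the necessary and sufficient maximum principle of Proposition \ref{Prp:4.3}.

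\emph{Step 1: construct the candidate.} Fix $\alpha^l\in\mathcal{U}^l$. By Theorem \ref{Thm:3.1}, $X^l$ does not depend on the followers' controls once $z$ is frozen, so it is a given $\mathcal{F}^l$-adapted process. Take the assumed solutions $P^f$ of (\ref{Eq:follower_riccati}) and $(z,\varphi^f,q^l)$ of (\ref{Eq:follower_consistent}). With $z$ and $\varphi^{f,u}$ given, the forward equation in (\ref{eq13}) is a linear SDE in $\hat X^{f,u}$ with bounded coefficients. Proposition \ref{prp07} then gives a unique solution $\hat X^f\in\mathbb{S}^2_{\boxtimes^l}(\mathbb{R}^{n_1})$ that is $\mathcal{I}$-measurable in $u$.

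Next define $\hat\alpha^f$ by the feedback formula in the statement, which is (\ref{eq10}). Define $p^{f,u}=P^f\hat X^{f,u}+\varphi^{f,u}$ as in (\ref{eq04}), $q^{f,u}$ by (\ref{eq09}), and $q^{l,u}$ from the consistency system, as in (\ref{eq12}). Boundedness of $P^f$ and $(\hat R^f)^{-1}$ makes all these processes square integrable, so $\hat\alpha^f\in\mathcal{U}^f$.

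\emph{Step 2: verify the Hamiltonian system.} Apply It\^o's formula to $P^f_t\hat X^{f,u}_t+\varphi^{f,u}_t$. The $\mathrm{d}W^{f,u}$ coefficient is $P^f\hat\sigma^f$, which equals (\ref{eq09}) and hence (\ref{eq08}). The $\mathrm{d}W^l$ coefficient is $q^{l,u}$. The $\mathrm{d}t$ coefficient reproduces the drift of the adjoint equation in (\ref{Eq:follower_hamilton}), because the Riccati equation and the definition of $\hat g^f$ are exactly the coefficient matching carried out above. The terminal conditions agree since $P_T^f=G^f$ and $\varphi^{f}_T=-G^f(g^{f,1}z_T+g^{f,2}X_T^l)$.

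The optimality condition (\ref{eq05}) holds because (\ref{eq07}) together with (\ref{eq08}) is equivalent to (\ref{eq10}), given that $\hat R^f$ is invertible. So $(\hat X^f,p^f,q^f,q^l)$ solves (\ref{Eq:follower_hamilton}) with the frozen aggregate $z$, and the sufficiency part of Proposition \ref{Prp:4.3} makes each $\hat\alpha^{f,u}$ optimal for follower $u$ given $z$.

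\emph{Step 3: consistency.} We still need $z=G\hat X^f$. By the computation in the proof of Proposition \ref{Prp:4.4}, the CELLN (Theorem \ref{Thm:2.1} with Proposition \ref{Prp:3.4}) kills the stochastic integral in $G\hat X^f$. Hence $G\hat X^f$ satisfies the same linear random ODE as $z$ in (\ref{Eq:follower_consistent}), with the same $\varphi^f$ and $X^l$ inputs and the same initial value. Uniqueness for that linear equation gives $z=G\hat X^f$. Therefore $\hat\alpha^f$ is a Nash equilibrium.

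\emph{Step 4: uniqueness, the main obstacle.} Let $\tilde\alpha^f$ be any Nash equilibrium, with aggregate $\tilde z=G\tilde X^f$. Proposition \ref{Prp:4.3}, applied with $z=\tilde z$, gives a solution of (\ref{Eq:follower_hamilton}). Set $\tilde\varphi^{f,u}:=\tilde p^{f,u}-P^f\tilde X^{f,u}$. By It\^o's formula and the Riccati equation, $\tilde\varphi^f$ satisfies the backward equation of (\ref{eq13}). The $\mathrm{d}W^{f,u}$ terms cancel by the same identities (\ref{eq08})--(\ref{eq09}), using invertibility of $\hat R^f$ to eliminate $\tilde\alpha^f$.

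The delicate point is that $\tilde\varphi^f$ must be $\mathcal{F}^l$-adapted and have no $\mathrm{d}W^{f,u}$ component. To get this, I would first show the $\mathrm{d}W^{f,u}$ integrand vanishes by matching diffusion terms. Then $\tilde\varphi^f$ solves a BSDE driven only by $W^l$ whose coefficients involve only $\tilde z$, $X^l$ and $\alpha^l$, all $\mathcal{F}^l$-adapted. Uniqueness of BSDE solutions in the larger filtration then forces $\tilde\varphi^f$ to coincide with the $\mathcal{F}^l$-adapted solution.

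Step 3 applied to $\tilde X^f$ then shows $(\tilde z,\tilde\varphi^f,\tilde q^l)$ solves (\ref{Eq:follower_consistent}). By the assumed uniqueness, $\tilde z=z$ and $\tilde\varphi^f=\varphi^f$. Uniqueness for the forward SDE and for the feedback map then gives $\tilde\alpha^f=\hat\alpha^f$.
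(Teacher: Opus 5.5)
Steps 1--3 follow the paper's own argument: sufficiency in Proposition \ref{Prp:4.3}, the CELLN computation behind Proposition \ref{Prp:4.4}, and the Riccati decoupling. You spell them out, and they are fine at the paper's level of rigor. The gap is in Step 4, which the paper's one-line proof does not really address either. For an arbitrary equilibrium, $\tilde\varphi^{f,u}:=\tilde p^{f,u}-P^f\tilde X^{f,u}$ has $\mathrm{d}W^{f,u}$-integrand $\theta^u:=\tilde q^{f,u}-P^f\tilde\sigma^{f,u}$, where $\tilde\sigma^{f,u}$ is $\sigma^f$ evaluated along $(\tilde X^{f,u},\tilde\alpha^{f,u},\tilde z^u,X^l,\alpha^l)$. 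Nothing makes $\theta^u$ vanish a priori. Identities (\ref{eq08})--(\ref{eq09}) were derived from the ansatz that $\varphi$ has no $\mathrm{d}W^{f,u}$ part, so using them to ``cancel'' or ``match'' the $\mathrm{d}W^{f,u}$ terms is circular. Until $\theta^u=0$ is known, $\tilde\varphi^{f,u}$ does not satisfy the backward equation of (\ref{eq13}), so there is not yet a $W^l$-driven BSDE to which your uniqueness argument can be applied. You have named the right tool, but it has to be used \emph{to prove} $\theta^u=0$, not after assuming it.

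Here is the repair. Substitute $\tilde q^{f,u}=P^f\tilde\sigma^{f,u}+\theta^u$ into (\ref{eq05}). This gives $\tilde\alpha^{f,u}$ as in (\ref{eq10}) plus an extra term $-(\hat R^f)^{-1}\sigma^{f,2\top}\theta^u$. The $\tilde X^{f,u}$-terms still cancel by (\ref{Eq:follower_riccati}), and you obtain the linear BSDE in $\mathcal{F}^{f,u}\lor\mathcal{F}^l$
\begin{equation*}
\mathrm{d}\tilde\varphi_t^{f,u}=\Big\{\hat g_t^{f}(\tilde\varphi_t^{f,u},\tilde z_t^u,X_t^l,\alpha_t^l)+\big(\hat S_t^{f\top}(\hat R_t^f)^{-1}\sigma_t^{f,2\top}-\sigma_t^{f,1\top}\big)\theta_t^u\Big\}\mathrm{d}t+\theta_t^u\,\mathrm{d}W_t^{f,u}+\tilde q_t^{l,u}\,\mathrm{d}W_t^l,
\end{equation*}
with terminal value $\tilde\varphi_T^{f,u}=-G^f(g^{f,1}\tilde z_T^u+g^{f,2}X_T^l)$. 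Its driver is Lipschitz in $(\tilde\varphi,\theta)$ because $P^f$ and $(\hat R^f)^{-1}$ are bounded, as you already assume. Its data are $\mathcal{F}^l$-adapted because $\tilde z=G\tilde X^f$ is $\mathcal{F}^l$-adapted by Theorem \ref{Thm:3.1}. Now let $(\varphi^{\circ,u},q^{\circ,u})$ be the $\mathcal{F}^l$-adapted solution of $\mathrm{d}\varphi_t^{\circ,u}=\hat g_t^f(\varphi_t^{\circ,u},\tilde z_t^u,X_t^l,\alpha_t^l)\mathrm{d}t+q_t^{\circ,u}\mathrm{d}W_t^l$ with the same terminal value. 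Paired with $\theta=0$, it also solves the BSDE above, since $W^l$ is still a Brownian motion in the enlarged filtration. Uniqueness then yields $\theta^u=0$ and $\tilde\varphi^{f,u}=\varphi^{\circ,u}$ at once, and the rest of your Step 4 goes through.

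A smaller point: Theorem \ref{Thm:3.1} does not say $X^l$ is unaffected by the followers, since $X^l$ is driven by $M^f$. What you and the paper actually use is that $X^l$ is held fixed as an input at the follower stage, because a single follower cannot move $M^f$. The $M^f$--$X^l$ closure is deferred to the leader's problem. Your Step 4 uniqueness therefore compares equilibria under the same $X^l$ only under that convention.
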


\begin{proof}
	The conclusion follows from Propositions \ref{Prp:4.3} and \ref{Prp:4.4} and from the derivation of the Riccati equation.
\end{proof}

Sufficient conditions for the existence and uniqueness of a solution to the forward-backward system (\ref{Eq:follower_consistent}) can be obtained by using eigenvalue decomposition and the method of continuation.

Since the graphon operator is a compact self-adjoint operator, there exist a sequence of unit eigenvectors $\{e_n\}_{n=1}^\infty\subset L_\mathcal{I}^2(\mathbb{R}^{n_1})$ and eigenvalues $\{\lambda_n\}_{n=1}^\infty$ satisfying $|\lambda_i|\ge |\lambda_j|$ for any $i>j$, and, for any $u\in I$,
\begin{equation*}
	\int_I G(u,v)e_n^v\lambda(\mathrm{d}v)=\lambda_n e_n^u.
\end{equation*}

By the properties of compact self-adjoint operators, write
\begin{equation*}
	z_t^n=\langle z_t,e_n \rangle,\quad \varphi_t^n=\langle \varphi_t,e_n \rangle,\quad \varphi_t^0=\varphi_t-\sum_{n=1}^\infty \langle \varphi_t,e_n \rangle e_n,\quad 
\end{equation*}
and, for any $t$, we have the eigenvector expansions
\begin{equation*}
	z_t=\sum_{n=1}^\infty z_t^n e_n,\quad \varphi=\sum_{n=1}^\infty \varphi_t^n e_n+\varphi_t^0.
\end{equation*}

\begin{prp}
	If for any positive integer $n$, the linear FBSDE
	\begin{equation}\label{eq15}
		\begin{cases}
			\mathrm{d}z_t^n=\big\{ (\hat{b}_t^{f,1}+\lambda_n \hat{b}_t^{f,3})z_t^n+\lambda_n\hat{b}_t^{f,2}\varphi_t^n+\|G(u,\cdot)\|_1^n(\hat{b}_t^{f,0}+\hat{b}_t^{f,4}X_t^l+\hat{b}_t^{f,5}\alpha_t^l)\big\}\mathrm{d}t,\\
			\mathrm{d}\varphi_t^n=\big\{ \hat{g}_t^{f,2}z_t^n+\hat{g}_t^{f,1}\varphi_t^n+(\hat{g}_t^{f,0}+\hat{g}_t^{f,3}X_t^l+\hat{g}_t^{f,4}\alpha_t^l)\mathbf{1}^n \big\}\mathrm{d}t+q_t^{l,n}\mathrm{d}W_t^l,\\
			z_0^n=\lambda_n x_0^n,\quad \varphi_T^n=-G^f(\lambda_n g^{f,1} z_T^n+g^{f,2}X_T^l\mathbf{1}^n),
		\end{cases}
	\end{equation}
	admits a unique solution $(z^n,\varphi^n,q^{l,n})\in\mathbb{S}_{\boxtimes^l}^2(\mathbb{R}^{n_1})\times\mathbb{S}_{\boxtimes^l}^2(\mathbb{R}^{n_1})\times\mathbb{H}_{\boxtimes^l}^2(\mathbb{R}^{n_1})$, and the linear BSDE
	\begin{equation}\label{eq16}
		\begin{cases}
			\mathrm{d}\varphi_t^0=\big\{ \hat{g}_t^{f,1}\varphi_t^0+(\hat{g}_t^{f,0}+\hat{g}_t^{f,3}X_t^l+\hat{g}_t^{f,4}\alpha_t^l)\mathbf{1}^0 \big\}\mathrm{d}t+q_t^{l,0}\mathrm{d}W_t^l,\\
			\varphi_T^0=-G^fg^{f,2}X_T^l\mathbf{1}^0,
		\end{cases}
	\end{equation}
	admits a unique solution $(\varphi^0,q^{l,0})\in\mathbb{S}_{\boxtimes^l}^2(\mathbb{R}^{n_1})\times\mathbb{H}_{\boxtimes^l}^2(\mathbb{R}^{n_1})$. Define
	\begin{equation}\label{eq33}
		z_t=\sum_{n=1}^\infty z_t^ne_n,\quad \varphi^f_t=\sum_{n=1}^\infty \varphi_t^n e_n+\varphi_t^0,\quad q^{l}_t=\sum_{n=1}^\infty q_t^{l,n}e_n+q_t^{l,0}.
	\end{equation}
	If $(z,\varphi^f,q^l)\in\mathbb{S}_{\boxtimes^l}^2(\mathbb{R}^{n_1})\times\mathbb{S}_{\boxtimes^l}^2(\mathbb{R}^{n_1})\times\mathbb{H}_{\boxtimes^l}^2(\mathbb{R}^{n_1})$, then $(z,\varphi^f,q^l)$ is the unique solution of the forward-backward system (\ref{Eq:follower_consistent}).
\end{prp}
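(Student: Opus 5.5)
The proof has two parts. First, the series (\ref{eq33}) solves (\ref{Eq:follower_consistent}); this is a projection argument. Second, the solution is unique; this comes from orthogonality plus the assumed uniqueness for each mode. Throughout, $\mathbf{1}^n=\langle \mathbf{1},e_n\rangle$, $\|G(u,\cdot)\|_1^n=\langle\|G(\cdot,\cdot)\|_1,e_n\rangle$, $x_0^n=\langle x_0^f,e_n\rangle$, and $\mathbf{1}^0$ is the component of the constant function outside $\overline{\mathrm{ran}\,G}$. The key facts are these. The inner product $\langle\cdot,e_n\rangle$ commutes with the deterministic coefficient matrices, because by Assumption \ref{Ass:03} they are independent of $u$. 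It also commutes with the terms $X^l,\alpha^l$, which do not depend on $u$. Finally, $\langle GY,e_n\rangle=\lambda_n\langle Y,e_n\rangle$ by self-adjointness (Proposition \ref{Prp:2.1}(3)), and $GY=\sum_n\lambda_n\langle Y,e_n\rangle e_n$, which annihilates the $\varphi^0$ component.

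\emph{Existence.} I first check that the partial sums $z^{(N)}=\sum_{n\le N}z^ne_n$, $\varphi^{(N)}=\sum_{n\le N}\varphi^ne_n+\varphi^0$ and $q^{(N)}$ satisfy, in integrated form, the equations obtained by applying the orthogonal projection $\Pi_N$ (onto $\mathrm{span}\{e_1,\dots,e_N\}\oplus(\overline{\mathrm{ran}\,G})^\perp$) to (\ref{Eq:follower_consistent}). This is a direct sum of (\ref{eq15}) over $n\le N$ and (\ref{eq16}). Two identities make it work. One is $\Pi_N G\varphi=\sum_{n\le N}\lambda_n\varphi^n e_n$. The other is that $\|G(u,\cdot)\|_1=G\mathbf{1}$ lies in $\overline{\mathrm{ran}\,G}$, so it has no $\mathbf{1}^0$ component; this is why the forward equation for the $0$-mode is absent. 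The terminal and initial conditions match the same way, using $Gx_0^f=\sum\lambda_n x_0^n e_n$.

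Next I let $N\to\infty$. By hypothesis $(z,\varphi^f,q^l)$ lies in the stated spaces. For a.e. $(\omega,t)$, $\Pi_N Y_t\to Y_t$ in $L^2_{\mathcal{I}}$, and $\|\Pi_N Y_t\|\le\|Y_t\|$, so dominated convergence gives convergence in $\mathbb{E}^\boxtimes\int_0^T|\cdot|^2$. The Lebesgue integrals converge for the same reason, using the boundedness of $G$ from Proposition \ref{Prp:3.1}. For the stochastic integrals I use the Itô isometry: the error is controlled by $\mathbb{E}^\boxtimes\int_0^T|q^l-q^{(N)}|^2\mathrm{d}t\to0$. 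Hence the integrated identity of (\ref{Eq:follower_consistent}) holds $P\boxtimes\lambda$-a.e.

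Two further points remain for existence. The adaptedness of $z$ to $\mathcal{F}^l$ is inherited from the modes, since $X^l,\alpha^l$ are $\mathcal{F}^l$-adapted and each $z^n$ is a deterministic-coefficient functional of them. Membership of $z=Gz+\ldots$ in $\bar{\mathbb{S}}^{\infty,2}$ then follows from the equation itself together with Proposition \ref{Prp:2.1}(1).

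\emph{Uniqueness.} Let $(\tilde z,\tilde\varphi,\tilde q)$ be another solution in the same spaces. Projecting it onto $e_n$ and onto the orthogonal complement produces solutions of (\ref{eq15}) and (\ref{eq16}). By the assumed uniqueness these coincide with $(z^n,\varphi^n,q^{l,n})$ and $(\varphi^0,q^{l,0})$. Parseval then gives $\tilde z=z$ and the analogous equalities for the other components.

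\emph{Main obstacle.} I expect the hardest step to be justifying the interchange of $\langle\cdot,e_n\rangle$ with the stochastic integral $\int q^l\,\mathrm{d}W^l$. This needs a stochastic Fubini theorem on the rich Fubini extension. I would prove it by approximating $q^l$ by simple processes. The point that makes this work is that $W^l$ does not depend on $u$, so for simple integrands the interchange is exact, and the general case follows by passing to the limit through the Itô isometry in $\mathbb{H}^2_{\boxtimes^l}$.
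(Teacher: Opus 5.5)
Your argument follows the paper's route: existence by summing the modal equations (\ref{eq15})--(\ref{eq16}) through the expansion (\ref{eq33}), and uniqueness by projecting an arbitrary solution onto the $e_n$ and onto $\ker G$ and then invoking modal uniqueness. You also supply the limit passage and the stochastic Fubini interchange that the paper's two-line proof leaves implicit.

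There are two small points to tighten.
\begin{itemize}
\item Projecting $\varphi_T^{f}=-G^f(g^{f,1}z_T+g^{f,2}X_T^l)$ onto $e_n$ gives $-G^f(g^{f,1}z_T^n+g^{f,2}X_T^l\mathbf{1}^n)$. So your claim that the terminal conditions ``match'' holds only after deleting the factor $\lambda_n$ printed in (\ref{eq15}). That factor is evidently a typo, since the $n$-independent sign condition $G^fg^{f,1}<0$ in Assumption \ref{Ass:07} fits the terminal map without it. You should flag this rather than assert the match.
\item In the uniqueness step, first observe that the $\ker G$-component $y$ of $\tilde z$ solves $\mathrm{d}y_t=\hat b_t^{f,1}y_t\,\mathrm{d}t$ with $y_0=0$, and hence vanishes. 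Only then can you assert that the $\ker G$-projection of $(\tilde\varphi,\tilde q)$ solves (\ref{eq16}).
\end{itemize}
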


\begin{proof}
	Existence follows by writing out the equation of $(z,\varphi,q^l)$ from (\ref{eq15}), (\ref{eq16}), and (\ref{eq33}). Uniqueness follows from the uniqueness of the eigenvalue decomposition and the uniqueness of solutions to (\ref{eq15}) and (\ref{eq16}).
\end{proof}

\begin{Ass}[Uniform monotonicity condition]\label{Ass:07}
	For any eigenvalue $\lambda_n$, one of the following two conditions holds:
	
	(1) there exists a constant $K>0$ such that, for any $\theta=(\theta_1,\theta_2)\in\mathbb{R}^{2n_1}$ and any $t\in[0,T]$,
	\begin{equation*}
		\begin{pmatrix}
			\theta_1 \\ \theta_2
		\end{pmatrix}^\top\begin{pmatrix}
			\hat{g}_t^{f,2} & \hat{g}_t^{f,1} \\
			\hat{b}_t^{f,1}+\lambda_n\hat{b}_t^{f,3} & \lambda_n\hat{b}_t^{f,2}
		\end{pmatrix}\begin{pmatrix}
			\theta_1 \\ \theta_2
		\end{pmatrix}\le -K|\theta_1|^2,\quad G^fg^{f,1}<0,
	\end{equation*}
	
	(2) there exists a constant $K>0$ such that, for any $\theta=(\theta_1,\theta_2)\in\mathbb{R}^{2n_1}$ and any $t\in[0,T]$,
	\begin{equation*}
		\begin{pmatrix}
			\theta_1 \\ \theta_2
		\end{pmatrix}^\top\begin{pmatrix}
			\hat{g}_t^{f,2} & \hat{g}_t^{f,1} \\
			\hat{b}_t^{f,1}+\lambda_n\hat{b}_t^{f,3} & \lambda_n\hat{b}_t^{f,2}
		\end{pmatrix}\begin{pmatrix}
			\theta_1 \\ \theta_2
		\end{pmatrix}\ge K|\theta_1|^2,\quad G^fg^{f,1}>0.
	\end{equation*}
\end{Ass}

\begin{Ass}[Stronger uniform monotonicity condition]\label{Ass:08}
	One of the following two conditions holds:
	
	(1) there exists a constant $K>0$ such that, for any $\theta=(\theta_1,\theta_2)\in\mathbb{R}^{2n_1}$ and any $t\in[0,T]$,
	\begin{gather*}
		\begin{pmatrix}
			\theta_1 \\ \theta_2
		\end{pmatrix}^\top\begin{pmatrix}
			\hat{g}_t^{f,2} & \hat{g}_t^{f,1} \\
			\hat{b}_t^{f,1}-\hat{b}_t^{f,3} & -\hat{b}_t^{f,2}
		\end{pmatrix}\begin{pmatrix}
			\theta_1 \\ \theta_2
		\end{pmatrix}\le -K|\theta_1|^2,\\
		\begin{pmatrix}
			\theta_1 \\ \theta_2
		\end{pmatrix}^\top\begin{pmatrix}
			\hat{g}_t^{f,2} & \hat{g}_t^{f,1} \\
			\hat{b}_t^{f,1}+\hat{b}_t^{f,3} & \hat{b}_t^{f,2}
		\end{pmatrix}\begin{pmatrix}
			\theta_1 \\ \theta_2
		\end{pmatrix}\le -K|\theta_1|^2,\quad G^fg^{f,1}<0,
	\end{gather*}
	
	(2) there exists a constant $K>0$ such that, for any $\theta=(\theta_1,\theta_2)\in\mathbb{R}^{2n_1}$ and any $t\in[0,T]$,
	\begin{gather*}
		\begin{pmatrix}
			\theta_1 \\ \theta_2
		\end{pmatrix}^\top\begin{pmatrix}
			\hat{g}_t^{f,2} & \hat{g}_t^{f,1} \\
			\hat{b}_t^{f,1}-\hat{b}_t^{f,3} & -\hat{b}_t^{f,2}
		\end{pmatrix}\begin{pmatrix}
			\theta_1 \\ \theta_2
		\end{pmatrix}\ge K|\theta_1|^2,\\
		\begin{pmatrix}
			\theta_1 \\ \theta_2
		\end{pmatrix}^\top\begin{pmatrix}
			\hat{g}_t^{f,2} & \hat{g}_t^{f,1} \\
			\hat{b}_t^{f,1}+\hat{b}_t^{f,3} & \hat{b}_t^{f,2}
		\end{pmatrix}\begin{pmatrix}
			\theta_1 \\ \theta_2
		\end{pmatrix}\ge K|\theta_1|^2,\quad G^fg^{f,1}>0.
	\end{gather*}
\end{Ass}

If Assumption \ref{Ass:08} holds, then Assumption \ref{Ass:07} holds.

\begin{prp}
	Assume that Assumption \ref{Ass:07} holds. Then, for any positive integer $n$, the linear FBSDE (\ref{eq15}) admits a unique solution $(z^n,\varphi^n,q^{l,n})\in\mathbb{S}_{\boxtimes^l}^2(\mathbb{R}^{n_1})\times\mathbb{S}_{\boxtimes^l}^2(\mathbb{R}^{n_1})\times\mathbb{H}_{\boxtimes^l}^2(\mathbb{R}^{n_1})$, the linear BSDE (\ref{eq16}) admits a unique solution, and $(z,\varphi^f,q^l)$ defined by (\ref{eq33}) belongs to $\mathbb{S}_{\boxtimes^l}^2(\mathbb{R}^{n_1})\times\mathbb{S}_{\boxtimes^l}^2(\mathbb{R}^{n_1})\times\mathbb{H}_{\boxtimes^l}^2(\mathbb{R}^{n_1})$ and is the unique solution of the forward-backward system (\ref{Eq:follower_consistent}).
\end{prp}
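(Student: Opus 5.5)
Under Assumption \ref{Ass:07}, the plan is to solve each mode (\ref{eq15}) by the method of continuation of Hu--Peng/Peng--Wu, solve (\ref{eq16}) by standard linear BSDE theory, and then sum the series (\ref{eq33}) in $L^2$ using Bessel/Parseval together with an \emph{$n$-uniform} a priori estimate. Once the sum lies in the right spaces, the preceding proposition identifies it as the unique solution of (\ref{Eq:follower_consistent}).

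\emph{Step 1 (each mode).} Fix $n$. The system (\ref{eq15}) is a linear FBSDE driven only by $W^l$, with forward variable $z^n$, backward variable $(\varphi^n,q^{l,n})$ and bounded deterministic coefficients. Its inhomogeneous terms $X^l,\alpha^l$ lie in $\mathbb{H}_{\mathcal{F}^l}^2$ and $X_T^l\in L^2$.

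Write $\Gamma_t^n$ for the matrix in Assumption \ref{Ass:07}, that is, $\Gamma_t^n$ has first row $(\hat g^{f,2}_t,\hat g^{f,1}_t)$ and second row $(\hat b_t^{f,1}+\lambda_n\hat b_t^{f,3},\lambda_n\hat b_t^{f,2})$. Since $\hat b^{f,2}=-b^{f,2}(\hat R^f)^{-1}b^{f,2\top}$, the term $\lambda_n\langle \theta_2,\hat b^{f,2}\theta_2\rangle$ carries its own sign.

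The key computation is It\^o's formula for $\langle \Delta z_t,\Delta\varphi_t\rangle$, where $\Delta z,\Delta\varphi$ are differences of two solutions. It gives
\begin{equation*}
\mathbb{E}\langle \Delta z_T,\Delta\varphi_T\rangle=\mathbb{E}\int_0^T (\Delta z_t,\Delta\varphi_t)\,\Gamma_t^n\,(\Delta z_t,\Delta\varphi_t)^\top \mathrm{d}t.
\end{equation*}
The terminal side equals $-\lambda_n\mathbb{E}\langle \Delta z_T,G^fg^{f,1}\Delta z_T\rangle$. In case (1), the integrand is $\le -K|\Delta z|^2$.

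The sign matching between the terminal and running terms depends on the sign of $\lambda_n$. I would interpret Assumption \ref{Ass:07} as imposing the compatible pair for each $n$ (the assumption is stated per eigenvalue), absorbing $\lambda_n$ into the monotone structure as in Peng--Wu. Under that reading I will verify the Peng--Wu conditions (H2.1)--(H2.3) with $\beta_1=K$ and $\mu_1\ge0$ on the terminal. I then apply the continuation lemma: embed (\ref{eq15}) into the family obtained by interpolating linearly, with parameter $\rho\in[0,1]$, between $\Gamma^n$ and the decoupled system $(-K\theta_1,\ \cdot\ )$. A uniform step $\delta_0$, depending only on $K$, the coefficient bounds and $T$, extends solvability from $\rho=0$ to $\rho=1$. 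This yields existence and uniqueness in $\mathbb{S}^2\times\mathbb{S}^2\times\mathbb{H}^2$. Case (2) is symmetric (replace $\varphi$ by $-\varphi$).

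\emph{Step 2 (BSDE (\ref{eq16})).} This is a linear BSDE with bounded coefficient $\hat g^{f,1}$, so existence and uniqueness are classical.

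\emph{Step 3 (summability, the main obstacle).} From the continuation argument I extract the estimate
\begin{multline*}
\mathbb{E}\Big[\sup_t|z^n_t|^2+\sup_t|\varphi^n_t|^2+\int_0^T|q^{l,n}_t|^2\mathrm{d}t\Big]\\
\le C\Big(|\lambda_n x_0^n|^2+\big|\langle\|G(\cdot,\cdot)\|_1,e_n\rangle\big|^2+|\langle\mathbf 1,e_n\rangle|^2\Big)\,\Lambda,
\end{multline*}
where $\Lambda=1+\mathbb{E}[\sup|X^l|^2+\int_0^T|\alpha^l|^2\mathrm{d}t]$. The crucial point is that $C$ is independent of $n$. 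This holds because $|\lambda_n|\le\|G\|_\infty\le1$, so all coefficients are uniformly bounded, and the monotonicity constant $K$ is uniform in $n$.

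Summing over $n$ and using Bessel's inequality for $x_0$, for $\|G(u,\cdot)\|_1$ and for $\mathbf 1$ shows that the series in (\ref{eq33}) converges in $\mathbb{H}^2_{\boxtimes^l}$ for $q^l$. For $z$ and $\varphi^f$ I first get convergence in $L^2$ with the supremum taken inside each mode. I then upgrade to $\mathbb{S}^2_{\boxtimes^l}$ via the identity $\mathbb{E}^\boxtimes[\sup_t|\cdot|^2]\le\mathbb{E}\sup_t\sum_n|\cdot^n_t|^2\le\sum_n\mathbb{E}\sup_t|\cdot^n_t|^2$.

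Measurability and $\mathcal{F}^l$-adaptedness of the sums follow because each coefficient process is $\mathcal{F}^l$-adapted and each $e_n$ is deterministic in $u$. Finally, the preceding proposition gives that $(z,\varphi^f,q^l)$ is the unique solution of (\ref{Eq:follower_consistent}). The delicate points are, first, securing the $n$-uniform constant from the continuation estimate and, second, handling the sign of $\lambda_n$ in the terminal coupling.
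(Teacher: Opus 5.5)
Your architecture is the paper's: continuation for each mode with constants uniform in $n$ (since $|\lambda_n|\le 1$), summation by Bessel/Parseval, and identification through the preceding proposition. The step placing $z$ and $\varphi^f$ in $\mathbb{S}^2_{\boxtimes^l}$ fails as written, however, because the inequality $\mathbb{E}^\boxtimes[\sup_t|z_t|^2]\le\mathbb{E}[\sup_t\sum_n|z_t^n|^2]$ is reversed. By Parseval the right-hand side equals $\mathbb{E}[\sup_t\int_I|z_t^u|^2\lambda(\mathrm{d}u)]$. Since $\sup_t\int_I\le\int_I\sup_t$, it is bounded \emph{above} by $\mathbb{E}^\boxtimes[\sup_t|z_t|^2]$. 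Summing the mode estimates therefore only gives convergence in $L^2(\varOmega;C([0,T];L^2_{\mathcal{I}}(\mathbb{R}^{n_1})))$, with the supremum outside the $u$-integral. The paper's proof also just sums the mode estimates and asserts membership, so it skips the same point. To close the gap, return to the equation. By Proposition \ref{Prp:2.1}(1), $\sup_u|Gz_t^u|+\sup_u|G\varphi_t^{f,u}|\le\|z_t\|_{L^2_{\mathcal I}}+\|\varphi^f_t\|_{L^2_{\mathcal I}}$. Hence, for every fixed $u$, the $z$-equation of (\ref{Eq:follower_consistent}) is a linear ODE in $t$ whose forcing is dominated by $\sup_t\|z_t\|_{L^2_{\mathcal I}}+\sup_t\|\varphi^f_t\|_{L^2_{\mathcal I}}+|X_t^l|+|\alpha_t^l|+1$. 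Its solution agrees with your series, since both solve the same linear $L^2_{\mathcal I}$-valued ODE, and Gronwall gives $\sup_u\mathbb{E}[\sup_t|z_t^u|^2]<\infty$. Next, for each $u$, solve the classical linear BSDE with terminal value $-G^f(g^{f,1}z_T^u+g^{f,2}X_T^l)$ and source $\hat g^{f,2}z^u+\hat g^{f,0}+\hat g^{f,3}X^l+\hat g^{f,4}\alpha^l$; its standard estimate is uniform in $u$. Projecting this solution onto $e_n$ gives a solution of the backward half of (\ref{eq15}) with $z^n$ fixed, so it equals $(\varphi^n,q^{l,n})$ by uniqueness for linear BSDEs; the same argument on $(\overline{\mathrm{ran}\,G})^\perp$ handles (\ref{eq16}). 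This yields membership in $\mathbb{S}^2_{\boxtimes^l}$, and in fact in $\mathbb{S}^{\infty,2}_{\boxtimes^l}$.

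On the sign of $\lambda_n$, you should not reinterpret Assumption \ref{Ass:07}. Read literally, with the factor $\lambda_n$ in the terminal condition of (\ref{eq15}), case (1) gives $\mathbb{E}\langle\Delta z_T,\Delta\varphi_T\rangle\le0$ when $\lambda_n<0$, and your continuation argument then proves nothing for negative eigenvalues. That factor is a misprint. Projecting $\varphi_T^{f,u}=-G^f(g^{f,1}z_T^u+g^{f,2}X_T^l)$ onto $e_n$ gives $\varphi_T^n=-G^f(g^{f,1}z_T^n+g^{f,2}X_T^l\mathbf{1}^n)$. With the printed factor, the series (\ref{eq33}) would instead satisfy $\varphi_T^f=-G^f(g^{f,1}Gz_T+g^{f,2}X_T^l)$, so it could not solve (\ref{Eq:follower_consistent}) at all. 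With the correct terminal condition, $-\langle\Delta z_T,G^fg^{f,1}\Delta z_T\rangle\ge\mu|\Delta z_T|^2$ for some $\mu>0$ independent of $n$ and of the sign of $\lambda_n$. Assumption \ref{Ass:07} is then exactly the Peng--Wu condition with $\beta_1=K$, $\beta_2=0$, $\mu_1=\mu$. Here you need $\mu_1>0$, not merely $\mu_1\ge0$: with $\beta_2=0$ and equal forward and backward dimensions, it is the terminal term that controls $\mathbb{E}|\Delta z_T|^2$, which feeds the backward equation. The uniformity of $\mu_1$ in $n$ is also what your Step 3 needs; with the printed factor it would be $|\lambda_n|\mu\to0$.
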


\begin{proof}
	The existence and uniqueness of solutions to the component equation (\ref{eq15}) follow directly from the method of continuation for classical linear FBSDEs; see \cite{Hu-Peng-1995,Yong-1997,Peng-Wu-1999}. Moreover, the following $L^2$ estimate holds:
	\begin{equation*}
    \begin{aligned}
		&\mathbb{E}^\boxtimes\biggl[ \sup_{0\le t\le T}\big\{ |z_t^n|^2+|\varphi_t^n|^2 \big\}+\int_0^T |q_t^{l,n}|^2\mathrm{d}t \biggr]\\
		&\qquad \le K\mathbb{E}^\boxtimes\biggl[ \int_0^T \big\{ |\hat{b}_t^{f,0}|^2+|X_t^l|^2+|\alpha_t^l|^2 \big\}(\|G(u,\cdot)\|_1^n)^2\mathrm{d}t\\
		&\qquad\qquad +\int_0^T \big\{ |\hat{g}_t^{f,0}|^2+|X_t^l|^2+|\alpha_t^l|^2 \big\}(\mathbf{1}^n)^2\mathrm{d}t+|X_T^l|^2(\mathbf{1}^n)^2 \biggr],
	\end{aligned}
    \end{equation*}
    where the constant $K$ depends only on the Lipschitz constant and the monotonicity constant, and is independent of the positive integer $n$.
    
	By the eigenvalue decomposition,
	\begin{equation*}
		\sum_{n=1}^\infty (\|G(u,\cdot)\|_1^n)^2=(\|G(u,\cdot)\|_1)^2,\quad \sum_{n=1}^\infty (\mathbf{1}^n)^2+(\mathbf{1}^0)^2=1,
	\end{equation*}
	we obtain
	\begin{equation*}
    \begin{aligned}
		&\sum_{n=1}^{\infty}\mathbb{E}^\boxtimes\biggl[ \sup_{0\le t\le T}\big\{ |z_t^n|^2+|\varphi_t^n|^2 \big\}+\int_0^T |q_t^{l,n}|^2\mathrm{d}t \biggr]+\mathbb{E}^\boxtimes\biggl[ \sup_{0\le t\le T}|\varphi_t^0|^2+\int_0^T|q_t^{l,0}|^2\mathrm{d}t \biggr]\\
		&\qquad \le K\mathbb{E}^\boxtimes\biggl[ \int_0^T\big\{ |\hat{b}_t^{f,0}|^2+|X_t^l|^2+|\alpha_t^l|^2 \big\}(\|G(u,\cdot)\|_1)^2\mathrm{d}t\\
		&\qquad\qquad +\int_0^T\big\{ |\hat{g}_t^{f,0}|^2+|X_t^l|^2+|\alpha_t^l|^2 \big\}\mathrm{d}t+|X_T^l|^2 \biggr]<\infty,
	\end{aligned}
    \end{equation*}
	and therefore $(z,\varphi^f,q^l)\in\mathbb{S}_{\boxtimes^l}^2(\mathbb{R}^{n_1})\times\mathbb{S}_{\boxtimes^l}^2(\mathbb{R}^{n_1})\times\mathbb{H}_{\boxtimes^l}^2(\mathbb{R}^{n_1})$.
\end{proof}

\begin{remark}
	Since, for classical linear FBSDEs, the constants in the $L^2$ estimates depend only on the Lipschitz constants of the coefficients and the constants in the monotonicity condition, the existence and uniqueness of solutions to the graphon aggregated FBSDE obtained here, together with the corresponding $L^2$ estimates, do not rely on the finite-rank condition of the graphon operator. This relaxes the requirement imposed in \cite{Xu-Gou-Huang-Gao-2025}.
\end{remark}

\begin{prp}
	If equation (\ref{Eq:follower_consistent}) admits a solution $(z,\varphi^f,q^l)\in\mathbb{S}_{\boxtimes^l}^2(\mathbb{R}^{n_1})\times\mathbb{S}_{\boxtimes^l}^2(\mathbb{R}^{n_1})\times\mathbb{H}_{\boxtimes^l}^2(\mathbb{R}^{n_1})$, then $(z,\varphi^f,q^l)\in\bar{\mathbb{S}}_{\boxtimes^l}^{\infty,2}(\mathbb{R}^{n_1})\times\mathbb{S}_{\boxtimes^l}^{\infty,2}(\mathbb{R}^{n_1})\times\mathbb{H}_{\boxtimes^l}^{\infty,2}(\mathbb{R}^{n_1})$, and the estimate
	\begin{equation*}
    \begin{aligned}
		&\sup_{u\in I}\mathbb{E}\biggl[ \sup_{0\le t\le T}\big\{|z_t^u|^2+|\varphi_t^{f,u}|^2\big\}+\int_0^T |q_t^{l,u}|^2\mathrm{d}t \biggr]\\
		&\qquad \le K\mathbb{E}\biggl[ \int_0^T\big\{ |X_t^l|^2+|\alpha_t^l|^2+|\hat{b}_t^{f,0}|^2+|\hat{g}_t^{f,0}|^2 \big\}\mathrm{d}t \biggr],
	\end{aligned}
    \end{equation*}
	holds.
\end{prp}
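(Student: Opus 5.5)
The idea is to exploit the structure of (\ref{Eq:follower_consistent}): once the graphon aggregates $G\varphi^f$ and $Gz$ are frozen, the system decouples in $u$. Proposition \ref{Prp:2.1} and Proposition \ref{Prp:3.1} then turn $L^2$-in-$u$ bounds into bounds that are uniform in $u$. Concretely, fix the given solution $(z,\varphi^f,q^l)\in\mathbb{S}_{\boxtimes^l}^2\times\mathbb{S}_{\boxtimes^l}^2\times\mathbb{H}_{\boxtimes^l}^2$. The aggregates $G\varphi^f$ and $Gz$ satisfy, by Proposition \ref{Prp:3.1} applied pathwise in $t$ (Cauchy--Schwarz in $v$),
\begin{equation*}
\sup_{u\in I}\mathbb{E}\Bigl[\sup_{0\le t\le T}\bigl\{|G\varphi_t^{f,u}|^2+|Gz_t^u|^2\bigr\}\Bigr]\le \|G\|_\infty^2\,\mathbb{E}^\boxtimes\Bigl[\sup_{0\le t\le T}\bigl\{|\varphi_t^f|^2+|z_t|^2\bigr\}\Bigr].
\end{equation*}
By the preceding proposition, the right-hand side is controlled by $K\mathbb{E}[\int_0^T\{|X_t^l|^2+|\alpha_t^l|^2+|\hat b_t^{f,0}|^2+|\hat g_t^{f,0}|^2\}\mathrm{d}t+|X_T^l|^2]$. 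I will absorb $\mathbb{E}|X_T^l|^2$ into the time integral using the leader-state estimate (\ref{eq18}), together with the linear dependence of $X^l$ on $M^f$, which is itself bounded via $z$-type quantities.

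With the aggregates frozen as inputs, for each fixed $u$ the pair $(z^u,\varphi^{f,u},q^{l,u})$ solves a linear FBSDE driven only by $W^l$. Its coefficients are uniformly bounded in $u$, and its inhomogeneous terms are $\|G(u,\cdot)\|_1(\hat b^{f,0}+\hat b^{f,4}X^l+\hat b^{f,5}\alpha^l)$ and $\hat b^{f,2}G\varphi^{f,u}+\hat b^{f,3}Gz^u$ in the forward equation, and $\hat g^{f,0}+\hat g^{f,3}X^l+\hat g^{f,4}\alpha^l$ in the backward equation. However, $z^u$ still appears in the backward drift and terminal condition, so the system is genuinely coupled in $u$. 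Rather than reuse monotonicity here, I would observe that $z^u$ is determined by the forward equation alone. It is a linear ODE with random coefficients whose inputs are already bounded uniformly in $u$, namely $G\varphi^{f,u}$, $Gz^u$, $X^l$ and $\alpha^l$, with $|\|G(u,\cdot)\|_1|\le1$ and $|Gx_0^{f,u}|\le\sup_v|x_0^{f,v}|$. Gronwall therefore gives $\sup_u\mathbb{E}[\sup_t|z_t^u|^2]\le K(\cdots)$ directly, and this shows in particular that $z\in\bar{\mathbb{S}}^{\infty,2}_{\boxtimes^l}$ because all inputs are $\mathcal{F}^l$-adapted.

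Next, $\varphi^{f,u}$ solves a linear BSDE in $W^l$ whose driver $\hat g^{f,1}\varphi+(\hat g^{f,0}+\hat g^{f,2}z^u+\hat g^{f,3}X^l+\hat g^{f,4}\alpha^l)$ and terminal value $-G^f(g^{f,1}z_T^u+g^{f,2}X_T^l)$ now have uniformly bounded second moments. The standard a priori BSDE estimate yields
\begin{equation*}
\mathbb{E}\Bigl[\sup_t|\varphi_t^{f,u}|^2+\int_0^T|q_t^{l,u}|^2\mathrm{d}t\Bigr]\le K\,\mathbb{E}\Bigl[|z_T^u|^2+|X_T^l|^2+\int_0^T\bigl\{|z_t^u|^2+|X_t^l|^2+|\alpha_t^l|^2+|\hat g_t^{f,0}|^2\bigr\}\mathrm{d}t\Bigr],
\end{equation*}
with $K$ independent of $u$. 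Taking $\sup_u$ and combining with the $z$ bound then gives the claim.

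The main obstacle is the first step, that is, ensuring the $\mathbb{E}^\boxtimes$ bound is genuinely available and contains only the quantities on the right-hand side of the statement. This involves two points. First, the term $|X_T^l|^2$ must be absorbed, which I would handle via the linear leader dynamics and Gronwall (or by simply accepting a constant that depends on the leader's system). Second, the pathwise supremum must be correctly handled inside $\sup_t|G\varphi_t^{f,u}|\le\int_I\sup_t|\varphi_t^{f,v}|\lambda(\mathrm{d}v)$.
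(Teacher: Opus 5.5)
Your route is the paper's: Proposition \ref{Prp:3.1} turns the $\mathbb{E}^\boxtimes$-integrability of the given solution into $u$-uniform bounds on $Gz$ and $G\varphi^f$. A per-$u$ Gronwall estimate for $z^u$ and a per-$u$ linear BSDE estimate for $(\varphi^{f,u},q^{l,u})$, followed by $\sup_u$, then give the membership claim. That qualitative part is sound, with two remarks:
\begin{itemize}
\item The $\mathcal{F}^l$-adaptedness of $Gz$ and $G\varphi^f$, which you need for $z\in\bar{\mathbb{S}}^{\infty,2}_{\boxtimes^l}$, is not automatic. It comes from Proposition \ref{Prp:3.4} and the CELLN (Theorem \ref{Thm:2.1}), so you should cite them.
\item The pathwise-supremum obstacle you list is self-inflicted. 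The forward Gronwall step only uses $\sup_u\mathbb{E}\int_0^T|G\varphi_t^{f,u}|^2\mathrm{d}t$, which is exactly (\ref{eq03}). This time-integrated quantity is also what the Parseval-type estimate of the preceding proposition controls; that estimate does not directly control $\mathbb{E}^\boxtimes[\sup_t|\varphi_t^f|^2]$.
\end{itemize}

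The genuine gap is in the quantitative estimate.

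\emph{The absorption of $\mathbb{E}|X_T^l|^2$ cannot be done.}
\begin{itemize}
\item Estimate (\ref{eq18}) brings in $I_0^l$ and $\mathbb{E}\int_0^T|M_t^f|^2\mathrm{d}t$, neither of which is on the right-hand side.
\item Rewriting $M^f$ through $z$ needs Assumption \ref{Ass:06} with $\lambda_0\neq0$ and the relation $z=G\hat{X}^f$. It then reintroduces $X_T^l$ through the bound on $z$, so the argument is circular.
\item A constant depending on the leader's coefficients cannot create additive terms such as $|x_0^l|^2$.
\end{itemize}

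\emph{The printed bound is too strong for any argument.} $X^l$ is an input of (\ref{Eq:follower_consistent}), and $\varphi_T^{f,u}$ contains the term $-G^fg^{f,2}X_T^l$. For a general input, $\mathbb{E}\int_0^T|X_t^l|^2\mathrm{d}t$ does not control this term. Moreover, taking $t=0$ shows the left-hand side is at least $\sup_u|Gx_0^{f,u}|^2$. This is positive when $X^l,\alpha^l,\hat{b}^{f,0},\hat{g}^{f,0}$ all vanish but $Gx_0^f\neq0$, and such solvable instances exist under Assumption \ref{Ass:07}. Your own forward step in fact produces $\sup_v|x_0^{f,v}|^2$. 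What your argument proves, and the correct form of the statement, is the estimate with $\sup_u|x_0^{f,u}|^2+\mathbb{E}|X_T^l|^2$ added to the right-hand side.

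\emph{The step that closes the estimate needs an extra hypothesis.} What actually closes the bound is your appeal to the preceding proposition. A forward Gronwall argument, which is all the paper's one-line proof cites, cannot break the loop $z\to\varphi_T^f\to G\varphi^f\to z$ for general $T$. But that proposition requires Assumption \ref{Ass:07}. You also need its uniqueness conclusion to identify your given solution with the spectrally constructed one. This hypothesis is not in the statement, so you must add it explicitly.
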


\begin{proof}
	By Propositions \ref{Prp:3.1} and \ref{Prp:3.4}, $Gz,G\varphi^f\in\bar{\mathbb{S}}_{\boxtimes^l}^{\infty,2}(\mathbb{R}^{n_1})$. For fixed $u$, using the classical SDE estimate gives
	\begin{equation*}
		\mathbb{E}\biggl[ \sup_{0\le t\le T}|z_t^u|^2 \biggr]\le K\mathbb{E}\biggl[ \int_0^T \big\{ |G\varphi_t^{f,u}|^2+|Gz_t^{f,u}|^2+|X_t^l|^2+|\alpha_t^l|^2+|\hat{b}_t^{f,0}|^2 \big\}\mathrm{d}t \biggr].
	\end{equation*}
	Taking the supremum with respect to $u$ yields
	\begin{equation*}
    \begin{aligned}
		&\sup_{u\in I}\mathbb{E}\biggl[ \sup_{0\le t\le T}|z_t^u|^2 \biggr]
        \le K\sup_{u\in I}\mathbb{E}\biggl[ \int_0^T\big\{|G\varphi_t^{f,u}|^2+|Gz_t^{f,u}|^2\big\}\mathrm{d}t \biggr] \\
		&\quad +K\mathbb{E}\biggl[ \int_0^T \big\{ |X_t^l|^2+|\alpha_t^l|^2+|\hat{b}_t^{f,0}|^2 \big\}\mathrm{d}t \biggr]<\infty,
	\end{aligned}
    \end{equation*}
	Combining this with Proposition \ref{Prp:3.4}, we obtain $z\in\bar{\mathbb{S}}_{\boxtimes^l}^{\infty,2}(\mathbb{R}^{n_1})$. Similarly, using BSDE estimates and taking the supremum give $(\varphi^f,q^l)\in\mathbb{S}_{\boxtimes^l}^{\infty,2}(\mathbb{R}^{n_1})\times\mathbb{H}_{\boxtimes^l}^{\infty,2}(\mathbb{R}^{n_1})$. The estimate follows from Proposition \ref{Prp:3.1} and Gronwall inequality.
\end{proof}

\subsection{The leader's problem}

\begin{Ass}\label{Ass:06}
	$w$ is an eigenvector of the graphon operator $G$, that is, there exists $\lambda_0$ such that, for any $u\in I$,
	\begin{equation*}
		\int_I G(u,v)w^v\lambda(\mathrm{d}v)=\lambda_0 w^u.
	\end{equation*}
\end{Ass}

\begin{prp}
	Assume that Assumption \ref{Ass:06} holds. For any $X\in L_\mathcal{I}^2(\mathbb{R}^{n_1})$, we have
	\begin{equation*}
		\int_I w^u GX^u\lambda(\mathrm{d}u)=\lambda_0\int_I w^u X^u\lambda(\mathrm{d}u).
	\end{equation*}
	In particular,
	\begin{equation*}
		\int_I w^u GX_t^{f,u}\lambda(\mathrm{d}u)=\lambda_0 M_t^{f}.
	\end{equation*}
\end{prp}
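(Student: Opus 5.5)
The plan is to use the symmetry of the graphon together with Fubini's theorem, and then apply the eigenvector relation of Assumption \ref{Ass:06}.

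\textbf{Deterministic identity.} Fix $X\in L_\mathcal{I}^2(\mathbb{R}^{n_1})$. Then
\begin{equation*}
\int_I w^u GX^u\lambda(\mathrm{d}u)=\int_I w^u\int_I G(u,v)X^v\lambda(\mathrm{d}v)\lambda(\mathrm{d}u).
\end{equation*}
To justify exchanging the order of integration, it suffices to check absolute integrability on $I\times I$. This holds because $|w^uG(u,v)X^v|\le \|w\|_\infty|X^v|$, and $X\in L^2_\mathcal{I}\subset L^1_\mathcal{I}$ on the probability space $(I,\mathcal{I},\lambda)$. After applying Fubini, the right-hand side equals
\begin{equation*}
\int_I \Big(\int_I G(u,v)w^u\lambda(\mathrm{d}u)\Big)X^v\lambda(\mathrm{d}v).
\end{equation*}
Since $G(u,v)=G(v,u)$, the inner integral is $\int_I G(v,u)w^u\lambda(\mathrm{d}u)$. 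By Assumption \ref{Ass:06} this equals $\lambda_0 w^v$. Substituting gives $\lambda_0\int_I w^vX^v\lambda(\mathrm{d}v)$, which is the first claim.

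\textbf{Application to the state.} For the second identity, fix $t$ and $\omega$ and apply the first identity to $X=X_t^{f,\cdot}(\omega)$. By Theorem \ref{Thm:3.1}, $X^f\in\mathbb{S}^2_{\boxtimes^l}(\mathbb{R}^{n_1})$. The Fubini property of the extension $\mathcal{F}\boxtimes\mathcal{I}$ then gives that, for $P$-a.e.\ $\omega$, the section $u\mapsto X_t^{f,u}(\omega)$ is $\mathcal{I}$-measurable and lies in $L^2_\mathcal{I}(\mathbb{R}^{n_1})$. Note that $\mathbb{E}^\boxtimes|X_t^f|^2<\infty$ implies $\int_I|X_t^{f,u}|^2\lambda(\mathrm{d}u)<\infty$ almost surely. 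On this set the first identity yields $\int_I w^uGX_t^{f,u}\lambda(\mathrm{d}u)=\lambda_0M_t^f$, so the equality holds $P$-a.s.

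\textbf{Main obstacle.} The only delicate point is the measurability and integrability of the $\omega$-sections under the rich Fubini extension. Once the sections are known to lie in $L^2_\mathcal{I}$, the rest is the routine Fubini exchange combined with the symmetry of $G$. I would handle the sections by citing the Fubini extension property from Section \ref{Sec:02}.
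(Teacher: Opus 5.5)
Your proposal is correct and follows the same route as the paper: exchange the order of integration, then use the symmetry of $G$ together with the eigenvector relation of Assumption \ref{Ass:06}. Three of your steps fill in details the paper leaves implicit: the integrability check that justifies Fubini, the explicit use of $G(u,v)=G(v,u)$ (which the paper uses silently), and the $\omega$-wise section argument via the Fubini extension for $\int_I w^u GX_t^{f,u}\lambda(\mathrm{d}u)=\lambda_0 M_t^f$.
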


\begin{proof}
	By exchanging the order of integration, for any $X\in L_\mathcal{I}^2(\mathbb{R}^{n_1})$, we have
	\begin{equation*}
    \begin{aligned}
		&\int_I w^u GX^u\lambda(\mathrm{d}u)=\int_I w^u\int_I G(u,v)X^v\lambda(\mathrm{d}v)\lambda(\mathrm{d}u)\\
		&=\int_I\int_I w^uG(u,v)\lambda(\mathrm{d}u)X^v\lambda(\mathrm{d}v)=\lambda_0\int_I w^uX^u\lambda(\mathrm{d}u).
	\end{aligned}
    \end{equation*}
    The proof is complete.
\end{proof}

Set
\begin{equation*}
	N_t^f:=\int_I w^u\varphi_t^{f,u}\lambda(\mathrm{d}u),\quad L_t^{f}:=\int_I w^uq_t^{l,u}\lambda(\mathrm{d}u).
\end{equation*}
Integrating the forward-backward system (\ref{eq13}) gives
\begin{equation*}
	\begin{cases}
		\mathrm{d}M_t^f=\big\{ (\hat{b}_t^{f,1}+\hat{b}_t^{f,3}\lambda_0)M_t^f+\hat{b}_t^{f,2}N_t^f+\hat{b}_t^{f,4}X_t^l+\hat{b}_t^{f,5}\alpha_t^l+\hat{b}_t^{f,0} \big\}\mathrm{d}t,\\
		\mathrm{d}N_t^f=\big\{\hat{g}_t^{f,2}\lambda_0 M_t^f+\hat{g}_t^{f,1}N_t^{f}+\hat{g}_t^{f,3}X_t^l+\hat{g}_t^{f,4}\alpha_t^l+\hat{g}_t^{f,0}\big\}\mathrm{d}t+L_t^f\mathrm{d}W_t^l,\\
		M_0^f=m_0^f,\quad N_T^f=-G^f(g^{f,1}\lambda_0M_T^f+g^{f,2}X_T^l),
	\end{cases}
\end{equation*}
where
\begin{equation*}
	m_0^f=\int_I w^u x_0^{f,u}\lambda(\mathrm{d}u).
\end{equation*}

\begin{prp}
	If the nonsymmetric Riccati equation
	\begin{equation}\label{Eq:leader_riccati1}
		\begin{cases}
			\mathrm{d}\hat{P}_t^f+\big\{ \hat{P}_t^f(\hat{b}_t^{f,1}+\lambda_0\hat{b}_t^{f,3})-\hat{g}_t^{f,1}\hat{P}_t^f+\hat{P}_t^f\hat{b}_t^{f,2}\hat{P}_t^f-\lambda_0\hat{g}_t^{f,2} \big\}\mathrm{d}t=0,\\
			\hat{P}_T^{f}=-G^fg^{f,1}\lambda_0,
		\end{cases}
	\end{equation}
	admits a solution $\hat{P}^f\in L^\infty(0,T;\mathbb{R}^{n_1\times n_1})$, then the forward-backward system
	\begin{equation}\label{eq14}
		\begin{cases}
			\mathrm{d}M_t^f=\big\{ (\hat{b}_t^{f,1}+\lambda_0\hat{b}_t^{f,3}+\hat{b}_t^{f,2}\hat{P}_t^f)M_t^f+\hat{b}_t^{f,2}N_t^l+\hat{b}_t^{f,4}X_t^l+\hat{b}_t^{f,5}\alpha_t^l+\hat{b}_t^{f,0} \big\}\mathrm{d}t,\\
			\mathrm{d}N_t^l=\big\{ (\hat{g}_t^{f,1}-\hat{P}_t^f\hat{b}_t^{f,2})N_t^l+(\hat{g}_t^{f,3}-\hat{P}_t^f\hat{b}_t^{f,4})X_t^l+(\hat{g}_t^{f,4}-\hat{P}_t^f\hat{b}_t^{f,5})\alpha_t^l\\
			\qquad\qquad +(\hat{g}_t^{f,0}-\hat{P}_t^fb_t^{f,0}) \big\}\mathrm{d}t+L_t^f\mathrm{d}W_t^l,\\
			M_0^f=m_0^f,\quad N_T^l=-G^fg^{f,2}X_T^l,
		\end{cases}
	\end{equation}
	admits a unique solution $(M^f,N^l,L^f)\in\mathbb{S}_{\mathcal{F}^l}^2(\mathbb{R}^{n_1})\times\mathbb{S}_{\mathcal{F}^l}^2(\mathbb{R}^{n_1})\times\mathbb{H}_{\mathcal{F}^l}^2(\mathbb{R}^{n_1})$, and satisfies
	\begin{equation*}
		N_t^f=\hat{P}_t^fM_t^f+N_t^l.
	\end{equation*}
\end{prp}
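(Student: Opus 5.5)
The plan is to decouple the integrated forward-backward system for $(M^f,N^f,L^f)$ with the deterministic matrix $\hat{P}^f$, in the usual way for linear FBSDEs. The key structural observation is that the backward equation for $N^l$ in (\ref{eq14}) does not involve $M^f$. Its driver is linear in $N^l$ with the bounded deterministic coefficient $\hat{g}^{f,1}-\hat{P}^f\hat{b}^{f,2}$ (using $\hat{P}^f\in L^\infty$). Its inhomogeneous terms $(\hat{g}^{f,3}-\hat{P}^f\hat{b}^{f,4})X^l$, $(\hat{g}^{f,4}-\hat{P}^f\hat{b}^{f,5})\alpha^l$ and the constant term are in $\mathbb{H}^2_{\mathcal{F}^l}$, by Theorem \ref{Thm:3.1} and $\alpha^l\in\mathcal{U}^l$. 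Its terminal value $-G^fg^{f,2}X_T^l$ lies in $L^2_{\mathcal{F}_T^l}$. Classical linear BSDE theory on the Brownian filtration $\mathcal{F}^l$ therefore gives a unique $(N^l,L^f)\in\mathbb{S}^2_{\mathcal{F}^l}\times\mathbb{H}^2_{\mathcal{F}^l}$. With $N^l$ known, the first equation of (\ref{eq14}) is a linear random ODE in $M^f$ with bounded coefficient $\hat{b}^{f,1}+\lambda_0\hat{b}^{f,3}+\hat{b}^{f,2}\hat{P}^f$ and $\mathcal{F}^l$-adapted square-integrable forcing. It therefore has a unique solution $M^f\in\mathbb{S}^2_{\mathcal{F}^l}$, by Gronwall. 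This proves existence and uniqueness for (\ref{eq14}).

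Next I will show that $\tilde N_t:=\hat{P}_t^fM_t^f+N_t^l$ solves the backward part of the integrated system with the same $L^f$. Since $\hat{P}^f$ is deterministic and absolutely continuous, and $M^f$ has no martingale part, the product rule gives $\mathrm{d}\tilde N_t=(\dot{\hat P}_t^fM_t^f+\hat P_t^f\dot M_t^f)\mathrm{d}t+\mathrm{d}N_t^l$. I will then substitute the Riccati equation (\ref{Eq:leader_riccati1}) for $\dot{\hat P}^f$ and the equations (\ref{eq14}) for $\dot M^f$ and $\mathrm{d}N^l$. The quadratic term $\hat P^f\hat b^{f,2}\hat P^f M^f$ and the cross terms $\hat P^f\hat b^{f,2}N^l$, $\hat P^f\hat b^{f,4}X^l$, $\hat P^f\hat b^{f,5}\alpha^l$, $\hat P^f\hat b^{f,0}$ should cancel. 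What remains should be exactly $\hat g^{f,2}\lambda_0M^f+\hat g^{f,1}(\hat P^fM^f+N^l)+\hat g^{f,3}X^l+\hat g^{f,4}\alpha^l+\hat g^{f,0}$, which is the drift of $N^f$ evaluated at $\tilde N$. In the constant term of (\ref{eq14}), $b^{f,0}$ must be read as $\hat b^{f,0}$ for this cancellation to work. The terminal condition also matches: $\tilde N_T=-G^fg^{f,1}\lambda_0M_T^f-G^fg^{f,2}X_T^l$. Finally, the $M^f$ equation in (\ref{eq14}) coincides with the original one once $N^f$ is replaced by $\hat P^fM^f+N^l$.

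For the identity $N^f=\hat P^fM^f+N^l$ for the actual aggregated quantities, I run the argument in reverse. Let $(M^f,N^f,L^f)$ be the process obtained by integrating (\ref{eq13}) against $w$; its well-posedness comes from the preceding results and Theorem \ref{Thm:3.1}. I will set $\check N:=N^f-\hat P^fM^f$ and apply the same Itô computation. This shows that $(\check N,L^f)$ solves the linear BSDE appearing in (\ref{eq14}) with the given $M^f$ eliminated, and that $M^f$ solves the forward equation in (\ref{eq14}) driven by $\check N$. Uniqueness for (\ref{eq14}), established in the first step, then forces $(M^f,\check N,L^f)$ to equal the solution constructed there, which gives $N^f_t=\hat P^f_tM^f_t+N^l_t$.

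The main obstacle I expect is the drift bookkeeping in the Itô verification. The Riccati equation is nonsymmetric, so the signs and the left/right placement of $\hat P^f$ in $\hat P^f(\hat b^{f,1}+\lambda_0\hat b^{f,3})$ versus $\hat g^{f,1}\hat P^f$ must be tracked carefully. I would write both sides as coefficients of $M^f$, $N^l$, $X^l$, $\alpha^l$ and $1$, and compare them term by term. A secondary point is to confirm that all processes are $\mathcal{F}^l$-adapted. This follows from Proposition \ref{Prp:3.4}, since integration against $w$ removes the dependence on $W^{f,u}$.
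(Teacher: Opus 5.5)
Your proposal is correct and is the standard decoupling argument the paper implicitly relies on; the paper states this proposition without a written proof, after using the same ansatz-and-coefficient-matching device for (\ref{eq04}). You solve the decoupled BSDE for $(N^l,L^f)$ and then the linear ODE for $M^f$, check against the Riccati equation (\ref{Eq:leader_riccati1}) that $\hat P^fM^f+N^l$ satisfies the backward equation for $N^f$, and apply uniqueness of (\ref{eq14}) to $N^f-\hat P^fM^f$ to identify the aggregated quantities. You are also right that $\hat{P}_t^f b_t^{f,0}$ in (\ref{eq14}) must be read as $\hat{P}_t^f\hat{b}_t^{f,0}$, which agrees with the paper's later definition $\tilde{g}_t^{l,0}=\hat{g}_t^{f,0}-\hat{P}_t^f\hat{b}_t^{f,0}$.
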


Combining (\ref{eq14}) with the system equation (\ref{state}) of the leader, we write
\begin{equation*}
	\tilde{X}_t^l=\begin{pmatrix}
		X_t^l \\ M_t^f
	\end{pmatrix}.
\end{equation*}
Then $(\tilde{X}^l,N^l,L^f)$ satisfies the forward-backward system
\begin{equation*}
	\begin{cases}
		\mathrm{d}\tilde{X}_t^l=\tilde{b}_t^l(\tilde{X}_t^l,N_t^l,\alpha_t^l)\mathrm{d}t+\tilde{\sigma}_t^l(\tilde{X}_t^l,\alpha_t^l)\mathrm{d}W_t^l,\\
		\mathrm{d}N_t^l=\tilde{g}_t^l(\tilde{X}_t^l,N_t^l,\alpha_t^l)\mathrm{d}t+L_t^f\mathrm{d}W_t^l,\\
		\tilde{X}_0^l=\tilde{x}_0^l,\quad N_T^l=\tilde{G}\tilde{X}_T^l,
	\end{cases}
\end{equation*}
where
\begin{equation*}
	\begin{cases}
		\tilde{b}_t^l(\tilde{x}^l,n^l,\alpha^l)=\tilde{b}_t^{l,0}+\tilde{b}_t^{l,1}\tilde{x}^l+\tilde{b}_t^{l,2}n^l+\tilde{b}_t^{l,3}\alpha^l,\\
		\tilde{b}_t^{l,0}=\begin{pmatrix}
			b_t^{l,0} \\ \hat{b}_t^{f,0}
		\end{pmatrix},\quad \tilde{b}_t^{l,1}=\begin{pmatrix}
			b_t^{l,1} & b_t^{l,3} \\ \hat{b}_t^{f,4} & \hat{b}_t^{f,1}+\lambda_0\hat{b}_t^{f,3}+\hat{b}_t^{f,2}\hat{P}_t^f
		\end{pmatrix},\\
		\tilde{b}_t^{l,2}=\begin{pmatrix}
			0 \\ \hat{b}_t^{f,2}
		\end{pmatrix},\quad \tilde{b}_t^{l,3}=\begin{pmatrix}
			b_t^{l,2} \\ \hat{b}_t^{f,5}
		\end{pmatrix},\\
		\tilde{\sigma}_t^l(\tilde{x}^l,n^l,\alpha^l)=\tilde{\sigma}_t^{l,0}+\tilde{\sigma}_t^{l,1}\tilde{x}^l+\tilde{\sigma}_t^{l,2}\alpha^l,\\
		\tilde{\sigma}_t^{l,0}=\begin{pmatrix}
			\sigma_t^{l,0} \\ 0
		\end{pmatrix},\quad \tilde{\sigma}_t^{l,1}=\begin{pmatrix}
			\sigma_t^{l,1} & \sigma_t^{l,3} \\ 0 & 0
		\end{pmatrix},\quad \tilde{\sigma}_t^{l,2}=\begin{pmatrix}
			\sigma_t^{l,2} \\ 0
		\end{pmatrix},\\
		\tilde{g}_t^l(\tilde{x}^l,n^l,\alpha^l)=\tilde{g}_t^{l,0}+\tilde{g}_t^{l,1}\tilde{x}^l+\tilde{g}_t^{l,2}n^l+\tilde{g}_t^{l,3}\alpha^l,\\
		\tilde{g}_t^{l,0}=\hat{g}_t^{f,0}-\hat{P}_t^f\hat{b}_t^{f,0},\quad \tilde{g}_t^{l,1}=\begin{pmatrix}
			\hat{g}_t^{f,3}-\hat{P}_t^{f}\hat{b}_t^{f,4} \\ 0
		\end{pmatrix},\\
		\tilde{g}_t^{l,2}=\hat{g}_t^{f,1}-\hat{P}_t^f\hat{b}_t^{f,2},\quad \tilde{g}_t^{l,3}=\hat{g}_t^{f,4}-\hat{P}_t^f\hat{b}_t^{f,5},\\
		\tilde{x}_0^l=\begin{pmatrix}
			x_0^l \\ m_0^f
		\end{pmatrix},\quad \tilde{G}=\begin{pmatrix}
			-G^f g^{f,2} \\ 0
		\end{pmatrix}.
	\end{cases}
\end{equation*}

The cost functional is
\begin{equation*}
	\tilde{J}^l(\alpha^l)=J^l\big(\alpha^l;\hat{\alpha}^f(\alpha^l)\big)=\frac{1}{2}\mathbb{E}\biggl[ \int_0^T \big\{ \tilde{X}_t^{l\top}\tilde{Q}_t^l\tilde{X}_t^l+\alpha_t^{l\top}R_t^l\alpha_t^l \big\}\mathrm{d}t+\tilde{X}_T^{l\top}\tilde{G}^l\tilde{X}_T^l \biggr],
\end{equation*}
where
\begin{equation*}
	\tilde{Q}_t^l=\begin{pmatrix}
		Q^l & -Q^lh^{l,1} \\
		-h^{l,1\top}Q^l & h^{l,1\top}Q^lh^{l,1}
	\end{pmatrix},\quad \tilde{G}^l=\begin{pmatrix}
		G^l & -G^lg^{l,1} \\
		-g^{l,1\top}G^l & g^{l,1\top}G^l g^{l,1}
	\end{pmatrix}.
\end{equation*}

\begin{Ass}[Uniform convexity condition]\label{Ass:4.6}
	For any admissible control $\beta\in\mathcal{U}^l$, the unique solution $(\tilde{X}^0,N^0,L^0)\in\mathbb{S}_{\mathcal{F}^l}^2(\mathbb{R}^{n_1+n_2})\times\mathbb{S}_{\mathcal{F}^l}^2(\mathbb{R}^{n_1})\times\mathbb{H}_{\mathcal{F}^l}^2(\mathbb{R}^{n_1})$ of the forward-backward system
	\begin{equation*}
		\begin{cases}
			\mathrm{d}\tilde{X}_t^0=(\tilde{b}_t^{l,1}\tilde{X}_t^0+\tilde{b}_t^{l,2}N_t^0+\tilde{b}_t^{l,3}\beta_t)\mathrm{d}t+(\tilde{\sigma}_t^{l,1}\tilde{X}_t^0+\tilde{\sigma}_t^{l,2}\beta_t)\mathrm{d}W_t^l,\\
			\mathrm{d}N_t^0=(\tilde{g}_t^{l,1}\tilde{X}_t^0+\tilde{g}_t^{l,2}N_t^0+\tilde{g}_t^{l,3}\beta_t)\mathrm{d}t+L_t^0\mathrm{d}W_t^l,\\
			\tilde{X}_0^0=0,\quad N_T^l=\tilde{G}\tilde{X}_T^0
		\end{cases}
	\end{equation*}
	satisfies
	\begin{equation*}
		\mathbb{E}\biggl[ \int_0^T\big\{ \tilde{X}_t^{0\top}\tilde{Q}_t^{l}\tilde{X}_t^0+\beta_t^\top R_t^l\beta_t \big\}\mathrm{d}t+\tilde{X}_T^{0\top}G^l\tilde{X}_T^{0} \biggr]\ge 0.
	\end{equation*}
\end{Ass}

When $Q^l\ge 0$, $R^l\gg 0$, and $G^l\ge 0$, Assumption \ref{Ass:4.6} holds automatically.

\begin{prp}
	Assume that Assumptions \ref{Ass:LQ}, \ref{Ass:03}, \ref{Ass:06}, and \ref{Ass:4.6} hold. If, for any $\alpha^l\in\mathcal{U}^l$, the forward-backward system
	\begin{equation}\label{eq34}
		\begin{cases}
			\mathrm{d}\tilde{X}_t^l=(\tilde{b}_t^{l,0}+\tilde{b}_t^{l,1}\tilde{X}_t^l+\tilde{b}_t^{l,2}N_t^l+\tilde{b}_t^{l,3}\alpha_t^l)\mathrm{d}t\\
			\qquad\qquad +(\tilde{\sigma}_t^{l,0}+\tilde{\sigma}_t^{l,1}\tilde{X}_t^l+\tilde{\sigma}_t^{l,2}\alpha_t^l)\mathrm{d}W_t^l,\\
			\mathrm{d}N_t^l=(\tilde{g}_t^{l,0}+\tilde{g}_t^{l,1}\tilde{X}_t^l+\tilde{g}_t^{l,2}N_t^l+\tilde{g}_t^{l,3}\alpha_t^l)\mathrm{d}t+L_t^f\mathrm{d}W_t^l,\\
			\mathrm{d}\tilde{y}_t=-(\tilde{b}_t^{l,1\top}\tilde{y}_t+\tilde{\sigma}_t^{l,1\top}\tilde{z}_t+\tilde{g}_t^{l,1\top}\psi_t+\tilde{Q}_t^l\tilde{X}_t^l)\mathrm{d}t+\tilde{z}_t\mathrm{d}W_t^l,\\
			\mathrm{d}\psi_t^l=-(\tilde{b}_t^{l,2\top}\tilde{y}_t+\tilde{g}_t^{l,2\top}\psi_t^l)\mathrm{d}t,\\
			\tilde{X}_0^l=\tilde{x}_0^l,\quad N_T^l=\tilde{G}\tilde{X}_T^l,\quad \tilde{y}_T=\tilde{G}^l\tilde{X}_T^l-\tilde{G}^\top\psi_T,\quad \psi_0=0,
		\end{cases}
	\end{equation}
	admits a unique solution, then the leader's problem admits an optimal control $\hat{\alpha}^l$. Denote the solution of the corresponding forward-backward system (\ref{eq34}) by $(\hat{X}^l,\allowbreak\hat{N}^l,\allowbreak\hat{L}^f,\hat{y},\hat{z},\hat{\psi})$. It satisfies
	\begin{equation}\label{eq35}
		R_t^l\hat{\alpha}_t^l+\tilde{b}_t^{l,3\top}\hat{y}_t+\tilde{\sigma}_t^{l,2\top}\hat{z}_t+\hat{g}_t^{l,3\top}\hat{\psi}_t=0.
	\end{equation}
\end{prp}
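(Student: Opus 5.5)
The leader's reduced problem is an LQ optimal control problem for the controlled forward-backward system (\ref{eq34}), with control $\alpha^l\in\mathcal{U}^l$ and quadratic cost $\tilde{J}^l$. The plan has three parts: show $\tilde J^l$ is convex with a well-defined directional derivative, compute that derivative by duality with $(\tilde y,\tilde z,\psi)$, and then turn the first-order condition into (\ref{eq35}) and the stated existence claim.

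\emph{Step 1: linearity and convexity.} Fix $\alpha^l\in\mathcal{U}^l$, a direction $\beta\in\mathcal{U}^l$ and $\varepsilon\in\mathbb{R}$. By linearity of the forward-backward state system, its solution under $\alpha^l+\varepsilon\beta$ equals the solution under $\alpha^l$ plus $\varepsilon(\tilde X^0,N^0,L^0)$, where the latter is the homogeneous system of Assumption \ref{Ass:4.6} driven by $\beta$. Expanding the quadratic cost gives
\begin{equation*}
\tilde J^l(\alpha^l+\varepsilon\beta)=\tilde J^l(\alpha^l)+\varepsilon\,\mathbb{E}\biggl[\int_0^T\big\{\tilde X_t^{l\top}\tilde Q_t^l\tilde X_t^0+\alpha_t^{l\top}R_t^l\beta_t\big\}\mathrm{d}t+\tilde X_T^{l\top}\tilde G^l\tilde X_T^0\biggr]+\varepsilon^2\tilde J^0(\beta),
\end{equation*}
where $\tilde J^0(\beta)$ is the quadratic form in Assumption \ref{Ass:4.6}. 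That assumption says $\tilde J^0(\beta)\ge 0$, so $\tilde J^l$ is convex on $\mathcal{U}^l$.

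\emph{Step 2: duality.} The linear FBSDE (\ref{eq34}) has by hypothesis a unique solution for each $\alpha^l$; it contains the adjoint pair $(\tilde y,\tilde z)$ for the forward state and the forward adjoint $\psi$ for the backward state $N$. Apply Itô's formula to $\langle\tilde y_t,\tilde X_t^0\rangle-\langle\psi_t,N_t^0\rangle$ on $[0,T]$. The boundary terms collapse as follows:
\begin{itemize}
\item $\tilde X_0^0=0$ and $\psi_0=0$ kill the terms at $t=0$;
\item $\tilde y_T=\tilde G^l\tilde X_T^l-\tilde G^\top\psi_T$ together with $N_T^0=\tilde G\tilde X_T^0$ makes the $\psi$-dependent terminal terms cancel, leaving $\langle\tilde G^l\tilde X_T^l,\tilde X_T^0\rangle$.
\end{itemize}
The drifts of $\tilde y$ and $\psi$ are chosen to cancel the $\tilde X^0$ and $N^0$ terms, so what remains is
\begin{equation*}
\mathbb{E}\big[\tilde X_T^{l\top}\tilde G^l\tilde X_T^0\big]+\mathbb{E}\int_0^T\tilde X_t^{l\top}\tilde Q_t^l\tilde X_t^0\,\mathrm{d}t=\mathbb{E}\int_0^T\big\langle\tilde b_t^{l,3\top}\tilde y_t+\tilde\sigma_t^{l,2\top}\tilde z_t+\tilde g_t^{l,3\top}\psi_t,\beta_t\big\rangle\,\mathrm{d}t.
\end{equation*}
The sign conventions for $\psi$ must be matched carefully here. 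If the stated drift leaves a residual $\tilde g^{l,1}$ or $\tilde b^{l,2}$ term, I would absorb it by replacing $\psi$ with $-\psi$. Square integrability of every term follows from the $\mathbb{S}^2/\mathbb{H}^2$ regularity of the solutions, so the stochastic integrals are true martingales with zero expectation.

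\emph{Step 3: first-order condition and existence.} Combining Steps 1 and 2, the first variation in direction $\beta$ equals
\begin{equation*}
\mathbb{E}\int_0^T\big\langle R_t^l\alpha_t^l+\tilde b_t^{l,3\top}\tilde y_t+\tilde\sigma_t^{l,2\top}\tilde z_t+\tilde g_t^{l,3\top}\psi_t,\beta_t\big\rangle\,\mathrm{d}t.
\end{equation*}
By convexity, $\hat\alpha^l$ is optimal if and only if this vanishes for every $\beta$. Since the integrand is $\mathcal{F}^l$-adapted, this is equivalent to the pointwise identity (\ref{eq35}).

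To obtain existence, substitute the feedback $\hat\alpha^l=-(R^l)^{-1}(\cdots)$ into (\ref{eq34}). This requires invertibility of $R^l$ (assumed, as for the follower); in the genuinely indefinite case one could instead keep the optimality condition as a constraint. The result is a closed coupled FBSDE, and the hypothesis that (\ref{eq34}) is uniquely solvable is to be read as supplying its solution; I would make this reading explicit in the proof. The solution defines an admissible $\hat\alpha^l$ satisfying (\ref{eq35}), and sufficiency from convexity makes it optimal.

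The main obstacle is this last existence step: interpreting the solvability hypothesis so that it covers the closed-loop system rather than (\ref{eq34}) for a fixed exogenous $\alpha^l$. A secondary point is the sign bookkeeping in the duality identity of Step 2.
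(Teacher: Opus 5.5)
Your argument follows the paper's proof. You expand $\tilde J^l(\hat\alpha^l+\varepsilon\beta)-\tilde J^l(\hat\alpha^l)=\frac12\varepsilon^2\mathcal{J}^1+\varepsilon\mathcal{J}^2$, take $\mathcal{J}^1\ge0$ from Assumption \ref{Ass:4.6}, and rewrite $\mathcal{J}^2$ by It\^o duality against $(\tilde y,\tilde z,\psi)$. Two points need repair.

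First, the pairing in your Step 2 has the wrong sign. With (\ref{eq34}) exactly as stated, take $\tilde X_t^{0\top}\tilde y_t-N_t^{0\top}\psi_t$. The terminal term $-N_T^{0\top}\psi_T=-\tilde X_T^{0\top}\tilde G^\top\psi_T$ doubles the $\psi$-term coming from $\tilde y_T$ instead of cancelling it. The drift cross terms $\tilde y_t^\top\tilde b_t^{l,2}N_t^0$ and $\tilde X_t^{0\top}\tilde g_t^{l,1\top}\psi_t$ double as well. The correct functional is the sum $\tilde X_t^{0\top}\tilde y_t+N_t^{0\top}\psi_t$, and the paper indeed adds its two It\^o identities. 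This sum vanishes at $t=0$ and equals $\tilde X_T^{0\top}\tilde G^l\tilde X_T^l$ at $t=T$. Its drift is $\beta_t^\top(\tilde b_t^{l,3\top}\tilde y_t+\tilde\sigma_t^{l,2\top}\tilde z_t+\tilde g_t^{l,3\top}\psi_t)-\tilde X_t^{0\top}\tilde Q_t^l\tilde X_t^l$, which gives exactly your displayed identity. Do not flip the sign of $\psi$ in the system; that would flip the sign of the last term in (\ref{eq35}).

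Second, existence. Your suspicion is right, but the defect is in the statement, not your bookkeeping. The paper's proof does not prove existence either: it opens with ``let $\hat\alpha^l$ be an optimal control''. So what it actually establishes is the characterization in your Step 3, and existence is obtained only later, in Theorem \ref{Thm:4.3}, via the Riccati equations and $\bar{\bar{R}}\gg0$.

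Fixed-$\alpha^l$ solvability of (\ref{eq34}) together with the mere nonnegativity in Assumption \ref{Ass:4.6} cannot produce a minimizer. Take $R^l=0$ (allowed, since the weights are indefinite), $Q^l=1$, $b^{l,2}=1$, $x_0^l=1$, and set all other leader coefficients to zero. Then:
the $M$-component of $\tilde y$ and $\psi$ vanish identically;
(\ref{eq34}) is uniquely solvable for every $\alpha^l$, and $\mathcal{J}^1\ge0$;
yet minimizing $\frac12\mathbb{E}\int_0^T|X_t^l|^2\mathrm{d}t$ subject to $\mathrm{d}X_t^l=\alpha_t^l\mathrm{d}t$ has infimum $0$, which is never attained.

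So reading the hypothesis as closed-loop solvability with $R^l$ invertible adds a new hypothesis; it is not a proof. To get existence honestly, you have two options. Either assume it, as the paper effectively does. Or strengthen Assumption \ref{Ass:4.6} to $\mathcal{J}^1\ge\delta\,\mathbb{E}\int_0^T|\beta_t|^2\mathrm{d}t$ with $\delta>0$, as its title suggests. In the second case, the affine control-to-state map of the linear system is bounded by the closed graph theorem. Then $\tilde J^l$ is a continuous, strongly convex quadratic functional on the Hilbert space $\mathcal{U}^l$ and has a unique minimizer. Your Step 3 then yields (\ref{eq35}) with no invertibility of $R^l$ needed.
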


\begin{proof}
	Let $\hat{\alpha}^l$ be an optimal control of the leader's problem, and let $\beta\in\mathcal{U}^l$ be an admissible control such that $\hat{\alpha}+\beta\in\mathcal{U}^l$. Under the controls $\hat{\alpha}$ and $\hat{\alpha}+\varepsilon\beta$, denote the solutions of the forward-backward system (\ref{eq34}) by $(\hat{X}^l,\hat{N}^l,\hat{L}^f,\hat{y},\hat{z},\hat{\psi})$ and $(\hat{X}^\varepsilon,\hat{N}^\varepsilon,\hat{L}^\varepsilon,\hat{y}^\varepsilon,\hat{z}^\varepsilon,\hat{\psi}^\varepsilon)$, respectively. Then $(\hat{X}^\varepsilon-\hat{X}^l,\hat{N}^\varepsilon-\hat{N}^l,\hat{L}^\varepsilon-\hat{L}^f,\hat{y}^\varepsilon-\hat{y},\hat{z}^\varepsilon-\hat{z},\hat{\psi}^\varepsilon-\hat{\psi})$ satisfies the corresponding homogeneous forward-backward system
	\begin{equation*}
		\begin{cases}
			\mathrm{d}\tilde{X}_t^0=(\tilde{b}_t^{l,1}\tilde{X}_t^0+\tilde{b}_t^{l,2}N_t^0+\tilde{b}_t^{l,3}\varepsilon\beta_t)\mathrm{d}t+(\tilde{\sigma}_t^{l,1}\tilde{X}_t^0+\tilde{\sigma}_t^{l,2}\varepsilon\beta_t)\mathrm{d}W_t^l,\\
			\mathrm{d}N_t^0=(\tilde{g}_t^{l,1}\tilde{X}_t^0+\tilde{g}_t^{l,2}N_t^0+\tilde{g}_t^{l,3}\varepsilon\beta_t)\mathrm{d}t+L_t^0\mathrm{d}W_t^l,\\
			\mathrm{d}\tilde{y}^0_t=-(\tilde{b}_t^{l,1\top}\tilde{y}^0_t+\tilde{\sigma}_t^{l,1\top}\tilde{z}^0_t+\tilde{g}_t^{l,1\top}\psi^0_t+\tilde{Q}_t^l\tilde{X}_t^0)\mathrm{d}t+\tilde{z}^0_t\mathrm{d}W_t^l,\\
			\mathrm{d}\psi_t^0=-(\tilde{b}_t^{l,2\top}\tilde{y}^0_t+\tilde{g}_t^{l,2\top}\psi_t^0)\mathrm{d}t,\\
			\tilde{X}_0^0=0,\quad N_T^l=\tilde{G}\tilde{X}_T^0,\quad \tilde{y}_T=\tilde{G}^l\tilde{X}_T^0-\tilde{G}^\top\psi_T^0,\quad \psi_0^0=0.
		\end{cases}
	\end{equation*}
	Substituting into the cost functional of the leader and writing $\mathcal{J}(\varepsilon)=\tilde{J}^l(\hat{\alpha}^l+\varepsilon\beta)-\tilde{J}^l(\hat{\alpha}^l)$, we obtain
	\begin{equation*}
    \begin{aligned}
		\mathcal{J}(\varepsilon)&=\frac{1}{2}\varepsilon^2\mathbb{E}\biggl[ \int_0^T\big\{ \tilde{X}_t^{0\top}\tilde{Q}_t^l\tilde{X}_t^0+\beta_t^\top R_t^l\beta_t \big\}\mathrm{d}t+\tilde{X}_T^{0\top}\tilde{G}^l\tilde{X}_T^0 \biggr]\\
		&\qquad +\varepsilon\mathbb{E}\biggl[ \int_0^T \big\{ \tilde{X}_t^{0\top}\tilde{Q}_t^l\hat{X}_t^l+\beta_t^{\top}R_t^l\hat{\alpha}_t^l \big\}\mathrm{d}t+\tilde{X}_T^{0\top}\tilde{G}^l\hat{X}_T^l \biggr]:=\frac{1}{2}\varepsilon^2\mathcal{J}^1+\varepsilon\mathcal{J}^2.
	\end{aligned}
    \end{equation*}
	By Assumption \ref{Ass:4.6}, $\mathcal{J}^1\ge 0$. Since $\hat{\alpha}^l$ is an optimal control, $\mathcal{J}^2=0$ for any $\beta$.
	
	By It\^o's formula,
	\begin{multline*}
		\mathbb{E}[\tilde{X}_T^{0\top}\tilde{y}_T^0]-\mathbb{E}[\tilde{X}_0^{0\top}\tilde{y}_0^0]=\mathbb{E}\biggl[ \int_0^T \big\{ -\tilde{X}_t^{0\top}\tilde{Q}^l\tilde{X}_t^0+\beta_t^\top(\tilde{b}_t^{l,3\top}\tilde{y}_t^0+\tilde{\sigma}_t^{l,2\top}\tilde{z}_t^0)\\
		+N_t^{0\top}\tilde{b}_t^{l,2\top}\tilde{y}_t^0-\tilde{X}_t^{0\top}\tilde{g}_t^{l,1\top}\psi_t^0 \big\}\mathrm{d}t \biggr],
	\end{multline*}
	\begin{equation*}
		\mathbb{E}[N_T^{0\top}\psi_T^0]-\mathbb{E}[N_0^{0\top}\psi_0^0]=\mathbb{E}\biggl[ \int_0^T\big\{ \beta_t^\top\tilde{g}_t^{l,3\top}\psi_t+\tilde{X}_t^{0\top}\tilde{g}_t^{l,1\top}\psi_t^0-N_t^{0\top}\tilde{b}_t^{l,2\top}\tilde{y}_t^0 \big\}\mathrm{d}t \biggr].
	\end{equation*}
	Adding the above two identities yields
	\begin{equation*}
		\mathbb{E}[\tilde{X}_T^{0\top}\tilde{G}^l\tilde{X}_T^0]=\mathbb{E}\biggl[ \int_0^T\big\{ -\tilde{X}_t^{0\top}\tilde{Q}^l\tilde{X}_t^0+\beta_t^\top(\tilde{b}_t^{l,3\top}\tilde{y}_t^0+\tilde{\sigma}_t^{l,2\top}\tilde{z}_t^0+\tilde{g}_t^{l,3\top}\psi_t) \big\}\mathrm{d}t \biggr],
	\end{equation*}
	\begin{equation*}
		\mathcal{J}^1=\mathbb{E}\biggl[ \int_0^T\big\{ \beta_t^\top(R_t^l\beta_t+\tilde{b}_t^{l,3\top}\tilde{y}_t^0+\tilde{\sigma}_t^{l,2\top}\tilde{z}_t^0+\tilde{g}_t^{l,3\top}\psi_t) \big\}\mathrm{d}t \biggr].
	\end{equation*}
	
	Similarly, by It\^o's formula, we obtain
	\begin{equation*}
		\mathbb{E}[\tilde{X}_T^{0\top}\tilde{G}^l\hat{X}_T^l]=\mathbb{E}\biggl[ \int_0^T\big\{ -\tilde{X}_T^{0\top}\tilde{Q}^l\hat{X}_t^l+\beta_t^{l\top}(\tilde{b}_t^{l,3\top}\hat{y}_t+\tilde{\sigma}_t^{l,2\top}\hat{z}_t+\tilde{g}_t^{l,3\top}\hat{\psi}_t) \big\}\mathrm{d}t \biggr],
	\end{equation*}
	\begin{equation*}
		\mathcal{J}^2=\mathbb{E}\biggl[\int_0^T\beta_t^{l\top}\big\{ R_t^l\hat{\alpha}_t^l+\tilde{b}_t^{l,3\top}\hat{y}_t+\tilde{\sigma}_t^{l,2\top}\hat{z}_t+\tilde{g}_t^{l,3\top}\hat{\psi}_t \big\}\mathrm{d}t\biggr].
	\end{equation*}
	Since $\mathcal{J}^2=0$ for any $\beta$, (\ref{eq35}) holds.
\end{proof}

Next, we use a dimension-lifting method to solve (\ref{eq34}) and (\ref{eq35}). Write
\begin{equation*}
	\bar{\bar{X}}=\begin{pmatrix}
		\tilde{X} \\ -\psi
	\end{pmatrix},\quad \bar{\bar{Y}}=\begin{pmatrix}
		\tilde{y} \\ N^l
	\end{pmatrix},\quad \bar{\bar{Z}}=\begin{pmatrix}
		\tilde{z} \\ L^f
	\end{pmatrix}.
\end{equation*}
Then (\ref{eq34}) and (\ref{eq35}) are equivalent to the constrained forward-backward system
\begin{equation*}
	\begin{cases}
		\mathrm{d}\bar{\bar{X}}_t=\bar{\bar{b}}_t(\bar{\bar{X}}_t,\bar{\bar{Y}}_t,\hat{\alpha}_t^l)\mathrm{d}t+\bar{\bar{\sigma}}_t(\bar{\bar{X}}_t,\hat{\alpha}_t^l)\mathrm{d}W_t^l,\\
		\mathrm{d}\bar{\bar{Y}}_t=\bar{\bar{g}}_t(\bar{\bar{X}}_t,\bar{\bar{Y}}_t,\bar{\bar{Z}}_t,\hat{\alpha}_t^l)\mathrm{d}t+\bar{\bar{Z}}_t\mathrm{d}W_t^l,\\
		\bar{\bar{X}}_0=\bar{\bar{x}}_0,\quad \bar{\bar{Y}}_T=\bar{\bar{G}}\bar{\bar{X}}_T,\\
		R_t^l\hat{\alpha}_t^l+\bar{\bar{h}}_t^1\bar{\bar{X}}_t+\bar{\bar{h}}_t^2\bar{\bar{Y}}_t+\bar{\bar{h}}_t^{3}\bar{\bar{Z}}_t=0,
	\end{cases}
\end{equation*}
where
\begin{equation*}
	\begin{cases}
		\bar{\bar{b}}_t(x,y,\alpha)=\bar{\bar{b}}_t^{0}+\bar{\bar{b}}_t^1 x+\bar{\bar{b}}_t^2 y+\bar{\bar{b}}_t^3\alpha,\\
		\bar{\bar{b}}_t^0=\begin{pmatrix}
			\tilde{b}_t^{l,0} \\ 0
		\end{pmatrix},\quad \bar{\bar{b}}_t^1=\begin{pmatrix}
			\tilde{b}_t^{l,1} & 0 \\
			0 & -\tilde{g}_t^{l,2\top}
		\end{pmatrix},\quad \bar{\bar{b}}_t^2=\begin{pmatrix}
			0 & \tilde{b}_t^{l,2} \\
			\tilde{b}_t^{l,2\top} & 0
		\end{pmatrix},\quad \bar{\bar{b}}_t^3=\begin{pmatrix}
			\tilde{b}_t^{l,3} \\ 0
		\end{pmatrix},\\
		\bar{\bar{\sigma}}_t(x,\alpha)=\bar{\bar{\sigma}}_t^{0}+\bar{\bar{\sigma}}_t^1 x+\bar{\bar{\sigma}}_t^2\alpha,\\
		\bar{\bar{\sigma}}_t^0=\begin{pmatrix}
			\tilde{\sigma}_t^{l,0} \\ 0
		\end{pmatrix},\quad \bar{\bar{\sigma}}_t^1=\begin{pmatrix}
			\tilde{\sigma}_t^{l,1} & 0 \\ 0 & 0
		\end{pmatrix},\quad \bar{\bar{\sigma}}_t^2=\begin{pmatrix}
			\tilde{\sigma}_t^{l,2} \\ 0
		\end{pmatrix},\\
		\bar{\bar{g}}_t(x,y,z,\alpha)=\bar{\bar{g}}_t^0+\bar{\bar{g}}_t^1x+\bar{\bar{g}}_t^2y+\bar{\bar{g}}_t^3z+\bar{\bar{g}}_t^4\alpha,\\
		\bar{\bar{g}}_t^0=\begin{pmatrix}
			0 \\ \tilde{g}_t^{l,0}
		\end{pmatrix},\quad \bar{\bar{g}}_t^1=\begin{pmatrix}
			-\tilde{Q}_t^l & \tilde{g}_t^{l,1\top} \\
			\tilde{g}_t^{l,1} & 0
		\end{pmatrix},\quad \bar{\bar{g}}_t^2=-\bar{\bar{b}}_t^{1\top},\quad \bar{\bar{g}}_t^3=-\bar{\bar{\sigma}}_t^{1\top},\quad \bar{\bar{g}}_t^4=\begin{pmatrix}
			0 \\ \tilde{g}_t^{l,3}
		\end{pmatrix},\\
		\bar{\bar{x}}_0=\begin{pmatrix}
			\tilde{x}_0^l \\ 0
		\end{pmatrix},\quad \bar{\bar{G}}=\begin{pmatrix}
			\tilde{G}^l & \tilde{G}^\top \\
			\tilde{G} & 0
		\end{pmatrix},\quad \bar{\bar{h}}_t^1=-\bar{\bar{g}}_t^{4\top},\quad \bar{\bar{h}}_t^2=\bar{\bar{b}}_t^{3\top},\quad \bar{\bar{h}}_t^3=\bar{\bar{\sigma}}_t^{2\top}.
	\end{cases}
\end{equation*}

Assume that there exist deterministic processes $\bar{\bar{P}},\bar{\bar{\varphi}}$ such that
\begin{equation*}
	\bar{\bar{Y}}_t=\bar{\bar{P}}_t\bar{\bar{X}}_t+\bar{\bar{\varphi}}_t.
\end{equation*}
Substituting this into the forward-backward system and comparing the coefficients of $\mathrm{d}W_t^l$ gives
\begin{equation*}
	\bar{\bar{Z}}_t=\bar{\bar{P}}_t(\bar{\bar{\sigma}}_t^0+\bar{\bar{\sigma}}_t^1\bar{\bar{X}}_t+\bar{\bar{\sigma}}_t^2\hat{\alpha}_t^l).
\end{equation*}
Substituting the expressions of $Y,Z$ into the maximum condition and setting
\begin{equation*}
	\begin{cases}
		\bar{\bar{R}}_t=R_t^l+\bar{\bar{\sigma}}_t^{2\top}\bar{\bar{P}}_t\bar{\bar{\sigma}}_t^2,\\
		\bar{\bar{S}}_t=-\bar{\bar{g}}_t^{4\top}+\bar{\bar{b}}_t^{3\top}\bar{\bar{P}}_t+\bar{\bar{\sigma}}_t^{2\top}\bar{\bar{P}}_t\bar{\bar{\sigma}}_t^1,
	\end{cases}
\end{equation*}
we obtain
\begin{equation*}
	\hat{\alpha}_t^l=-\bar{\bar{R}}_t^{-1}( \bar{\bar{S}}_t\bar{\bar{X}}_t+\bar{\bar{b}}_t^{3\top}\bar{\bar{\varphi}}_t+\bar{\bar{\sigma}}_t^{2\top}\bar{\bar{P}}_t\bar{\bar{\sigma}}_t^0 ),
\end{equation*}
\begin{equation*}
	\bar{\bar{Z}}_t=\bar{\bar{P}}_t\big\{(\bar{\bar{\sigma}}_t^1-\bar{\bar{\sigma}}_t^2\bar{\bar{R}}_t^{-1}\bar{\bar{S}}_t)\bar{\bar{X}}_t-\bar{\bar{\sigma}}_t^2\bar{\bar{R}}_t^{-1}\bar{\bar{b}}_t^{3\top}\bar{\bar{\varphi}}_t+(I-\bar{\bar{\sigma}}_t^2\bar{\bar{R}}_t^{-1}\bar{\bar{\sigma}}_t^{2\top}\bar{\bar{P}}_t)\bar{\bar{\sigma}}_t^0\big\}.
\end{equation*}
Substituting the expressions of $Y,Z,\alpha$ into the forward-backward system and comparing the coefficients of the $\mathrm{d}t$ terms, we obtain the Riccati equation and the backward {\it ordinary differential equation} (ODE)
\begin{equation}\label{Eq:leader_riccati2}
	\begin{cases}
		\mathrm{d}\bar{\bar{P}}_t+\big\{ \bar{\bar{P}}_t\bar{\bar{b}}_t^1+\bar{\bar{b}}_t^{1\top}\bar{\bar{P}}_t+\bar{\bar{\sigma}}_t^{1\top}\bar{\bar{P}}_t\bar{\bar{\sigma}}_t^1+\bar{\bar{P}}_t\bar{\bar{b}}_t^2\bar{\bar{P}}_t-\bar{\bar{S}}_t^{l\top}\bar{\bar{R}}_t^{-1}\bar{\bar{S}}_t-\bar{\bar{g}}_t^1 \big\}\mathrm{d}t=0,\\
		\bar{\bar{P}}_T=\bar{\bar{G}},
	\end{cases}
\end{equation}
\begin{equation}\label{Eq:leader_BODE}
	\begin{cases}
		\mathrm{d}\bar{\bar{\varphi}}_t=\big\{(-\bar{\bar{b}}_t^{1\top}-\bar{\bar{P}}_t\bar{\bar{b}}_t^2+\bar{\bar{S}}_t^{\top}\bar{\bar{R}}_t^{-1}\bar{\bar{b}}_t^{3\top})\bar{\bar{\varphi}}_t\\
		\qquad\qquad +(\bar{\bar{g}}_t^0-\bar{\bar{P}}_t\bar{\bar{b}}_t^0-\bar{\bar{\sigma}}_t^{1\top}\bar{\bar{P}}_t\bar{\bar{\sigma}}_t^0+\bar{\bar{S}}_t^{\top}\bar{\bar{\sigma}}_t^2\bar{\bar{R}}_t^{-1}\bar{\bar{\sigma}}_t^{2\top}\bar{\bar{P}}_t\bar{\bar{\sigma}}_t^0)\big\}\mathrm{d}t,\\
		\bar{\bar{\varphi}}_T=0.
	\end{cases}
\end{equation}

Combining the above derivations, we obtain the following result.

\begin{thm}[Conclusion for the leader problem]\label{Thm:4.3}
	Assume that Assumptions \ref{Ass:LQ}, \ref{Ass:03}, \ref{Ass:06}, and \ref{Ass:4.6} hold, and that $\bar{\bar{R}}\gg 0$. If the nonsymmetric Riccati equation (\ref{Eq:leader_riccati1}) admits a solution $\hat{P}^f\in L^\infty(0,T;\mathbb{R}^{n_1\times n_1})$, and the Riccati equation (\ref{Eq:leader_riccati2}) admits a unique solution $\bar{\bar{P}}\in L^{\infty}(0,T;\mathcal{S}^{2n_1+n_2})$, then the leader problem admits an optimal control $\hat{\alpha}^l$ satisfying
	\begin{equation}\label{eq36}
		\hat{\alpha}_t^l=-\bar{\bar{R}}_t^{-1}( \bar{\bar{S}}_t\bar{\bar{X}}_t+\bar{\bar{b}}_t^{3\top}\bar{\bar{\varphi}}_t+\bar{\bar{\sigma}}_t^{2\top}\bar{\bar{P}}_t\bar{\bar{\sigma}}_t^0 ),
	\end{equation}
	where $\bar{\bar{\varphi}}$ is the solution of the ODE (\ref{Eq:leader_BODE}), and $\bar{\bar{X}}\in\mathbb{S}_{\mathcal{F}^l}^2(\mathbb{R}^{2n_1+n_2})$ is the solution of the SDE
	\begin{equation}\label{Eq:leader_SDE}
		\begin{cases}
			\mathrm{d}\bar{\bar{X}}_t=\big\{(\bar{\bar{b}}_t^1+\bar{\bar{b}}_t^2\bar{\bar{P}}_t-\bar{\bar{b}}_t^3\bar{\bar{R}}_t^{-1}\bar{\bar{S}}_t)\bar{\bar{X}}_t+(\bar{\bar{b}}_t^2-\bar{\bar{b}}_t^3\bar{\bar{R}}_t^{-1}\bar{\bar{b}}_t^{3\top})\bar{\bar{\varphi}}_t\\
			\qquad\qquad +(\bar{\bar{b}}_t^0-\bar{\bar{b}}_t^3\bar{\bar{R}}_t^{-1}\bar{\bar{\sigma}}_t^{2\top}\bar{\bar{P}}_t\bar{\bar{\sigma}}_t^0)\big\}\mathrm{d}t+\big\{(\bar{\bar{\sigma}}_t^1-\bar{\bar{\sigma}}_t^2\bar{\bar{R}}_t^{-1}\bar{\bar{S}}_t)\bar{\bar{X}}_t\\
			\qquad\qquad -\bar{\bar{\sigma}}_t^2\bar{\bar{R}}_t^{-1}\bar{\bar{b}}_t^{3\top}\bar{\bar{\varphi}}_t+(\bar{\bar{\sigma}}_t^0-\bar{\bar{\sigma}}_t^2\bar{\bar{R}}_t^{-1}\bar{\bar{\sigma}}_t^{2\top}\bar{\bar{P}}_t\bar{\bar{\sigma}}_t^0)\big\}\mathrm{d}W_t^l,\\
			\bar{\bar{X}}_0=\bar{\bar{x}}_0.
		\end{cases}
	\end{equation}
\end{thm}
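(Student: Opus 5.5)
The plan is a verification (decoupling) argument. We construct a candidate from the Riccati solution and the ODE, show that it solves the constrained forward-backward system equivalent to (\ref{eq34})--(\ref{eq35}), and then use convexity to conclude optimality.

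\textbf{Step 1: well-posedness of the candidate.} Since $\bar{\bar{P}}\in L^\infty(0,T;\mathcal{S}^{2n_1+n_2})$ solves (\ref{Eq:leader_riccati2}) and $\bar{\bar{R}}\gg 0$, the inverse $\bar{\bar{R}}^{-1}$ is bounded. Hence the linear backward ODE (\ref{Eq:leader_BODE}) has bounded coefficients and a unique bounded solution $\bar{\bar{\varphi}}$. The linear SDE (\ref{Eq:leader_SDE}) also has bounded deterministic coefficients, so classical theory gives a unique $\bar{\bar{X}}\in\mathbb{S}_{\mathcal{F}^l}^2(\mathbb{R}^{2n_1+n_2})$. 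We then define $\hat{\alpha}^l$ by (\ref{eq36}); it lies in $\mathcal{U}^l$ because its coefficients are bounded. Next we set $\bar{\bar{Y}}_t=\bar{\bar{P}}_t\bar{\bar{X}}_t+\bar{\bar{\varphi}}_t$, and define $\bar{\bar{Z}}_t$ by the $\mathrm{d}W^l$-coefficient formula derived before the theorem.

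\textbf{Step 2: reversing the derivation.} We apply It\^o's formula to $\bar{\bar{P}}_t\bar{\bar{X}}_t+\bar{\bar{\varphi}}_t$. Using (\ref{Eq:leader_riccati2}) and (\ref{Eq:leader_BODE}), the drift should reduce to $\bar{\bar{g}}_t(\bar{\bar{X}}_t,\bar{\bar{Y}}_t,\bar{\bar{Z}}_t,\hat{\alpha}_t^l)$. Because $\bar{\bar{P}}_T=\bar{\bar{G}}$ and $\bar{\bar{\varphi}}_T=0$, the terminal condition $\bar{\bar{Y}}_T=\bar{\bar{G}}\bar{\bar{X}}_T$ holds. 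Next we check that the stationarity constraint $R^l\hat{\alpha}^l+\bar{\bar{h}}^1\bar{\bar{X}}+\bar{\bar{h}}^2\bar{\bar{Y}}+\bar{\bar{h}}^3\bar{\bar{Z}}=0$ is exactly (\ref{eq36}), by the definitions of $\bar{\bar{R}}$ and $\bar{\bar{S}}$. Unpacking the block variables $\bar{\bar{X}}=(\tilde{X}^\top,-\psi^\top)^\top$, $\bar{\bar{Y}}=(\tilde{y}^\top,N^{l\top})^\top$, $\bar{\bar{Z}}=(\tilde{z}^\top,L^{f\top})^\top$ then yields a solution of (\ref{eq34}) together with (\ref{eq35}). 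We also need consistency with the followers' reduction: this uses the solvability of (\ref{Eq:leader_riccati1}), so that $M^f$, $N^f=\hat{P}^fM^f+N^l$ and (\ref{eq14}) are well-defined for this control.

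\textbf{Step 3: optimality.} For any $\alpha^l=\hat{\alpha}^l+\beta\in\mathcal{U}^l$, we repeat the expansion from the preceding proof with $\varepsilon=1$:
\[
\tilde{J}^l(\hat{\alpha}^l+\beta)-\tilde{J}^l(\hat{\alpha}^l)=\tfrac12\mathcal{J}^1+\mathcal{J}^2 .
\]
The It\^o duality computations there, applied to the candidate adjoint $(\hat{y},\hat{z},\hat{\psi})$, show that $\mathcal{J}^2=\mathbb{E}\int_0^T\beta_t^\top\{R^l\hat{\alpha}^l+\tilde{b}^{l,3\top}\hat{y}+\tilde{\sigma}^{l,2\top}\hat{z}+\tilde{g}^{l,3\top}\hat{\psi}\}\mathrm{d}t$, which vanishes by (\ref{eq35}). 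Assumption \ref{Ass:4.6} gives $\mathcal{J}^1\ge0$, so $\hat{\alpha}^l$ is optimal.

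\textbf{Main obstacle.} I expect the hardest part to be the coefficient matching in Step 2. The Riccati equation is nonstandard: it contains the quadratic term $\bar{\bar{P}}\bar{\bar{b}}^2\bar{\bar{P}}$, $\bar{\bar{g}}^2=-\bar{\bar{b}}^{1\top}$, and $\bar{\bar{g}}^3=-\bar{\bar{\sigma}}^{1\top}$. So we must check carefully that the drift of $\bar{\bar{Y}}$ equals $\bar{\bar{g}}$, including the $\bar{\bar{g}}^4\hat{\alpha}$ term, which enters through $\bar{\bar{S}}$ via $-\bar{\bar{g}}^{4\top}$. This requires $\bar{\bar{P}}$ symmetric, so that $\bar{\bar{S}}^\top$ terms match. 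I would first verify the matching on the $\bar{\bar{X}}$-coefficient and then on the constant/$\bar{\bar{\varphi}}$ part. A secondary point is the sign convention in $\bar{\bar{X}}=(\tilde{X},-\psi)$, which is needed for the $\psi_0=0$ initial condition and the terminal coupling $\tilde{y}_T=\tilde{G}^l\tilde{X}_T-\tilde{G}^\top\psi_T$ to be encoded by $\bar{\bar{G}}$.
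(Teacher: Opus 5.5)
Your proposal is correct and uses the same ingredients as the paper: the dimension-lifted constrained FBSDE, the decoupling ansatz $\bar{\bar{Y}}=\bar{\bar{P}}\bar{\bar{X}}+\bar{\bar{\varphi}}$ leading to (\ref{Eq:leader_riccati2})--(\ref{Eq:leader_SDE}), and the expansion $\tfrac12\varepsilon^2\mathcal{J}^1+\varepsilon\mathcal{J}^2$ with Assumption \ref{Ass:4.6} taken from the preceding proposition. The only difference is direction: you construct the candidate from $\bar{\bar{P}}$, check the lifted system by It\^o's formula, and then conclude optimality, whereas the paper derives the formulas from the ansatz; your verification order is in fact what the paper's ``combining the above derivations'' needs to support the existence claim.
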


By the form of $\bar{\bar{X}}$, write
\begin{equation*}
	\hat{\alpha}_t^l=\hat{\alpha}_t^l(\hat{X}_t^{l},\hat{M}_t^f,\hat{\psi}_t),
\end{equation*}
where $\hat{\alpha}_t^l(x,m,\psi)$ is a linear function of $x,m,\psi$ determined by the maximum condition (\ref{eq36}).

\subsection{Stackelberg-Nash equilibrium}

\begin{thm}[Stackelberg-Nash equilibrium]
	Assume that Assumptions \ref{Ass:LQ}, \ref{Ass:03}, \ref{Ass:06}, and \ref{Ass:4.6} hold, $\hat{R}^f\gg 0$ and $\bar{\bar{R}}\gg 0$, the Riccati equations (\ref{Eq:follower_riccati}) and (\ref{Eq:leader_riccati2}) admit unique solutions $P^f\in L^\infty(0,T;\mathcal{S}^{n_1})$ and $\bar{\bar{P}}\in L^\infty(0,T;\mathcal{S}^{2n_1+n_2})$, respectively, the nonsymmetric Riccati equation (\ref{Eq:leader_riccati1}) admits a unique solution $\hat{P}^f\in L^\infty(0,T;\mathbb{R}^{n_1\times n_1})$, $\bar{\bar{\varphi}}\in L^2(0,T;\mathbb{R}^{2n_1+n_2})$ is the unique solution of the ODE (\ref{Eq:leader_BODE}), $\bar{\bar{X}}\in\mathbb{S}_{\mathcal{F}^l}^2(\mathbb{R}^{2n_1+n_2})$ is the unique solution of the SDE (\ref{Eq:leader_SDE}), and the forward-backward system (\ref{Eq:follower_consistent}) admits a unique solution $(\hat{z},\hat{\varphi},\hat{q})\in\mathbb{S}_{\boxtimes^l}^{\infty,2}(\mathbb{R}^{n_1})\times\mathbb{S}_{\boxtimes^l}^{\infty,2}(\mathbb{R}^{n_1})\times\mathbb{H}_{\boxtimes^l}^{\infty,2}(\mathbb{R}^{n_1})$ under the optimal pair of the leader
	\begin{equation*}
		\hat{\alpha}_t^l=-\bar{\bar{R}}_t^{-1}( \bar{\bar{S}}_t\bar{\bar{X}}_t+\bar{\bar{b}}_t^{3\top}\bar{\bar{\varphi}}_t+\bar{\bar{\sigma}}_t^{2\top}\bar{\bar{P}}_t\bar{\bar{\sigma}}_t^0 ),\quad \hat{X}_t^l=\begin{pmatrix}
			I^{n_2} \\ \mathbf{0}^{2n_1+n_2}
		\end{pmatrix}\bar{\bar{X}}_t.
	\end{equation*}
	Then the problem admits a unique Stackelberg-Nash equilibrium
	\begin{equation}
		\begin{cases}
			\hat{\alpha}_t^{f,u}=\hat{\alpha}^f_t(\hat{X}_t^{f,u},z_t^u,\varphi_t^{f,u},X_t^l,\alpha_t^l),\\
			\hat{\alpha}_t^l=\hat{\alpha}_t^l(\hat{X}_t^{l},\hat{M}_t^f,\hat{y}_t).
		\end{cases}
	\end{equation}
\end{thm}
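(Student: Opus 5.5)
The proof assembles Theorem \ref{Thm:4.2} (followers' Nash response for a fixed leader control) and Theorem \ref{Thm:4.3} (leader's optimal control against that response), in two layers.

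\emph{Step 1: the follower layer.} Fix an arbitrary $\alpha^l\in\mathcal{U}^l$. Since $\hat R^f\gg0$, $\hat R^f$ is invertible. With the Riccati solution $P^f$ of (\ref{Eq:follower_riccati}) and the solution of the consistency system (\ref{Eq:follower_consistent}), Theorem \ref{Thm:4.2} gives a unique Nash equilibrium $\hat\alpha^f(\alpha^l)$ in feedback form. Along the way I will check that the decoupling ansatz (\ref{eq04}) recovers a solution of the Hamiltonian system (\ref{Eq:follower_hamilton}). Then Proposition \ref{Prp:4.3}, which is a necessary and sufficient condition, gives both optimality of each follower's response and uniqueness. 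For uniqueness I will use the following argument: any other equilibrium yields, through the same Hamiltonian system and the consistency condition of Proposition \ref{Prp:4.4}, a second solution of (\ref{Eq:follower_consistent}), contradicting the assumed uniqueness. This defines the response map $\alpha^l\mapsto\hat\alpha^f(\alpha^l)$. Integrating against $w$ under Assumption \ref{Ass:06} reduces the relevant aggregate to $(M^f,N^f)$. The decoupling $N^f=\hat P^f M^f+N^l$ via (\ref{Eq:leader_riccati1}) then turns the leader's cost into $\tilde J^l(\alpha^l)$, depending only on the $\mathcal{F}^l$-adapted state $\tilde X^l$.

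\emph{Step 2: the leader layer.} For the reduced problem, Assumption \ref{Ass:4.6} makes $\tilde J^l$ convex. In the expansion $\mathcal J(\varepsilon)=\tfrac12\varepsilon^2\mathcal J^1+\varepsilon\mathcal J^2$ we have $\mathcal J^1\ge0$, so the stationarity condition (\ref{eq35}) is sufficient as well as necessary. I will verify that the feedback (\ref{eq36}), built from $\bar{\bar P}$, $\bar{\bar\varphi}$ and the SDE (\ref{Eq:leader_SDE}), produces a solution of the constrained FBSDE. To do this, set $\bar{\bar Y}=\bar{\bar P}\bar{\bar X}+\bar{\bar\varphi}$ and $\bar{\bar Z}$ as in the derivation, and apply It\^o's formula using (\ref{Eq:leader_riccati2}) and (\ref{Eq:leader_BODE}). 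This step is a direct computation. Sufficiency then follows: for any $\beta$, $\tilde J^l(\hat\alpha^l+\beta)-\tilde J^l(\hat\alpha^l)=\tfrac12\mathcal J^1+\mathcal J^2\ge0$. Uniqueness requires strict convexity. I intend to extract it from $\bar{\bar R}\gg0$ by completing the square with $\bar{\bar P}$, so that $\mathcal J^1=\mathbb E\int_0^T|\bar{\bar R}^{1/2}(\beta-\cdots)|^2\,dt$ vanishes only if $\beta=0$.

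\emph{Step 3: closing the loop.} Substitute $\hat\alpha^l$ and $\hat X^l$ (the first $n_2$ components of $\bar{\bar X}$) into the follower system. The assumed unique solution $(\hat z,\hat\varphi,\hat q)$ of (\ref{Eq:follower_consistent}) under this leader pair gives the followers' strategies. I will check consistency: $\hat M^f=\int_I w^u\hat X^{f,u}\,d\lambda$ coincides with the $M^f$-component of $\bar{\bar X}$ (via the CELLN, Theorem \ref{Thm:2.1}, Proposition \ref{Prp:3.4}, and uniqueness for (\ref{eq14})). I will also check that $\hat\psi$ is the last block of $\bar{\bar X}$, which is what the notation $\hat y$ in the statement refers to.

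The main obstacle is the uniqueness claim, on both levels. For the followers, the subtle point is that Proposition \ref{Prp:4.3} is stated for fixed $z$, so uniqueness of the equilibrium must be routed through uniqueness of (\ref{Eq:follower_consistent}). For the leader, indefinite weights mean Assumption \ref{Ass:4.6} gives only $\mathcal J^1\ge0$. I will try first to obtain strictness from $\bar{\bar R}\gg0$ and the Riccati completion of squares. If that fails, I will state uniqueness within the class of feedback strategies generated by these Riccati solutions.
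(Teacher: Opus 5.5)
Your assembly of Theorems \ref{Thm:4.2} and \ref{Thm:4.3} is exactly the paper's proof, which consists of that one sentence. You are right that this leaves leader-side uniqueness open, since Theorem \ref{Thm:4.3} only asserts existence. The step that fails is your plan to get strict convexity from $\bar{\bar{R}}\gg0$ by completing the square. The lifted forward equation contains $\bar{\bar{b}}^2\bar{\bar{Y}}$, because the backward component $N$ enters the drift of $M^f$ through $\hat{b}^{f,2}$. For the homogeneous system driven by $\beta$, set $\Gamma=\bar{\bar{Y}}^0-\bar{\bar{P}}\bar{\bar{X}}^0$, so that $\Gamma_T=0$. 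Combining (\ref{Eq:leader_riccati2}), It\^o's formula for $\langle\bar{\bar{X}}^0,\Gamma\rangle$, and the paper's duality expression for $\mathcal{J}^1$ gives
\begin{equation*}
\mathcal{J}^1=\mathbb{E}\int_0^T\Big\{\big|\bar{\bar{R}}_t^{1/2}\big(\beta_t+\bar{\bar{R}}_t^{-1}\bar{\bar{S}}_t\bar{\bar{X}}_t^0\big)\big|^2-\Gamma_t^\top\bar{\bar{b}}_t^2\Gamma_t\Big\}\mathrm{d}t .
\end{equation*}
Here $\Gamma$ is generally nonzero, since it solves a BSDE with source $-\bar{\bar{S}}^\top(\beta+\bar{\bar{R}}^{-1}\bar{\bar{S}}\bar{\bar{X}}^0)$. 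Moreover, $\bar{\bar{b}}^2$ has zero diagonal blocks, so it is indefinite whenever $\hat{b}^{f,2}\neq0$. So no positivity follows this way. Completing the square does work in your Step~1: once $z,X^l,\alpha^l$ are frozen, each follower faces a forward SLQ problem with $\hat{R}^f\gg0$. That, rather than the convexity-based sufficient maximum principle cited for Proposition \ref{Prp:4.3}, is what justifies follower optimality under indefinite weights.

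You do not need strict convexity, and your fallback (uniqueness only among Riccati feedbacks) would weaken the theorem. Argue through the Hamiltonian system instead.
\begin{itemize}
\item Any optimal $\alpha$ satisfies (\ref{eq35}) by the paper's first-variation argument ($\mathcal{J}^2=0$), which uses no convexity. It therefore yields a solution $(\bar{\bar{X}},\bar{\bar{Y}},\bar{\bar{Z}},\alpha)$ of the constrained lifted FBSDE.
\item Put $\Gamma=\bar{\bar{Y}}-\bar{\bar{P}}\bar{\bar{X}}-\bar{\bar{\varphi}}$ and $\Lambda=\bar{\bar{Z}}-\bar{\bar{P}}(\bar{\bar{\sigma}}^0+\bar{\bar{\sigma}}^1\bar{\bar{X}}+\bar{\bar{\sigma}}^2\alpha)$. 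The constraint becomes $\bar{\bar{R}}\alpha+\bar{\bar{S}}\bar{\bar{X}}+\bar{\bar{b}}^{3\top}\bar{\bar{\varphi}}+\bar{\bar{\sigma}}^{2\top}\bar{\bar{P}}\bar{\bar{\sigma}}^0=-(\bar{\bar{b}}^{3\top}\Gamma+\bar{\bar{\sigma}}^{2\top}\Lambda)$.
\item Using (\ref{Eq:leader_riccati2}) and (\ref{Eq:leader_BODE}), the dynamics of $\Gamma$ reduce to
\begin{equation*}
\mathrm{d}\Gamma_t=\big\{(\bar{\bar{S}}_t^\top\bar{\bar{R}}_t^{-1}\bar{\bar{b}}_t^{3\top}-\bar{\bar{b}}_t^{1\top}-\bar{\bar{P}}_t\bar{\bar{b}}_t^2)\Gamma_t+(\bar{\bar{S}}_t^\top\bar{\bar{R}}_t^{-1}\bar{\bar{\sigma}}_t^{2\top}-\bar{\bar{\sigma}}_t^{1\top})\Lambda_t\big\}\mathrm{d}t+\Lambda_t\mathrm{d}W_t^l,\quad \Gamma_T=0 .
\end{equation*}
\item Hence $\Gamma=\Lambda=0$, $\alpha$ is the feedback (\ref{eq36}), and $\bar{\bar{X}}$ is the unique solution of (\ref{Eq:leader_SDE}).
\end{itemize}
This gives uniqueness of the leader's optimal control, and a posteriori $\mathcal{J}^1(\beta)>0$ for $\beta\neq0$.

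Separately, your Step~1 builds the response map for every $\alpha^l\in\mathcal{U}^l$. The hypothesis only gives unique solvability of (\ref{Eq:follower_consistent}) under $\hat{\alpha}^l$. You must assume it for all $\alpha^l$ (e.g.\ via Assumption \ref{Ass:07}), as the leader's reduced problem presupposes.
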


\begin{proof}
	It follows from Theorems \ref{Thm:4.2} and \ref{Thm:4.3}.
\end{proof} 
	
\section{Propagation of chaos}\label{Sec:05}

\subsection{Finite-player game}

Consider a leader-follower game with one leader and $N$ followers:

\begin{equation}\label{Eq:N_follower_sys}
	\begin{cases}
		\mathrm{d}X_t^{N,i}=b_t^f(X_t^{N,i},\alpha_t^{N,i},\upsilon_t^{N,i},X_t^{N,0},\alpha_t^{N,0})\mathrm{d}t\\
		\qquad\qquad +\sigma_t^f(X_t^{N,i},\alpha_t^{N,i},\upsilon_t^{N,i},X_t^{N,0},\alpha_t^{N,0})\mathrm{d}W_t^{f,\frac{i}{N}},\\
		\mathrm{d}X_t^{N,0}=b_t^l(X_t^{N,0},\alpha_t^{N,0},\upsilon_t^{N,0})\mathrm{d}t+\sigma_t^l(X_t^{N,0},\alpha_t^{N,0},\upsilon_t^{N,0})\mathrm{d}W_t^l,\\
		X_0^{N,i}=x_0^{N,i},\quad X_0^{N,0}=x_0^{l},
	\end{cases}
\end{equation}
where
\begin{equation*}
	\upsilon_t^{N,i}=\frac{1}{N}\sum_{j=1}^N g^{N,i,j}X_t^{N,j},\quad i=1,\cdots,N,\quad \upsilon_t^{N,0}=\frac{1}{N}\sum_{i=1}^N w^{N,i}X_t^{N,i}.
\end{equation*}

The cost functional of follower $i$ is
\begin{multline}\label{eq40}
	J^{N,i}(\alpha^{N,i};\alpha^{N,-i},\alpha^{N,0})\\
	=\mathbb{E}\biggl[ \int_0^T h_t^{f}(X_t^{N,i},\alpha_t^{N,i},\upsilon_t^{N,i},X_t^{N,0},\alpha_t^{N,0})\mathrm{d}t+g^{f}(X_T^{N,i},\upsilon_T^{N,i},X_T^{N,0}) \biggr],
\end{multline}
and the cost functional of the leader is
\begin{equation}
	J^{N,0}(\alpha^{N,0};\alpha^N)=\mathbb{E}\biggl[ \int_0^T h_t^l(X_t^{N,0},\alpha_t^{N,0},\upsilon_t^{N,0})\mathrm{d}t+g^l(X_T^{N,0},\upsilon_T^{N,0}) \biggr].
\end{equation}

Define two classes of admissible control sets as follows.

\begin{dfn}
	Let $\mathcal{F}^{N,i}$ be the completed natural filtration generated by the Brownian motion $W^{f,\frac{i}{N}}$, let $\mathcal{F}^0$ be the completed natural filtration generated by $W^l$, and let $\mathcal{F}^N=\mathcal{F}^0\lor\mathcal{F}^{N,1}\lor\cdots\lor\mathcal{F}^{N,N}$ be the completed natural filtration generated by all Brownian motions in the $N$-follower problem. Define
	
	(1) centralized strategy profiles
	\begin{equation*}
		\mathcal{U}_c^{N,0}=L_{\mathcal{F}^N}^2(0,T;\mathbb{R}^{m_2}),\quad \mathcal{U}_c^{N,i}=L_{\mathcal{F}^N}^2(0,T;\mathbb{R}^{m_1}),\quad i=1,\cdots,N,
	\end{equation*}
	
	(2) decentralized strategy profiles
	\begin{equation*}
		\mathcal{U}_d^{N,0}=L_{\mathcal{F}^0}^2(0,T;\mathbb{R}^{m_2}),\quad \mathcal{U}_d^{N,i}=L_{\mathcal{F}^{N,i}\lor\mathcal{F}^0}^2(0,T;\mathbb{R}^{m_1}),\quad i=1,\cdots,N.
	\end{equation*}
\end{dfn}

One can directly use classical results to solve for the exact centralized strategy, but the computational complexity is very high and may suffer from the curse of dimensionality. Therefore, we use the graphon mean field approximation method to provide decentralized approximate equilibrium strategies for the players.

\begin{dfn}
	Let $\hat{\alpha}^N$ be a measurable function from $\mathcal{U}_d^{N,0}$ to $\mathcal{U}_d^N$. We say that $(\hat{\alpha}^{N,0},\hat{\alpha}^N(\hat{\alpha}^{N,0}))$ is a decentralized $(\varepsilon_1,\varepsilon_2)$-Stackelberg-Nash equilibrium of the finite-player game if
	
	(1) for any $\alpha^{N,0}\in\mathcal{U}_d^{N,0}$, $\hat{\alpha}^N(\alpha^{N,0})$ is an $\varepsilon_2$-Nash equilibrium of the follower problem, that is, for any $i=1,\cdots,N$ and $\beta\in\mathcal{U}_d^{N,i}$,
	\begin{equation*}
		J^{N,i}(\beta;\hat{\alpha}^N(\alpha^{N,0})^{-i},\alpha^{N,0})\ge J^{N,i}(\hat{\alpha}^N(\alpha^{N,0})^i;\hat{\alpha}^N(\alpha^{N,0})^{-i},\alpha^{N,0})-\varepsilon_2,
	\end{equation*}
	
	(2) $\hat{\alpha}^{N,0}\in\mathcal{U}_d^{N,0}$ is an $\varepsilon_1$-optimal control of the leader problem when the approximate follower response $\hat{\alpha}^N$ is fixed, that is, for any $\beta\in\mathcal{U}_d^{N,0}$,
	\begin{equation*}
		J^{N,0}(\beta;\hat{\alpha}^N(\beta))\ge J^{N,0}(\hat{\alpha}^{N,0};\hat{\alpha}^N(\hat{\alpha}^{N,0}))-\varepsilon_1.
	\end{equation*}
\end{dfn}

\subsection{Propagation of chaos}

For brevity, the problem with $N$ followers is called the $N$-problem, and the auxiliary problem constructed below is called the $N$-auxiliary problem. The continuum follower problem is called the limiting problem.

Write
\begin{gather*}
	G_N=\sum_{i,j=1}^N g^{N,i,j}\mathbf{1}_{\left(\frac{i-1}{N},\frac{i}{N}\right]\times\left(\frac{j-1}{N},\frac{j}{N}\right]},\\
	x_0^{[N]}=\sum_{i=1}^N x_0^{N,i}\mathbf{1}_{\left(\frac{i-1}{N},\frac{i}{N}\right]},\quad w^{[N]}=\sum_{i=1}^N w^{N,i}\mathbf{1}_{\left( \frac{i-1}{N},\frac{i}{N} \right]}.
\end{gather*}

\begin{Ass}[The corresponding limiting problem is solvable]\label{Ass:POC1}
	In the limiting problem, for any $\alpha^l\in\mathcal{U}^l$, the follower problem admits a Nash equilibrium $\hat{\alpha}_\cdot^f(\cdot,\cdot,\cdot,\cdot,\cdot)$, and the leader problem admits an optimal control $\hat{\alpha}_\cdot^l(\cdot,\cdot,\cdot)$.
\end{Ass}

\begin{Ass}[Convergence of graphons and initial states]\label{Ass:POC2}
	The graphons $G,G_N$ satisfy
	\begin{equation*}
		\lim_{N\to\infty}N\max_{i=1,\cdots,N}\int_{\left(\frac{i-1}{N},\frac{i}{N}\right]}\|G(u,\cdot)-G_N(u,\cdot)\|_1\lambda(\mathrm{d}u)=0,
	\end{equation*}
	and the initial states $x_0,x_0^{[N]}$ and the leader weights $w,w^{[N]}$ satisfy
	\begin{gather*}
		\lim_{N\to\infty} N\max_{i=1,\cdots,N}\int_{\left(\frac{i-1}{N},\frac{i}{N}\right]}|x_0^{f,u}-x_0^{[N],u}|\lambda(\mathrm{d}u)=0,\\
		\lim_{N\to\infty}N\max_{i=1,\cdots,N}\int_{\left(\frac{i-1}{N},\frac{i}{N}\right]}|w^u-w^{[N],u}|\lambda(\mathrm{d}u)=0.
	\end{gather*}
\end{Ass}

\begin{remark}
	Regarding notations, when discussing the approximate Nash equilibrium of the followers below, in the $N$-problem and the limiting problem, for a given $\alpha^{N,0}=\alpha^l$, we denote the follower states, aggregates, and leader state in the $N$-problem under the constructed strategy profile by $\hat{X}^{N},\hat{\upsilon}^N,\hat{X}^{N,0}$ respectively, and denote the follower states, aggregates, and leader state in the limiting problem under the follower Nash equilibrium by $\hat{X}^f,\hat{z},\hat{X}^l$ respectively,. When discussing the approximate Stackelberg equilibrium of the leader, under the optimal leader control $\hat{\alpha}^l$ in the limiting problem, the corresponding optimal follower state and mean state are denoted by $(\check{X}^f,\check{M}^f)$, and the optimal leader state is denoted by $\check{X}^l$.
\end{remark}

\begin{Ass}\label{Ass:POC3}
	There exists a constant $K>0$ such that, for any positive integer $N$, $i=1,\cdots,N$, $\alpha^N\in\mathcal{U}_d^{N}$, and $\alpha^0\in\mathcal{U}_d^{N,0}$, we have
	\begin{equation*}
		J^{N,i}(\alpha^{N,i};\alpha^{N,-i},\alpha^0)\ge K\mathbb{E}\biggl[ \int_0^T |\alpha_t^{N,i}|^2\mathrm{d}t \biggr]-K.
	\end{equation*}
\end{Ass}

When $Q\ge 0$, $R\gg 0$, and $G\ge 0$, Assumption \ref{Ass:POC3} holds automatically.

\begin{thm}[Propagation of chaos]\label{Thm:POC}
	Assume that Assumptions \ref{Ass:POC1}, \ref{Ass:POC2} and \ref{Ass:POC3} hold. For any $\alpha^{l}\in\mathcal{U}^l$, let the graphon aggregate, adjoint process, and leader state of the limiting problem under the follower Nash equilibrium be $\big(\hat{z}(\alpha^l),\hat{\varphi}(\alpha^l),\hat{X}^{l}(\alpha^l)\big)$. For any $t\in[0,T]$, define
	\begin{equation*}
		\bar{z}_t^{N,i}(\alpha^l)=N\int_{\left(\frac{i-1}{N},\frac{i}{N}\right]}\hat{z}_t^u(\alpha^l)\lambda(\mathrm{d}u),\quad \bar{\varphi}_t^{N,i}(\alpha^l)=N\int_{\left(\frac{i-1}{N},\frac{i}{N}\right]}\hat{\varphi}_t^u(\alpha^l)\lambda(\mathrm{d}u),
	\end{equation*}
	\begin{equation*}
		\hat{\alpha}_t^{N,i}(\alpha^l)=\hat{\alpha}^f_t\big(\hat{X}_t^{N,i},\bar{\varphi}_t^{N,i}(\alpha^l),\bar{z}_t^{N,i}(\alpha^l),\hat{X}_t^{l}(\alpha^l),\alpha_t^{l}\big),
	\end{equation*}
	\begin{equation*}
		\hat{\alpha}^N(\alpha^l)=\big(\hat{\alpha}^{N,1}(\alpha^l),\cdots,\hat{\alpha}^{N,N}(\alpha^l)\big),\quad \hat{\alpha}^{N,0}=\hat{\alpha}_t^l(\check{X}_t^l,\check{M}_t^f,\check{\psi}).
	\end{equation*}
	Then, for any $\varepsilon_1,\varepsilon_2>0$, there exists a positive integer $N_0$ such that, for any $N\ge N_0$, $\big( \hat{\alpha}^{N,0},\hat{\alpha}^N(\hat{\alpha}^{N,0}) \big)$ is a decentralized $(\varepsilon_1,\varepsilon_2)$-Stackelberg-Nash equilibrium of the $N$-problem.
\end{thm}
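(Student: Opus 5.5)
The argument splits into the two parts of the definition of a decentralized $(\varepsilon_1,\varepsilon_2)$-Stackelberg-Nash equilibrium. Both rest on one quantitative estimate: under the decentralized profile, the $N$-problem dynamics stay close to the limiting dynamics. Fix $\alpha^l\in\mathcal{U}^l$ and identify $\alpha^{N,0}=\alpha^l$. Under $\hat{\alpha}^N(\alpha^l)$, the $N$-follower states $\hat{X}^{N,i}$ solve linear SDEs that differ from those for the limiting states $\hat{X}^{f,i/N}$ in three places: the empirical aggregate $\hat{\upsilon}^{N,i}$ replaces $\hat{z}^{i/N}$; the block averages $\bar{z}^{N,i},\bar{\varphi}^{N,i}$ replace $\hat{z}^u,\hat{\varphi}^u$; and the leader is driven by $\upsilon^{N,0}$ rather than $M^f$.

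\textbf{Step 1 (state approximation).} Show
\[
\max_{1\le i\le N}\mathbb{E}\Big[\sup_{t\le T}\big\{|\hat{X}_t^{N,i}-\hat{X}_t^{f,i/N}|^2+|\hat{\upsilon}_t^{N,i}-\hat{z}_t^{i/N}|^2\big\}+\sup_{t\le T}|\hat{X}_t^{N,0}-\hat{X}_t^l|^2\Big]\to 0 .
\]
Decompose $\hat{\upsilon}^{N,i}-\bar{z}^{N,i}$ into four pieces.
\begin{itemize}
\item A coupling error $\frac1N\sum_j g^{N,i,j}(\hat{X}^{N,j}-\hat{X}^{f,j/N})$, handled by Gronwall.
\item A graphon error $G_N-G$, controlled by Assumption \ref{Ass:POC2}.
\item A discretization error from replacing $X^{f,u}$ on a block by $X^{f,j/N}$, controlled by regularity in $u$ of the limiting solution. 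This uses that coefficients are $u$-independent (Assumption \ref{Ass:03}), so the $u$-dependence enters only through $x_0^{f,u}$, $\|G(u,\cdot)\|_1$ and $\hat{z}^u$.
\item A law-of-large-numbers fluctuation
\[
\frac1N\sum_j G(\cdot,j/N)\big(\hat{X}^{f,j/N}-\mathbb{E}[\hat{X}^{f,j/N}\mid\mathcal{F}^l_T]\big).
\]
Given $\mathcal{F}_T^l$ the summands are conditionally independent (Lemma \ref{Lem:3.1}, Proposition \ref{Prp:3.4}) and have uniformly bounded second moments (the $\mathbb{S}^{\infty,2}$ estimates), so the fluctuation has conditional variance $O(1/N)$.
\end{itemize}
The CELLN (Theorem \ref{Thm:2.1}) identifies $\hat{z}$ with the integral of the conditional means. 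The same decomposition handles $\upsilon^{N,0}$ versus $M^f$, using the $w^{[N]}$ part of Assumption \ref{Ass:POC2}. A Gronwall argument on the coupled linear system then gives the displayed convergence. The rate is uniform over $\alpha^l$ in bounded sets of $\mathcal{U}^l$, because all constants come from linear estimates such as Proposition \ref{prp07}.

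\textbf{Step 2 (followers' $\varepsilon_2$-Nash).} Let follower $i$ deviate to $\beta\in\mathcal{U}_d^{N,i}$. By Assumption \ref{Ass:POC3}, it suffices to consider $\beta$ with $\mathbb{E}\int_0^T|\beta|^2\,\mathrm{d}t\le K'$, since otherwise the deviation is not profitable. A single deviation changes $\upsilon^{N,j}$ and $\upsilon^{N,0}$ by $O(1/N)$ in $L^2$, since the factor $g^{N,i,j}/N$ is bounded by $1/N$. Hence Step 1 still holds for the others, and $\upsilon^{N,i}$ stays close to $\hat{z}^{i/N}$. The $N$-cost of follower $i$ therefore differs by $o(1)$, uniformly over such $\beta$, from the limiting cost $J^{f,i/N}(\beta;\hat{z},\hat{X}^l,\alpha^l)$ of a single agent facing frozen $(\hat{z},\hat{X}^l)$. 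The quadratic cost is locally Lipschitz in $L^2$, so these $o(1)$ differences in the state and aggregate translate into $o(1)$ cost differences. The limiting Nash property (Assumption \ref{Ass:POC1}) gives
\[
J^{f,i/N}(\beta)\ge J^{f,i/N}(\hat{\alpha}^{f,i/N}).
\]
Step 1 shows the cost of $\hat{\alpha}^{N,i}$ is within $o(1)$ of the right-hand side. Note that $\beta$ need only be $\mathcal{F}^{N,i}\lor\mathcal{F}^0$-adapted, which sits inside the limiting admissible class. Choosing $N_0$ to make the $o(1)$ terms below $\varepsilon_2$ finishes this part. Since $\varepsilon_2$-Nash must hold for every $\alpha^{N,0}$, one should either check that the estimates are uniform on bounded sets of leader controls, or interpret the statement for each fixed $\alpha^l$.

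\textbf{Step 3 (leader's $\varepsilon_1$-optimality).} The same scheme applies. For $\beta\in\mathcal{U}_d^{N,0}$, Step 1 gives
\[
J^{N,0}(\beta;\hat{\alpha}^N(\beta))=\tilde{J}^l(\beta)+o(1),
\]
with $o(1)$ uniform on $L^2$-bounded sets. Large $\beta$ is excluded by coercivity of $\tilde{J}^l$, which should follow from Assumption \ref{Ass:4.6} together with $\bar{\bar{R}}\gg0$. Optimality of $\hat{\alpha}^l$ in the limit then yields the claim, with $\hat{\alpha}^{N,0}$ coinciding with $\hat{\alpha}^l$ as an $\mathcal{F}^l$-adapted process.

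\textbf{Main obstacle.} The difficult step is the discretization and law-of-large-numbers term in Step 1. The CELLN is an exact statement about the continuum, while here one needs a quantitative estimate on the $N$ sampled indices $u=j/N$, which may not be typical points of the rich Fubini extension. To deal with this, I would use the explicit linear structure. Conditional on $\mathcal{F}^l$, the process $\hat{X}^{f,u}-\mathbb{E}[\hat{X}^{f,u}\mid\mathcal{F}^l]$ is a stochastic integral against $W^{f,u}$ with $u$-independent coefficients. Its conditional covariance across distinct sampled indices vanishes, because the essential pairwise independence is arranged to hold on the grid points, possibly after re-choosing representatives within each block. The conditional mean is an affine functional of $(x_0^{f,u},\hat{z}^u,\hat{\varphi}^u)$, and its block averages are controlled by the $L^1$-block conditions of Assumption \ref{Ass:POC2}.
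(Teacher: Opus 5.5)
\textbf{There is a genuine gap: your comparison targets the limiting solution at individual grid points, which the assumptions do not control.}

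Step 1 aims at $\hat X^{f,i/N}$ and $\hat z^{i/N}$ (and, through the feedback, $\hat\varphi^{i/N}$). Step 2 then invokes the limiting Nash inequality for the follower with index $u=i/N$. Assumption \ref{Ass:POC2} controls only block-$L^1$ averages of $x_0^{f,\cdot}-x_0^{[N]}$ and of $\|G(u,\cdot)-G_N(u,\cdot)\|_1$. Nothing gives regularity of $u\mapsto(x_0^{f,u},G(u,\cdot))$, hence of $u\mapsto(\hat z^u,\hat\varphi^u)$, at single points.

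Two examples show the failure:
\begin{itemize}
\item Take $x_0^{f,u}=\mathbf{1}_{\mathbb{Q}}(u)$ and $x_0^{N,i}=0$. This satisfies Assumption \ref{Ass:POC2}, yet $\hat X_0^{N,i}-\hat X_0^{f,i/N}=-1$ for every $N$.
\item Set $G=1$ on the null set $(\mathbb{Q}\times I)\cup(I\times\mathbb{Q})$. This leaves every $\bar z^{N,i}$ and Assumption \ref{Ass:POC2} unchanged, but makes $\hat z^{i/N}-\bar z^{N,i}$ of order one.
\end{itemize}
So your ``discretization error'' need not vanish. In Step 2, the limiting agent at $i/N$ starts from $x_0^{f,i/N}$, faces $\hat z^{i/N}$, and uses $\hat\varphi^{i/N}$. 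It is not close to follower $i$ of the $N$-problem, which starts from $x_0^{N,i}$ and uses $\bar z^{N,i},\bar\varphi^{N,i}$.

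Re-choosing representatives $u_i$ inside the blocks does not help. The $N$-problem is driven by the fixed noises $W^{f,i/N}$, so $\hat X^{N,i}$ and $\hat X^{f,u_i}$ are no longer pathwise comparable. The prescribed strategy also still uses block averages.

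\textbf{The missing idea, which the paper uses, is to take the block averages themselves as comparison objects and introduce an auxiliary frozen-aggregate problem.} Because the coefficients are linear and $u$-independent (Assumption \ref{Ass:03}), averaging over a block shows that $(\bar\varphi^{N,i},\bar q^{N,i})$ solves the $\varphi$-BSDE in (\ref{Eq:follower_consistent}) with $z$ replaced by $\bar z^{N,i}$. Hence the prescribed control $\hat\alpha^f(\cdot,\bar z^{N,i},\bar\varphi^{N,i},X^l,\alpha^l)$ is \emph{exactly} optimal for a single player who starts from $x_0^{N,i}$ and faces the frozen $\bar z^{N,i}$ and $X^l$. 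The Riccati feedback is optimal from any initial state, so no pointwise information on $x_0^{f,\cdot}$ or $G$ is needed.

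The paper then proves $\max_i\mathbb{E}[\sup_t|\hat\upsilon_t^{N,i}-\bar z_t^{N,i}|^2]\le KK_N^2$ without comparing individual states at all:
\begin{itemize}
\item The drift of the aggregates involves only aggregates and the leader.
\item The drift mismatch reduces, via $\frac1N\sum_j g^{N,i,j}\bar\varphi_t^{N,j}=N\int_{\left(\frac{i-1}{N},\frac{i}{N}\right]}G_N\hat\varphi_t^u\lambda(\mathrm{d}u)$ and its analogue for $\bar z$, to block integrals of $(G_N-G)\hat\varphi^u$ and $(G_N-G)\hat z^u$. These are controlled by Assumption \ref{Ass:POC2}.
\item The martingale part has weights $g^{N,i,j}/N$, so It\^o's isometry gives variance $O(1/N)$. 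No CELLN at grid points is needed.
\end{itemize}
Your Step 2 mechanics go through once the limiting agent at $i/N$ is replaced by this auxiliary agent. These mechanics are using Assumption \ref{Ass:POC3} to bound deviations and the $O(1/N)$ effect of a single deviation.

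\textbf{A secondary point on Step 3.} Coercivity of $\tilde J^l$ does not follow from Assumption \ref{Ass:4.6}, which only asserts nonnegativity of the homogeneous quadratic form. Your reduction to bounded leader deviations therefore needs an additional argument. The paper's own leader step does not supply one either.
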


This theorem will be proved in the next subsection.

\subsection{Proof of the propagation of chaos}

Fix $\alpha^{N,0}=\alpha^l$. For simplicity, write $\bar{z}_t^{N,i}(\alpha^l),\bar{\varphi}_t^{N,i}(\alpha^l)$ as $\bar{z}_t^{N,i},\bar{\varphi}_t^{N,i}$, and define $\bar{X}_t^{N,i},\bar{q}_t^{N,i}$ similarly.

Set
\begin{multline*}
	K_N=N\max_{i=1,\cdots,N}\int_{\left(\frac{i-1}{N},\frac{i}{N}\right]}\|G(u,\cdot)-G_N(u,\cdot)\|_1\lambda(\mathrm{d}u)\\
	+N\max_{i=1,\cdots,N}\int_{\left(\frac{i-1}{N},\frac{i}{N}\right]}\big\{|x_0^{f,u}-x_0^{[N],u}|+|w^u-w^{[N],u}|\big\}\lambda(\mathrm{d}u)+\frac{1}{\sqrt{N}}.
\end{multline*}
Assumption \ref{Ass:POC2} is equivalent to $\lim_{N\to\infty}K_N=0$.

\begin{prp}[Uniform boundedness estimate for interaction parameters in the finite problem]\label{Prp:5.1}
	Assume that Assumption \ref{Ass:POC2} holds. Then there exists a constant $K$ independent of $N$ such that
	
	(1) for any positive integer $N$,
	\begin{gather}
		\max_{i=1,\cdots,N}\biggl\{ \frac{1}{N}\sum_{j=1}^N g^{N,i,j}-N\int_{\left(\frac{i-1}{N},\frac{i}{N}\right]}\|G(u,\cdot)\|_1\lambda(\mathrm{d}u) \biggr\}\le K_N,\\
		\max_{i=1,\cdots,N}\frac{1}{N}\sum_{j=1}^N g^{N,i,j}=\sup_{u\in I}\|G_N(u,\cdot)\|_1\le K_N+1\le K,\\
		\biggl|\frac{1}{N}\sum_{i=1}^N w^{N,i}-1\biggr|\le \int_I |w^{[N],u}-w^u|\lambda(\mathrm{d}u)\le K_N,\\
		\max_{i=1,\cdots,N}w^{N,i}\le\sup_{u\in I}w^u+K_N\le K,\label{eq30}
	\end{gather}
	
	(2) for any $X\in L_{\mathcal{F}\boxtimes\mathcal{I}}^{\infty,2}(\mathbb{R}^n)$,
	\begin{gather*}
		\max_{i=1,\cdots,N}\mathbb{E}\biggl[ \biggl( N\int_{\left(\frac{i-1}{N},\frac{i}{N}\right]}(G_N-G)X^u\lambda(\mathrm{d}u) \biggr)^2 \biggr]\le KK_N^2\sup_{u\in I}\mathbb{E}[|X^u|^2],\\
		\mathbb{E}\biggl[\biggl( \int_I (w^{[N],u}-w^u)X^u\lambda(\mathrm{d}u) \biggr)^2\biggr]\le KK_N^2\sup_{u\in I}\mathbb{E}[|X^u|^2].
	\end{gather*}
\end{prp}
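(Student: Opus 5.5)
The plan is to rewrite every discrete quantity as an integral over a block $I_i:=\left(\frac{i-1}{N},\frac{i}{N}\right]$ of a step function. Then each estimate reduces to the definition of $K_N$ together with Proposition \ref{Prp:2.1}. The basic identities are
\begin{equation*}
	\frac{1}{N}\sum_{j=1}^N g^{N,i,j}=\|G_N(u,\cdot)\|_1\ \text{ for every }u\in I_i,\qquad \frac{1}{N}\sum_{i=1}^N w^{N,i}=\int_I w^{[N],u}\lambda(\mathrm{d}u),
\end{equation*}
and $w^{N,i}=N\int_{I_i}w^{[N],u}\lambda(\mathrm{d}u)$. Both hold because $G_N$ and $w^{[N]}$ are constant on the blocks.

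For part (1), first inequality: average the first identity over $u\in I_i$ to get
\begin{equation*}
	\frac1N\sum_j g^{N,i,j}-N\int_{I_i}\|G(u,\cdot)\|_1\lambda(\mathrm{d}u)=N\int_{I_i}\bigl(\|G_N(u,\cdot)\|_1-\|G(u,\cdot)\|_1\bigr)\lambda(\mathrm{d}u).
\end{equation*}
By the reverse triangle inequality in $L^1$, this is at most $N\int_{I_i}\|G_N(u,\cdot)-G(u,\cdot)\|_1\lambda(\mathrm{d}u)\le K_N$.

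The second inequality follows by adding $N\int_{I_i}\|G(u,\cdot)\|_1\le\|G\|_\infty\le 1$. The bound $K_N\le K$ holds because $K_N\to 0$, so $K_N$ is bounded. Here I note that the graphon $G_N$ need not take values in $[0,1]$, which is why the bound comes with $K_N+1$ rather than $1$.

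For the third inequality, use $\int_I w^u\lambda(\mathrm{d}u)=1$ and the $L^1$ bound
\begin{equation*}
	\int_I|w^{[N]}-w|\,\mathrm{d}\lambda=\sum_i\int_{I_i}|w^{[N]}-w|\,\mathrm{d}\lambda\le\max_i N\int_{I_i}|w^{[N]}-w|\,\mathrm{d}\lambda\le K_N.
\end{equation*}
For the fourth inequality, write
\begin{equation*}
	w^{N,i}=N\int_{I_i}w^u\lambda(\mathrm{d}u)+N\int_{I_i}(w^{[N],u}-w^u)\lambda(\mathrm{d}u)\le\sup_u w^u+K_N,
\end{equation*}
and then use $w\in L^\infty_{\mathcal I}$.

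For part (2), I interpret $(G_N-G)X^u=\int_I(G_N(u,v)-G(u,v))X^v\lambda(\mathrm{d}v)$. By Jensen's inequality on the probability measure $N\lambda|_{I_i}$,
\begin{equation*}
	\Bigl(N\int_{I_i}(G_N-G)X^u\,\mathrm{d}u\Bigr)^2\le\Bigl(N\int_{I_i}\int_I|G_N(u,v)-G(u,v)|\,|X^v|\,\mathrm{d}v\,\mathrm{d}u\Bigr)^2.
\end{equation*}
The key step is to treat the right side as the square of $\int_{I_i}\int_I\mu(\mathrm{d}u\,\mathrm{d}v)|X^v|$ for the positive measure $\mu=N|G_N-G|\mathrm{d}u\,\mathrm{d}v$. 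Its total mass is
\begin{equation*}
	m_i=N\int_{I_i}\|G_N(u,\cdot)-G(u,\cdot)\|_1\lambda(\mathrm{d}u)\le K_N.
\end{equation*}
Cauchy–Schwarz gives $(\int|X|\,\mathrm{d}\mu)^2\le m_i\int|X^v|^2\,\mathrm{d}\mu$. Taking expectations and using Fubini on the rich Fubini extension yields
\begin{equation*}
	\mathbb{E}[\cdot]\le m_i\int|\ldots|\,\mathbb{E}|X^v|^2\,\mathrm{d}\mu\le m_i^2\sup_v\mathbb{E}|X^v|^2\le K_N^2\sup_v\mathbb{E}|X^v|^2.
\end{equation*}
The $w$-estimate is identical with $\mu=|w^{[N]}-w|\,\mathrm{d}\lambda$, whose mass is at most $K_N$ by the computation in part (1).

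The main obstacle is obtaining $K_N^2$ rather than $K_N$ in part (2). A naive approach bounds $|G_N-G|\le K$ pointwise and applies Cauchy–Schwarz in $u$ and $v$ separately, which gives only one factor of $K_N$. The weighted Cauchy–Schwarz with respect to $\mu$ is what produces the square. Measurability of $(u,v,\omega)\mapsto X^v$ under Fubini should also be checked, which the rich Fubini extension guarantees.
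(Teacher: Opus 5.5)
Your proof is correct. Part (1) is exactly the unpacking the paper has in mind; its proof is the single line citing the definition of $K_N$ and Proposition~\ref{Prp:2.1}(2).

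For part (2) your route differs from what that citation literally gives. Proposition~\ref{Prp:2.1}(2) is a pathwise sup bound. Applied pathwise to the kernel $G_N-G$, it yields $K_N^2\,\mathbb{E}[\sup_{v\in I}|X^v|^2]$ rather than the stated $K_N^2\sup_{v\in I}\mathbb{E}[|X^v|^2]$. For essentially pairwise independent families the former is typically infinite: for $X^u=W_1^{f,u}$ the exact law of large numbers gives $\sup_{v}|X^v|=\infty$ a.s.

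Your weighted Cauchy--Schwarz inequality with respect to the kernel measure $N|G_N-G|\,\lambda(\mathrm{d}u)\lambda(\mathrm{d}v)$ on $I_i\times I$ avoids this. It is equivalent to Minkowski's integral inequality for the $L^2(\varOmega)$-valued map $v\mapsto X^v$. It brings the expectation inside first, so the sup bound is only applied to the deterministic function $v\mapsto\mathbb{E}[|X^v|^2]$. This gives exactly the stated right-hand side, with $K=1$, using only the Fubini property of the extension. The paper's one-line citation buys brevity; your argument supplies the step that actually produces $\sup_{u}\mathbb{E}[|X^u|^2]$.

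You are also right to read $\|G(u,\cdot)-G_N(u,\cdot)\|_1$ in $K_N$ as $\int_I|G(u,v)-G_N(u,v)|\lambda(\mathrm{d}v)$. The paper defines $\|\cdot\|_1$ only for nonnegative graphons, and a signed reading would not control part (2).
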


\begin{proof}
	The result follows from the definition of $K_N$ and Proposition \ref{Prp:2.1}, (2).
\end{proof}

\begin{prp}[Uniform boundedness estimates for the averaged states, adjoints, and aggregates of the limiting problem]\label{Prp:5.2}
	Assume that Assumptions \ref{Ass:POC1} and \ref{Ass:POC2} hold. There exists a constant $K>0$ independent of $N$ such that
	\begin{gather}
		\sup_{N,i}\mathbb{E}\biggl[ \sup_{0\le t\le T}\big\{|\bar{X}_t^{N,i}|^2+|\bar{z}_t^{N,i}|^2+|\bar{\varphi}_t^{N,i}|^2\big\}+\int_0^T|\bar{q}_t^{N,i}|^2\mathrm{d}t \biggr]\le K,\\
		\mathbb{E}\biggl[ \sup_{0\le t\le T}\big\{ |\hat{X}_t^l|^2+|\hat{M}_t^f|^2 \big\} \biggr]\le K.
	\end{gather}
\end{prp}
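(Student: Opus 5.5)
The plan is to reduce both bounds to the uniform-in-$u$ estimates already available for the limiting problem, and to control the block averages by Jensen's inequality. Fix $\alpha^l\in\mathcal{U}^l$; this is the leader control in Assumption \ref{Ass:POC1} under which $\hat{z},\hat{\varphi},\hat{X}^f,\hat{q}$ and $\hat{X}^l,\hat{M}^f$ are defined. The constant $K$ may depend on $\alpha^l$ through $\mathbb{E}[\int_0^T|\alpha^l_t|^2\mathrm{d}t]$, but not on $N$ or $i$.

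\textbf{Step 1: uniform-in-$u$ bounds for the limiting quantities.} By Assumption \ref{Ass:POC1} the limiting follower equilibrium exists. Theorem \ref{Thm:3.1} and estimate (\ref{eq19}) of Proposition \ref{prp07} then give $\hat{X}^f\in\mathbb{S}_{\boxtimes^l}^{\infty,2}(\mathbb{R}^{n_1})$ and $\hat{X}^l\in\mathbb{S}_{\mathcal{F}^l}^2(\mathbb{R}^{n_2})$. The bound is in terms of $\sup_u I_0^{f,u}$, $I_0^l$, $\mathbb{E}[\int_0^T|\alpha^l|^2\mathrm{d}t]$ and $\sup_u\mathbb{E}[\int_0^T|\hat{\alpha}^{f,u}|^2\mathrm{d}t]$.

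To control the last term, I would use the affine feedback form of $\hat{\alpha}^f$ from Theorem \ref{Thm:4.2}, whose coefficients are bounded. I would combine it with the last proposition of the follower subsection, which yields
\[
\sup_u\mathbb{E}\Bigl[\sup_t\{|\hat{z}^u_t|^2+|\hat{\varphi}^{u}_t|^2\}+\int_0^T|\hat{q}^{l,u}_t|^2\mathrm{d}t\Bigr]\le K\,\mathbb{E}\Bigl[\int_0^T\{|\hat{X}^l_t|^2+|\alpha^l_t|^2+1\}\mathrm{d}t\Bigr].
\]

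There is a loop here: $\hat{X}^l$ depends on $\hat{M}^f$, which depends on $\hat{X}^f$. I would close it with a Gronwall argument on the joint system. For $t\in[0,T]$, set
\[
\Lambda_t:=\sup_u\mathbb{E}\bigl[\sup_{s\le t}|\hat{X}^{f,u}_s|^2\bigr]+\mathbb{E}\bigl[\sup_{s\le t}|\hat{X}^l_s|^2\bigr].
\]
This is the main obstacle, because the backward component $\hat{\varphi}$ is not causal in $t$. First I would try to bypass it by treating $(\hat{X}^l,\hat{M}^f)$ through the $\mathcal{F}^l$-adapted coupled system (\ref{eq14}) together with the leader state equation. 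Its well-posedness and $L^2$ estimate, which follow from the solvability of the nonsymmetric Riccati equation (\ref{Eq:leader_riccati1}), give $\mathbb{E}[\sup_t(|\hat{X}^l_t|^2+|\hat{M}^f_t|^2)]\le K$ directly. Feeding this into the uniform estimate above bounds $\hat{z},\hat{\varphi},\hat{q}^l$ uniformly in $u$. Plugging these into (\ref{eq19}) via the feedback form then bounds $\sup_u\mathbb{E}[\sup_t|\hat{X}^{f,u}_t|^2]$. Since $\hat{M}^f_t=\int_I w^u\hat{X}^{f,u}_t\lambda(\mathrm{d}u)$ with $w$ bounded, Cauchy--Schwarz together with exchanging the order of integration recovers the second displayed bound consistently.

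\textbf{Step 2: block averages.} For each $N$ and $i$, Jensen's inequality on the interval $(\frac{i-1}{N},\frac iN]$, which has normalized measure $N\lambda$, gives
\[
|\bar{z}^{N,i}_t|^2\le N\int_{(\frac{i-1}{N},\frac iN]}|\hat{z}^u_t|^2\lambda(\mathrm{d}u).
\]
Hence $\sup_t|\bar{z}^{N,i}_t|^2\le N\int\sup_t|\hat{z}^u_t|^2\lambda(\mathrm{d}u)$. Taking expectations and using Fubini on the rich Fubini extension yields $\mathbb{E}[\sup_t|\bar{z}^{N,i}_t|^2]\le\sup_u\mathbb{E}[\sup_t|\hat{z}^u_t|^2]$. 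The same argument applies to $\bar{X}^{N,i}$, $\bar{\varphi}^{N,i}$ and $\bar{q}^{N,i}$; for $\bar{q}^{N,i}$ the inequality is applied under $\int_0^T\mathrm{d}t$.

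Taking the supremum over $N$ and $i$ gives the first bound with a constant independent of $N$. Assumption \ref{Ass:POC2} is used only to guarantee that the initial data and weights entering $I_0^{f,u}$ are uniformly bounded, so that the constants coming from Proposition \ref{Prp:5.1} are also $N$-independent.
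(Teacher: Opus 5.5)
Your Step~2 takes a different, and more careful, route than the paper. The paper bounds the block averages pathwise, $|\bar X^{N,i}_t|^2\le\sup_{u\in I}|\hat X^{f,u}_t|^2$ (likewise for $\bar z,\bar\varphi,\bar q$). It then simply cites $\hat X^f\in\mathbb{S}_{\boxtimes^l}^{\infty,2}(\mathbb{R}^{n_1})$ and the analogous memberships for $(\hat z,\hat\varphi,\hat q)$. Those spaces control $\sup_u\mathbb{E}[\cdot]$, not $\mathbb{E}[\sup_u\cdot]$. For $\hat X^{f,u}$, which carries the essentially pairwise independent noise $W^{f,u}$, the pathwise supremum over $u$ is not controlled at all: when the idiosyncratic noise is nondegenerate, the exact law of large numbers shows that its essential supremum over $u$ is a.s.\ infinite. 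Your Jensen inequality on $(\frac{i-1}{N},\frac iN]$ followed by Fubini on the extension gives $\mathbb{E}[\sup_t|\bar z^{N,i}_t|^2]\le N\int_{(\frac{i-1}{N},\frac iN]}\mathbb{E}[\sup_t|\hat z^u_t|^2]\lambda(\mathrm{d}u)\le\sup_u\mathbb{E}[\sup_t|\hat z^u_t|^2]$. This is exactly the step that turns the $\infty,2$ memberships into the claimed bounds, so on this point your argument repairs the paper's.

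Your Step~1 contains a wrongly justified step, although the step is also unnecessary. Solvability of (\ref{Eq:leader_riccati1}) does not give well-posedness of (\ref{eq14}) \emph{together with the leader equation}. The matrix $\hat P^f$ only decouples $(M^f,N^f)$ for a given input $X^l$. Once $X^l$ is fed back through $b^{l,3}M^f$ and $\sigma^{l,3}M^f$, the system for $(\tilde X^l,N^l,L^f)$ is again a coupled linear FBSDE: the forward part contains $\hat b^{f,2}N^l$, and the backward part has $X^l$ in both its driver and its terminal value. Its solvability is a separate hypothesis (compare the solvability assumption the paper imposes on (\ref{eq34})), and Assumption~\ref{Ass:POC1} does not grant (\ref{Eq:leader_riccati1}) in any case.

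However, there is no loop to close, so neither this bypass nor the Gronwall argument on $\Lambda_t$ is needed. The limiting quantities do not depend on $N$ at all. Since the equilibrium $\hat\alpha^f$ lies in $\mathcal{U}^f$, Theorem~\ref{Thm:3.1} (with estimate (\ref{eq18})) already gives $\hat X^l,\hat M^f\in\mathbb{S}^2_{\mathcal{F}^l}$, which is the second bound. The last proposition of the follower subsection then bounds $(\hat z,\hat\varphi,\hat q^l)$ uniformly in $u$. The classical estimate for the closed-loop linear SDE with coefficients $\hat b^f,\hat\sigma^f$ then gives $\sup_u\mathbb{E}[\sup_t|\hat X^{f,u}_t|^2]<\infty$. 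This is what the paper compresses into ``substituting the optimal control and using Theorem~\ref{Thm:3.1}''. Finally, Assumption~\ref{Ass:POC2} and Proposition~\ref{Prp:5.1} play no role in this statement; the uniform bound on $x_0^{f,u}$ comes from Assumption~\ref{Ass:01}.
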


\begin{proof}
	By definition, for any positive integer $N$, $i=1,\cdots,N$, and $t\in[0,T]$,
	\begin{equation*}
		|\bar{X}_t^{N,i}|\le\sup_{u\in I}|\hat{X}_t^{f,u}|^2,\quad |\bar{z}_t^{N,i}|^2\le\sup_{u\in I}|\hat{z}_t^u|^2,\quad |\bar{\varphi}_t^{N,i}|\le \sup_{u\in I}|\hat{\varphi}_t^{u}|^2,\quad |\bar{q}_t^{N,i}|\le \sup_{u\in I}|\hat{q}_t^{u}|^2.
	\end{equation*}
	Substituting the optimal control of the limiting problem and using Theorem \ref{Thm:3.1}, we have $\hat{X}^f\in\mathbb{S}_{\boxtimes^l}^{\infty,2}(\mathbb{R}^{n_1})$ and $(\hat{z},\hat{\varphi},\hat{q})\in\mathbb{S}_{\boxtimes^l}^{\infty,2}(\mathbb{R}^{n_1})\times\mathbb{S}_{\boxtimes^l}^{\infty,2}(\mathbb{R}^{n_1})\times\mathbb{H}_{\boxtimes^l}^{\infty,2}(\mathbb{R}^{n_1})$, which further yields the conclusion.
\end{proof}

Substituting the strategy profile constructed in Theorem \ref{Thm:POC} into the state equation (\ref{Eq:N_follower_sys}), and noting that
\begin{gather*}
	b^f\big(x^f,\hat{\alpha}^f(x^f,z,\varphi^f,x^l,\alpha^l),z,x^l,\alpha^l\big)=\hat{b}^f(x^f,\varphi^f,z,x^l,\alpha^l),\\
	\sigma^f\big(x^f,\hat{\alpha}^f(x^f,z,\varphi^f,x^l,\alpha^l),z,x^l,\alpha^l\big)=\hat{\sigma}^f(x^f,\varphi^f,z,x^l,\alpha^l),
\end{gather*}
we obtain that the corresponding state $\hat{X}^N$ satisfies
\begin{equation*}
	\begin{cases}
		\mathrm{d}\hat{X}_t^{N,i}=\big\{\hat{b}_t^f(\hat{X}_t^{N,i},\bar{\varphi}_t^{N,i},\bar{z}_t^{N,i},\hat{X}_t^{N,0},\alpha_t^l)+b_t^{f,3}(\hat{\upsilon}_t^{N,i}-\bar{z}_t^{N,i})\big\}\mathrm{d}t\\
		\qquad\qquad +\big\{ \hat{\sigma}_t^f(\hat{X}_t^{N,i},\bar{\varphi}_t^{N,i},\bar{z}_t^{N,i},\hat{X}_t^{N,0},\alpha_t^l)+\sigma_t^{f,3}(\hat{\upsilon}_t^{N,i}-\bar{z}_t^{N,i}) \big\}\mathrm{d}W_t^{f,\frac{i}{N}},\\
		X_0^{N,i}=x_0^{N,i}.
	\end{cases}
\end{equation*}
From the definition of $\hat{\upsilon}^{N,0}$,
\begin{equation*}
	\begin{cases}
		\mathrm{d}\hat{\upsilon}_t^{N,0}=\big\{ \frac{1}{N}\sum_{j=1}^N w^{N,j}(\hat{b}_t^{f,0}+\hat{b}_t^{f,4}\hat{X}_t^{N,0}+\hat{b}_t^{f,5}\alpha_t^l)+\hat{b}_t^{f,1}\hat{\upsilon}_t^{N,0}\\
		\qquad\qquad +\frac{1}{N}\sum_{j=1}^N w^{N,j}\big( \hat{b}_t^{f,2}\bar{\varphi}_t^{N,j}+\hat{b}_t^{f,3}\bar{z}_t^{N,j}+b_t^{f,3}(\hat{\upsilon}_t^{N,j}-\bar{z}_t^{N,j}) \big) \big\}\mathrm{d}t\\
		\qquad\qquad +\frac{1}{N}\sum_{j=1}^N w^{N,j}\big( \hat{\sigma}_t^{f,0}+\hat{\sigma}_t^{f,1}\hat{X}_t^{N,j}+\hat{\sigma}_t^{f,2}\bar{\varphi}_t^{N,j}+\hat{\sigma}_t^{f,3}\bar{z}_t^{N,j}\\
		\qquad\qquad +\hat{\sigma}_t^{f,4}\hat{X}_t^{N,0}+\hat{\sigma}_t^{f,5}\alpha_t^l+\sigma_t^{f,3}(\hat{\upsilon}_t^{N,j}-\bar{z}_t^{N,j}) \big)\mathrm{d}W_t^{f,\frac{j}{N}},\\
		\hat{\upsilon}_0^{N,0}=\frac{1}{N}\sum_{j=1}^N w^{N,j}x_0^{N,j}.
	\end{cases}
\end{equation*}
By the CELLN, the expression of $\hat{M}_t^f$ is
\begin{equation*}
	\begin{cases}
		\mathrm{d}\hat{M}_t^f=\hat{b}_t^f\big(\hat{M}_t^f,\int_I w^u\hat{\varphi}_t^u\lambda(\mathrm{d}u),\int_I w^u\hat{z}_t^u\lambda(\mathrm{d}u),\hat{X}_t^l,\alpha_t^l\big)\mathrm{d}t,\\
		\hat{M}_0^f=\int_I w^ux_0^{f,u}\lambda(\mathrm{d}u).
	\end{cases}
\end{equation*}
Set $\varDelta_t^{N,0}=\hat{\upsilon}_t^{N,0}-\hat{M}_t^f$. Then $\varDelta^{N,0}$ satisfies
\begin{equation*}
	\begin{cases}
		\mathrm{d}\varDelta_t^{N,0}=\big\{ \big( \frac{1}{N}\sum_{j=1}^N w^{N,j}-1 \big)(\hat{b}_t^{f,0}+\hat{b}_t^{f,5}\alpha_t^l)+\hat{b}_t^{f,4}\big( \frac{1}{N}\sum_{j=1}^N w^{N,j}\hat{X}_t^{N,0}-\hat{X}_t^l\big)\\
		\qquad\qquad+\hat{b}_t^{f,1}\varDelta_t^{N,0}+\hat{b}_t^{f,2}\big( \frac{1}{N}\sum_{j=1}^N w^{N,j}\bar{\varphi}_t^{N,j}-\int_I w^u\hat{\varphi}_t^u\lambda(\mathrm{d}u) \big)\\
		\qquad\qquad+\hat{b}_t^{f,3}\big(\frac{1}{N}\sum_{j=1}^N w^{N,j}\bar{z}_t^{N,j}-\int_I w^u\hat{z}_t^u\lambda(\mathrm{d}u)\big)\\
		\qquad\qquad+b_t^{f,3}\frac{1}{N}\sum_{j=1}^N w^{N,j}(\hat{\upsilon}_t^{N,j}-\bar{z}_t^{N,j}) \big\}\mathrm{d}t\\
		\qquad\qquad +\frac{1}{N}\sum_{j=1}^N w^{N,j}\big\{ \hat{\sigma}_t^{f,0}+\hat{\sigma}_t^{f,1}\hat{X}_t^{N,j}+\hat{\sigma}_t^{f,2}\bar{\varphi}_t^{N,j}+\hat{\sigma}_t^{f,3}\bar{z}_t^{N,j}\\
		\qquad\qquad +\hat{\sigma}_t^{f,4}\hat{X}_t^{N,0}+\hat{\sigma}_t^{f,5}\alpha_t^l+\sigma_t^{f,3}(\hat{\upsilon}_t^{N,j}-\bar{z}_t^{N,j}) \big\}\mathrm{d}W_t^{f,\frac{j}{N}},\\
		\varDelta_0^{N,0}=\frac{1}{N}\sum_{j=1}^N w^{N,j}x_0^{N,j}-\int_I w^ux_0^{f,u}\lambda(\mathrm{d}u).
	\end{cases}
\end{equation*}

\begin{prp}\label{Prp:5.3}
	Assume that Assumptions \ref{Ass:POC1} and \ref{Ass:POC2} hold. For any $t\in[0,T]$, the estimate
	\begin{equation*}
		\mathbb{E}\biggl[ \sup_{0\le s\le t}\big\{ |\hat{\upsilon}_s^{N,0}-\hat{M}_s^f|^2+|\hat{X}_s^{N,0}-\hat{X}_s^l|^2 \big\} \biggr]\le K\max_{i=1,\cdots,N}\mathbb{E}\biggl[ \sup_{0\le s\le t}|\hat{\upsilon}_s^{N,i}-\bar{z}_s^{N,i}|^2 \biggr]+KK_N^2
	\end{equation*}
	holds.
\end{prp}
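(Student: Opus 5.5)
The approach is a Gronwall argument for the pair $(\varDelta^{N,0},\hat{X}^{N,0}-\hat{X}^l)$ on $[0,t]$. We write $\varDelta^{l}_s:=\hat{X}_s^{N,0}-\hat{X}_s^l$. We subtract the leader equation in the limiting problem, driven by $\hat{M}^f$, from the leader equation in the $N$-problem, driven by $\hat{\upsilon}^{N,0}$. Both use the same control $\alpha^l$ and the same Brownian motion $W^l$. Hence $\varDelta^l$ solves a linear SDE with zero initial value:
\begin{equation*}
\mathrm{d}\varDelta_s^l=\big(b_s^{l,1}\varDelta_s^l+b_s^{l,3}\varDelta_s^{N,0}\big)\mathrm{d}s+\big(\sigma_s^{l,1}\varDelta_s^l+\sigma_s^{l,3}\varDelta_s^{N,0}\big)\mathrm{d}W_s^l .
\end{equation*}
By the Burkholder--Davis--Gundy inequality and boundedness of the coefficients, we get
\[
\mathbb{E}\Big[\sup_{r\le s}|\varDelta_r^l|^2\Big]\le K\int_0^s\mathbb{E}\Big[\sup_{r\le \tau}\big\{|\varDelta_r^l|^2+|\varDelta_r^{N,0}|^2\big\}\Big]\mathrm{d}\tau .
\]

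Next we estimate $\varDelta^{N,0}$ from its SDE displayed before the statement, treating each term separately.

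\begin{enumerate}
\item The initial value is bounded by $K K_N$. This uses the uniform boundedness of $x_0^{f}$ and $w$ from \eqref{eq30}, together with Assumption \ref{Ass:POC2}, after writing the difference as $\int_I (w^{[N]}x_0^{[N]}-w x_0^{f})\lambda(\mathrm{d}u)$.
\item The term $(\frac1N\sum_j w^{N,j}-1)(\hat b^{f,0}+\hat b^{f,5}\alpha^l)$ contributes $KK_N^2$ after squaring, integrating, and using Proposition \ref{Prp:5.1}(1).
\item The term $\hat b^{f,4}(\frac1N\sum_j w^{N,j}\hat X^{N,0}-\hat X^l)$ splits as $\hat b^{f,4}\varDelta^l$ plus $(\frac1N\sum_j w^{N,j}-1)\hat X^{N,0}$. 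For the second piece we need an a priori bound on $\mathbb{E}\sup|\hat X^{N,0}|^2$; we obtain it by bounding it by $K+K\,\mathbb{E}\sup|\varDelta^l|^2$ and absorbing it into the Gronwall argument, or alternatively from Proposition \ref{Prp:5.2} together with a uniform a priori bound.
\item The averaged terms $\frac1N\sum_j w^{N,j}\bar\varphi^{N,j}-\int_I w^u\hat\varphi^u\lambda(\mathrm{d}u)$, and the same with $\bar z$, reduce by the definitions of $\bar\varphi^{N,j}$ and $w^{[N]}$ to $\int_I (w^{[N],u}-w^u)\hat\varphi^u\lambda(\mathrm{d}u)$. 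This follows because $w^{[N]}$ is piecewise constant, so $\frac1N\sum_j w^{N,j}\bar\varphi^{N,j}=\int_I w^{[N],u}\hat\varphi^u\lambda(\mathrm{d}u)$. Proposition \ref{Prp:5.1}(2) combined with the $\sup_u$ bounds of Proposition \ref{Prp:5.2} then gives $KK_N^2$.
\item The term $b^{f,3}\frac1N\sum_j w^{N,j}(\hat\upsilon^{N,j}-\bar z^{N,j})$ is bounded via Cauchy--Schwarz and \eqref{eq30} by $K\max_i\mathbb{E}\sup_{r\le t}|\hat\upsilon_r^{N,i}-\bar z_r^{N,i}|^2$. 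This is exactly the first term on the right-hand side of the claim.
\end{enumerate}

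The main obstacle is the martingale term driven by the idiosyncratic noises $W^{f,j/N}$. There is no limit counterpart for it, so we must show it is $O(1/N)$ in mean square, which is the source of the $1/\sqrt N$ term inside $K_N$. The Brownian motions $W^{f,j/N}$ are independent, since the indices $j/N$ are distinct and we may take them in the full-measure set of pairwise independence. So by BDG the second moment of the supremum is bounded by
\[
\frac{K}{N^2}\sum_{j=1}^N (w^{N,j})^2\,\mathbb{E}\int_0^t|\text{integrand}_j|^2\mathrm{d}s .
\]
The integrands contain $\hat X^{N,j}$, $\bar\varphi^{N,j}$, $\bar z^{N,j}$, $\hat X^{N,0}$, $\alpha^l$ and $\hat\upsilon^{N,j}-\bar z^{N,j}$. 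Their second moments are uniformly bounded: for $\bar\varphi,\bar z$ by Proposition \ref{Prp:5.2}, and for $\hat X^{N,j}$ and $\hat\upsilon^{N,j}$ by a standard a priori estimate on the $N$-system (linear SDE, bounded coefficients, $\frac1N\sum_j g^{N,i,j}\le K$). This gives a bound $K/N\le KK_N^2$.

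Finally we add the two estimates and obtain, for $\Phi(s):=\mathbb{E}\sup_{r\le s}\{|\varDelta^{N,0}_r|^2+|\varDelta^l_r|^2\}$,
\[
\Phi(s)\le K\max_i\mathbb{E}\Big[\sup_{r\le t}|\hat\upsilon_r^{N,i}-\bar z_r^{N,i}|^2\Big]+KK_N^2+K\int_0^s\Phi(\tau)\,\mathrm{d}\tau,\qquad s\le t .
\]
Gronwall's inequality on $[0,t]$ then yields the claimed estimate.
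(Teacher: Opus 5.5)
Your proposal is correct and follows essentially the same route as the paper. The paper also runs an SDE estimate for $\hat{\upsilon}^{N,0}-\hat{M}^f$, handles the $w^{[N]}$-averaged terms and the $b^{f,3}$ coupling term via Proposition~\ref{Prp:5.1}, and then closes with the linear leader-state difference equation and Gronwall's inequality. Where you go beyond the paper is in treating the idiosyncratic martingale term explicitly: your direct uniform a priori bound on the $N$-system validly gives $K/N\le KK_N^2$. You should, however, take the mutual independence of $W^l,W^{f,1/N},\dots,W^{f,N/N}$ as part of the finite-player model, as the paper implicitly does in Proposition~\ref{Prp:5.7}. It cannot be derived from essential pairwise independence, which gives no information about the fixed null set of indices $\{j/N\}$ and in any case yields only pairwise, not mutual, independence.
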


\begin{proof}
	Note that
	\begin{equation*}
		\frac{1}{N}\sum_{j=1}^N w^{N,j}\bar{\varphi}_t^{N,j}=\int_I w^{[N],u}\hat{\varphi}_t^u\lambda(\mathrm{d}u),\qquad \frac{1}{N}\sum_{j=1}^N w^{N,j}\bar{z}_t^{N,j}=\int_I w^{[N],u}\hat{z}_t^u\lambda(\mathrm{d}u).
	\end{equation*}
	By Proposition \ref{Prp:5.1}, we obtain
	\begin{gather*}
		\mathbb{E}\biggl[\int_0^T \biggl( \frac{1}{N}\sum_{j=1}^N w^{N,j}-1 \biggr)^2(\hat{b}_t^{f,0}+\hat{b}_t^{f,5}\alpha_t^l)^2\mathrm{d}t\biggr]\le KK_N^2,\\
		\mathbb{E}\biggl[ \int_0^t \biggl| b_s^{f,3}\frac{1}{N}\sum_{j=1}^N w^{N,j}(\hat{\upsilon}_s^{N,j}-\bar{z}_s^{N,j}) \biggr|^2\mathrm{d}s \biggr]\le K\max_{i=1,\cdots,N}\mathbb{E}\biggl[\sup_{0\le s\le t}|\hat{\upsilon}_s^{N,i}-\bar{z}_s^{N,i}|^2\biggr].
	\end{gather*}
	Using SDE estimates gives
	\begin{equation*}
    \begin{aligned}
		\mathbb{E}\biggl[ \sup_{0\le s\le t}|\hat{\upsilon}_s^{N,0}-\hat{M}_s^f|^2 \biggr]&\le K\mathbb{E}\biggl[ \sup_{0\le s\le t}|\hat{X}_s^{N,0}-\hat{X}_s^l|^2 \biggr]\\
		&\qquad +K\max_{i=1,\cdots,N}\mathbb{E}\biggl[ \sup_{0\le s\le t}|\hat{\upsilon}_s^{N,j}-\bar{z}_s^{N,j}|^2 \biggr]+KK_N^2.
	\end{aligned}
    \end{equation*}
	Applying the classical SDE estimate and Gronwall inequality to the leader state equation yields the desired estimate.
\end{proof}

\begin{prp}[Uniform boundedness rough estimate]\label{Prp:5.4}
	Assume that Assumptions \ref{Ass:POC1} and \ref{Ass:POC2} hold. For any $t\in[0,T]$, the estimate
	\begin{equation*}
    \begin{aligned}
		&\max_{i=1,\cdots,N}\mathbb{E}\biggl[ \sup_{0\le s\le t}\big\{|\hat{X}_s^{N,i}|^2+|\hat{\upsilon}_s^{N,i}|^2+|\hat{\alpha}_s^{N,i}|^2+|\hat{X}_s^{N,0}|^2+|\hat{\upsilon}_s^{N,0}|^2\big\} \biggr]\\
		&\le K\max_{i=1,\cdots,N}\mathbb{E}\biggl[ \sup_{0\le s\le t}|\hat{\upsilon}_s^{N,i}-\bar{z}_s^{N,i}|^2 \biggr]+K
	\end{aligned}
    \end{equation*}
	holds.
\end{prp}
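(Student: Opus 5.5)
The plan is to treat the closed-loop $N$-system as a finite-dimensional linear SDE in which the gap terms $\hat{\upsilon}^{N,i}-\bar{z}^{N,i}$ act as exogenous inputs. These inputs are kept on the right-hand side and never estimated; this is exactly the form of the bound in Proposition \ref{Prp:5.4}.

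\textbf{Step 1: follower states.} Fix $i$ and use the closed-loop equation for $\hat{X}^{N,i}$ displayed above. Its drift and diffusion are affine in
\begin{equation*}
\hat{X}^{N,i},\quad \bar{\varphi}^{N,i},\quad \bar{z}^{N,i},\quad \hat{X}^{N,0},\quad \alpha^l,\quad \hat{\upsilon}^{N,i}-\bar{z}^{N,i}.
\end{equation*}
All coefficients are bounded uniformly in $(t,i,N)$ by Assumption \ref{Ass:LQ} and boundedness of $\hat{R}^{-1}$ and $P^f$. The initial data $x_0^{N,i}$ are bounded uniformly, since $x_0^f$ is bounded and Assumption \ref{Ass:POC2} holds. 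The Burkholder--Davis--Gundy inequality and the standard SDE estimate then give
\begin{equation*}
\mathbb{E}\Bigl[\sup_{s\le t}|\hat{X}_s^{N,i}|^2\Bigr]\le K+K\,\mathbb{E}\Bigl[\int_0^t\bigl\{|\bar{\varphi}_s^{N,i}|^2+|\bar{z}_s^{N,i}|^2+|\hat{X}_s^{N,0}|^2+|\alpha_s^l|^2+|\hat{\upsilon}_s^{N,i}-\bar{z}_s^{N,i}|^2\bigr\}\mathrm{d}s\Bigr].
\end{equation*}
By Proposition \ref{Prp:5.2}, the $\bar{\varphi}$ and $\bar{z}$ terms contribute at most $K$. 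The $\alpha^l$ term is bounded by a constant because $\alpha^l$ is fixed. This is the one place where the $\alpha^l$-dependence of $K$ enters.

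\textbf{Step 2: leader and aggregate terms.} For the leader, combine Proposition \ref{Prp:5.3} with Proposition \ref{Prp:5.2}:
\begin{equation*}
\mathbb{E}\Bigl[\sup_{s\le t}\bigl\{|\hat{X}_s^{N,0}|^2+|\hat{\upsilon}_s^{N,0}|^2\bigr\}\Bigr]\le 2\,\mathbb{E}\Bigl[\sup_{s\le t}\bigl\{|\hat{X}_s^l|^2+|\hat{M}_s^f|^2\bigr\}\Bigr]+2\times(\text{Prop.\ \ref{Prp:5.3} bound})\le K+K\max_i\mathbb{E}\Bigl[\sup_{s\le t}|\hat{\upsilon}_s^{N,i}-\bar{z}_s^{N,i}|^2\Bigr].
\end{equation*}
Here $K_N\le K$ is used. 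Substituting this into Step 1 controls $\max_i\mathbb{E}[\sup_{s\le t}|\hat{X}_s^{N,i}|^2]$ by the desired right-hand side; Gronwall is not even needed, since the leader term is already handled.

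For the follower aggregates, write $\hat{\upsilon}^{N,i}=\bar{z}^{N,i}+(\hat{\upsilon}^{N,i}-\bar{z}^{N,i})$. Then $\mathbb{E}[\sup_{s\le t}|\hat{\upsilon}_s^{N,i}|^2]$ is at most twice the gap term plus $K$, again by Proposition \ref{Prp:5.2}. This is simpler than expanding $\hat{\upsilon}^{N,i}$ as $\frac1N\sum_j g^{N,i,j}\hat{X}^{N,j}$, though that expansion also works via Proposition \ref{Prp:5.1}(1).

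\textbf{Step 3: controls.} The control is the affine feedback of Theorem \ref{Thm:4.2}:
\begin{equation*}
\hat{\alpha}^{N,i}=\hat{\alpha}^f\bigl(\hat{X}^{N,i},\bar{z}^{N,i},\bar{\varphi}^{N,i},\cdot,\alpha^l\bigr),
\end{equation*}
with coefficients bounded uniformly in $t$. Its $\sup_s$ second moment is therefore bounded by the previously controlled quantities. For the $\alpha^l$ argument with a supremum inside, interpret the estimate for $\alpha^l$ in the appropriate norm, or replace $\sup_s|\hat{\alpha}^{N,i}_s|^2$ by its time integral.

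\textbf{Expected obstacle.} The main difficulty is bookkeeping rather than analysis. One point is ensuring the constants are uniform in $i$ and $N$; this relies on Proposition \ref{Prp:5.1}(1) and Proposition \ref{Prp:5.2}. The other is the mismatch between the leader argument $\hat{X}^{N,0}$ in the dynamics and $\hat{X}^l$ in the feedback. That mismatch is absorbed by Proposition \ref{Prp:5.3}.
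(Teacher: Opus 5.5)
Your proposal is correct and follows essentially the paper's own argument: Proposition~\ref{Prp:5.2} plus the triangle inequality for $\hat{\upsilon}^{N,i}$, Propositions~\ref{Prp:5.2} and~\ref{Prp:5.3} for $(\hat{X}^{N,0},\hat{\upsilon}^{N,0})$, the linear SDE estimate for $\hat{X}^{N,i}$, and the affine feedback for $\hat{\alpha}^{N,i}$. The one difference is that you work with the closed-loop equation directly, whereas the paper first bounds $\hat{\alpha}^{N,i}$ in terms of $\hat{X}^{N,i}$ and then applies Gronwall. Your caveat on $\sup_s|\alpha_s^l|^2$ is justified, since the paper's control bound tacitly requires $\mathbb{E}[\sup_s|\alpha_s^l|^2]<\infty$; be aware, though, that your remark that no further Gronwall step is needed takes Proposition~\ref{Prp:5.3} at face value, while its proof actually needs a bound on $\max_j\mathbb{E}\int_0^t|\hat{X}_s^{N,j}|^2\mathrm{d}s$ to control the $\frac{1}{N}$-weighted diffusion of $\hat{\upsilon}^{N,0}$, so the fully rigorous route is a joint Gronwall argument over the follower and leader quantities.
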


\begin{proof}
	By Proposition \ref{Prp:5.2},
	\begin{equation*}
		\max_{i=1,\cdots,N}\mathbb{E}\biggl[ \sup_{0\le s\le t}|\hat{\upsilon}_s^{N,i}|^2 \biggr]\le \max_{i=1,\cdots,N}\mathbb{E}\biggl[ \sup_{0\le s\le t}|\hat{\upsilon}_s^{N,i}-\bar{z}_s^{N,i}|^2 \biggr]+K.
	\end{equation*}
	By Propositions \ref{Prp:5.2} and \ref{Prp:5.3},
	\begin{equation*}
		\max_{i=1,\cdots,N}\mathbb{E}\biggl[ \sup_{0\le s\le t}\big\{ |\hat{X}_s^{N,0}|^2+|\hat{\upsilon}_s^{N,0}|^2 \big\} \biggr]\le K\max_{i=1,\cdots,N}\mathbb{E}\biggl[ \sup_{0\le s\le t}|\hat{\upsilon}_s^{N,i}-\bar{z}_s^{N,i}|^2 \biggr]+K.
	\end{equation*}
	From the definition of $\hat{\alpha}^{N,i}$,
	\begin{equation*}
    \begin{aligned}
		&\max_{i=1,\cdots,N}\mathbb{E}\biggl[ \sup_{0\le s\le t}|\hat{\alpha}_s^{N,i}|^2 \biggr]
\le K\max_{i=1,\cdots,N}\mathbb{E}\biggl[ \sup_{0\le s\le t}|\hat{X}_s^{N,i}|^2 \biggr]\\
        &\qquad + K\max_{i=1,\cdots,N}\mathbb{E}\biggl[ \sup_{0\le s\le t}|\hat{\upsilon}_s^{N,i}-\bar{z}_s^{N,i}|^2 \biggr]+K.
	\end{aligned}
    \end{equation*}
	Substituting this into the equation of $\hat{X}^{N,i}$ and using the classical SDE estimate and Gronwall inequality,
	\begin{equation*}
		\max_{i=1,\cdots,N}\mathbb{E}\biggl[ \sup_{0\le s\le t}|\hat{X}_s^{N,i}|^2 \biggr]\le K\max_{i=1,\cdots,N}\mathbb{E}\biggl[ \sup_{0\le s\le t}|\hat{\upsilon}_s^{N,i}-\bar{z}_s^{N,i}|^2 \biggr]+K,
	\end{equation*}
	and the result follows.
\end{proof}

From the definition of $\hat{\upsilon}^N$, we have
\begin{equation*}
	\begin{cases}
		\mathrm{d}\hat{\upsilon}_t^{N,i}=\big\{\frac{1}{N}\sum_{j=1}^N g^{N,i,j}(\hat{b}_t^{f,0}+\hat{b}_t^{f,4}\hat{X}_t^{N,0}+\hat{b}_t^{f,5}\alpha_t^{l})+\hat{b}_t^{f,1}\hat{\upsilon}_t^{N,i}\\
		\qquad\qquad +\frac{1}{N}\sum_{j=1}^Ng^{N,i,j}\big(\hat{b}_t^{f,2}\bar{\varphi}_t^{N,j}+\hat{b}_t^{f,3}\bar{z}_t^{N,j}+b_t^{f,3}(\hat{\upsilon}_t^{N,j}-\bar{z}_t^{N,j})\big)\big\}\mathrm{d}t\\
		\qquad\qquad +\frac{1}{N}\sum_{j=1}^N g^{N,i,j}\big(\hat{\sigma}_t^{f,0}+\hat{\sigma}_t^{f,1}\hat{X}_t^{N,j}+\hat{\sigma}_t^{f,2}\bar{\varphi}_t^{N,j}+\hat{\sigma}_t^{f,3}\bar{z}_t^{N,j}\\
		\qquad\qquad +\hat{\sigma}_t^{f,4}\hat{X}_t^{N,0}+\hat{\sigma}_t^{f,5}\alpha_t^l+\sigma_t^{f,3}(\hat{\upsilon}_t^{N,j}-\bar{z}_t^{N,j})\big)\mathrm{d}W_t^{f,\frac{j}{N}},\\
		\hat{\upsilon}_0^{N,i}=\frac{1}{N}\sum_{j=1}^N g^{N,i,j}x_0^{N,j}.
	\end{cases}
\end{equation*}
By (\ref{Eq:follower_consistent}), $(\bar{z}^{N},\bar{\varphi}^N,\bar{q}^{N})$ satisfies
\begin{equation*}
	\begin{cases}
		\mathrm{d}\bar{z}_t^{N,i}=\big\{\hat{b}_t^{f,1}\bar{z}_t^{N,i}+\hat{b}_t^{f,2}N\int_{\left(\frac{i-1}{N},\frac{i}{N}\right]}G\hat{\varphi}_t^{u}\lambda(\mathrm{d}u)+\hat{b}_t^{f,3}N\int_{\left(\frac{i-1}{N},\frac{i}{N}\right]}G\hat{z}_t^{u}\lambda(\mathrm{d}u)\\
		\qquad\qquad\quad  +N\int_{\left(\frac{i-1}{N},\frac{i}{N}\right]}\|G(u,\cdot)\|_1\lambda(\mathrm{d}u)(\hat{b}_t^{f,0}+\hat{b}_t^{f,4}\hat{X}_t^l+\hat{b}_t^{f,5}\hat{\alpha}_t^l)\big\}\mathrm{d}t,\\
		\mathrm{d}\bar{\varphi}_t^{N,i}=\big\{\hat{g}_t^{f,1}\bar{\varphi}_t^{N,i}+\hat{g}_t^{f,2}\bar{z}_t^{N,i}+(\hat{g}_t^{f,0}+\hat{g}_t^{f,3}\hat{X}_t^l+\hat{g}_t^{f,4}\hat{\alpha}_t^l)\big\}\mathrm{d}t+\bar{q}_t^{N,i}\mathrm{d}W_t^l,\\
		\bar{z}_0^{N,i}=N\int_{\left(\frac{i-1}{N},\frac{i}{N}\right]} Gx_0^{f,u}\lambda(\mathrm{d}u),\quad \bar{\varphi}_T^{N,i}=-G^f\big(g^{f,1}\bar{z}_T^{N,i}+g^{f,2}\hat{X}_T^l\big).
	\end{cases}
\end{equation*}
Set $\varDelta^N=\hat{\upsilon}^N-\bar{z}^N$. Then $\varDelta^N$ satisfies
\begin{equation}\label{eq31}
	\begin{cases}
		\mathrm{d}\varDelta^N_t=(A_t\varDelta_t^N+B_t)\mathrm{d}t+\varSigma_t\mathrm{d}W_t^N,\\
		\varDelta_0^N=\delta_0^N,
	\end{cases}
\end{equation}
where the $N\times N$ and $N\times 1$ block coefficient matrices are
\begin{equation*}
	\begin{cases}
		A_t=\hat{b}_t^{f,1}I_N+\frac{1}{N}A_t^{[N]},\quad A_t^{[N]}=\big(A_t^{[N],i,j}\big)_{N\times N},\quad A_t^{[N],i,j}=g^{N,i,j}b_t^{f,3},\\
		B_t=B_t^0+B_t^1+B_t^2+B_t^3+B_t^4,\\
		B_t^{0,i}=\hat{b}_t^{f,0}\big(\frac{1}{N}\sum_{j=1}^N g^{N,i,j}-N\int_{\left(\frac{i-1}{N},\frac{i}{N}\right]}\|G(u,\cdot)\|_1\lambda(\mathrm{d}u)\big),\\
		B_t^{1,i}=\hat{b}_t^{f,2}\big( \frac{1}{N}\sum_{j=1}^N g^{N,i,j}\bar{\varphi}_t^{N,j}-N\int_{\left(\frac{i-1}{N},\frac{i}{N}\right]}G\hat{\varphi}_t^u\lambda(\mathrm{d}u) \big),\\
		B_t^{2,i}=\hat{b}_t^{f,3}\big( \frac{1}{N}\sum_{j=1}^N g^{N,i,j}\bar{z}_t^{N,j}-N\int_{\left(\frac{i-1}{N},\frac{i}{N}\right]}G\hat{z}_t^u\lambda(\mathrm{d}u) \big),\\
		B_t^{3,i}=\hat{b}_t^{f,4}\big( \frac{1}{N}\sum_{j=1}^Ng^{N,i,j}\hat{X}_t^{N,0}-N\int_{\left(\frac{i-1}{N},\frac{i}{N}\right]}\|G(u,\cdot)\|_1\lambda(\mathrm{d}u)\hat{X}_t^l \big),\\
		B_t^{4,i}=\hat{b}_t^{f,5}\big( \frac{1}{N}\sum_{j=1}^N g^{N,i,j}\hat{\alpha}_t^{N,0}-N\int_{\left(\frac{i-1}{N},\frac{i}{N}\right]}\|G(u,\cdot)\|_1\lambda(\mathrm{d}u)\hat{\alpha}_t^l \big),\\
		\varSigma_t=\big(\varSigma_t^{i,j}\big)_{N\times N},\\
		\varSigma_t^{i,j}=\frac{1}{N}g^{N,i,j}\big(\hat{\sigma}_t^{f,0}+\hat{\sigma}_t^{f,1}\hat{X}_t^{N,j}+\hat{\sigma}_t^{f,2}\bar{\varphi}_t^{N,j}+\hat{\sigma}_t^{f,3}\bar{z}_t^{N,j}+\hat{\sigma}_t^{f,4}\hat{X}_t^{N,0}+\hat{\sigma}_t^{f,5}\alpha_t^l\\
		\qquad\qquad+\sigma_t^{f,3}(\hat{\upsilon}_t^{N,j}-\bar{z}_t^{N,j})\big),\\
		\delta_0^{N,i}=\frac{1}{N}\sum_{j=1}^N g^{N,i,j}x_0^{N,j}-N\int_{\left(\frac{i-1}{N},\frac{i}{N}\right]}Gx_0^{f,u}\lambda(\mathrm{d}u).
	\end{cases}
\end{equation*}

Take the auxiliary process
\begin{equation*}
	\begin{cases}
		\mathrm{d}\varPhi_t=-\varPhi_t\big(\hat{b}_t^{f,1}I_N+\frac{1}{N}A_t^{[N]}\big)\mathrm{d}t,\\
		\varPhi_0=I.
	\end{cases}
\end{equation*}
Then the linear SDE (\ref{eq31}) has the representation
\begin{equation}\label{eq32}
	\varDelta_t^N=\varPhi_t^{-1}\varDelta_0^N+\varPhi_t^{-1}\int_0^t \varPhi_sB_s\mathrm{d}s+\varPhi_t^{-1}\int_0^t \varPhi_s\varSigma_s\mathrm{d}W_s^N,
\end{equation}
and the solution of the auxiliary process is
\begin{equation*}
	\varPhi_t=\exp\biggl\{-\int_0^t\hat{b}_s^{f,1}\mathrm{d}s\biggr\}\exp\biggl\{ -\frac{1}{N}\int_0^t A_s^{[N]}\mathrm{d}s \biggr\}.
\end{equation*}

\begin{prp}
	There exists a constant $K>0$ independent of $N$ such that, for any positive integer $N$ and $t\in[0,T]$,
	\begin{equation*}
		\biggl\| \exp\biggl\{ -\frac{1}{N}\int_0^t A_s^{[N]}\mathrm{d}s \biggr\} \biggr\|_{\infty}\le K,\quad \biggl\| \exp\biggl\{ \frac{1}{N}\int_0^t A_s^{[N]}\mathrm{d}s \biggr\} \biggr\|_{\infty}\le K.
	\end{equation*}
	Furthermore, there exists a constant $K>0$ independent of $N$ such that, for any positive integer $N$ and $t\in [0,T]$,
	\begin{equation*}
		\|\varPhi_t\|_\infty\le K,\quad \|\varPhi_t^{-1}\|_\infty\le K.
	\end{equation*}
\end{prp}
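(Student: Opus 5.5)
The plan is to work with the operator norm induced on block vectors $x=(x^1,\dots,x^N)\in(\mathbb{R}^{n_1})^N$ by $\max_{i}|x^i|$. Concretely, for an $N\times N$ block matrix $M=(M^{i,j})$ we take $\|M\|_\infty=\max_i\sum_j|M^{i,j}|$, where $|\cdot|$ is the Euclidean operator norm of each $n_1\times n_1$ block. I will read the norm in the statement this way. It is submultiplicative and satisfies $\|e^{M}\|_\infty\le e^{\|M\|_\infty}$. The whole argument then reduces to a uniform-in-$N$ bound on $\|\frac1N A_s^{[N]}\|_\infty$.

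\textbf{Step 1 (row-sum bound).} By Assumption \ref{Ass:03} the block $b_s^{f,3}$ does not depend on the follower index, and by Assumption \ref{Ass:LQ} it is uniformly bounded, say $|b_s^{f,3}|\le K_b$. Hence
\begin{equation*}
\Big\|\tfrac1N A_s^{[N]}\Big\|_\infty=\max_{i}\frac1N\sum_{j=1}^N g^{N,i,j}|b_s^{f,3}|\le K_b\sup_{u\in I}\|G_N(u,\cdot)\|_1\le K_bK .
\end{equation*}
The last inequality uses Proposition \ref{Prp:5.1}(1). The constant $K$ there is independent of $N$ because $K_N\to0$, so $\sup_N K_N<\infty$. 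It follows that
\begin{equation*}
\Big\|\tfrac1N\int_0^tA_s^{[N]}\,\mathrm{d}s\Big\|_\infty\le TK_bK ,
\end{equation*}
and therefore both $\|\exp\{\pm\frac1N\int_0^tA_s^{[N]}\,\mathrm{d}s\}\|_\infty\le e^{TK_bK}$. This gives the first pair of bounds.

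\textbf{Step 2 (bounds on $\varPhi$ and $\varPhi^{-1}$).} I would not rely on the product-of-exponentials formula for $\varPhi_t$, since $A_s^{[N]}$ at different times need not commute. Instead I work directly with the defining ODE. Write $A_t=\hat b_t^{f,1}I_N+\frac1NA_t^{[N]}$, so that $\sup_{t,N}\|A_t\|_\infty\le \sup_t|\hat b_t^{f,1}|+K_bK=:K_A$. Here $\hat b^{f,1}$ is bounded because $P^f$ is bounded and $\hat R^f$ has a bounded inverse. From $\varPhi_t=I-\int_0^t\varPhi_sA_s\,\mathrm{d}s$ we get
\begin{equation*}
\|\varPhi_t\|_\infty\le 1+K_A\int_0^t\|\varPhi_s\|_\infty\,\mathrm{d}s ,
\end{equation*}
and Gronwall's inequality gives $\|\varPhi_t\|_\infty\le e^{K_AT}$. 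For the inverse, $\varPhi_t^{-1}$ satisfies $\mathrm{d}\varPhi_t^{-1}=A_t\varPhi_t^{-1}\,\mathrm{d}t$ with $\varPhi_0^{-1}=I$. This follows by differentiating $\varPhi_t\varPhi_t^{-1}=I$, and existence of the inverse follows from Liouville's formula for the determinant. The same Gronwall argument then yields $\|\varPhi_t^{-1}\|_\infty\le e^{K_AT}$.

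When the coefficients happen to commute in time, Step 2 is consistent with the displayed formula for $\varPhi_t$. The only genuinely $N$-dependent ingredient is the row-sum bound in Step 1. I expect this to be the main point needing care, for two reasons. First, one must use the $\max$-row-sum norm rather than a Frobenius or spectral norm: a Frobenius norm of an $N\times N$ matrix would grow with $N$, and with the $\ell^\infty$-type norm the factor $\frac1N$ exactly cancels the $N$ terms. Second, one must make sure that the constant coming from Proposition \ref{Prp:5.1} is uniform in $N$.
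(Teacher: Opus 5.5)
Your proof is correct, and it is more self-contained than the paper's. The paper simply invokes Lemma 4.11 of Xu--Gou--Huang--Gao (2025) for the bounds on $\exp\{\pm\frac1N\int_0^tA_s^{[N]}\,\mathrm{d}s\}$. It then implicitly obtains the bounds on $\varPhi_t,\varPhi_t^{-1}$ by multiplying factor bounds in the displayed formula $\varPhi_t=\exp\{-\int_0^t\hat b_s^{f,1}\mathrm{d}s\}\exp\{-\frac1N\int_0^tA_s^{[N]}\mathrm{d}s\}$.

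Your Step 1 supplies what the citation leaves implicit: a uniform row-sum bound plus $\|e^M\|_\infty\le e^{\|M\|_\infty}$ for a submultiplicative norm. Your choice of the max-row-sum norm is exactly the one the paper needs later in the proof of Proposition \ref{Prp:5.7}. There it uses $\max_i|(Mx)_i|\le\|M\|_\infty\|x\|_\infty$ and $|(M\varSigma)_{ij}|\le\|M\|_\infty\max_{k,l}|\varSigma^{k,l}|$.

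The genuine difference is Step 2. You apply Gronwall directly to $\mathrm{d}\varPhi_t=-\varPhi_tA_t\,\mathrm{d}t$ and $\mathrm{d}\varPhi_t^{-1}=A_t\varPhi_t^{-1}\,\mathrm{d}t$, so you never use the product formula. That formula is only correct when $\hat b_t^{f,1}$ and $b_s^{f,3}$ commute for all $s,t$, for instance when $n_1=1$. For the time-dependent matrix coefficients the paper allows, that product of exponentials need not solve the ODE. Your route therefore actually covers the stated generality. The paper's route only buys an explicit representation of $\varPhi$, which is not needed anywhere.

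A minor remark: writing $\frac1N\sum_j|g^{N,i,j}b_s^{f,3}|=\frac1N\sum_jg^{N,i,j}|b_s^{f,3}|$ uses $g^{N,i,j}\ge0$. Since $G_N$ is a $[0,1]$-valued graphon, you even get $\frac1N\sum_jg^{N,i,j}\le1$ directly, without going through Proposition \ref{Prp:5.1}.
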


\begin{proof}
	The result follows directly from Lemma 4.11 in \cite{Xu-Gou-Huang-Gao-2025}.
\end{proof}

\begin{prp}[Coefficient estimate]\label{Prp:5.6}
	Assume that Assumptions \ref{Ass:POC1} and \ref{Ass:POC2} hold. For any $t\in[0,T]$, the estimate
	\begin{equation*}
		\mathbb{E}\biggl[ \int_0^t \|B_s\|_\infty^2\mathrm{d}s \biggr]\le KK_N^2+\int_0^t \max_{i=1,\cdots,N}\mathbb{E}\biggl[ \sup_{0\le r\le s}|\hat{\upsilon}_r^{N,i}-\bar{z}_r^{N,i}|^2 \biggr]\mathrm{d}s
	\end{equation*}
	holds.
\end{prp}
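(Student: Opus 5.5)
The estimate will be obtained by treating the five pieces $B^0,\dots,B^4$ of $B$ separately. Since $\|B_s\|_\infty^2\le 5\sum_{k=0}^4\max_i|B_s^{k,i}|^2$, it suffices to bound $\mathbb{E}\int_0^t\max_i|B_s^{k,i}|^2\mathrm{d}s$ for each $k$. Replacing $\max_i$ inside the expectation by a maximum outside it is harmless in every case except where a genuine stochastic term appears. For the deterministic-in-$i$ pieces this causes no trouble, because the random factors do not depend on $i$. The coefficient matrices $\hat b^{f,k}$ are bounded, since $P^f$ and $(\hat R^f)^{-1}$ are bounded.

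\emph{Term $B^0$.} By Proposition \ref{Prp:5.1}(1), $\max_i\bigl|\frac1N\sum_j g^{N,i,j}-N\int_{((i-1)/N,i/N]}\|G(u,\cdot)\|_1\lambda(\mathrm{d}u)\bigr|\le K_N$. Hence $\int_0^t\|B^0_s\|^2_\infty\mathrm{d}s\le KK_N^2$.

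\emph{Terms $B^1$ and $B^2$.} Write $\frac1N\sum_j g^{N,i,j}\bar\varphi^{N,j}_s=N\int_{((i-1)/N,i/N]}G_N\hat\varphi_s^u\lambda(\mathrm{d}u)$, using that $G_N$ is a step function and that $\bar\varphi^{N,j}$ is the cell average of $\hat\varphi$. Then
\[
B^{1,i}_s=\hat b^{f,2}_sN\int_{((i-1)/N,i/N]}(G_N-G)\hat\varphi^u_s\lambda(\mathrm{d}u).
\]
Since $G_N-G$ acts on $u$-profiles, and the key point is that $\hat\varphi_s$ and $\hat z_s$ are uniformly bounded in $u$ in $L^2(P)$, the integral can be estimated through $\sup_u|\hat\varphi_s^u|$. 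Precisely, $|N\int_{\text{cell }i}(G_N-G)X^u\mathrm{d}u|\le N\int_{\text{cell }i}\|G(u,\cdot)-G_N(u,\cdot)\|_1\mathrm{d}u\cdot\sup_v|X^v|\le K_N\sup_v|X^v|$. Thus $\max_i|B^{1,i}_s|\le KK_N\sup_v|\hat\varphi^v_s|$, where $\hat\varphi$ is $\mathcal F^l$-adapted. If only $\sup_v\mathbb{E}|\hat\varphi^v_s|^2$ is available, I will instead use the second-moment bound of Proposition \ref{Prp:5.1}(2) per $i$ and accept the maximum outside the expectation. The same applies to $B^2$ with $\hat z$. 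Proposition \ref{Prp:5.2} (membership in $\mathbb{S}^{\infty,2}$) then gives $\int_0^t\mathbb{E}\|B^1_s\|^2_\infty+\mathbb{E}\|B^2_s\|^2_\infty\,\mathrm{d}s\le KK_N^2$.

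\emph{Term $B^3$.} Add and subtract:
\[
B^{3,i}=\hat b^{f,4}\Bigl[\tfrac1N\textstyle\sum_jg^{N,i,j}(\hat X^{N,0}-\hat X^l)+\bigl(\tfrac1N\sum_j g^{N,i,j}-N\int_{\text{cell }i}\|G(u,\cdot)\|_1\bigr)\hat X^l\Bigr].
\]
The first part is bounded by $K|\hat X^{N,0}_s-\hat X^l_s|$, using Proposition \ref{Prp:5.1}(1). The second is bounded by $K_N|\hat X^l_s|$. Proposition \ref{Prp:5.3} then gives $\mathbb{E}|\hat X_s^{N,0}-\hat X^l_s|^2\le K\max_i\mathbb{E}\sup_{r\le s}|\hat\upsilon^{N,i}_r-\bar z^{N,i}_r|^2+KK_N^2$, and Proposition \ref{Prp:5.2} bounds $\mathbb{E}\sup|\hat X^l|^2$. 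Integrating in $s$ yields the $\int_0^t\max_i\mathbb{E}\sup_{r\le s}|\varDelta_r^{N,i}|^2\mathrm{d}s$ term. Its constant is absorbed into $K$: the statement as written has coefficient $1$, but only a constant multiple matters for the later Gronwall argument.

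\emph{Term $B^4$.} Here $\alpha^{N,0}=\alpha^l$ is fixed in this subsection, so $\hat\alpha^{N,0}=\hat\alpha^l=\alpha^l\in\mathcal U^l$. The difference therefore reduces to $(\frac1N\sum_jg^{N,i,j}-N\int\|G\|_1)\alpha^l_s$, which gives a bound of $KK_N^2\,\mathbb{E}\int_0^T|\alpha^l|^2$.

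\emph{Main obstacle.} The delicate step is $B^1,B^2$: commuting $\max_i$ with the expectation while only having $L^{\infty,2}$-type control of $\hat\varphi,\hat z$ in $u$. I would exploit that these processes are $\mathcal F^l$-adapted and use the pointwise operator estimate of Proposition \ref{Prp:2.1}(2) together with the cell-wise definition of $K_N$, so that the maximum over $i$ is taken deterministically before the expectation.
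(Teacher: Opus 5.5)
Your decomposition $B=B^0+\cdots+B^4$ and the tools you apply to each piece are exactly the paper's. You use Proposition~\ref{Prp:5.1}(1) for $B^0$ and $B^4$. For $B^1,B^2$ you use the step-graphon identity $\frac1N\sum_j g^{N,i,j}\bar\varphi^{N,j}_s=N\int_{((i-1)/N,i/N]}G_N\hat\varphi^u_s\lambda(\mathrm{d}u)$ followed by Proposition~\ref{Prp:5.1}(2). For $B^3$ you add and subtract and then apply Proposition~\ref{Prp:5.3}. For $B^0,B^3,B^4$ your argument is complete, since the random factors do not depend on $i$ and the bound on $\max_i$ holds pathwise. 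You are also right that the integral term carries a constant $K$ rather than $1$. The paper drops this $K$ silently when it invokes Proposition~\ref{Prp:5.3}, and it is harmless for the Gronwall step in Proposition~\ref{Prp:5.7}.

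The genuine gap is in $B^1,B^2$, and it is the one you flagged without closing. Your fallback yields $\int_0^t\max_i\mathbb{E}|B^{1,i}_s|^2\,\mathrm{d}s\le KK_N^2$. The statement, however, concerns $\mathbb{E}\|B^1_s\|_\infty^2=\mathbb{E}\max_i|B^{1,i}_s|^2$, which is in general strictly larger. So your conclusion $\int_0^t\mathbb{E}\|B^1_s\|^2_\infty\,\mathrm{d}s\le KK_N^2$ ``by Proposition~\ref{Prp:5.2}'' does not follow. The pathwise route through Proposition~\ref{Prp:2.1}(2) needs $\mathbb{E}\sup_u|\hat\varphi^u_s|^2<\infty$. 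Membership in $\mathbb{S}^{\infty,2}_{\boxtimes^l}$ does not supply this, since it controls $\sup_u\mathbb{E}$, not $\mathbb{E}\sup_u$, and $\mathcal{F}^l$-adaptedness does not supply it either. The paper's proof has the same defect: it writes $\mathbb{E}[\|B_t^1\|_\infty^2]=\max_i\mathbb{E}[|B_t^{1,i}|^2]$, where only ``$\ge$'' is true.

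One repair is to derive the missing uniform bound from the LQ structure. By Proposition~\ref{Prp:2.1}(1), $|G\hat\varphi^u_s|\le\int_I|\hat\varphi^v_s|\lambda(\mathrm{d}v)$ and $|G\hat z^u_s|\le\int_I|\hat z^v_s|\lambda(\mathrm{d}v)$ for every $u$. The linear ODE for $\hat z^u$ in (\ref{Eq:follower_consistent}) then gives $\sup_{t,u}|\hat z^u_t|\le K(1+\xi)$, with $\xi=\int_0^T\{\int_I(|\hat\varphi^v_s|+|\hat z^v_s|)\lambda(\mathrm{d}v)+|\hat X^l_s|+|\alpha^l_s|\}\mathrm{d}s\in L^2(P)$. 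The conditional-expectation representation of the $W^l$-driven linear BSDE for $\hat\varphi^u$ gives $|\hat\varphi^u_t|\le K\mathbb{E}[\eta|\mathcal{F}^l_t]$ for some $\eta\in L^2(P)$ not depending on $u$. Doob's inequality then yields $\mathbb{E}\sup_t\operatorname{ess\,sup}_u|\hat\varphi^u_t|^2<\infty$. With that, your bound $\max_i|B^{1,i}_s|\le KK_N\operatorname{ess\,sup}_v|\hat\varphi^v_s|$ proves the statement as written.

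Alternatively, prove only the version with $\max_i$ outside the expectation, and note that this is all Proposition~\ref{Prp:5.7} actually uses. The propagator $\varPhi_t^{-1}\varPhi_s$ is deterministic, so a weighted Cauchy--Schwarz inequality gives $\mathbb{E}|(\varPhi_t^{-1}\varPhi_sB_s)_i|^2\le\|\varPhi_t^{-1}\varPhi_s\|_\infty^2\max_j\mathbb{E}|B^j_s|^2$.
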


\begin{proof}
	By the definitions of the step graphon and the averaged states, for any $i=1,\dots,N$,
	\begin{equation*}
		B_t^{1,i}=\hat{b}_t^{f,2}\biggl( N\int_{\left(\frac{i-1}{N},\frac{i}{N}\right]}(G_N\varphi_t^{[N],u}-G\hat{\varphi}_t^u)\lambda(\mathrm{d}u) \biggr)=\hat{b}_t^{f,2}\biggl(N\int_{\left(\frac{i-1}{N},\frac{i}{N}\right]}(G_N-G)\hat{\varphi}_t^u\lambda(\mathrm{d}u)\biggr).
	\end{equation*}
	By Proposition \ref{Prp:5.1}, (2),
	\begin{equation*}
    \begin{aligned} 
		&\mathbb{E}\big[\|B_t^1\|_\infty^2\big]=\max_{i=1,\cdots,N}\mathbb{E}\big[|B_t^{1,i}|^2\big]
\le K\max_{i=1,\cdots,N}\mathbb{E}\biggl[ \biggl( N\int_{\left(\frac{i-1}{N},\frac{i}{N}\right]}(G_N-G)\hat{\varphi}_t^u\lambda(\mathrm{d}u) \biggr)^2 \biggr]\\
		&\le KK_N^2\sup_{u\in I}\mathbb{E}[|\hat{\varphi}_t^u|^2]\le KK_N^2.
	\end{aligned}
    \end{equation*}
	Similarly,
	\begin{equation*}
		\|B_t^2\|_\infty^2\le KK_N^2\sup_{u\in I}\mathbb{E}[|\hat{z}_t^u|^2]\le KK_N^2.
	\end{equation*}
	By Proposition \ref{Prp:5.1}, (1),
	\begin{equation*}
		\|B_t^0\|_\infty^2\le KK_N^2,\quad \mathbb{E}[\|B_t^4\|_\infty^2]\le KK_N^2,
	\end{equation*}
	\begin{equation*}
    \begin{aligned} 
		&\mathbb{E}\big[\|B_t^3\|_\infty^2\big]=\max_{i=1,\cdots,N}\mathbb{E}\big[|B_t^{3,i}|^2\big]\le K\biggl( \biggl( \frac{1}{N}\sum_{j=1}^N g^{N,i,j}(\hat{X}_t^{N,0}-\hat{X}_t^l) \biggr)^2+K_N^2\mathbb{E}[|\hat{X}_t^l|^2] \biggr)\\
		&\le K\mathbb{E}\big[|\hat{X}_t^{N,0}-\hat{X}_t^l|^2\big]+KK_N^2.
	\end{aligned}
    \end{equation*}
	Therefore,
	\begin{equation*}
    \begin{aligned} 
		&\mathbb{E}\biggl[ \int_0^t \|B_s\|_\infty^2\mathrm{d}s \biggr]\le KK_N^2+\mathbb{E}\biggl[ \int_0^t|\hat{X}_s^{N,0}-\hat{X}_s^l|^2\mathrm{d}s \biggr]\\
		&\le KK_N^2+\int_0^t \max_{i=1,\cdots,N}\mathbb{E}\biggl[ \sup_{0\le r\le s}|\hat{\upsilon}_r^{N,i}-\bar{z}_r^{N,i}|^2 \biggr]\mathrm{d}s.\qedhere
	\end{aligned}
    \end{equation*}
\end{proof}

\begin{prp}\label{Prp:5.7}
	Assume that Assumptions \ref{Ass:POC1} and \ref{Ass:POC2} hold. The estimate
	\begin{equation}\label{eq37}
		\max_{i=1,\cdots,N}\mathbb{E}\biggl[ \sup_{0\le t\le T}|\hat{\upsilon}_t^{N,i}-\bar{z}_t^{N,i}|^2 \biggr]\le KK_N^2
	\end{equation}
	holds.
\end{prp}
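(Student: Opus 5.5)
We start from the explicit representation (\ref{eq32}) of $\varDelta^N=\hat{\upsilon}^N-\bar{z}^N$. Set
\begin{equation*}
	f(t):=\max_{i=1,\cdots,N}\mathbb{E}\Bigl[\sup_{0\le s\le t}|\varDelta_s^{N,i}|^2\Bigr].
\end{equation*}
The goal is a Gronwall-type inequality $f(t)\le KK_N^2+K\int_0^t f(s)\,\mathrm{d}s$, where $K$ does not depend on $N$.

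Each of the three terms in (\ref{eq32}) is bounded row by row in the $\|\cdot\|_\infty$ (maximum row sum) norm. The previous proposition gives $\|\varPhi_t\|_\infty,\|\varPhi_t^{-1}\|_\infty\le K$ uniformly in $N$ and $t$.

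\emph{Initial term.} For the term $\varPhi_t^{-1}\varDelta_0^N$, write
\begin{equation*}
	\delta_0^{N,i}=N\int_{\left(\frac{i-1}{N},\frac{i}{N}\right]}\bigl(G_Nx_0^{[N],u}-Gx_0^{f,u}\bigr)\lambda(\mathrm{d}u).
\end{equation*}
Split this as $G_N(x_0^{[N]}-x_0^f)+(G_N-G)x_0^f$. By Assumption \ref{Ass:POC2} and Proposition \ref{Prp:5.1}, $\max_i|\delta_0^{N,i}|\le KK_N$, so this term contributes $KK_N^2$.

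\emph{Drift term.} For $\varPhi_t^{-1}\int_0^t\varPhi_sB_s\,\mathrm{d}s$, use Cauchy--Schwarz in time:
\begin{equation*}
	\sup_{s\le t}\Bigl|\bigl(\varPhi_t^{-1}\textstyle\int_0^t\varPhi_sB_s\,\mathrm{d}s\bigr)^i\Bigr|^2\le KT\int_0^t\|B_s\|_\infty^2\,\mathrm{d}s.
\end{equation*}
Proposition \ref{Prp:5.6} then bounds the expectation by $KK_N^2+K\int_0^t f(s)\,\mathrm{d}s$. Proposition \ref{Prp:5.3} has already been used inside Proposition \ref{Prp:5.6} to absorb the leader-state discrepancy.

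\emph{Stochastic term.} This is the main obstacle. The factor $\varPhi_t^{-1}$ depends on $t$, so $\int_0^t\varPhi_s\varSigma_s\,\mathrm{d}W_s^N$ is not directly a martingale multiplied by a constant. Since $\varPhi$ is deterministic and bounded, I would write $\varPhi_t^{-1}\int_0^t\varPhi_s\varSigma_s\,\mathrm{d}W_s^N$ and bound $\sup_t$ by $\|\varPhi^{-1}\|_\infty$ times $\sup_t$ of the martingale $\int_0^t\varPhi_s\varSigma_s\,\mathrm{d}W^N_s$. Alternatively, one can work with $\varPhi_t\varDelta_t$ directly, which is legitimate because $\varPhi$ is deterministic and invertible with uniformly bounded inverse.

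Component $i$ of the martingale is $\sum_j\int_0^t(\varPhi_s\varSigma_s)^{ij}\,\mathrm{d}W_s^{f,j/N}$. The Brownian motions $W^{f,j/N}$ are independent for distinct grid points; if needed, this is ensured by choosing the essentially pairwise independent family appropriately. Burkholder--Davis--Gundy then gives
\begin{equation*}
	\mathbb{E}\Bigl[\sup_{s\le t}\Bigl|\textstyle\sum_j\int_0^s(\varPhi\varSigma)^{ij}\,\mathrm{d}W^{f,j/N}\Bigr|^2\Bigr]\le K\,\mathbb{E}\Bigl[\int_0^t\textstyle\sum_j\bigl|(\varPhi_s\varSigma_s)^{ij}\bigr|^2\,\mathrm{d}s\Bigr].
\end{equation*}
Every entry of $\varSigma_s$ carries a factor $\frac1N g^{N,i,j}$, and $\|\varPhi_s\|_\infty\le K$. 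Using Cauchy--Schwarz on $\sum_k\varPhi^{ik}\varSigma^{kj}$, one gets
\begin{equation*}
	\sum_j|(\varPhi\varSigma)^{ij}|^2\le \frac{K}{N^2}\sum_j|\xi_s^j|^2,
\end{equation*}
where $\xi^j$ is the bracket in $\varSigma^{\cdot,j}$. Hence the stochastic term is at most
\begin{equation*}
	\frac{K}{N}\max_j\mathbb{E}\Bigl[\int_0^t|\xi_s^j|^2\,\mathrm{d}s\Bigr].
\end{equation*}
By Propositions \ref{Prp:5.2} and \ref{Prp:5.4}, this is at most $\frac{K}{N}\bigl(1+\int_0^tf(s)\,\mathrm{d}s\bigr)$. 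Since $1/N\le K_N^2$ by the definition of $K_N$ (it contains $1/\sqrt N$), this term is also bounded by $KK_N^2+K\int_0^tf$.

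\emph{Conclusion.} Collecting the three bounds gives $f(t)\le KK_N^2+K\int_0^tf(s)\,\mathrm{d}s$. The function $f$ is finite, since each $\hat{\upsilon}^{N,i}$ solves a linear SDE with square-integrable data. Gronwall's inequality then yields (\ref{eq37}).

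The delicate point is exploiting the $1/N$ scaling in $\varSigma$ together with the independence of the $W^{f,j/N}$, so that the noise contributes only $O(1/N)$ rather than $O(1)$. A secondary point is keeping every constant uniform in $N$ via the $\|\cdot\|_\infty$ bounds on $\varPhi^{\pm1}$.
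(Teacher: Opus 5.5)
Your route is the paper's: the representation (\ref{eq32}), the split into initial, drift and noise terms, Proposition \ref{Prp:5.6} for the drift, the $1/N$ scaling of $\varSigma$ for the noise, the observation $1/N\le K_N^2$, and Gronwall. On the noise term you are more careful than the paper in two ways. You use Burkholder--Davis--Gundy instead of It\^o's isometry at a fixed $t$. Your bound $|(\varPhi_s\varSigma_s)^{ij}|\le \frac{K}{N}|\xi_s^j|$ uses the column structure of $\varSigma$, which avoids the paper's exchange of $\mathbb{E}$ and $\max_{i,j}$. That bound comes from $|\sum_k\varPhi_s^{ik}g^{N,k,j}|\le K\|\varPhi_s\|_\infty$, not from Cauchy--Schwarz in $k$. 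Plain Cauchy--Schwarz would only give $\sum_j|(\varPhi_s\varSigma_s)^{ij}|^2\le\frac{K}{N}\sum_j|\xi_s^j|^2$, which loses the entire gain.

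However, one step does not go through as written: transferring the componentwise BDG bound to $\sup_t|(\varPhi_t^{-1}M_t)^i|$, where $M_t=\int_0^t\varPhi_s\varSigma_s\,\mathrm{d}W_s^N$. The inequality $|(\varPhi_t^{-1}M_t)^i|\le\|\varPhi_t^{-1}\|_\infty\max_k|M_t^k|$ leaves you with $\mathbb{E}[\sup_t\max_k|M_t^k|^2]$. BDG only controls $\max_k\mathbb{E}[\sup_t|M_t^k|^2]$, which is of order $1/N$. These two quantities are not comparable uniformly in $N$, and the crude bound $\mathbb{E}\max_k\le\sum_k\mathbb{E}$ costs exactly the factor $N$ you gained. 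Passing to $\varPhi_t\varDelta_t$ does not help, because $\varPhi_t^{-1}$ must still be applied inside the supremum. The paper is equally loose here: it never justifies the $\sup$ in the noise term.

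There are two ways to close this.

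\emph{Keeping $\varPhi$.} Since $\mathrm{d}\varPhi_t^{-1}=(\hat{b}_t^{f,1}I_N+\frac{1}{N}A_t^{[N]})\varPhi_t^{-1}\mathrm{d}t$, a Picard expansion shows that the block norms $\|(\varPhi_t^{-1})^{ik}\|$ are dominated by the entries $\bar a^{ik}:=(e^{T\bar C})^{ik}$, where $\bar C:=c\,(I_N+\frac{1}{N}(g^{N,i,k})_{i,k})$. These entries do not depend on $t$, and their row sums are bounded uniformly in $N$ by Proposition \ref{Prp:5.1}. Weighted Cauchy--Schwarz then gives $\sup_t|(\varPhi_t^{-1}M_t)^i|^2\le K\sum_k\bar a^{ik}\sup_t|M_t^k|^2$, whose expectation is at most $K\max_k\mathbb{E}[\sup_t|M_t^k|^2]$.

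\emph{Dropping $\varPhi$ (simpler).} Estimate the $i$-th component of (\ref{eq31}) directly.
By Jensen, $\mathbb{E}|\frac{1}{N}\sum_jg^{N,i,j}b_s^{f,3}\varDelta_s^{N,j}|^2\le K\max_j\mathbb{E}|\varDelta_s^{N,j}|^2$.
The single martingale $\sum_j\int_0^{\cdot}\varSigma_s^{ij}\,\mathrm{d}W_s^{f,j/N}$ has quadratic variation at most $\frac{K}{N^2}\sum_j\int_0^t|\xi_s^j|^2\mathrm{d}s$, with $\xi^j$ as in your proposal. Doob's inequality then gives $\frac{K}{N}(1+\int_0^tf(s)\,\mathrm{d}s)$.
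The drift $B^i$ is handled by Proposition \ref{Prp:5.6}.
This yields $f(t)\le KK_N^2+K\int_0^tf(s)\,\mathrm{d}s$ exactly as you intended, without the fundamental matrix.
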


\begin{proof}
	By (\ref{eq32}),
	\begin{equation*}
		(\varDelta_t^N\varDelta_t^{N\top})_{ii}\le 3\biggl( (\varPhi_t^{-1}\varDelta_0^N)_i^2+\biggl( \varPhi_t^{-1}\int_0^t \varPhi_sB_s\mathrm{d}s \biggr)_i^2+\biggl( \varPhi_t^{-1}\int_0^t \varPhi_s\varSigma_s\mathrm{d}W_s^N \biggr)_i^2 \biggr).
	\end{equation*}
	We estimate the three terms on the right-hand side separately.
	
	Estimate of the first term: by the definition of $K_N$,
	\begin{equation*}
    \begin{aligned} 
		&\mathbb{E}[\|\varDelta_0^N\|_\infty^2]=\mathbb{E}\biggl[ \max_{i=1,\cdots,N}|\varDelta_0^{N,i}|^2 \biggr]\\
		&=\mathbb{E}\biggl[ \max_{i=1,\cdots,N}\biggl| N\int_{\left(\frac{i-1}{N},\frac{i}{N}\right]}\big\{ G_N(x_0^{[N],u}-x_0^{f,u})+(G_N-G)x_0^{f,u} \big\}\lambda(\mathrm{d}u) \biggr|^2 \biggr]\\
		&\le KK_N^2+KK_N^2\sup_{u\in I}\mathbb{E}[|x_0^{f,u}|^2]\le KK_N^2,
	\end{aligned}
    \end{equation*}
	\begin{equation*}
		\max_{i=1,\cdots,N}\mathbb{E}[(\varPhi_t^{-1}\varDelta_0^N)_i^2]\le\mathbb{E}[\|\varPhi_t^{-1}\|_\infty^2\|\varDelta_0^N\|_\infty^2]\le KK_N^2.
	\end{equation*}
	
	Estimate of the second term: by Proposition \ref{Prp:5.6},
	\begin{multline*}
		\max_{i=1,\cdots,N}\mathbb{E}\biggl[\biggl( \varPhi_t^{-1}\int_0^t \varPhi_s B_s\mathrm{d}s \biggr)_i^2\biggr]\le \mathbb{E}\biggl[\biggl( \int_0^t \|\varPhi_t^{-1}\varPhi_s\|_\infty\cdot\|B_s\|_\infty\mathrm{d}s \biggr)^2\biggr]\\
		\le K\mathbb{E}\biggl[\biggl( \int_0^t \|B_s\|_\infty\mathrm{d}s \biggr)^2\biggr]\le KK_N^2+K\int_0^t \max_{i=1,\cdots,N}\mathbb{E}\biggl[ \sup_{0\le r\le s}|\hat{\upsilon}_r^{N,i}-\bar{z}_r^{N,i}|^2 \biggr]\mathrm{d}s.
	\end{multline*}
	
	Estimate of the third term: by Propositions \ref{Prp:5.2} and \ref{Prp:5.4},
	\begin{equation*}
    \begin{aligned} 
		&\max_{i,j=1,\cdots,N}\mathbb{E}\biggl[ \sup_{0\le s\le t} |\varSigma_s^{i,j}|^2 \biggr]\\
		&\le \frac{K}{N^2}\biggl\{1+\max_{i=1,\cdots,N}\mathbb{E}\biggl[\sup_{0\le s\le t}\big\{ |\hat{X}_s^{N,i}|^2+|\bar{\varphi}_s^{N,i}|^2+|\bar{z}_t^{N,i}|^2+|\hat{\upsilon}_s^{N,i}|^2+|\hat{X}_s^{N,0}|^2+|\alpha_t^l|^2 \big\}\biggr]\biggr\}\\
		&\le \frac{K}{N^2}+\frac{K}{N^2}\max_{i=1,\cdots,N}\mathbb{E}\biggl[ \sup_{0\le s\le t}|\hat{\upsilon}_s^{N,i}-\bar{z}_s^{N,i}|^2 \biggr].
	\end{aligned}
    \end{equation*}
	
	By It\^o's isometry,
	\begin{multline*}
		\mathbb{E}\biggl[ \biggl| \varPhi_t^{-1}\int_0^t\varPhi_s\varSigma_s\mathrm{d}W_s^N \biggr|_i^2 \biggr]=\mathbb{E}\biggl[ \biggl| \int_0^t(\varPhi_t^{-1}\varPhi_s\varSigma_s)_{i:}\mathrm{d}W_s^N \biggr|^2 \biggr]=\mathbb{E}\biggl[ \int_0^t\sum_{j=1}^N (\varPhi_t^{-1}\varPhi_s\varSigma_s)_{ij}^2\mathrm{d}s \biggr]\\
		\le \mathbb{E}\biggl[ \int_0^t \sum_{j=1}^N\|\varPhi_t^{-1}\varPhi_s\|_\infty^2\max_{i,j=1,\cdots,N}|\varSigma_s^{i,j}|^2\mathrm{d}s \biggr]\le KN\int_0^t\mathbb{E}\biggl[ \max_{i,j=1,\cdots,N}|\varSigma_s^{i,j}|^2\mathrm{d}s \biggr]\\
		\le KN\int_0^t\max_{i,j=1,\cdots,N}\mathbb{E}\biggl[ \sup_{0\le r\le s}|\varSigma_r^{i,j}|^2 \biggr]\mathrm{d}s\le \frac{K}{N}+\frac{K}{N}\int_0^t \max_{i=1,\cdots,N}\mathbb{E}\biggl[ \sup_{0\le r\le s}|\hat{\upsilon}_r^{N,i}-\bar{z}_r^{N,i}|^2 \biggr]\mathrm{d}s.
	\end{multline*}
	Combining the estimates,
	\begin{multline*}
		\max_{i=1,\cdots,N}\mathbb{E}\biggl[\sup_{0\le s\le t}|\hat{\upsilon}_s^{N,i}-\bar{z}_s^{N,i}|^2\biggr]=\max_{i=1,\cdots,N}\mathbb{E}\biggl[\sup_{0\le s\le t}(\varDelta_s^N\varDelta_s^{N\top})_{ii}\biggr]\\
		\le KK_N^2+K\int_0^t \max_{i=1,\cdots,N}\mathbb{E}\biggl[ \sup_{0\le r\le s}|\hat{\upsilon}_r^{N,i}-\bar{z}_r^{N,i}|^2 \biggr]\mathrm{d}s.
	\end{multline*}
	Gronwall inequality yields estimate (\ref{eq37}).
\end{proof}

\begin{prp}[Summary of the estimates obtained so far]\label{Prp:5.8}
	Assume that Assumptions \ref{Ass:POC1} and \ref{Ass:POC2} hold. The estimates
	\begin{gather*}
		\max_{i=1,\cdots,N}\mathbb{E}\biggl[ \sup_{0\le t\le T}|\hat{\upsilon}_t^{N,i}-\bar{z}_t^{N,i}|^2 \biggr]\le KK_N^2,\\
		\mathbb{E}\biggl[ \sup_{0\le t\le T}\big\{ |\hat{\upsilon}_t^{N,0}-\hat{M}_t^f|^2+|\hat{X}_t^{N,0}-\hat{X}_t^l|^2 \big\} \biggr]\le KK_N^2,\\
		\max_{i=1,\cdots,N}\mathbb{E}\biggl[ \sup_{0\le t\le T}\big\{|\hat{X}_t^{N,i}|^2+|\hat{\upsilon}_t^{N,i}|^2+|\hat{\alpha}_t^{N,i}|^2+|\hat{X}_t^{N,0}|^2+|\hat{\upsilon}_t^{N,0}|^2\big\} \biggr]\le K
	\end{gather*}
	hold.
\end{prp}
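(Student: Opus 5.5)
The statement collects three bounds, and each follows by chaining the propositions just proved, taken in order.

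The first estimate is exactly (\ref{eq37}) from Proposition \ref{Prp:5.7}, so it needs no further work.

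For the second estimate, apply Proposition \ref{Prp:5.3} at $t=T$:
\begin{equation*}
\mathbb{E}\biggl[ \sup_{0\le t\le T}\big\{ |\hat{\upsilon}_t^{N,0}-\hat{M}_t^f|^2+|\hat{X}_t^{N,0}-\hat{X}_t^l|^2 \big\} \biggr]\le K\max_{i=1,\cdots,N}\mathbb{E}\biggl[ \sup_{0\le t\le T}|\hat{\upsilon}_t^{N,i}-\bar{z}_t^{N,i}|^2 \biggr]+KK_N^2 .
\end{equation*}
Inserting the first estimate into the right-hand side gives the bound $KK_N^2$, after renaming the constant.

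For the third estimate, apply Proposition \ref{Prp:5.4} at $t=T$. Its right-hand side is
\begin{equation*}
K\max_{i}\mathbb{E}\Big[\sup_{0\le t\le T}|\hat{\upsilon}_t^{N,i}-\bar{z}_t^{N,i}|^2\Big]+K\le KK_N^2+K .
\end{equation*}
Since $K_N\to 0$ under Assumption \ref{Ass:POC2}, the sequence $(K_N)$ is bounded, say $\sup_N K_N\le K$. Hence $KK_N^2+K$ is bounded by a constant independent of $N$, which is the third estimate.

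The only point needing care is that every constant is independent of $N$ and $i$. Propositions \ref{Prp:5.1}, \ref{Prp:5.2}, \ref{Prp:5.3}, \ref{Prp:5.4} and \ref{Prp:5.7} already state their constants uniformly in $N$. The bounds on $\varPhi_t$ and $\varPhi_t^{-1}$ are also uniform in $N$. So the finitely many constants appearing in the chain can be absorbed into a single $K$.
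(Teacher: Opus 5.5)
Your proposal is correct and follows the same route as the paper: evaluate Propositions~\ref{Prp:5.3} and~\ref{Prp:5.4} at $t=T$, then insert the bound from Proposition~\ref{Prp:5.7}. For the third bound you correctly use Proposition~\ref{Prp:5.4} together with $\sup_N K_N<\infty$; the paper's one-line proof cites Proposition~\ref{Prp:5.2} there, but Proposition~\ref{Prp:5.4} is the ingredient actually needed.
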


\begin{proof}
	The result follows from Propositions \ref{Prp:5.2}, \ref{Prp:5.3}, and \ref{Prp:5.7}.
\end{proof}

Consider the $N$-auxiliary problem with system equation
\begin{equation}\label{eq38}
	\begin{cases}
		\mathrm{d}X_t^{N,i}=b_t^f(X_t^{N,i},\alpha_t^{N,i},\bar{z}_t^{N,i},X_t^{N,0},\alpha_t^{N,0})\mathrm{d}t\\
		\qquad\qquad +\sigma_t^f(X_t^{N,i},\alpha_t^{N,i},\bar{z}_t^{N,i},X_t^{N,0},\alpha_t^{N,0})\mathrm{d}W_t^{f,\frac{i}{N}},\\
		X_0^{N,i}=x_0^{N,i},
	\end{cases}
\end{equation}
and cost functional of player $i$
\begin{multline}\label{eq39}
	J^{*N,i}(\alpha^{N,i};\alpha^{N,-i},\alpha^{N,0})\\
	=\mathbb{E}\biggl[ \int_0^T h_t^{f}(X_t^{N,i},\alpha_t^{N,i},\bar{z}_t^{N,i},X_t^{N,0},\alpha_t^{N,0})\mathrm{d}t+g^{f}(X_T^{N,i},\bar{z}_T^{N,i},X_T^{N,0}) \biggr].
\end{multline}

Since the $N$-auxiliary problem is a decoupled optimal control problem and the players do not influence one another, applying the maximum principle directly to the $N$ players gives the following result.

\begin{prp}[Optimal control of the $N$-auxiliary problem]
	Assume that Assumption \ref{Ass:POC1} holds. Then the optimal control of player $i$ in the $N$-auxiliary problem is
	\begin{equation*}
		\hat{\alpha}_t^{*N,i}=\hat{\alpha}_t^f(\hat{X}_t^{*N,i},\bar{z}_t^{N,i},\bar{\varphi}_t^{N,i},X_t^l,\alpha_t^l).
	\end{equation*}
\end{prp}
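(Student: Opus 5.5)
The $N$-auxiliary problem (\ref{eq38})--(\ref{eq39}) decouples completely. The aggregate $\bar z^{N,i}$ is a fixed $\mathcal{F}^l$-adapted process built from the limiting solution. The leader state $X^{N,0}=X^l$ and control $\alpha^{N,0}=\alpha^l$ are likewise fixed $\mathcal{F}^l$-adapted inputs. So player $i$ faces a standard LQ stochastic control problem on the filtration $\mathcal{F}^{N,i}\lor\mathcal{F}^0$, driven by $W^{f,\frac{i}{N}}$ and $W^l$. My plan is to identify this problem with the limiting follower problem for a single representative follower, whose aggregate input is now the averaged process $\bar z^{N,i}$ instead of $\hat z^u$. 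I then verify that the closed-loop feedback from Theorem \ref{Thm:4.2}, evaluated at $(\bar z^{N,i},\bar\varphi^{N,i})$, solves the Hamiltonian system of Proposition \ref{Prp:4.3} for this problem.

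\textbf{Step 1: the necessary and sufficient maximum principle.} The coefficients are $u$-independent (Assumption \ref{Ass:03}). Under Assumption \ref{Ass:POC1} the limiting follower problem is solvable, i.e. the Riccati equation (\ref{Eq:follower_riccati}) has a solution $P^f$ with $\hat R^f$ invertible. Hence Proposition \ref{Prp:4.3} applies to player $i$ with $z$ replaced by $\bar z^{N,i}\in\mathbb{S}^2_{\mathcal{F}^l}$. So it suffices to produce a solution $(\hat X^{*N,i},p,q^f,q^l)$ of the linear Hamiltonian system (\ref{Eq:follower_hamilton}) with $z=\bar z^{N,i}$, together with the stationarity condition (\ref{eq05}).

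\textbf{Step 2: averaging the decoupled BSDE.} Take the ansatz $p=P^f\hat X^{*N,i}+\bar\varphi^{N,i}$. The key point is that $\bar\varphi^{N,i}$ solves the correct BSDE. Integrate the $\varphi$-equation of (\ref{Eq:follower_consistent}) over $u\in\left(\frac{i-1}{N},\frac{i}{N}\right]$ and multiply by $N$. The coefficients do not depend on $u$, so this gives
\[
\mathrm{d}\bar\varphi^{N,i}_t=(\hat g^{f,0}_t+\hat g^{f,1}_t\bar\varphi^{N,i}_t+\hat g^{f,2}_t\bar z^{N,i}_t+\hat g^{f,3}_tX^l_t+\hat g^{f,4}_t\alpha^l_t)\mathrm{d}t+\bar q^{N,i}_t\mathrm{d}W^l_t,
\]
with terminal value $-G^f(g^{f,1}\bar z^{N,i}_T+g^{f,2}X^l_T)$. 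This is exactly the backward equation of (\ref{eq13}) with $z=\bar z^{N,i}$. Interchanging the $\lambda$-integral with the stochastic integral in $W^l$ is legitimate by the Fubini property of the rich Fubini extension, since the integrand lies in $\mathbb{H}^{\infty,2}_{\boxtimes^l}$.

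\textbf{Step 3: well-posedness and verification.} Let $\hat X^{*N,i}$ be the solution of the closed-loop SDE with drift $\hat b^f$ and diffusion $\hat\sigma^f$, evaluated at $(\bar\varphi^{N,i},\bar z^{N,i},X^l,\alpha^l)$. This is a linear SDE with square-integrable inputs (Proposition \ref{Prp:5.2}), so it is well posed in $\mathbb{S}^2$. Define $q^f$ by (\ref{eq09}) and $q^l:=\bar q^{N,i}$. Reversing the derivation of (\ref{eq10})--(\ref{Eq:follower_riccati}), Itô's formula on $P^f\hat X^{*N,i}+\bar\varphi^{N,i}$ shows that the adjoint equation holds. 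Comparing $\mathrm{d}t$ terms uses the Riccati equation, and comparing $\mathrm{d}W$ terms uses (\ref{eq12}) and (\ref{eq08}). Condition (\ref{eq05}) holds by construction of $\hat\alpha^f$.

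\textbf{Main obstacle: uniqueness and sufficiency under indefinite weights.} Sufficiency of the maximum principle needs convexity of each player's cost functional. With indefinite $Q^f,R^f,G^f$ this is not automatic. I would obtain it from the Riccati solution with $\hat R^f\gg0$ by a completion-of-squares argument. Apply Itô's formula to $\tfrac12 X^\top P^f X+\varphi^\top X$ along an arbitrary admissible $\alpha^{N,i}$. This writes $J^{*N,i}(\alpha)-J^{*N,i}(\hat\alpha^{*N,i})$ as
\[
\tfrac12\mathbb{E}\int_0^T\|\alpha_t-\hat\alpha^{*N,i}_t\|_{\hat R^f_t}\mathrm{d}t\ge0 .
\]
This proves optimality directly and is robust to indefiniteness. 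Uniqueness of the optimal control then follows from strict positivity of $\hat R^f$.
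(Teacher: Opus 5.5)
Your proposal is correct, and its skeleton is the paper's: the paper's entire argument is that the $N$-auxiliary problem decouples into single-player problems, to which the limiting follower's maximum principle applies directly. You differ from it in two respects.

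First, you justify explicitly why the block-averaged offset $\bar\varphi^{N,i}$ is the right one, by averaging the $\varphi$-equation of (\ref{Eq:follower_consistent}) over $\left(\frac{i-1}{N},\frac{i}{N}\right]$. The paper writes this averaged system earlier (just before (\ref{eq31})) and uses it only implicitly here. Exchanging the block integral with $\int\cdot\,\mathrm{d}W^l$ is a stochastic Fubini statement rather than the Fubini property of the extension itself, but this does not affect the argument.

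Second, and more substantively, you prove optimality by completion of squares instead of through Proposition \ref{Prp:4.3}. The sufficiency half of that proposition rests on a convexity-based sufficient maximum principle, which does not cover indefinite $Q^f,R^f,G^f$. Your route needs only the Riccati solution with $\hat R^f$ positive together with your Step 2. It also yields uniqueness and makes your Steps 1 and 3 unnecessary.

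One correction: the cost-difference identity is
\[
J^{*N,i}(\alpha)-J^{*N,i}(\hat\alpha^{*N,i})=\tfrac12\mathbb{E}\int_0^T\big\|\alpha_t-\hat\alpha^f_t(X^\alpha_t,\bar z^{N,i}_t,\bar\varphi^{N,i}_t,X^l_t,\alpha^l_t)\big\|_{\hat R^f_t}\mathrm{d}t .
\]
Here the feedback is evaluated along the state $X^\alpha$ generated by $\alpha$; the right-hand side is not $\|\alpha_t-\hat\alpha^{*N,i}_t\|_{\hat R^f_t}$. Uniqueness then follows because equality forces $X^\alpha$ to solve the closed-loop SDE, whose unique solution is $\hat X^{*N,i}$.

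Also, $\hat R^f\gg0$ is not literally contained in Assumption \ref{Ass:POC1}; the paper imposes it in its equilibrium theorem. However, $\hat R^f\ge0$ a.e.\ follows from the optimality of the limiting feedback by the same identity.
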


For fixed $\alpha^l$, under $\hat{\alpha}^N$ in the original $N$-problem, denote the corresponding states and aggregate terms by $\hat{X}^{N,i},\hat{\upsilon}^{N,i},\hat{X}^{N,0}$. Under the control $\hat{\alpha}^{*N}$ in the $N$-auxiliary problem with fixed $\alpha^l,X^l$, denote the corresponding states and aggregates by $\hat{X}^{*N,i},\hat{\upsilon}^{*N,i}$.

\begin{prp}\label{Prp:5.10}
	Assume that Assumptions \ref{Ass:POC1} and \ref{Ass:POC2} hold. Under the strategy constructed for the original $N$-problem, the cost functional and the value function of the $N$-auxiliary problem satisfy
	\begin{equation*}
		\max_{i=1,\cdots,N}|J^{N,i}(\hat{\alpha}^{N,i};\hat{\alpha}^{N,-i},\alpha^{N,0})-J^{*N,i}(\hat{\alpha}^{*N,i};\hat{\alpha}^{*N,-i},\alpha^l)|\le KK_N.
	\end{equation*}
\end{prp}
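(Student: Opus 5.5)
The plan is to compare the two cost functionals term by term and to bound the difference of the quadratic integrands with the Cauchy--Schwarz inequality. This works because both costs are evaluated along processes whose second moments are uniformly bounded, and whose differences are controlled by Proposition \ref{Prp:5.8}.

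\textbf{Step 1: the two state processes coincide.} Fix $\alpha^{N,0}=\alpha^l$. Under the strategy constructed in Theorem \ref{Thm:POC}, the state $\hat{X}^{N,i}$ of the $N$-problem solves the same linear SDE as the auxiliary state $\hat{X}^{*N,i}$ of (\ref{eq38}), except for two perturbations:
\begin{itemize}
\item[(a)] the extra terms $b_t^{f,3}(\hat{\upsilon}_t^{N,i}-\bar{z}_t^{N,i})$ and $\sigma_t^{f,3}(\hat{\upsilon}_t^{N,i}-\bar{z}_t^{N,i})$;
\item[(b)] the leader state $\hat{X}^{N,0}$ appears in place of $\hat{X}^l$.
\end{itemize}
Both processes are driven by the same Brownian motion $W^{f,\frac{i}{N}}$, have the same initial value $x_0^{N,i}$, and are closed by the same feedback $\hat{\alpha}^f$ with the same inputs $\bar{z}^{N,i},\bar{\varphi}^{N,i}$. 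I would apply the classical $L^2$ estimate for linear SDEs and the Gronwall inequality to their difference. Combined with Proposition \ref{Prp:5.8}, this gives
\begin{equation*}
\max_{i}\mathbb{E}\Big[\sup_{0\le t\le T}\big\{|\hat{X}_t^{N,i}-\hat{X}_t^{*N,i}|^2+|\hat{\alpha}_t^{N,i}-\hat{\alpha}_t^{*N,i}|^2\big\}\Big]\le KK_N^2.
\end{equation*}
The control estimate follows because $\hat{\alpha}^f$ is affine with bounded coefficients in $(x^f,z,\varphi^f,x^l,\alpha^l)$.

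\textbf{Step 2: uniform second-moment bounds.} Proposition \ref{Prp:5.8} bounds the $N$-problem quantities. The auxiliary quantities are bounded by the triangle inequality together with Step 1, and by Proposition \ref{Prp:5.2} for $\bar{z}^{N,i},\bar{\varphi}^{N,i}$ and $\hat{X}^l$. Hence all of
\begin{equation*}
\hat{X}^{N,i},\ \hat{\upsilon}^{N,i},\ \hat{\alpha}^{N,i},\ \hat{X}^{N,0},\ \hat{X}^{*N,i},\ \bar{z}^{N,i},\ \hat{X}^l,\ \alpha^l
\end{equation*}
have $\sup_t$ second moments bounded uniformly in $N$ and $i$.

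\textbf{Step 3: compare the costs.} The running cost $h^f$ and terminal cost $g^f$ are quadratic forms with bounded weights. For such $\ell$ the elementary inequality
\begin{equation*}
|\ell(a)-\ell(b)|\le K(|a|+|b|)\,|a-b|
\end{equation*}
holds. I apply it with the arguments
\begin{equation*}
a=(\hat{X}^{N,i},\hat{\alpha}^{N,i},\hat{\upsilon}^{N,i},\hat{X}^{N,0},\alpha^l),\qquad b=(\hat{X}^{*N,i},\hat{\alpha}^{*N,i},\bar{z}^{N,i},\hat{X}^l,\alpha^l),
\end{equation*}
and similarly at time $T$ for $g^f$. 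Taking expectations and using Cauchy--Schwarz gives a bound of the form $K\big(\mathbb{E}[\text{second moments}]\big)^{1/2}$ times the square root of
\begin{equation*}
\mathbb{E}\Big[\sup_t\big\{|\hat{X}^{N,i}-\hat{X}^{*N,i}|^2+|\hat{\alpha}^{N,i}-\hat{\alpha}^{*N,i}|^2+|\hat{\upsilon}^{N,i}-\bar{z}^{N,i}|^2+|\hat{X}^{N,0}-\hat{X}^l|^2\big\}\Big].
\end{equation*}
Step 2 bounds the first factor by a constant, and Step 1 together with Proposition \ref{Prp:5.8} bounds the second by $KK_N^2$. This yields $|J^{N,i}-J^{*N,i}|\le KK_N$ uniformly in $i$, which is the claim.

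\textbf{Main obstacle.} The delicate point is Step 1. One must check that the auxiliary problem is posed with the leader's state $\hat{X}^l$ from the limiting problem, and the $N$-problem with $\hat{X}^{N,0}$. The mismatch between the two leader states then enters only through $\hat{X}^{N,0}-\hat{X}^l$, which Proposition \ref{Prp:5.8} controls at order $K_N$. One must also ensure that every constant is independent of $i$ and $N$, which relies on the uniform boundedness of the coefficients in Assumption \ref{Ass:LQ}.
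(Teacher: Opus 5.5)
Your proposal is correct and follows the paper's proof essentially step by step. The paper likewise uses an SDE/Gronwall estimate for $\hat{X}^{N,i}-\hat{X}^{*N,i}$ driven by $\hat{\upsilon}^{N,i}-\bar{z}^{N,i}$ and $\hat{X}^{N,0}-\hat{X}^l$, deduces the control estimate from the affine feedback, and compares the quadratic costs via Cauchy--Schwarz and Proposition \ref{Prp:5.8}. Your Step 3 is in fact more careful than the paper, which writes the integrand bound pathwise; note also that for the running cost you only need $\mathbb{E}\int_0^T|\alpha_t^l|^2\mathrm{d}t<\infty$, not the $\sup_t$ moment bound on $\alpha^l$ you list in Step 2, which an arbitrary $\alpha^l\in\mathcal{U}^l$ need not have.
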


\begin{proof}
	Substituting $\hat{\alpha}^N,\hat{\alpha}^{*N}$ into the state equations (\ref{Eq:N_follower_sys}) and (\ref{eq38}) of the two problems, the classical estimate for SDE solutions yields
	\begin{equation*}
    \begin{aligned}
		&\max_{i=1,\cdots,N}\mathbb{E}\biggl[ \sup_{0\le t\le T}|\hat{X}_t^{N,i}-\hat{X}_t^{*N,i}|^2 \biggr]\\
		&\le K\max_{i=1,\cdots,N}\mathbb{E}\biggl[ \sup_{0\le t\le T}\big\{ |\hat{\upsilon}_t^{N,i}-\bar{z}_t^{N,i}|^2+|\hat{X}_t^{N,0}-X_t^l|^2 \big\} \biggr]\le KK_N^2.
	\end{aligned}
    \end{equation*}
	By the definitions of $\hat{\alpha}^N,\hat{\alpha}^{*N}$ and Proposition \ref{Prp:5.8}, we have
	\begin{equation*}
		\max_{i=1,\cdots,N}\mathbb{E}\biggl[ \sup_{0\le t\le T}|\hat{\alpha}_t^{N,i}-\hat{\alpha}_t^{*N,i}|^2 \biggr]\le KK_N^2.
	\end{equation*}
	By the quadratic form of the functionals, Proposition \ref{Prp:5.8}, and Cauchy's inequality, we obtain
	\begin{gather*}
		|h_t^f(\hat{X}_t^{N,i},\hat{\alpha}_t^{N,i},\hat{\upsilon}_t^{N,i},\hat{X}_t^{N,0},\alpha_t^{N,0})-h_t^f(\hat{X}_t^{*N,i},\hat{\alpha}_t^{*N,i},\bar{z}_t^{N,i},X_t^l,\alpha_t^l)|\le KK_N,\\
		|g^f(\hat{X}_T^{N,i},\hat{\upsilon}_T^{N,i},\hat{X}_T^{N,0})-g^f(\hat{X}_T^{*N,i},\bar{z}_T^{N,i},X_T^l)|\le KK_N.
	\end{gather*}
	Substituting into the definitions of the cost functionals (\ref{eq40}) and (\ref{eq39}) gives the estimate.
\end{proof}

Set
\begin{equation*}
	\bar{\mathcal{U}}_d^{N,i}=\biggl\{ \beta^i\in\mathcal{U}_d^{N,i}:\mathbb{E}\biggl[ \int_0^T |\beta_t^i|^2\mathrm{d}t \biggr]\le\frac{1}{K}J^{N,i}(\hat{\alpha}^{N,i};\hat{\alpha}^{N,-i},\alpha^0)+1 \biggr\}.
\end{equation*}

\begin{prp}\label{Prp:5.11}
	Assume that Assumptions \ref{Ass:POC1}, \ref{Ass:POC2}, and \ref{Ass:POC3} hold. For any $k=1,\cdots,N$ and perturbation $\beta^k\in\bar{\mathcal{U}}_d^{N,k}$, the estimate
	\begin{equation*}
		|J^{N,k}(\beta^k;\hat{\alpha}^{N,-k},\alpha^{N,0})-J^{*N,k}(\beta^k;\hat{\alpha}^{*N,-k},\alpha^l)|\le KK_N
	\end{equation*}
	holds.
\end{prp}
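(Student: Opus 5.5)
The plan mirrors the proof of Proposition \ref{Prp:5.10}, except that follower $k$ now uses an arbitrary decentralized control $\beta^k\in\bar{\mathcal{U}}_d^{N,k}$ while all other followers keep $\hat{\alpha}^{N,-k}$. Let $\tilde{X}^{N,j}$, $\tilde{\upsilon}^{N,j}$, $\tilde{X}^{N,0}$ be the states in the original $N$-problem under $(\beta^k,\hat{\alpha}^{N,-k},\alpha^{N,0})$. Let $\tilde{X}^{*N,k}$ be the state of player $k$ in the $N$-auxiliary problem (\ref{eq38}) under $\beta^k$; there the aggregate is frozen at $\bar{z}^{N,k}$ and the leader state is frozen at $X^l$.

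The first step is an a priori bound. By Assumption \ref{Ass:POC3} and Proposition \ref{Prp:5.8}, the cost $J^{N,k}(\hat{\alpha}^{N,k};\cdots)$ is bounded uniformly in $N$. Hence the definition of $\bar{\mathcal{U}}_d^{N,k}$ gives $\mathbb{E}\int_0^T|\beta^k_t|^2\mathrm{d}t\le K$ with $K$ independent of $N$ and $k$. The classical SDE estimate then gives $\mathbb{E}\sup_t|\tilde{X}^{*N,k}_t|^2\le K$.

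The second step, which is the main obstacle, is a perturbation estimate for the aggregates. I would show that
\begin{equation*}
\max_{i}\mathbb{E}\Bigl[\sup_{0\le t\le T}|\tilde{\upsilon}_t^{N,i}-\hat{\upsilon}_t^{N,i}|^2\Bigr]+\mathbb{E}\Bigl[\sup_{0\le t\le T}\big\{|\tilde{X}^{N,0}_t-\hat{X}^{N,0}_t|^2+|\tilde{\upsilon}^{N,0}_t-\hat{\upsilon}^{N,0}_t|^2\big\}\Bigr]\le \frac{K}{N}.
\end{equation*}
To do this, take the differences $\delta^j=\tilde{X}^{N,j}-\hat{X}^{N,j}$ and note that only player $k$'s control changes. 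For $j\ne k$, the equation for $\delta^j$ is driven only through $\tilde{\upsilon}^{N,j}-\hat{\upsilon}^{N,j}$ and $\tilde{X}^{N,0}-\hat{X}^{N,0}$, because the feedback in $\hat{\alpha}^{N,j}$ is linear in these quantities. Player $k$ contributes to every aggregate with weight $g^{N,i,k}/N\le 1/N$ and $w^{N,k}/N\le K/N$, the latter bound coming from (\ref{eq30}). So I would write $\tilde{\upsilon}^{N,i}-\hat{\upsilon}^{N,i}=\frac1N\sum_{j\ne k}g^{N,i,j}\delta^j+\frac1N g^{N,i,k}\delta^k$, and bound $\mathbb{E}\sup|\delta^k|^2\le K$ from the first step. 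I would then run a Gronwall argument on $\max_{j\ne k}\mathbb{E}\sup_{s\le t}|\delta^j_s|^2$ together with the leader difference, using the uniform bounds of Proposition \ref{Prp:5.1}. If the diffusion terms spoil the direct approach, I would fall back on the $\varPhi$-representation (\ref{eq32}) with the bounded fundamental matrix. In either case the forcing is of order $1/N\le K_N^2$, since $K_N\ge 1/\sqrt N$.

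The third step combines this with Proposition \ref{Prp:5.8}. That proposition gives $\mathbb{E}\sup|\tilde{\upsilon}^{N,k}-\bar{z}^{N,k}|^2\le KK_N^2$ and $\mathbb{E}\sup|\tilde{X}^{N,0}-X^l|^2\le KK_N^2$, where $X^l=\hat{X}^l$ is the limiting leader state under $\alpha^l$. Comparing the equations for $\tilde{X}^{N,k}$ and $\tilde{X}^{*N,k}$, which are driven by the same $\beta^k$, the classical SDE estimate then yields $\mathbb{E}\sup|\tilde{X}^{N,k}-\tilde{X}^{*N,k}|^2\le KK_N^2$.

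Finally, I would conclude as in Proposition \ref{Prp:5.10}. Write $h^f$ and $g^f$ as quadratic forms, so that the difference of costs is bounded by (difference of arguments) times (sum of arguments). Applying Cauchy--Schwarz with the $O(1)$ second-moment bounds from the first step and from Proposition \ref{Prp:5.8}, this gives $|J^{N,k}-J^{*N,k}|\le KK_N$, uniformly in $k$ and in $\beta^k\in\bar{\mathcal{U}}_d^{N,k}$.
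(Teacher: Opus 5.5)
Your proposal is correct and follows essentially the same route as the paper: bound $\mathbb{E}\int_0^T|\beta^k_t|^2\mathrm{d}t$ via the definition of $\bar{\mathcal{U}}_d^{N,k}$ and Assumption~\ref{Ass:POC3}, show that a single deviation moves the aggregates and the leader state by $O(1/N)\le KK_N^2$, combine this with Proposition~\ref{Prp:5.8}, compare $X^{N,k}$ with the auxiliary state $X^{*N,k}$ by a classical SDE estimate, and conclude from the quadratic form of the costs. One small point: your first step only controls $\beta^k$ and $\tilde{X}^{*N,k}$, so the bound $\mathbb{E}[\sup_t|\delta^k_t|^2]\le K$ does not follow from it directly; you get it by including $\delta^k$ in the same Gronwall argument (or from an a priori bound on the deviated system, in the spirit of Proposition~\ref{Prp:5.4}), which is what the paper implicitly does as well.
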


\begin{proof}
	Let $X^{N},X^{N,0},\upsilon^N$ be the states and aggregates of the $N$-problem under the strategy profile $(\beta^i;\hat{\alpha}^{N,-i})$, and let $X^{*N}$ be the state of the $N$-auxiliary problem under the strategy profile $(\beta^i;\hat{\alpha}^{*N,-i})$.
	
	By the definitions of the aggregate term $\upsilon^{N,i}$ and the weighted average term $\upsilon^{N,0}$,
	\begin{multline*}
		\max_{i=1,\cdots,N}\mathbb{E}\biggl[ \sup_{0\le s\le t}\big\{ |\upsilon_s^{N,i}-\hat{\upsilon}_s^{N,i}|^2+|\upsilon_s^{N,0}-\hat{\upsilon}_s^{N,0}|^2 \big\} \biggr]\\
		\le K\max_{i\neq k}\mathbb{E}\biggl[ \sup_{0\le s\le t}|X_s^{N,i}-\hat{X}_s^{N,i}|^2 \biggr]+\frac{K}{N}\mathbb{E}\biggl[\sup_{0\le s\le t}|X_s^{N,k}-\hat{X}_s^{N,k}|^2\biggr].
	\end{multline*}
	Applying the classical estimate for SDE solutions to the state equation (\ref{Eq:N_follower_sys}) for $i\neq k$, and then using Gronwall inequality, yields
	\begin{equation*}
		\max_{i\neq k}\mathbb{E}\biggl[ \sup_{0\le s\le t}|X_s^{N,i}-\hat{X}_s^{N,i}|^2 \biggr]\le \frac{K}{N}\mathbb{E}\biggl[ \sup_{0\le s\le t}|X_s^{N,k}-\hat{X}_s^{N,k}|^2 \biggr]+KK_N^2.
	\end{equation*}
	Applying the classical SDE estimate to the state equation (\ref{Eq:N_follower_sys}) for $X^{N,0}$ gives
	\begin{equation*}
		\mathbb{E}\biggl[ \sup_{0\le s\le t}|X_s^{N,0}-\hat{X}_s^{N,0}|^2 \biggr]\le \frac{K}{N}\mathbb{E}\biggl[ \sup_{0\le s\le t}|X_s^{N,k}-\hat{X}_s^{N,k}|^2 \biggr]+KK_N^2.
	\end{equation*}
	Applying the classical SDE estimate to the state equation (\ref{Eq:N_follower_sys}) for $i=k$, and then using Gronwall inequality and Assumption \ref{Ass:POC3}, gives
	\begin{equation*}
		\mathbb{E}\biggl[ \sup_{0\le s\le t}|X_s^{N,k}-\hat{X}_s^{N,k}|^2 \biggr]\le K\mathbb{E}\biggl[ \int_0^t |\beta^k_s-\hat{\alpha}_s^{N,k}|^2\mathrm{d}s \biggr]+KK_N^2\le K+KK_N^2.
	\end{equation*}
	Combining the preceding conclusions gives
	\begin{equation*}
		\max_{i=1,\cdots,N}\mathbb{E}\biggl[\sup_{0\le s\le t}\big\{ |\upsilon_s^{N,i}-\hat{\upsilon}_s^{N,i}|^2+|\upsilon_s^{N,0}-\hat{\upsilon}_s^{N,0}|^2+|X_s^{N,0}-\hat{X}_s^{N,0}|^2 \big\}\biggr]\le KK_N^2.
	\end{equation*}
	By Proposition \ref{Prp:5.8},
	\begin{equation*}
		\max_{i=1,\cdots,N}\mathbb{E}\biggl[\sup_{0\le s\le t}\big\{ |\upsilon_s^{N,i}-\bar{z}_s^{N,i}|^2+|\upsilon_s^{N,0}-\hat{M}_s^{f}|^2+|X_s^{N,0}-X_s^l|^2 \big\}\biggr]\le KK_N^2.
	\end{equation*}
	By the classical SDE estimate,
	\begin{equation*}
		\mathbb{E}\biggl[ \sup_{0\le t\le T}|X_t^{N,k}-X_t^{*N,k}|^2 \biggr]\le KK_N^2.
	\end{equation*}
	Again by the quadratic form of the functionals, we obtain
	\begin{equation*}
		|J^{N,k}(\beta^k;\hat{\alpha}^{N,-k},\alpha^{N,0})-J^{*N,k}(\beta^k;\hat{\alpha}^{*N,-k},\alpha^l)|\le KK_N. \qedhere
	\end{equation*}
\end{proof}

\begin{prp}\label{Prp:5.12}
	Assume that Assumptions \ref{Ass:POC1}, \ref{Ass:POC2}, and \ref{Ass:POC3} hold. For fixed $\alpha^l$, there exists a positive integer $N_0^f$ such that, for any $N>N_0^f$, the strategy profile constructed for the $N$-problem is an $\varepsilon_2$-Nash equilibrium.
\end{prp}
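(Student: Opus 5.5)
The plan is the standard three-step sandwich argument through the $N$-auxiliary problem. Fix $\alpha^l=\alpha^{N,0}$ and a follower index $k$. We must show that for every $\beta^k\in\mathcal{U}_d^{N,k}$,
\begin{equation*}
J^{N,k}(\beta^k;\hat{\alpha}^{N,-k},\alpha^{N,0})\ge J^{N,k}(\hat{\alpha}^{N,k};\hat{\alpha}^{N,-k},\alpha^{N,0})-\varepsilon_2 .
\end{equation*}

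\textbf{Step 1: reduction to bounded deviations.} If $\beta^k\notin\bar{\mathcal{U}}_d^{N,k}$, then Assumption \ref{Ass:POC3} (applied with constant $K$) gives
\begin{equation*}
J^{N,k}(\beta^k;\hat{\alpha}^{N,-k},\alpha^{N,0})\ge K\mathbb{E}\biggl[\int_0^T|\beta_t^k|^2\mathrm{d}t\biggr]-K> J^{N,k}(\hat{\alpha}^{N,k};\hat{\alpha}^{N,-k},\alpha^{N,0}),
\end{equation*}
provided $K$ is adjusted in the definition of $\bar{\mathcal{U}}_d^{N,k}$. Hence such deviations are never profitable. The constants must also be uniform in $N$; this follows from Propositions \ref{Prp:5.8} and \ref{Prp:5.10}, which show that $J^{N,k}(\hat{\alpha}^{N,k};\cdot)$ is bounded uniformly in $N,k$. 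Consequently the ball $\bar{\mathcal{U}}_d^{N,k}$ has a radius independent of $N$, and Proposition \ref{Prp:5.11} applies on it with a uniform constant.

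\textbf{Step 2: the sandwich for $\beta^k\in\bar{\mathcal{U}}_d^{N,k}$.} The chain of estimates is:
\begin{equation*}
\begin{aligned}
J^{N,k}(\beta^k;\hat{\alpha}^{N,-k},\alpha^{N,0})&\ge J^{*N,k}(\beta^k;\hat{\alpha}^{*N,-k},\alpha^l)-KK_N\\
&\ge J^{*N,k}(\hat{\alpha}^{*N,k};\hat{\alpha}^{*N,-k},\alpha^l)-KK_N\\
&\ge J^{N,k}(\hat{\alpha}^{N,k};\hat{\alpha}^{N,-k},\alpha^{N,0})-2KK_N .
\end{aligned}
\end{equation*}
The first inequality is Proposition \ref{Prp:5.11}. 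The second is the optimality of $\hat{\alpha}^{*N,k}$ in the decoupled $N$-auxiliary problem, where the other players play no role. The third is Proposition \ref{Prp:5.10}.

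A point to check in the second inequality: $\hat{\alpha}^{*N,k}$ is optimal over a class of controls containing $\mathcal{U}_d^{N,k}$. This holds because the auxiliary problem for player $k$ involves only $W^{f,k/N}$ and $W^l$. The inputs $\bar{z}^{N,k}$ and $X^l$ are $\mathcal{F}^l$-adapted by Theorem \ref{Thm:3.1}, so the optimal control characterized by the maximum principle (Proposition \ref{Prp:4.3}) lies in $\mathcal{F}^{N,k}\lor\mathcal{F}^0$-adapted controls. Decentralized deviations are therefore admissible competitors.

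\textbf{Step 3: choice of $N_0^f$.} Since $K_N\to0$ under Assumption \ref{Ass:POC2}, choose $N_0^f$ such that $2KK_N\le\varepsilon_2$ for all $N>N_0^f$. The constant $K$ is independent of $k$ and $N$, so this choice is uniform over $k=1,\dots,N$, which proves the claim.

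\textbf{Main obstacle.} The delicate point is the uniformity in Step 1. The constant in Proposition \ref{Prp:5.11} depends on the radius of the admissible ball for $\beta^k$, so that radius must be bounded independently of $N$. I would first establish $\sup_{N,k}|J^{N,k}(\hat{\alpha}^{N,k};\hat{\alpha}^{N,-k},\alpha^{N,0})|<\infty$ from Proposition \ref{Prp:5.8}, using the quadratic growth in Assumption \ref{Ass:LQ}. Only after that would I invoke Proposition \ref{Prp:5.11}.
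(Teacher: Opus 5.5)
Your proposal is correct and follows essentially the same route as the paper: deviations outside the ball $\bar{\mathcal{U}}_d^{N,k}$ are ruled out by Assumption \ref{Ass:POC3}, and deviations inside it are sandwiched through the $N$-auxiliary problem via Proposition \ref{Prp:5.11}, optimality of $\hat{\alpha}^{*N,k}$, and Proposition \ref{Prp:5.10}, after which $N_0^f$ is chosen from $K_N\to 0$. Your two extra checks are sound and are left implicit in the paper: that the ball's radius is bounded uniformly in $N$ (via Proposition \ref{Prp:5.8}), and that decentralized deviations are admissible competitors in the auxiliary problem.
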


\begin{proof}
	For any $i=1,\cdots,N$ and perturbation $\beta^i\in\mathcal{U}_d^{N,i}\backslash\bar{\mathcal{U}}_d^{N,i}$,
	\begin{equation*}
		J^{N,i}(\beta^i;\hat{\alpha}^{N,-i},\alpha^0)\ge K\mathbb{E}\biggl[ \int_0^T |\beta_t^i|^2\mathrm{d}t \biggr]-K\ge J^{N,i}(\hat{\alpha}^{N,i};\hat{\alpha}^{N,-i},\alpha^0).
	\end{equation*}
	For any $\beta^i\in\bar{\mathcal{U}}_d^{N,i}$, Propositions \ref{Prp:5.10} and \ref{Prp:5.11} give
	\begin{multline*}
		J^{N,i}(\beta^i;\hat{\alpha}^{N,-i},\alpha^{N,0})\ge J^{*N,i}(\beta^i;\hat{\alpha}^{*N,-i},\alpha^l)-KK_N\\
		\ge J^{*N,i}(\hat{\alpha}^{*N,i};\hat{\alpha}^{*N,i},\alpha^l)-KK_N\ge J^{N,i}(\hat{\alpha}^{N,i};\hat{\alpha}^{N,-i},\alpha^{N,0})-KK_N.
	\end{multline*}
	By Assumption \ref{Ass:POC2}, for any $\varepsilon_2>0$, there exists $N_0^f$ such that, for any $N>N_0^f$,
	\begin{equation*}
		J^{N,i}(\beta^i;\hat{\alpha}^{N,-i},\alpha^{N,0})\ge J^{N,i}(\hat{\alpha}^{N,i};\hat{\alpha}^{N,-i},\alpha^{N,0})-\varepsilon_2.\qedhere
	\end{equation*}
\end{proof}

\begin{prp}
	Assume that Assumptions \ref{Ass:POC1}, \ref{Ass:POC2}, and \ref{Ass:POC3} hold. In the $N$-problem, for any fixed control $\alpha^l$,
	\begin{equation*}
		|J^{N,0}(\alpha^l)-J^l(\alpha^l)|\le KK_N.
	\end{equation*}
\end{prp}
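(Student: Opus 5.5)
The plan is to compare the leader's cost in the $N$-problem with its cost in the limiting problem when both are driven by the same leader control. We fix $\alpha^{N,0}=\alpha^l$, and the followers use the decentralized profile $\hat{\alpha}^N(\alpha^l)$ from Theorem \ref{Thm:POC} in the $N$-problem and the Nash equilibrium $\hat{\alpha}^f(\alpha^l)$ in the limit. Here $J^{N,0}(\alpha^l)$ means $J^{N,0}(\alpha^l;\hat{\alpha}^N(\alpha^l))$, which depends on $(\hat{X}^{N,0},\hat{\upsilon}^{N,0})$. Likewise $J^l(\alpha^l)$ means $J^l(\alpha^l;\hat{\alpha}^f(\alpha^l))$, which depends on $(\hat{X}^l,\hat{M}^f)$. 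Since $h^l$ and $g^l$ are quadratic (Assumption \ref{Ass:LQ}) and share the same coefficients in both problems, the difference of the two functionals reduces to differences of quadratic forms evaluated at these two pairs of processes.

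The first step is to invoke Proposition \ref{Prp:5.8}, which gives
\begin{equation*}
\mathbb{E}\biggl[\sup_{0\le t\le T}\big\{|\hat{\upsilon}_t^{N,0}-\hat{M}_t^f|^2+|\hat{X}_t^{N,0}-\hat{X}_t^l|^2\big\}\biggr]\le KK_N^2,
\end{equation*}
together with the uniform bounds on $\hat{X}^{N,0}$ and $\hat{\upsilon}^{N,0}$. The limiting quantities are controlled by Proposition \ref{Prp:5.2}, which bounds $\mathbb{E}[\sup_t\{|\hat{X}^l_t|^2+|\hat{M}^f_t|^2\}]$. The second step uses the identity $\|a\|_Q-\|b\|_Q=(a-b)^\top Q(a+b)$ for symmetric $Q$. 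With $a=\hat{X}^{N,0}_t-h^{l,1}_t\hat{\upsilon}^{N,0}_t$ and $b=\hat{X}^l_t-h^{l,1}_t\hat{M}^f_t$, this gives
\begin{equation*}
\big|h_t^l(\hat{X}_t^{N,0},\alpha_t^l,\hat{\upsilon}_t^{N,0})-h_t^l(\hat{X}_t^l,\alpha_t^l,\hat{M}_t^f)\big|\le K\big(|\hat{X}^{N,0}_t-\hat{X}^l_t|+|\hat{\upsilon}^{N,0}_t-\hat{M}^f_t|\big)\big(|\hat{X}^{N,0}_t|+|\hat{X}^l_t|+|\hat{\upsilon}^{N,0}_t|+|\hat{M}^f_t|\big).
\end{equation*}
The control term $\frac12\|\alpha^l\|_{R^l}$ is the same in both functionals and cancels exactly. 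The terminal cost $g^l$ is treated in the same way at $t=T$.

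The third step takes expectations and applies the Cauchy--Schwarz inequality. Each term becomes (the $L^2$ norm of the difference) $\times$ (the $L^2$ norm of the sum), which is at most $\sqrt{KK_N^2}\cdot\sqrt{K}=KK_N$. Integrating over $[0,T]$ and adding the terminal term then yields $|J^{N,0}(\alpha^l)-J^l(\alpha^l)|\le KK_N$.

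The main obstacle is making sure Proposition \ref{Prp:5.8} applies with a constant independent of $N$ for the given $\alpha^l$. In particular, the bounds on $\hat{X}^{N,0}$, $\hat{\upsilon}^{N,0}$ and the $B^4$ term in Proposition \ref{Prp:5.6} require $\alpha^l\in\mathcal{U}^l$ to be square integrable, and $K$ will then depend on $\mathbb{E}\int_0^T|\alpha^l_t|^2\mathrm{d}t$. I would state this dependence explicitly, noting that it is harmless for the fixed control in the statement. One should also confirm that the $\frac{1}{\sqrt N}$ stochastic-integral contribution, which is already absorbed into $K_N$, survives in the $\upsilon^{N,0}$ estimate of Proposition \ref{Prp:5.3}. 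This holds because the $W^{f,j/N}$ are independent and each integrand carries a factor $1/N$, so the stochastic term has second moment $O(1/N)$.
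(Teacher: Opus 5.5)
Your argument is correct and is essentially the paper's proof. Like the paper, you factor the $h^l$ and $g^l$ differences via $\|a\|_Q-\|b\|_Q=(a-b)^\top Q(a+b)$, cancel the identical control term, and combine the $KK_N^2$ difference bounds and uniform bounds from Propositions \ref{Prp:5.8} and \ref{Prp:5.2}. Your version is slightly cleaner: you apply Cauchy--Schwarz to the first moment $\mathbb{E}|h^l(\cdot)-h^l(\cdot)|$, whereas the paper bounds the second moment of the difference by a product of second moments, a step that would actually require fourth-moment estimates.
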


\begin{proof}
	By the quadratic form of the functional,
	\begin{multline*}
		h_t^l(X_t^{N,0},\alpha_t^{N,0},\hat{\upsilon}_t^{N,0})-h_t^l(X_t^l,\alpha_t^l,\hat{M}_t^f)=\frac{1}{2}\big( \|X_t^{N,0}-h_t^{l,1}\hat{\upsilon}_t^{N,0}\|_{Q_t^l}-\|X_t^l-h_t^{l,1}\hat{M}_t^f\|_{Q_t^l} \big)\\
		=\frac{1}{2}\big[ (X_t^{N,0}+X_t^l)-h_t^{l,1}(\hat{\upsilon}_t^{N,0}+\hat{M}_t^f) \big]^\top Q_t^l\big[ (X_t^{N,0}-X_t^l)-h_t^{l,1}(\hat{\upsilon}_t^{N,0}-\hat{M}_t^f) \big].
	\end{multline*}
	By Proposition \ref{Prp:5.8},
	\begin{equation*}
    \begin{aligned} 
		&\mathbb{E}\biggl[\sup_{0\le t\le T}|h_t^l(X_t^{N,0},\alpha_t^{N,0},\hat{\upsilon}_t^{N,0})-h_t^l(X_t^l,\alpha_t^l,\hat{M}_t^f)|^2\biggr]\\
		&\le K\mathbb{E}\biggl[ \sup_{0\le t\le T}\big\{|X_t^{N,0}|^2+|\hat{\upsilon}_t^{N,0}|^2+|X_t^l|^2+|\hat{M}_t^f|^2\big\} \biggr]\\
        &\qquad \times\mathbb{E}\biggl[\sup_{0\le t\le T}\big\{ |X_t^{N,0}-X_t^l|^2+|\hat{\upsilon}_t^{N,0}-\hat{M}_t^f|^2 \big\}\biggr]\le KK_N^2.
	\end{aligned}
    \end{equation*}
	Similarly,
	\begin{equation*}
		\mathbb{E}\biggl[ |g^l(X_T^{N,0},\hat{\upsilon}_T^{N,0})-g^l(X_T^l,\hat{M}_T^l)|^2 \biggr]\le KK_N^2.
	\end{equation*}
	Therefore,
	\begin{equation*}
		|J^{N,0}(\alpha^{N,0})-J^l(\alpha^l)|\le KK_N.\qedhere
	\end{equation*}
\end{proof}

\begin{prp}\label{Prp:5.14}
	Assume that Assumptions \ref{Ass:POC1}, \ref{Ass:POC2}, and \ref{Ass:POC3} hold. There exists a positive integer $N_0^l$ such that, for any $N>N_0^l$, the leader control constructed for the $N$-problem constitutes an $\varepsilon_1$-optimal control.
\end{prp}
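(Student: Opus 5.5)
The plan mirrors the follower argument of Proposition \ref{Prp:5.12}, with the previous proposition (the estimate $|J^{N,0}(\alpha^l)-J^l(\alpha^l)|\le KK_N$ for fixed $\alpha^l$) in the role of Propositions \ref{Prp:5.10}–\ref{Prp:5.11}. Write $\tilde{J}^{N,0}(\beta):=J^{N,0}(\beta;\hat{\alpha}^N(\beta))$ and $\tilde{J}^l(\beta):=J^l(\beta;\hat{\alpha}^f(\beta))$. The target chain, for any $\beta\in\mathcal{U}_d^{N,0}=\mathcal{U}^l$, is
\begin{equation*}
	\tilde{J}^{N,0}(\beta)\ge \tilde{J}^l(\beta)-KK_N\ge \tilde{J}^l(\hat{\alpha}^l)-KK_N\ge \tilde{J}^{N,0}(\hat{\alpha}^{N,0})-2KK_N.
\end{equation*}
The middle inequality is the optimality of $\hat{\alpha}^l$ in the limiting leader problem (Assumption \ref{Ass:POC1}, Theorem \ref{Thm:4.3}). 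Since $\hat{\alpha}^{N,0}=\hat{\alpha}^l(\check{X}^l,\check{M}^f,\check{\psi})$ is $\mathcal{F}^l$-adapted, it is an admissible decentralized control and coincides with $\hat{\alpha}^l$ as a process. Choosing $N_0^l$ with $2KK_N<\varepsilon_1$ for $N>N_0^l$ (possible since $K_N\to0$) then finishes the proof.

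The main obstacle is uniformity: the constant $K$ in the previous proposition depends on $\alpha^l$ through $\mathbb{E}\int_0^T|\alpha^l_t|^2\mathrm{d}t$, entering via Propositions \ref{Prp:5.2}, \ref{Prp:5.3} and \ref{Prp:5.8}. I will therefore first check that every constant there grows at most like $K(1+\mathbb{E}\int_0^T|\beta_t|^2\mathrm{d}t)$, which holds because all equations involved are linear in $\beta$ with bounded coefficients. The resulting bound is
\begin{equation*}
	|\tilde{J}^{N,0}(\beta)-\tilde{J}^l(\beta)|\le KK_N\Big(1+\mathbb{E}\int_0^T|\beta_t|^2\mathrm{d}t\Big).
\end{equation*}

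To handle large perturbations, I will split the controls exactly as in $\bar{\mathcal{U}}_d^{N,i}$. Let $\bar{\mathcal{U}}^{N,0}$ be the set of $\beta$ with $\mathbb{E}\int_0^T|\beta|^2\le C_0$, where $C_0$ is a constant depending only on $\tilde{J}^l(\hat{\alpha}^l)$ and the coercivity constant. On $\bar{\mathcal{U}}^{N,0}$ the chain above holds with a uniform $K$.

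Outside $\bar{\mathcal{U}}^{N,0}$, I need a coercivity estimate $\tilde{J}^{N,0}(\beta)\ge K\mathbb{E}\int_0^T|\beta_t|^2\mathrm{d}t-K$, the leader analogue of Assumption \ref{Ass:POC3}. I would obtain it from the uniform convexity Assumption \ref{Ass:4.6} in its strict form together with $\bar{\bar{R}}\gg0$, which gives the bound for $\tilde{J}^l$. I would then transfer it to $\tilde{J}^{N,0}$ via the $\beta$-dependent estimate above, which has a small factor $KK_N$ in front of $\mathbb{E}\int|\beta|^2$ and so can be absorbed for $N$ large. If this absorption fails, I will instead add the coercivity as an assumption, in the same spirit as Assumption \ref{Ass:POC3}. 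With coercivity in hand, large $\beta$ satisfy $\tilde{J}^{N,0}(\beta)\ge \tilde{J}^{N,0}(\hat{\alpha}^{N,0})$ directly, and the proof is complete.
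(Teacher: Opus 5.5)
Your core chain coincides with the paper's proof. The paper stops after that chain and treats the constant from the preceding proposition as if it were independent of $\beta$. You are right that it is not: it grows like $1+\mathbb{E}\int_0^T|\beta_t|^2\mathrm{d}t$ through Propositions \ref{Prp:5.2}, \ref{Prp:5.3} and \ref{Prp:5.8}. The one-line argument is therefore uniform only over bounded sets of leader controls, which is a real hole in the paper's proof and not a pedantic one. Your bounded/unbounded split is the right repair, and on $\{\mathbb{E}\int_0^T|\beta_t|^2\mathrm{d}t\le C_0\}$ your argument is complete.

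The remaining gap is the leader coercivity needed for large $\beta$, which you cannot derive from the stated hypotheses.
\begin{itemize}
\item Assumption \ref{Ass:POC3} concerns only the followers.
\item Assumption \ref{Ass:4.6} is not assumed in this proposition. Even where it holds, it gives only $\mathcal{J}^1(\beta)\ge 0$, not $\mathcal{J}^1(\beta)\ge\delta\,\mathbb{E}\int_0^T|\beta_t|^2\mathrm{d}t$, so its ``strict form'' is a new hypothesis.
\item $\bar{\bar R}\gg0$ does not obviously supply it. $\bar{\bar P}$ is the decoupling field of the lifted Hamiltonian system (\ref{eq34}), not a value-function Riccati solution for the leader, so the standard equivalence between regular Riccati solvability and uniform convexity cannot simply be quoted.
\end{itemize}
The issue is not cosmetic. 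Expand $\tilde J^{N,0}(\hat\alpha^l+\gamma)$ around $\hat\alpha^l$. Its quadratic part differs from the limiting quadratic form $\mathcal{J}^1(\gamma)$ by at most $KK_N\,\mathbb{E}\int_0^T|\gamma_t|^2\mathrm{d}t$. Suppose $\mathcal{J}^1$ vanishes along some direction $\gamma$, which Assumption \ref{Ass:4.6} permits when $R^l$ is indefinite. Then this $O(K_N)$ perturbation may be negative, in which case $\tilde J^{N,0}$ is unbounded below and no $\varepsilon_1$-optimality can hold. Nothing in Assumptions \ref{Ass:POC1}--\ref{Ass:POC3} excludes this.

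So take your fallback explicitly: add as a hypothesis uniform convexity of the limiting leader problem, equivalently coercivity of $\tilde J^l$. With that hypothesis your absorption step transfers coercivity to $\tilde J^{N,0}$ for large $N$, and your split then completes the proof. In the definite case $Q^l\ge0$, $R^l\gg0$, $G^l\ge0$ the extra hypothesis holds automatically, since then $\tilde J^{N,0}(\beta)\ge\frac12\mathbb{E}\int_0^T\beta_t^\top R_t^l\beta_t\mathrm{d}t$.
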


\begin{proof}
	For any perturbation $\beta\in\mathcal{U}_d^{N,0}$,
	\begin{equation*}
		J^{N,0}(\beta)\ge J^l(\beta)-KK_N\ge J^l(\hat{\alpha}^l)-KK_N\ge J^{N,0}(\hat{\alpha}^l)-KK_N.
	\end{equation*}
	For any $\varepsilon_1>0$, there exists $N_0^l$ such that, for any $N>N_0^l$,
	\begin{equation*}
		J^{N,0}(\beta)\ge J^{N,0}(\hat{\alpha}^l)-\varepsilon_1.\qedhere
	\end{equation*}
\end{proof}

\begin{proof}[Proof of Theorem \ref{Thm:POC}]
	The conclusion follows from Propositions \ref{Prp:5.12} and \ref{Prp:5.14}.
\end{proof}

\section{Special cases}\label{Sec:06}

\subsection{Indefinite LQ GMFG}

When $\alpha^l=0$, $b^l\equiv 0$, and $\sigma^l\equiv 0$ are fixed, we have $X^l=0$, and the problem no longer has a leader-follower structure. It degenerates into an indefinite LQ GMFG, where the diffusion term of the system equation is highly general and contains the player's state, control, and graphon aggregate:
\begin{equation*}
	\begin{cases}
		\mathrm{d}X_t^{f,u}=b_t^{f,u}(X_t^{f,u},\alpha_t^{f,u},GX_t^{f,u})\mathrm{d}t\\
		\qquad\qquad+\sigma_t^{f,u}(X_t^{f,u},\alpha_t^{f,u},GX_t^{f,u})\mathrm{d}W_t^{f,u},\\
		X_0^{f,u}=x_0^{f,u},\quad u\in I.
	\end{cases}
\end{equation*}
The quadratic cost functional contains the player's state, control, and graphon aggregate:
\begin{equation*}
	J^{f,u}(\alpha^{f,u};\alpha^{f,-u})=\mathbb{E}\biggl[ \int_0^T h_t^{f,u}(X_t^{f,u},\alpha_t^{f,u},GX_t^{f,u})\mathrm{d}t+g^{f,u}(X_T^{f,u},GX_T^{f,u}) \biggr].
\end{equation*}

Theorem \ref{Thm:3.1} can provide a Nash equilibrium for the above problem, and Theorem \ref{Thm:POC} gives the propagation of chaos result for the above problem.

\subsection{Stackelberg MFG}

When the graphon $G\equiv 1$ and $w\equiv 1$, the problem degenerates into an LQ indefinite Stackelberg MFG. The system equation is
\begin{equation*}
	\begin{cases}
		\mathrm{d}X_t^{f,u}=b_t^{f,u}(X_t^{f,u},\alpha_t^{f,u},\bar{M}_t^f,X_t^l,\alpha_t^l)\mathrm{d}t\\
		\qquad\qquad+\sigma_t^{f,u}(X_t^{f,u},\alpha_t^{f,u},\bar{M}_t^f,X_t^l,\alpha_t^l)\mathrm{d}W_t^{f,u},\\
		\mathrm{d}X_t^l=b_t^l(X_t^l,\alpha_t^l,\bar{M}_t^f)\mathrm{d}t+\sigma_t^l(X_t^l,\alpha_t^l,\bar{M}_t^f)\mathrm{d}W_t^l,\\
		X_0^{f,u}=x_0^{f,u},\quad u\in I,\quad X_0^l=x_0^l,
	\end{cases}
\end{equation*}
and the cost functionals are
\begin{gather*}
	J^{f,u}(\alpha^{f,u};\alpha^{f,-u},\alpha^l)=\mathbb{E}\biggl[ \int_0^T h_t^{f,u}(X_t^{f,u},\alpha_t^{f,u},\bar{M}_t^f,X_t^l,\alpha_t^l)\mathrm{d}t+g^{f,u}(X_T^{f,u},\bar{M}_T^f,X_T^l) \biggr],\\
	J^l(\alpha^l;\alpha^f)=\mathbb{E}\biggl[ \int_0^T h_t^l(X_t^l,\alpha_t^l,\bar{M}_t^f)\mathrm{d}t+g^l(X_T^l,\bar{M}_T^f) \biggr],
\end{gather*}
where
\begin{equation*}
	\bar{M}_t^f:=\int_I X_t^{f,u}\lambda(\mathrm{d}u)=\int_I \mathbb{E}[X_t^{f,u}|\mathcal{F}_T^l]\lambda(\mathrm{d}u).
\end{equation*}

Theorem \ref{Thm:3.1} gives the Stackelberg-Nash equilibrium for the above problem, and Theorem \ref{Thm:POC} gives the propagation of chaos result for the above problem.

\subsection{LQ GMFG with random effects}

When $b^l\equiv 0$ and $\sigma^l\equiv 0$, we have $X^l\equiv 0$, and $\alpha^l$ can be regarded as a random effect term. The problem degenerates into an LQ GMFG with random effects. Denote the random effect $\alpha^l$ by $\eta$. The system equation is
\begin{equation*}
	\begin{cases}
		\mathrm{d}X_t^{f,u}=b_t^{f,u}(X_t^{f,u},\alpha_t^{f,u},GX_t^{f,u},\eta_t)\mathrm{d}t\\
		\qquad\qquad+\sigma_t^{f,u}(X_t^{f,u},\alpha_t^{f,u},GX_t^{f,u},\eta_t)\mathrm{d}W_t^{f,u},\\
		X_0^{f,u}=x_0^{f,u},\quad u\in I,
	\end{cases}
\end{equation*}
and the cost functional is
\begin{equation*}
	J^{f,u}(\alpha^{f,u};\alpha^{f,-u})\\
	=\mathbb{E}\biggl[ \int_0^T h_t^{f,u}(X_t^{f,u},\alpha_t^{f,u},GX_t^{f,u},\eta_t)\mathrm{d}t+g^{f,u}(X_T^{f,u},GX_T^{f,u}) \biggr].
\end{equation*}

Theorem \ref{Thm:3.1} can provide a Nash equilibrium for the above problem, and Theorem \ref{Thm:POC} gives the propagation of chaos result for the above problem.

\section{Numerical simulation}
\label{Sec:07}

We consider the correction of cross-track errors in a UAV formation after a lateral-wind disturbance. A leader guides a large population of followers back to the centerline of the flight corridor. The scalar state $X$ denotes the cross-track displacement ($m$), and the control is the lateral guidance command. According to their communication strengths, the followers are divided into a high-connectivity block $I_H=[0,0.4]$ and a standard block $I_S=(0.4,1]$. The communication network is described by the two-block step graphon
\begin{equation}
	G(u,v)=
	\begin{cases}
		0.90,&u,v\in I_H,\\
		0.10,&(u,v)\in(I_H\times I_S)\cup(I_S\times I_H),\\
		0.30,&u,v\in I_S.
	\end{cases}
	\label{eq:uav-graphon}
\end{equation}
We take the graphon-operator eigenvalue $\lambda=0.37247$ and its normalized nonnegative eigenfunction
\begin{equation}
	w=1.90587\cdot\mathbf{1}_{I_H}+0.39609\cdot\mathbf{1}_{I_S}.
\end{equation}

Let $z_t^u=GX_t^{f,u}$ and $M_t=\int_0^1w^uX_t^{f,u}\lambda(\mathrm{d}u)$. The dynamics are
\begin{equation}\left\{\begin{aligned}\label{eq:uav-dynamics}
	\mathrm{d}X_t^{f,u}&=(a_fX_t^{f,u}+b_f\alpha_t^{f,u})\mathrm{d}t+\sigma_f \mathrm{d}W_t^{f,u},\\
	\mathrm{d}X_t^l&=(a_lX_t^l+b_l\alpha_t^l+c_lM_t)\mathrm{d}t+\sigma_l\mathrm{d}W_t^l. 
\end{aligned}\right.\end{equation}
The followers and leader's cost functionals are, respectively,
\begin{align}
	J^{f,u}={}&\frac12\mathbb E\!\left[\int_0^T\!\left\{
	q_f|X_t^{f,u}+\rho z_t^u-\kappa X_t^l|^2
	+r_f|\alpha_t^{f,u}|^2\right\}dt
	+g_f|X_T^{f,u}+\rho z_T^u-\kappa X_T^l|^2\right],\nonumber\\
	J^l={}&\frac12\mathbb E\!\left[\int_0^T\!\left\{
	q_l|X_t^l-\kappa_lM_t|^2+r_l|\alpha_t^l|^2\right\}dt
	+g_l|X_T^l-\kappa_lM_T|^2\right]. \label{eq:uav-costs}
\end{align}
Thus, each follower tracks the leader while suppressing the communication-weighted cross-track bias. We use
\[
\begin{gathered}
	T=12,\quad (a_f,b_f,\sigma_f)=(-0.35,1,0.22),\quad
	(a_l,b_l,c_l,\sigma_l)=(-0.20,1,0.08,0.06),\\
	(q_f,r_f,\rho,\kappa,g_f)=(2,0.5,0.45,0.85,4),\quad
	(q_l,r_l,\kappa_l,g_l)=(4,1.2,0.25,8),\\
	(X_0^l,X_0^H,X_0^S)=(5,7,-2).
\end{gathered}
\]

It is readily verified that these parameters satisfy the conditions of the corresponding results in the paper.

\begin{figure}[H]
	\centering
	\includegraphics[width=.98\linewidth]{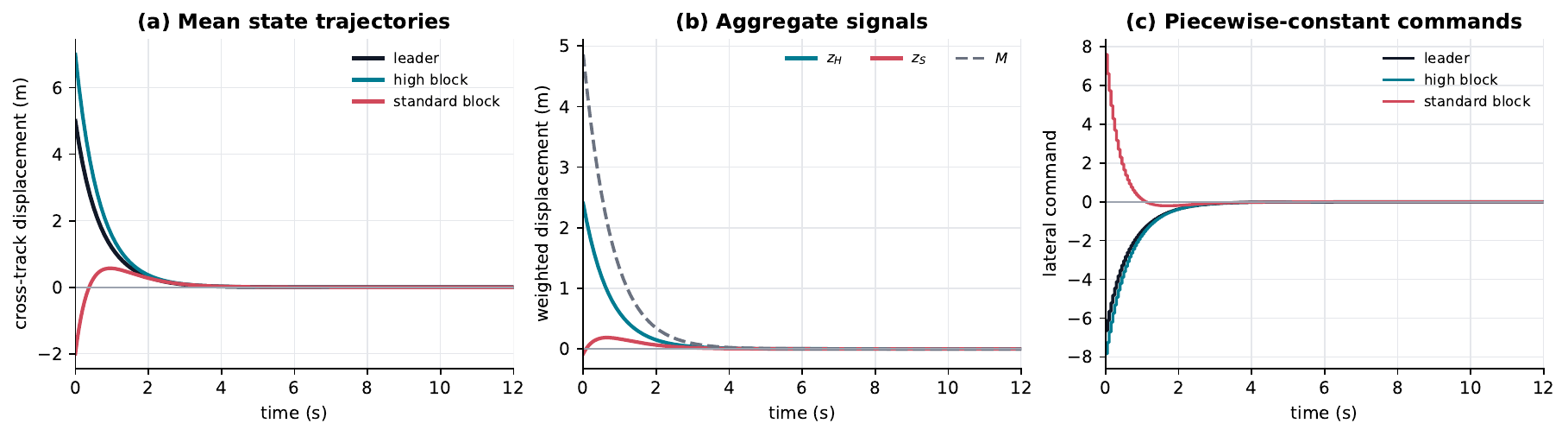}
	\caption{Equilibrium results: (a) mean state trajectories; (b) aggregate signals; (c) lateral guidance commands.}
	\label{fig:uav-equilibrium}
\end{figure}

In Figure~\ref{fig:uav-equilibrium} (a), all three mean state trajectories converge to zero. In panel (b), $z_H$, $z_S$, and $M$ decay simultaneously, and the relatively large $z_H(0)$ agrees with the stronger within-block connectivity of the high-connectivity population. In panel (c), each initial command has the opposite sign to the corresponding initial displacement and converges to zero as the tracking error vanishes. The staircase appearance results from the piecewise-constant implementation with numerical step size $h=0.05$. These results show that the equilibrium uses the state errors and graphon aggregates as feedback signals to return the formation to the corridor centerline, consistently with the mechanism predicted by the model.

\section{Conclusion}\label{Sec:08}

This paper studied a general leader-follower LQ stochastic graphon game with indefinite control weights and general diffusion terms, where the coupling between the leader and the followers occurs through their state equations and cost functionals. A rigorous formulation was established under a rich Fubini extension and the CELLN, which ensures the well-posedness of the controlled systems and the required adaptedness and integrability of the graphon aggregate and weighted-average terms.

For the limiting problem, stochastic maximum principles were used to derive graphon aggregated FBSDEs for the followers' Nash response and the leader's optimal control, which were further analyzed and simplified by Riccati equations. In the resulting Stackelberg structure, the leader anticipates the followers' Nash response under a given leader control and optimizes its own cost accordingly. Under suitable solvability and convexity conditions, a Stackelberg-Nash equilibrium was constructed.

The limiting equilibrium was further used to construct decentralized strategies for finite-player network games. When the finite graphs, initial states, and leader weights converge to their graphon limiting counterparts, the induced graphon aggregate terms, weighted-average terms, and leader state converge to the corresponding limiting quantities. Based on these convergence results and cost comparisons, the decentralized strategies were shown to form an approximate Stackelberg-Nash equilibrium, yielding a propagation of chaos result.

The results also cover several existing models as special cases and extend related LQ-GMFG results by allowing more general diffusion couplings, indefinite control weights, and graphon operators beyond the finite-rank setting. Several problems remain worthy of further study, including index-dependent coefficients, partially observed control problems, and applications to finance and other areas.

\end{document}